\documentclass[10pt,leqno]{amsart}

\usepackage{amsmath}
\usepackage{amssymb,xfrac}
\usepackage{amsthm}
\usepackage{aliascnt}
\usepackage{ifthen} 
\usepackage[breaklinks,unicode]{hyperref} 
\usepackage[capitalise]{cleveref}
\usepackage[shortlabels]{enumitem} 
\setlist{nosep}
\setlist[enumerate]{label=(\arabic*),ref=(\arabic*)}

\usepackage[dvipsnames]{xcolor}
\usepackage{tikz-cd, tikz}
\usepackage{caption}
\usepackage{subcaption}
\usepackage{mathrsfs} 
\usepackage{float}

\usepackage{cancel}

\numberwithin{equation}{section}

\newtheorem{theorem}{Theorem}[section]
\newaliascnt{claim}{theorem}

\aliascntresetthe{claim}
\newaliascnt{proposition}{theorem}
\newtheorem{proposition}[proposition]{Proposition}
\aliascntresetthe{proposition}
\newaliascnt{lemma}{theorem}
\newtheorem{lemma}[lemma]{Lemma}
\aliascntresetthe{lemma}
\newaliascnt{corollary}{theorem}
\newtheorem{corollary}[corollary]{Corollary}
\aliascntresetthe{corollary}
\newaliascnt{conjecture}{theorem}
\newtheorem{conjecture}[conjecture]{Conjecture}
\aliascntresetthe{conjecture}
\newaliascnt{convention}{theorem}

\aliascntresetthe{convention}

\newtheorem*{theorem*}{Theorem}
\newtheorem*{claim*}{Claim}
\newtheorem*{proposition*}{Proposition}
\newtheorem*{lemma*}{Lemma}
\newtheorem*{corollary*}{Corollary}

\theoremstyle{definition}
\newaliascnt{definition}{theorem}
\newtheorem{definition}[definition]{Definition}
\aliascntresetthe{definition}
\newaliascnt{observation}{theorem}

\aliascntresetthe{observation}
\newaliascnt{remark}{theorem}
\newtheorem{remark}[remark]{Remark}
\aliascntresetthe{remark}
\newaliascnt{example}{theorem}
\newtheorem{example}[example]{Example}
\aliascntresetthe{example}
\newaliascnt{question}{theorem}
\newtheorem{question}[question]{Question}
\aliascntresetthe{question}
\newaliascnt{exercise}{theorem}

\aliascntresetthe{exercise}
\newaliascnt{fact}{theorem}

\aliascntresetthe{fact}
\newaliascnt{notation}{theorem}

\aliascntresetthe{notation}

\crefname{claim}{Claim}{Claims}
\crefname{proposition}{Proposition}{Propositions}
\crefname{lemma}{Lemma}{Lemmas}
\crefname{corollary}{Corollary}{Corollaries}
\crefname{conjecture}{Conjecture}{Conjectures}
\crefname{definition}{Definition}{Definitions}
\crefname{observation}{Observation}{Observations}
\crefname{remark}{Remark}{Remarks}
\crefname{example}{Example}{Examples}
\crefname{question}{Question}{Questions}
\crefname{exercise}{Exercise}{Exercises}
\crefname{fact}{Fact}{Facts}
\crefname{notation}{Notation}{Notations}
\crefname{section}{\S}{\S\S}
\crefname{subsection}{\S}{\S\S}
\crefname{subsubsection}{\S}{\S\S}
\Crefname{section}{\S}{\S\S}
\Crefname{subsection}{\S}{\S\S}
\Crefname{subsubsection}{\S}{\S\S}

\newtheorem*{definition*}{Definition}
\newtheorem*{observation*}{Observation}
\newtheorem*{remark*}{Remark}
\newtheorem*{example*}{Example}
\newtheorem*{question*}{Question}
\newtheorem*{exercise*}{Exercise}
\newtheorem*{fact*}{Fact}
\newtheorem*{notation*}{Notation}

\newcommand{\bbN}{\mathbb{N}}

\newcommand{\bbR}{\mathbb{R}}

\newcommand{\bbZ}{\mathbb{Z}}

\newcommand{\bfG}{\mathbf{G}}
\newcommand{\bfH}{\mathbf{H}}

\newcommand*{\sgen}[1]{\langle#1\rangle}

\DeclareMathOperator{\stab}{stab}

\DeclareMathOperator{\SL}{SL}

\DeclareMathOperator{\id}{id}
\DeclareMathOperator{\Aut}{Aut}

\DeclareMathOperator{\image}{Im}

\DeclareMathOperator{\cd}{cd}

\newcommand\norm[1]{\lVert#1\rVert}

\newcommand*{\claimproofname}{Proof}

\crefname{cond}{condition}{conditions}
\creflabelformat{cond}{#2#1\@#3}
\crefname{obs}{observation}{observations}
\creflabelformat{obs}{#2#1\@#3}

\usepackage[T1]{fontenc}
\usepackage[margin=4cm]{geometry}

\usepackage{comment}

\makeatletter
\newtheorem*{rep@theorem}{\rep@title}
\newcommand{\newreptheorem}[2]{%
\newenvironment{rep#1}[1]{%
 \def\rep@title{#2 \ref{##1}}%
 \begin{rep@theorem}}%
 {\end{rep@theorem}}}
\makeatother

\newcommand{\ssrank}{\operatorname{ssrank}}
\newcommand{\bs}{\backslash}

\newcommand{\defq}{\mathrel{\mathop:}=}
\newcommand{\im}{\operatorname{im}}

\newcommand{\res}{\operatorname{res}}

\newcommand{\Isom}{\operatorname{Isom}}
\newcommand{\End}{\operatorname{End}}

\newcommand{\BL}{\operatorname{BL}}

\newcommand{\rank}{{\mathrm{rank}}}

\newcommand{\Ext}{{\mathrm{Ext}}}

\newcommand{\loc}{\operatorname{loc}}

\newcommand{\pol}{\ensuremath{\mathrm{pol}}}

\newcommand{\abs}[1]{{\left\lvert #1\right\rvert}}

\newreptheorem{theorem}{Theorem}

\newcommand{\N}{\mathbb{N}}

\newcommand{\FA}{\mathrm{FA}}

\definecolor{shakedcomment}{rgb}{0, 0, 255}
\newcommand*{\sh}[1]{{\color{shakedcomment}Shaked: #1}}

\definecolor{saarcomment}{rgb}{255, 0, 0}
\newcommand*{\saar}[1]{{\color{saarcomment}Saar: #1}}

\newcommand*{\rom}[1]{\textcolor{OliveGreen}{Roman: #1}}

\begin{document}

\title[Fixed point property and cohomology of arithmetic groups]{Fixed point properties and cohomology of Banach representations of arithmetic groups}
\date{}
\author{Saar Bader}
\email{saar.bader@gmail.com}

\author{Shaked Bader}
\address{University of Oxford}
\email{shaked.bader@gmail.com}

\author{Uri Bader}
\address{University of Maryland and Weizmann Institute of Science}
\email{uri.bader@gmail.com}

\author{Roman Sauer}
\address{Karlsruhe Institute of Technology}
\email{roman.sauer@kit.edu}

\begin{abstract}
    We study fixed point theorems for actions of lattices of semisimple groups. They are deduced from vanishing results for the group cohomology of $L^p$-representations. 
    We show that for lattices in simple groups of higher rank, the cohomology with $L^p$-coefficients vanishes below the rank whenever there are no invariant vectors. As a corollary of the vanishing for $L^1$-coefficients, we obtain that every action on an acyclic simplicial complex of dimension lower than the rank has a finite orbit.
    This in particular proves a conjecture by Farb. The $L^p$-vanishing below the rank proves a conjecture by Gromov regarding $L^p$-cohomology of symmetric spaces.
\end{abstract}

\keywords{higher Kazhdan property, continuous group cohomology,
$L^p$-representations, semisimple Lie groups and lattices, fixed-point properties}

\subjclass[2020]{Primary 22D55; Secondary 20J06, 22E40, 22E41, 22D12}

\maketitle

\section{Introduction}

The motivation for this paper was to study fixed point theorems for semisimple groups and their lattices.  Our level of generality are 
semisimple groups over arbitrary local fields of characteristic 0, but in the introduction we focus mainly on semisimple Lie groups.

For concreteness, we will discuss first the classical case of $\SL_n(\mathbb{Z})$.

\begin{theorem}[\cite{BorelSerre:cohomologie}, \cite{Pettet-Souto}]
    The group $\SL_n(\mathbb{Z})$ has a proper action on a
    contractible simplicial complex of dimension $n\choose{2}$,
    but it does not have a proper action on a lower-dimensional contractible simplicial complex.
\end{theorem}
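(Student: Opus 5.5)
The statement has two halves: an existence part and a lower bound.

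\emph{Existence.} I would take the Borel--Serre bordification $\overline{X}$ of the symmetric space $X=\SL_n(\bbR)/\mathrm{SO}(n)$, which has dimension $\dim X=\frac{n(n+1)}{2}-1$. The space $\overline{X}$ is a contractible manifold with corners. The group $\SL_n(\bbZ)$ acts on it properly and cocompactly. I would then use the fact that the rational Tits building has dimension $n-2$ and is homotopy equivalent to a wedge of $(n-2)$-spheres. Borel--Serre duality then gives virtual cohomological dimension
\[
\dim X-(n-1)=\binom{n}{2}.
\]
To pass from cohomological dimension to an actual complex, I would use the $\SL_n(\bbZ)$-equivariant deformation retraction of $\overline{X}$ (equivalently of $X$) onto a spine of dimension $\binom{n}{2}$. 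Soulé's well-rounded retract is one such spine; Ash's retract for general arithmetic groups is another. The spine is a contractible, locally finite, cocompact $\SL_n(\bbZ)$-CW-complex with finite stabilizers. Its barycentric subdivision is therefore a simplicial complex of dimension $\binom{n}{2}$ carrying a proper simplicial action, which gives the first claim.

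\emph{Lower bound.} Suppose $\SL_n(\bbZ)$ acts properly on a contractible simplicial complex $Y$ of dimension $d$. I would pass to a torsion-free finite-index subgroup $\Gamma$, for instance a congruence subgroup of level $\geq3$. Then $\Gamma$ acts freely on $Y$ after subdivision, so $\cd\Gamma\le d$. On the other hand, $\cd\Gamma=\binom{n}{2}$ by Borel--Serre: the top cohomology $H^{\binom n2}(\Gamma;\bbZ\Gamma)$ is the Steinberg module, which is nonzero. Hence $d\ge\binom n2$.

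The subtle point is that the lower bound must be stated for cohomological dimension and not merely for dimension of a cocompact model. Dimension $2$ could in principle cause an Eilenberg--Ganea issue, but that concerns the reverse inequality and does not matter here.

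\emph{Main obstacle.} The real obstacle is that an action with finite stabilizers need not come from a free action of the whole group, because $\SL_n(\bbZ)$ has torsion. Restricting to $\Gamma$ resolves this, since a subgroup still acts properly and torsion-freeness then forces freeness. Since the paper cites the result, I expect the proof to be by reference to \cite{BorelSerre:cohomologie} and to \cite{Pettet-Souto}, who proved the retract minimality of this form.
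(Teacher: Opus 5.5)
Your argument is correct and matches the paper's approach. The paper gives no proof of its own: it cites Borel--Serre and Pettet--Souto, and those references supply exactly your two ingredients, namely $\operatorname{vcd}\SL_n(\bbZ)=\binom{n}{2}$ for the lower bound and the $\binom{n}{2}$-dimensional equivariant well-rounded spine (due to Ash, with Soulé for $n=3$) for existence.
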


Weakening the properness demand, we ask the following question.

\begin{question} \label{ques:d}
    What is the minimal number $d$ for which $\SL_n(\mathbb{Z})$ has an action on a contractible $d$-dimensional simplicial complex such that all orbits are infinite?
\end{question}

Clearly $d\leq {n \choose 2}$, but can we find a non-trivial lower bound for $n\geq 3$?
Serre showed that for $n\geq 3$, the group $\SL_n(\mathbb{Z})$ satisfies property FA, that is, every action on a \emph{tree} has a fixed point. In particular, $d\geq 2$.
One can prove Serre's result by combining the following two theorems.

\begin{theorem}[Kazhdan, \cite{kazhdan:connection}] \label{thm:Kaz67}
    For $n\geq 3$, $\SL_n(\mathbb{Z})$ satisfies property (T).
\end{theorem}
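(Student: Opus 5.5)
Kazhdan's theorem is the statement that $\SL_n(\mathbb{Z})$, $n\geq 3$, has property (T). The plan is to first prove property (T) for the ambient Lie group $G=\SL_n(\mathbb{R})$ and then pass to the lattice $\Gamma=\SL_n(\mathbb{Z})$.

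For the passage to the lattice, recall that property (T) is inherited by lattices. Suppose $\Gamma$ has a unitary representation $\pi$ with almost invariant vectors but no nonzero invariant vectors. Form the induced representation $\mathrm{Ind}_\Gamma^G\pi$ on $L^2(G/\Gamma,\pi)$. Since $G/\Gamma$ has finite covolume, almost invariant vectors of $\pi$ produce almost invariant vectors of the induced representation; this uses a fundamental domain and the Borel density/finite-volume argument. Property (T) for $G$ then gives a $G$-invariant vector. Such a vector corresponds to a $\Gamma$-invariant vector of $\pi$, a contradiction.

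The main work is property (T) for $G=\SL_n(\mathbb{R})$. By inducing and restricting, it suffices to treat $\SL_3(\mathbb{R})$ embedded in the upper-left corner, together with a bounded-generation or Howe--Moore argument. I would instead use the direct route via the semidirect product $H=\SL_2(\mathbb{R})\ltimes\mathbb{R}^2\subset \SL_3(\mathbb{R})$, and prove that the pair $(H,\mathbb{R}^2)$ has relative property (T). The steps are as follows.

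1. Take a unitary representation with $(Q,\varepsilon)$-invariant vectors and no $\mathbb{R}^2$-invariant vectors.

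2. Restrict it to $\mathbb{R}^2$. By the spectral theorem we obtain a projection-valued measure $P$ on the dual $\widehat{\mathbb{R}^2}\cong\mathbb{R}^2$. This measure is $\SL_2(\mathbb{R})$-quasi-equivariant.

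3. An almost invariant unit vector $v$ gives a probability measure $\mu_v(\cdot)=\langle P(\cdot)v,v\rangle$. This measure is almost $\SL_2(\mathbb{R})$-invariant and concentrated near $0$, because $v$ is almost fixed by a compact neighborhood in $\mathbb{R}^2$.

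4. Since $\SL_2(\mathbb{R})$ has no invariant mean on $\mathbb{R}^2\setminus\{0\}$ (pass to the projective line and use the contraction dynamics of hyperbolic elements), a weak-$*$ limit argument forces the mass to sit at $\{0\}$. That means $P(\{0\})\neq 0$, so there is a nonzero $\mathbb{R}^2$-invariant vector.

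With relative (T) for $(H,\mathbb{R}^2)$ in hand, I would finish as follows.

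5. The vectors invariant under the unipotent subgroup $\mathbb{R}^2$ are invariant under all of $\SL_3(\mathbb{R})$, by the Mautner phenomenon (Howe--Moore). This yields property (T) for $\SL_3(\mathbb{R})$.

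6. For $n>3$, the elementary unipotent subgroups are each contained in a copy of such a semidirect product. Almost invariant vectors are therefore almost invariant under each root group. Applying relative (T) together with Mautner again yields invariant vectors for $\SL_n(\mathbb{R})$.

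The expected main obstacle is step 4: showing that an almost invariant probability measure on $\widehat{\mathbb{R}^2}$ must charge the origin with definite mass, and making $\varepsilon$ uniform. I would handle it by partitioning $\mathbb{R}^2\setminus\{0\}$ into sectors permuted by two explicit unipotent matrices $\begin{pmatrix}1&1\\0&1\end{pmatrix}$ and $\begin{pmatrix}1&0\\1&1\end{pmatrix}$, using a ping-pong estimate on the sectors, following Burger's proof.
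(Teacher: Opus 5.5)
The paper does not prove this statement. It quotes Kazhdan's theorem as a black box, as the model that \cref{thm:1stversion} generalizes. Its own machinery in fact takes property (T) of $G$ and of $\Gamma$ as an input: in deducing \cref{thm:mainGsimpleLp} from \cref{thm:mainGsemisimpleLp}, and in the ultrapower step of \cref{cor:main_simple_lattice}. So the paper offers no independent route to it, and there is nothing to match against. Your outline is correct. It is the now-standard proof, as in Bekka--de la Harpe--Valette, rather than Kazhdan's original unitary-dual argument: relative property (T) for $(\SL_2(\mathbb{R})\ltimes\mathbb{R}^2,\mathbb{R}^2)$ via the spectral measure and the non-existence of invariant means, promotion to $\SL_n(\mathbb{R})$ by Howe--Moore, and inheritance by the lattice via induction. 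Compared with citing, it is self-contained modulo Howe--Moore and reduction theory. Burger's sector estimate also gives an explicit Kazhdan pair for the ambient group $\SL_n(\mathbb{R})$.

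A few points should be tightened.

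\emph{Lattice step.} Borel density plays no role here. What you need is that $\SL_n(\mathbb{Z})$ has finite covolume (reduction theory), plus a cocycle estimate on a large compact part of a fundamental domain. Equivalently, you can use continuity of induction with respect to weak containment.

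\emph{Step 3.} Over $\mathbb{R}^2$ the concentration of $\mu_v$ near $0$ is not used. The contradiction comes only from the absence of an $\SL_2(\mathbb{R})$-invariant mean on the Borel sets of $\mathbb{R}^2\setminus\{0\}$. Concentration is what Burger needs when the dual is $\mathbb{T}^2$.

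\emph{Step 4.} Contraction dynamics alone only pushes an invariant mean toward fixed points; a Dirac mass at a fixed point of a hyperbolic element is invariant under it. What rules out an invariant mean is two elements in ping-pong position, which is exactly what your sector argument with the two unipotents supplies. Also, Step~1 fixes $(Q,\varepsilon)$ while Step~4 takes a weak-$*$ limit. The limit needs unit vectors that are $\varepsilon_n$-invariant on compacta exhausting $H$, with $\varepsilon_n\to 0$. This yields the qualitative form of relative (T), which suffices.

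\emph{Step 6.} A single copy is enough. Restrict to $\SL_3(\mathbb{R})$, obtain a nonzero vector fixed by a non-compact subgroup, and apply Howe--Moore in $\SL_n(\mathbb{R})$. Applying relative (T) root group by root group would produce possibly different vectors and is not needed.
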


\begin{theorem}[Watatani, \cite{watatani:property}] \label{thm:wat}
    Groups with property (T) satisfy Serre's property FA.    
\end{theorem}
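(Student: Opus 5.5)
The plan is to go through affine isometric actions on a Hilbert space: from an action of a group $\Gamma$ on a tree $T$ without a fixed point we build an affine isometric action on a real Hilbert space with unbounded orbits. Property (T) rules this out. By the Delorme--Guichardet theorem, property (T) for a countable group is equivalent to $H^1(\Gamma,\pi)=0$ for every unitary representation $\pi$, and so to every affine isometric action on a Hilbert space having a fixed point. Groups with (T) are finitely generated, hence countable, so this applies. Finally, a bounded orbit in Hilbert space gives a fixed point via its circumcenter.

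Step 1 (reduction). Let $\Gamma$ act on a tree $T$ by automorphisms, after passing to the barycentric subdivision so that there are no edge inversions. By the standard dichotomy (Serre, or Tits), either some vertex is fixed, or $\Gamma$ fixes an end, or $\Gamma$ contains a hyperbolic element. Assume no fixed vertex. In both remaining cases the vertex orbits are unbounded: a hyperbolic element has unbounded orbits, and with a fixed end but no fixed vertex one checks that the orbits escape along the end. So it suffices to show that property (T) forces some orbit $\Gamma v$ to be bounded. A bounded orbit has a unique center, which is a vertex or the midpoint of an edge, and with no inversions this gives a fixed vertex.

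Step 2 (the cocycle). Let $\vec E$ be the set of oriented edges and let $H=\{f\in\ell^2(\vec E): f(\bar e)=-f(e)\}$, with $\pi$ the unitary representation given by translation. Fix a vertex $v_0$. For a vertex $w$, let $\chi_{[v_0,w]}$ be the function equal to $1$ on the edges of the geodesic from $v_0$ to $w$ oriented towards $w$, equal to $-1$ on their reverses, and $0$ elsewhere. Set $c(g)=\chi_{[v_0,gv_0]}$. The key identity is $\chi_{[u,w]}=\chi_{[u,x]}+\chi_{[x,w]}$ in $H$ for any vertices $u,x,w$. It holds because the backtracking parts cancel by antisymmetry, which is exactly where the tree property (unique geodesics, no cycles) enters. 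This identity gives the cocycle relation $c(gh)=c(g)+\pi(g)c(h)$, and moreover $\norm{c(g)}^2=2\,d(v_0,gv_0)$.

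Step 3 (conclusion). By property (T) and Delorme--Guichardet, $c$ is a coboundary, so $c$ is bounded. Hence $d(v_0,gv_0)$ is bounded over $g\in\Gamma$, and Step 1 yields a fixed vertex. This gives property FA.

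The main obstacle I expect is Step 2: making the identity $\chi_{[u,w]}=\chi_{[u,x]}+\chi_{[x,w]}$ precise. I would prove it by letting $y$ be the median of $u,x,w$. The paths $[u,x]$ and $[x,w]$ then contain the segment $[y,x]$ traversed in opposite directions, and these contributions cancel in $H$. A secondary point is Step 1, where one needs that no fixed vertex forces unbounded orbits. I would avoid this case analysis altogether by using the center of a bounded orbit directly, which is all the argument needs.
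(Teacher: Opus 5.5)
Your argument is correct. It is the classical proof: the oriented-edge cocycle with $\lVert c(g)\rVert^2=2\,d(v_0,gv_0)$, the implication (T) $\Rightarrow$ (FH) of Delorme--Guichardet, and the circumcenter of a bounded orbit. As you already observed, the trichotomy in Step~1 can be dropped.

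The paper itself only cites this theorem. However, its dimension-free replacement (\cref{thm:2ndversion}, via \cref{thm:finite_orbit_main}), specialized to a tree, runs differently:
\begin{enumerate}
\item One takes $c_0(g)=gv_0\in\Delta_0(X)$, so $\partial c_0=1$, and the unique lift $c_1$ of $\delta c_0$ through $\partial$. This lift is precisely your path cocycle.
\item The cocycle is regarded in the $\ell^1$-space $\overline{\Delta}_1(X)$ rather than in $\ell^2$. The input is $H^1(\Gamma,\overline{\Delta}_1(X))=0$, which for groups with (T) is the Bader--Gelander--Monod theorem (\cref{thm:BGM}), not Delorme--Guichardet.
\item Writing $c_1=\delta a_1$, the vector $v=c_0-\partial a_1$ is a $\Gamma$-invariant element of $\overline{\Delta}_0(X)$ with $\partial v=1$, hence non-zero. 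Since $v$ is an $\ell^1$ function on vertices, this gives a finite vertex orbit, and a fixed point follows by taking a circumcenter.
\end{enumerate}

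Your route is more elementary, since it needs only Hilbert-space cohomology. But it relies on the identity between $\lVert c(g)\rVert$ and the displacement of $v_0$, which is special to trees. The paper's route needs the much deeper $L^1$ input, but it uses no metric identity. It is purely homological, and so extends verbatim to $\mathbb{R}$-acyclic complexes of any finite dimension. It also genuinely requires $\ell^1$ rather than $\ell^2$: the step $\partial v=1\Rightarrow v\neq 0$ uses that the augmentation $\overline{\Delta}_0(X)\to\mathbb{R}$ is bounded, which fails on $\ell^2$ of an infinite vertex set.
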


By proving analogs of the above two theorems, we are able to show that $d\geq n-1$.

\begin{theorem} \label{thm:SLn}
Every action of $\SL_n(\mathbb{Z})$ on a contractible $(n-2)$-dimensional simplicial complex has a finite orbit. 
\end{theorem}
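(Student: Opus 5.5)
The proof will follow the pattern of \cref{thm:Kaz67} and \cref{thm:wat}, with property (T) replaced by vanishing of cohomology with $L^1$-type (more precisely $\ell^1$-) coefficients up to degree $n-2$. Let $\Gamma=\SL_n(\mathbb{Z})$, and suppose $\Gamma$ acts simplicially on a contractible simplicial complex $X$ of dimension $d\le n-2$ with all orbits infinite. After passing to the barycentric subdivision we may assume the action is without inversions, so it preserves orientations of simplices. For each $k$, the simplicial chain group $C_k(X)$ with $\ell^1$-norm (or with real coefficients, completed in $\ell^1$) splits as a direct sum over $\Gamma$-orbits of $k$-simplices. Each summand is an induced module $\ell^1(\Gamma/\Gamma_\sigma)$, twisted by an orientation character, where the stabilizer $\Gamma_\sigma$ has infinite index. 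Contractibility gives an exact augmented chain complex $0\to C_d(X)\to\cdots\to C_0(X)\to\mathbb{R}\to 0$. With $\ell^1$-chains one must check that exactness survives completion. For locally infinite $X$ this is delicate, so I would work with the algebraic (finitely supported) chains and later compare to $\ell^1$.

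The key analytic input, which I take to be the main theorem of the paper, is vanishing of $H^k(\Gamma,V)$ for $k<\rank=n-1$, i.e.\ $k\le n-2$. This should hold for every $L^p$-representation ($p\ge1$, including $p=1$) without invariant vectors. The modules $\ell^1(\Gamma/\Gamma_\sigma)$ have no nonzero invariant vectors, because the orbit is infinite and an invariant $\ell^1$ function on an infinite set must vanish. The twisted versions are handled the same way, or avoided by the no-inversion reduction. I would get this vanishing from the Lie group $\SL_n(\mathbb{R})$ by induction: $L^1(G\times_\Gamma \ell^1(\Gamma/\Gamma_\sigma))$ is an $L^1$-representation of $G$, Shapiro's lemma transfers vanishing, and the higher-rank vanishing below the rank is proved via the Howe–Moore/Mautner phenomenon and a spectral sequence over parabolics.

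Given the vanishing, a dimension shift finishes the argument. Write $Z_k$ for the cycle and boundary modules of the resolution. The long exact sequences give $H^0(\Gamma,\mathbb{R})\hookrightarrow H^1(\Gamma,Z_0)\cong\cdots\cong H^{d}(\Gamma,Z_{d-1})$, using $H^j(\Gamma,C_k)=0$ for $j\le d\le n-2$. Since $C_d\to Z_{d-1}$ is an isomorphism (the top boundary is injective because $H_d(X)=0$), $H^d(\Gamma,Z_{d-1})=H^d(\Gamma,C_d)=0$, which contradicts $H^0(\Gamma,\mathbb{R})=\mathbb{R}\neq0$. Concretely, the class of the augmentation lifts step by step to an equivariant $\ell^1$-valued cocycle, and at the top degree it is killed.

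The main obstacle is the functional-analytic bookkeeping. Cohomology here means Banach/continuous cohomology, and we need the boundary maps to be bounded with the exact sequences $0\to Z_k\to C_k\to Z_{k-1}\to0$ topologically exact with closed images. This fails unless $X$ is locally finite or the chain complex has bounded geometry. My first attempt would be to work with the uniformly finite dimension-$d$ skeleton orbitwise: replace $X$ by a $\Gamma$-subcomplex generated by finitely many orbits, using that $\Gamma$ is finitely presented and $\Gamma$-equivariant chains of cycles suffice to detect the obstruction. Failing that, I would treat the sequences algebraically, noting that $\Gamma$ is of type $FP_\infty$ so that the relevant Ext groups commute with the needed direct sums (hence direct sums of $\ell^1(\Gamma/\Gamma_\sigma)$ suffice).
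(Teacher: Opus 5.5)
There is a genuine gap, and it is the step you label ``bookkeeping'' and then leave unresolved. Your dimension shift needs short exact sequences $0\to Z_k\to C_k\to Z_{k-1}\to 0$ in which the \emph{same} modules $C_k$ both form an exact complex and satisfy $H^k(\Gamma,C_k)=0$. The $L^1$-vanishing theorem applies only to the completions $\overline\Delta_k(X)$, while exactness holds only for the finitely supported chains $\Delta_k(X)$.

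The completed complex of a contractible complex need not be exact. Take the simplicial line with $\mathbb{Z}$ acting by translation. The $0$-chain $\sum_{m\ge1}m^{-2}(v_m-v_0)$ lies in $\ker\epsilon\subset\ell^1(\text{vertices})$. Its only candidate primitive has edge coefficients $\sum_{k>m}k^{-2}\approx 1/m$, which is not summable. So $\partial$ has dense, non-closed image there.

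Neither workaround repairs this. A subcomplex spanned by finitely many orbits is in general neither acyclic nor locally finite, since vertex stabilizers can be infinite. Even bounded geometry does not give $\ell^1$-exactness, as the line shows. Working algebraically replaces the coefficients by $\mathbb{R}[\Gamma/\Gamma_\sigma]$, about which the $L^1$ theorem says nothing. Type $FP_\infty$ lets cohomology commute with direct sums over orbits, but it does not let you pass from finitely supported to $\ell^1$ functions on a single orbit. The orbitwise complex $\bigoplus_{\text{orbits}}\ell^1(\Gamma\sigma)$ has the same exactness defect.

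The missing idea, which is the paper's proof of \cref{finite orbit}, is to separate the two roles inside the double complex $C^k(\Gamma,\Delta_j)$.

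First go \emph{up} using only exactness of the algebraic chains. Choose $c_k\in C^k(\Gamma,\Delta_k(X))$ with $\partial c_0=1$ and $\partial c_k=\delta c_{k-1}$; this is your iterated connecting class. Then $\delta c_d=0$, because $\partial\delta c_d=\delta\delta c_{d-1}=0$ and $\partial$ is injective on $\Delta_d(X)$.

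Then come \emph{down} in the completed complex. Here you use only that $\partial$ extends continuously with $\partial\partial=0$, together with $H^k(\Gamma,\overline\Delta_k(X))=0$ for $1\le k\le d$. Descending induction gives $a_k\in C^{k-1}(\Gamma,\overline\Delta_k(X))$ with $\delta a_k=c_k-\partial a_{k+1}$. Then $v=c_0-\partial a_1$ is a $\Gamma$-invariant vector of $\overline\Delta_0(X)$ with $\partial v=1$, hence nonzero. This yields a finite orbit by \cref{non existence of invariants}. Exactness of $\overline\Delta_*(X)$ is never used.

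Separately, your sketch of the black-box vanishing is inaccurate for $\SL_n(\mathbb{Z})$. The lattice is not cocompact, so Shapiro does not give an isomorphism. The paper obtains only surjectivity of the Shapiro restriction, via polynomiality of the lattice and Leuzinger--Young (\cref{cor:shapiro_lemma_for_lattices}).
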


The exact value of $d$ in \cref{ques:d} remains unknown. In particular, it is yet unknown whether $\SL_3(\mathbb{Z})$ acts on a contractible $2$-dimensional simplicial complex with infinite orbits.

Property (T) has the following cohomological consequence, which is actually an equivalent definition.

\begin{theorem}[Bader-Gelander-Monod, \cite{badergelandermonod:BGM:fixedpoint}]
\label{thm:BGM}
    A countable group $\Gamma$ has property (T) if and only if for every measure space $X$, and any linear isometric action of $\Gamma$ on $L^1(X)$,  $H^1(\Gamma,L^1(X))=0$.
\end{theorem}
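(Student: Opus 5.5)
The statement has two directions, and I would treat them separately. The common tool is the relation between affine isometric actions on $L^1$-spaces and on Hilbert spaces. Recall that $H^1(\Gamma,L^1(X))=0$ means that every cocycle $b\colon\Gamma\to L^1(X)$ is a coboundary. Equivalently, every affine isometric action with linear part the given action has a fixed point.

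\emph{(T) $\Rightarrow$ vanishing.} Let $b$ be a cocycle for a linear isometric action $\pi$ of $\Gamma$ on $L^1(X)$, and consider the affine action $\alpha(g)v=\pi(g)v+b(g)$.

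First I would show that this action has bounded orbits. The metric $(f,h)\mapsto\norm{f-h}_1$ on $L^1(X)$ is of negative type. Concretely, $f\mapsto \mathbf 1_{\{(x,t):\,0<t<f(x)\text{ or }f(x)<t<0\}}$ is an isometry of $L^1$ into $L^2(X\times\bbR)$ after squaring distances. Hence $\psi(g)=\norm{b(g)}_1=\norm{\alpha(g)0-0}_1$ is a function conditionally of negative type on $\Gamma$. By Delorme--Guichardet, property (T) forces $\psi$ to be bounded, so the orbit $\alpha(\Gamma)0$ is bounded.

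Next I would pass from a bounded orbit to a fixed point. This is the step I expect to be the main obstacle, because $L^1$ is neither reflexive nor uniformly convex, so the usual circumcenter argument fails. My plan is as follows.
\begin{enumerate}
\item Use that $L^1(X)$ is $L$-embedded: $L^1(X)^{**}=L^1(X)\oplus_1 L_s$, with an $L$-projection $P$ onto $L^1(X)$. The bidual action is by weak$^*$-continuous affine isometries, and $P$ is equivariant because the $L$-decomposition is canonical.
\item The set of Chebyshev centers of the bounded orbit in the bidual is nonempty and weak$^*$-compact. Since $\norm{Pz-x}\le\norm{z-x}$ for $x$ in the orbit, $P$ maps centers to centers. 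Hence the set $C\subset L^1(X)$ of centers of the orbit is nonempty, convex, closed, bounded and $\Gamma$-invariant.
\item I would then argue that $C$ (or a suitable invariant subset of it) is uniformly integrable, hence weakly compact by Dunford--Pettis. Affine isometries act distally in norm, so the Ryll-Nardzewski fixed point theorem yields a fixed point.
\end{enumerate}
Checking the uniform integrability in (3) is where I expect most of the work.

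\emph{Vanishing $\Rightarrow$ (T).} Suppose $\Gamma$ fails (T). By Delorme--Guichardet, $\Gamma$ has an affine isometric action on a real Hilbert space $H$ with unbounded cocycle $c$. I would transport this to $L^1$ via a Crofton-type construction. Let $\mathcal S$ be the space of affine half-spaces of $H$, in a suitably measurable model. One can use a Gaussian realization, or first reduce to separable $H$ using countability of $\Gamma$ and then work with finite-dimensional approximations. Equip $\mathcal S$ with a measure $\mu$ invariant under $\Isom(H)$ such that $\mu\{\text{half-spaces separating }u,v\}=\kappa\norm{u-v}$.

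Then $\Gamma$ acts on $L^1(\mathcal S,\mu)$ by measure-preserving, hence isometric, linear maps. The formula $\beta(g)=\mathbf 1_{\{s:\,c(g)\in s\}}-\mathbf 1_{\{s:\,0\in s\}}$ defines a cocycle into $L^1(\mathcal S,\mu)$. Its norm is $\norm{\beta(g)}_1=\kappa\norm{c(g)}$, which is unbounded. An unbounded cocycle cannot be a coboundary, since coboundaries $g\mapsto \pi(g)v-v$ are bounded. So $H^1(\Gamma,L^1)\ne0$, which contradicts the hypothesis.
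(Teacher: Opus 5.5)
Your overall architecture matches the cited result. The paper gives no proof of its own; it quotes the fixed-point ingredient as \cref{BGM}. The shape is: bounded orbits from the negative type of the $L^1$-metric plus Delorme--Guichardet, then a fixed point theorem for $L$-embedded spaces, and conversely a transfer of an unbounded Hilbert cocycle into $L^1$. However, the forward direction has a genuine gap at step (3), and that step is the crux. You never show that the set $C$ of centers is weakly compact, and the route you propose is left open. That route would not even suffice as stated: on a general measure space, relative weak compactness in $L^1$ needs, besides uniform integrability, that the norms be uniformly small off sets of finite measure. You also give no argument for uniform integrability itself.

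The missing observation is that the $L$-decomposition gives an \emph{equality}, not merely contractivity of $P$. For $z\in L^1(X)^{**}$ and $x\in L^1(X)$ one has $\norm{z-x}=\norm{Pz-x}+\norm{z-Pz}$, because $Pz-x\in L^1(X)$ and $z-Pz\in L^1(X)_s$. Let $z$ be a Chebyshev center of the orbit $B$ in the bidual, with bidual radius $r$. Then $\sup_{x\in B}\norm{Pz-x}=r-\norm{z-Pz}$. Minimality of $r$ over the bidual forces $\sup_{x\in B}\norm{Pz-x}\geq r$, so $z=Pz$. Thus every bidual center already lies in $L^1(X)$, and your $C$ equals the weak$^*$-compact set of bidual centers. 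Since the weak$^*$ topology of the bidual restricts to the weak topology on $L^1(X)$, $C$ is weakly compact. Ryll-Nardzewski then applies with no integrability estimate; alternatively, apply \cref{BGM} directly to $B=\alpha(\Gamma)0$.

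Two smaller remarks. First, your explicit negative-type embedding handles only real-valued functions. For complex $L^1(X)$, use $\int_0^{2\pi}\abs{\mathrm{Re}(e^{-i\theta}w)}\,d\theta=4\abs{w}$ to pass to a real $L^1$-space. Second, in the converse the half-space measure is more than you need. The Gaussian map $v\mapsto X_v$, with $\norm{X_v}_1=\sqrt{2/\pi}\,\norm{v}$, is linear and equivariant for the Gaussian measure-preserving action. So $g\mapsto X_{c(g)}$ is already an unbounded $L^1$-cocycle. The Gaussian model is also the cleanest way to actually build your half-space measure in infinite dimensions.
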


The following is our generalization of \cref{thm:Kaz67}.

\begin{theorem} \label{thm:1stversion}
    Let $\Gamma=\SL_n(\mathbb{Z})$. For every measure space $X$ and every linear isometric action of $\Gamma$ on $L^1(X)$ without non-trivial invariant vectors, we have
    \[
    H^i(\Gamma,L^1(X))=0 \qquad\text{for }0\leq i\leq n-2.
    \]
\end{theorem}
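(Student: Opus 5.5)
The plan is to pass from $\Gamma=\SL_n(\mathbb{Z})$ to the ambient group $G=\SL_n(\mathbb{R})$, which has real rank $n-1$, and prove the vanishing there.

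\emph{Step 1: induction.} Since $\Gamma$ is a lattice in $G$, Shapiro's lemma gives
\[
H^i(\Gamma,L^1(X))\cong H^i_c(G,\operatorname{Ind}_\Gamma^G L^1(X)).
\]
Here I take the $L^1$-induction, so that $\operatorname{Ind}_\Gamma^G L^1(X)\cong L^1(G/\Gamma\times X)$ with an isometric $G$-action; it is again an $L^1$-space. Because $\Gamma$ is not cocompact, Shapiro's lemma for $L^p$-induction needs care. I will use that $\SL_n(\mathbb{Z})$ is $L^p$-integrable: by Shalom's integrability of lattices in higher rank, the cocycle $G\times G/\Gamma\to\Gamma$ satisfies the required moment bounds. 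This makes the induction comparison valid in every degree. The absence of $\Gamma$-invariant vectors in $L^1(X)$ passes to the absence of $G$-invariant vectors in the induced module. Invariants in $L^1(G/\Gamma\times X)$ are functions on $X$ that are $\Gamma$-invariant and integrable, so they vanish.

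\emph{Step 2: vanishing for $G$ below the rank.} I would prove that for every isometric $G$-action on an $L^1$-space without invariant vectors, $H^i_c(G,V)=0$ for $i<\rank_\bbR G=n-1$. The strategy mirrors the Hilbert case (Borel--Wallach, Garland-type arguments).
\begin{enumerate}
\item Use Lamperti's description of $L^p$-isometries (here $p=1$). The action comes from a measure-class-preserving action on a measure space twisted by a cocycle. This lets me use Howe--Moore or Mautner phenomena: vectors invariant under a non-compact simple factor of a Levi subgroup are $G$-invariant.
\item Compute $H^*_c(G,V)$ by restricting to a minimal parabolic $P=MAN$ and using a Hochschild--Serre spectral sequence for $N\triangleleft P$ and then for $A$.
\item Alternatively, compute it via a Bruhat--Tits-like filtration of the Furstenberg boundary.
\end{enumerate}
The key input is a uniform spectral gap for the $A$-action on $V$ along each of the $n-1$ simple roots. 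This should give a Koszul-type vanishing: each independent direction of $A\cong\bbR^{n-1}$ acting without almost-invariant vectors shifts cohomology up by one degree. I expect $H^i=0$ for $i<n-1$.

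\emph{Step 3: spectral gap on $L^1$.} Property (T) fails to transfer directly to $L^1$, so I would first prove the vanishing for $L^p$ with $1<p<\infty$. There, uniform convexity and Bader--Furman--Gelander--Monod-type arguments give spectral gap for Levi factors. I would then pass to $p=1$ by a Mazur-map or interpolation argument, or by working with $L^1$ directly through the associated measure-class-preserving action and using ergodicity.

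I expect Step 2, and specifically the non-Hilbert degree-by-degree vanishing, to be the main obstacle. Lacking unitarity, one cannot use Casimir or harmonic-form methods. I would first try an inductive argument over parabolic subgroups: reduce degree $i$ vanishing for $G$ to degree $i-1$ vanishing for Levi factors of rank one less, with a homotopy constructed by averaging over expanding elements of $A$. Controlling norms uniformly in $L^1$ is delicate, so I would aim for a quantitative contraction estimate in the bounded-cohomology style. The non-reduced versus reduced cohomology issue must also be handled, via a closed-range statement coming from the spectral gap.
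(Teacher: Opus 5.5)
There is a genuine gap, and it is in Step~2, where the main content of the theorem lies.

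\textbf{Step 2.} The key input you propose is a uniform spectral gap for one-parameter subgroups of $A\cong\bbR^{n-1}$. This is false for modules the theorem must cover. $A$ is amenable. Take $X=\Gamma$ with the left regular action on $\ell^1(\Gamma)$; it has no invariant vectors since $\Gamma$ is infinite. The induced module is then $L^1(G)$ with left translations, and Reiter functions on $A$ convolved with a fixed positive $\psi\in L^1(G)$ are almost $A$-invariant.

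Restricting to $P$ also does not compute $H^*(G,V)$. Since $G/P$ carries no $G$-invariant measure, the splitting behind \cref{cor:restriction_is_injective_cocolattice} is unavailable, and restriction genuinely fails to be injective. Already for the upper triangular $P<\SL_2(\bbR)$ one has $H^2_c(\SL_2(\bbR),\bbR)\cong\bbR$ but $H^2_c(P,\bbR)=0$.

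Finally, the Koszul heuristic cannot produce the threshold $n-1$. A single direction of $A$ acting without almost invariant vectors already kills $H^*(A,V)$ in every degree.

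In the paper the rank enters differently:
\begin{itemize}
\item One inducts on the rank through Levi subgroups $L$, carrying Hausdorffness at the rank along in the induction (\cref{prop:mainGhausdorff}, via ultrapowers on a cocompact lattice).
\item Spectral gap is used only for the non-compact simple factors of the semisimple part $S$ of $L$ (Oh's bounds, moved to $L^p$ by the Mazur map).
\item For the non-compact centre $Z(L)$ one only has $V^{Z(L)}=0$. The fixed point theorem for $L$-embedded spaces (\cref{BGM}, applied in \cref{action of C on cohomology of N} to a cocompact lattice of $S$) turns this into $H^i(L,V)=0$ for all $i\le\ssrank(L)$, one degree beyond what $S$ gives.
\item These are assembled via the opposition complex. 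By von Heydebreck it is spherical of dimension $\rank(G)-1$, and its $k$-simplices form finitely many orbits $G/L$ with $\ssrank(L)=\rank(G)-1-k$. A measurable Shapiro lemma identifies $\Ext_{\bbZ G}^*(\bbZ[G/L],V)$ with $H^*(L,V)$ (\cref{prop:mainGinductionstep}).
\end{itemize}
Your closing idea of inducting over Levi factors points this way. However, neither the centre/fixed-point mechanism nor a resolution of $\bbZ$ by the $\bbZ[G/L]$ appears in your plan, and averaging over expanding elements of $A$ does not substitute for either.

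\textbf{Step 1.} Integrability of the lattice only makes the Shapiro map on polynomial cochains well defined. It does not make $H^i(G,I^1V)\to H^i(\Gamma,V)$ an isomorphism in all degrees for non-uniform $\Gamma$. In degree one it suffices only because $1$-cocycles of finitely generated groups automatically have linear growth. The paper obtains only surjectivity for $i<\rank(G)$, by factoring the comparison map $H^i_{\pol}(\Gamma,V)\to H^i(\Gamma,V)$ through the Shapiro restriction. That comparison map is an isomorphism below the rank by Leuzinger--Young's polynomial filling bounds (\cref{cor:shapiro_lemma_for_lattices}). Surjectivity suffices for you, but you must supply this input.

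\textbf{Step 3.} The Mazur map is a nonlinear map between unit spheres. It transports almost invariant vectors, not cocycles or coboundaries, so it cannot carry cohomological vanishing from $p>1$ to $p=1$. Nor is there an interpolation principle for vanishing of cohomology. The paper needs no such transfer: its argument is uniform in $1\le p<\infty$, using only that $L^p$-spaces are $L$-embedded (true for $L^1$) and closed under ultrapowers. The Mazur map is used only for the spectral-gap hypotheses. Also, property (T) does pass to $L^1$-coefficients in degree one, by \cref{thm:BGM}.
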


The following is our generalization of \cref{thm:wat}.

\begin{theorem} \label{thm:2ndversion}
    Let $\Gamma$ be a group and let $d\geq0$. If $H^i(\Gamma,V)=0$ for $0\leq i\leq d$ and every $L^1$-space $V$ on which $\Gamma$ acts by linear isometries without non-trivial invariant vectors, then 
    every simplicial action of $\Gamma$ on a contractible $d$-dimensional simplicial complex has a finite orbit.
\end{theorem}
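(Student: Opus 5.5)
The plan is to argue by contradiction via dimension shifting along a chain complex of $L^1$-modules built from the simplicial complex. Suppose $\Gamma$ acts simplicially on a contractible $d$-dimensional simplicial complex $Y$ with all orbits infinite. Let $Y_i$ be the set of $i$-simplices. Fix an orientation of each $i$-simplex, so that $\Gamma$ acts on $C_i\defq\ell^1(Y_i)$ by signed permutations of the basis. These are linear isometries, so each $C_i$ is an $L^1$-space with an isometric $\Gamma$-action. Since every orbit in $Y_i$ is infinite, $C_i$ has no non-trivial invariant vectors: an invariant $\ell^1$-function has constant absolute value on each orbit, hence vanishes there. By hypothesis, $H^j(\Gamma,C_i)=0$ for $0\le j\le d$ and all $i$; here $H^*$ is ordinary group cohomology of the discrete group $\Gamma$, so long exact sequences are available for arbitrary short exact sequences of $\Gamma$-modules. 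The boundary maps $\partial_i\colon C_i\to C_{i-1}$ and the augmentation $\varepsilon\colon C_0\to\bbR$, $f\mapsto\sum_v f(v)$, are bounded and $\Gamma$-equivariant.

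Next I split the augmented complex $0\to C_d\to\cdots\to C_0\to\bbR\to0$ into short exact sequences, with $Z_i=\ker\partial_i$ (and $Z_0=\ker\varepsilon$) and $B_{i-1}=\partial_i(C_i)$. The sequence $0\to Z_0\to C_0\to\bbR\to0$ and $H^0(\Gamma,C_0)=0$ give that $\bbR=H^0(\Gamma,\bbR)$ injects into $H^1(\Gamma,Z_0)$. If $Z_{k}=B_{k}$ for $0\le k\le d-1$, then for each such $k$ the sequence $0\to Z_{k+1}\to C_{k+1}\to B_{k}\to 0$ and the vanishing of $H^{k+1}(\Gamma,C_{k+1})$ make the connecting map $H^{k+1}(\Gamma,B_k)\to H^{k+2}(\Gamma,Z_{k+1})$ injective. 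Iterating, we get a non-zero class in $H^{d}(\Gamma,B_{d-1})$. If moreover $Z_d=0$, then $\partial_d\colon C_d\to B_{d-1}$ is an isomorphism of $\Gamma$-modules. Hence $H^d(\Gamma,B_{d-1})\cong H^d(\Gamma,C_d)=0$, a contradiction. This uses the vanishing exactly in degrees $0,\dots,d$, which matches the statement.

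The main obstacle is the exactness input: $Z_k=B_k$ for $k<d$ and $Z_d=0$ in the $\ell^1$ setting. Contractibility gives these only for finitely supported chains. In $\ell^1$ they can fail; already for a tree, a summable $0$-chain of total mass zero need not be the boundary of a summable $1$-chain, because the required fillings have unbounded length. My first attempt would be to keep the finitely supported complex $C_i^{\mathrm{fin}}$, which is exact by contractibility, and to replace each $C_i$ by an $L^1$-space in which this exactness survives. One option is an $\ell^1$-space with orbitwise constant weights, which still carries an isometric action. Another is an $L^1$-space of chains over a larger $\Gamma$-set, for instance pairs of a simplex together with a chosen filling, on which an equivariant bounded contracting homotopy can be defined. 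Failing that, I would run the dimension-shifting argument with the modules $Z_k$ and $B_k$ taken inside the finitely supported complex. I would then show that the relevant cohomology classes survive under the inclusions $C_i^{\mathrm{fin}}\hookrightarrow\ell^1(Y_i)$, using that the inclusion $\bbR\to\bbR$ on the augmentation level is the identity. Controlling these comparison maps is where I expect the real work to lie.
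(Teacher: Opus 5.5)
Your main line has a genuine gap, and it is where you suspect. The dimension shift needs $\ker\partial_k=\partial_{k+1}\bigl(\ell^1(Y_{k+1})\bigr)$ for $k<d$ and $\ker\partial_d=0$ on $\ell^1$-chains. Contractibility gives neither; your tree example already breaks the first at $k=0$. Your first two repairs do not close the gap:
\begin{itemize}
\item When $\Gamma$ has finitely many orbits of simplices, orbitwise constant weights give a norm equivalent to the $\ell^1$-norm, so nothing changes.
\item An equivariant contracting homotopy $s$ of an augmented complex would make $s(1)$ a $\Gamma$-invariant vector of augmentation $1$ in the degree-$0$ module. The hypothesis then cannot be applied to that module, so this construction defeats itself.
\end{itemize}

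The missing idea is that you never need exactness in $\ell^1$. Your third option works, and the comparison costs nothing beyond naturality of connecting homomorphisms.
\begin{itemize}
\item Let $C^{\mathrm{fin}}_k$ be the finitely supported chains. Set $B^{\mathrm{fin}}_k=\ker\partial_k\cap C^{\mathrm{fin}}_k=\partial_{k+1}C^{\mathrm{fin}}_{k+1}$, by exactness of the finitely supported complex, with $\partial_0=\varepsilon$, $B^{\mathrm{fin}}_{-1}=\bbR$ and $B^{\mathrm{fin}}_d=0$.
\item On the $\ell^1$ side, set $\widehat Z_k=\ker\partial_k$ and $\widehat B_{k-1}=\partial_k\bigl(\ell^1(Y_k)\bigr)$, with $\widehat B_{-1}=\bbR$.
\item Inclusion maps $0\to B^{\mathrm{fin}}_k\to C^{\mathrm{fin}}_k\to B^{\mathrm{fin}}_{k-1}\to 0$ to $0\to\widehat Z_k\to\ell^1(Y_k)\to\widehat B_{k-1}\to 0$, and $B^{\mathrm{fin}}_k\to\widehat Z_k$ factors through $\widehat B_k$.
\item Let $\theta_{-1}=1$. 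Let $\theta_k\in H^{k+1}(\Gamma,B^{\mathrm{fin}}_k)$ be the connecting image of $\theta_{k-1}$, and let $\eta_k$ be its image in $H^{k+1}(\Gamma,\widehat B_k)$.
\item By naturality, the $\ell^1$-connecting map sends $\eta_{k-1}$ to the image of $\eta_k$ in $H^{k+1}(\Gamma,\widehat Z_k)$. Since $H^k(\Gamma,\ell^1(Y_k))=0$, this connecting map is injective. Hence $\eta_{-1}\neq 0$ propagates to $\eta_{d-1}\neq 0$, using degrees $0,\dots,d-1$.
\item But $\partial_d\colon C^{\mathrm{fin}}_d\to B^{\mathrm{fin}}_{d-1}$ is an isomorphism. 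So the inclusion $B^{\mathrm{fin}}_{d-1}\to\widehat B_{d-1}$ factors through $\ell^1(Y_d)$, and $\eta_{d-1}$ lies in the image of $H^d(\Gamma,\ell^1(Y_d))=0$. This is a contradiction.
\end{itemize}

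This is the paper's proof in long-exact-sequence form; the paper works at the cochain level.
\begin{itemize}
\item It builds finitely supported $c_k\in C^k(\Gamma,\Delta_k)$ with $\partial c_0=1$ and $\partial c_k=\delta c_{k-1}$, so $\delta c_d=0$.
\item By descending induction using $H^k(\Gamma,\overline\Delta_k)=0$, it finds $a_k$ with $\delta a_k=c_k-\partial a_{k+1}$.
\item Then $v=c_0-\partial a_1$ is a $\Gamma$-invariant vector of $\overline\Delta_0$ with augmentation $1$, which gives the finite orbit.
\end{itemize}
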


We will elaborate on and extend \cref{thm:1stversion} below.
\cref{thm:2ndversion} will be proved in \cref{sec:actions}, see the more concrete \cref{thm:finite_orbit_main}.
Together they clearly imply \cref{thm:SLn}.

The above is our main narrative, spelled out for $\SL_n(\mathbb{Z})$. Next we expand, explain and put it in context.
We start by recalling a classical cohomological identification of property~(T).
Throughout the paper, we abbreviate \emph{locally compact second countable} to \emph{lcsc}.

\begin{theorem}[Delorme-Guichardet, \cite{delorme:cohomologie} and \cite{guichardet:cohomologie}] \label{thm:DG}
   An lcsc group $G$ satisfies property (T) if and only if $H^1(G,V)=0$ for every unitary representation~$V$ of~$G$.  
\end{theorem}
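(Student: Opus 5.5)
This is the classical Delorme–Guichardet theorem. The plan is to prove the two implications separately, using the standard dictionary between $1$-cocycles and affine isometric actions: a class in $H^1(G,V)$ is represented by a continuous cocycle $b\colon G\to V$, and $g\cdot v=\pi(g)v+b(g)$ is a continuous affine isometric action. The cocycle is a coboundary exactly when this action has a fixed point, which is the same as having a bounded orbit (Chebyshev center in a Hilbert space). So $H^1(G,V)=0$ for all unitary $V$ is equivalent to property FH.

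\emph{(T) $\Rightarrow$ vanishing (Delorme).} Let $(K,\epsilon)$ be a Kazhdan pair with $K$ compact. Suppose $b$ is an unbounded cocycle. We use Schoenberg's construction. The function $\psi(g)=\norm{b(g)}^2$ is conditionally negative definite, so $\varphi_t=e^{-t\psi}$ is positive definite for every $t>0$. By GNS, $\varphi_t$ gives a unitary representation $\pi_t$ with a cyclic unit vector $\xi_t$ satisfying $\langle\pi_t(g)\xi_t,\xi_t\rangle=\varphi_t(g)$. As $t\to0$, $\varphi_t\to1$ uniformly on $K$, since $\psi$ is continuous and hence bounded on $K$. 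Thus $\xi_t$ is $(K,\epsilon)$-invariant for small $t$. Property (T), in its quantitative form, then gives an invariant vector $\eta_t$ with $\norm{\xi_t-\eta_t}\le\delta$, where $\delta$ can be made small. Consequently $\varphi_t(g)=\langle\pi_t(g)\xi_t,\xi_t\rangle$ is bounded below by $1-2\delta>0$ uniformly in $g$. This forces $\psi$ to be bounded, a contradiction. The one point to check carefully is the quantitative form of (T): an $(K,\epsilon\delta)$-invariant unit vector lies within $\delta$ of the invariant vectors. This follows by splitting the vector along the invariant subspace and its orthogonal complement.

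\emph{Vanishing $\Rightarrow$ (T) (Guichardet).} This is the main obstacle. Suppose $G$ fails (T). Then there is a unitary representation $V$ with no invariant vectors but with almost invariant vectors; replacing $V$ by $V\ominus V^G$, we may assume $V^G=0$. Fix an exhausting increasing sequence of compact sets $K_n$ (possible since $G$ is lcsc). Pick unit vectors $\xi_n\in V$ that are $(K_n,4^{-n})$-invariant. Form $W=\bigoplus_n V$ and define
\[ b(g)=\bigl(2^n(\pi(g)\xi_n-\xi_n)\bigr)_n. \]
This series converges uniformly on compacta, so $b$ is a continuous cocycle into $W$. It is unbounded, since $V^G=0$ implies $\sup_g\norm{\pi(g)\xi_n-\xi_n}\ge$ a fixed positive constant; indeed, if a unit vector had orbit within distance $<1$ of itself, the circumcenter of its orbit would be a nonzero invariant vector. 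Hence $b$ is not a coboundary and $H^1(G,W)\neq0$.

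A technical point remains: the hypothesis is stated for all unitary $V$, while some formulations require $H^1$ with non-Hausdorff issues. This is handled by the boundedness criterion above, which avoids the reduced/unreduced distinction.
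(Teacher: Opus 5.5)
The paper does not prove this statement: it quotes it as a classical theorem of Delorme and Guichardet and uses it only as background, so there is no internal argument to compare yours with. Your proposal is the standard textbook proof (as in Bekka--de la Harpe--Valette), and it is correct. For (T) $\Rightarrow$ $H^1=0$ it uses Schoenberg's theorem, the GNS construction and the quantitative form of (T). For the converse it uses the cocycle $b(g)=\bigl(2^n(\pi(g)\xi_n-\xi_n)\bigr)_n$ in $\bigoplus_n V$.

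Two small details are worth making explicit.
\begin{enumerate}
\item Choose the exhaustion so that $K_n\subset\operatorname{int}K_{n+1}$. Then every compact set lies in some $K_n$, which is what gives uniform convergence on compacta and hence continuity of $b$. An exhaustion in the weaker sense $\bigcup_n K_n=G$ does not guarantee this.
\item The circumcenter $c$ of the orbit of $\xi_n$ is nonzero because $\xi_n$ itself lies in the orbit. Hence $\|c-\xi_n\|\le\sup_g\|\xi_n-\pi(g)\xi_n\|<1=\|\xi_n\|$, which forces $c\neq 0$.
\end{enumerate}
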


This theorem has seen many generalizations.
Preceding \cref{thm:BGM}, there was the following.

\begin{theorem}[Bader-Furman-Gelander-Monod, \cite{baderfurmangelandermonod:BFGM:property}]
    An lcsc group $G$ has property (T) if and only if $H^1(G,L^p(X))=0$ for every measure space $X$, every $1<p \leq 2$ and every linear isometric action of $G$ on $L^p(X)$.
\end{theorem}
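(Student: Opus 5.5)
The plan is to prove the two directions separately. Both directions go through \cref{thm:DG}, which identifies property (T) with the vanishing of $H^1$ for unitary coefficients. Throughout I use the standard dictionary: a continuous $1$-cocycle $b\colon G\to V$ for a linear isometric representation $\pi$ on $V$ is the same as the affine isometric action $\alpha_g(v)=\pi(g)v+b(g)$. Moreover, $b$ is a coboundary if and only if $\alpha$ has a fixed point.

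\emph{($\Leftarrow$).} Take $p=2$. Every separable Hilbert space is isometrically isomorphic to some $L^2(X)$; use counting measure on a countable set if needed, and real or complex scalars as appropriate. Under this isomorphism, a unitary (respectively orthogonal) representation becomes a linear isometric action on $L^2(X)$. Second countability of $G$ lets us reduce to separable Hilbert spaces when testing \cref{thm:DG}, since a cocycle takes values in a separable invariant subspace. So the hypothesis gives $H^1(G,V)=0$ for all unitary $V$, and \cref{thm:DG} yields property (T).

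\emph{($\Rightarrow$).} Assume $G$ has (T). Fix $1<p\le 2$, a linear isometric action $\pi$ on $L^p(X)$, and a continuous cocycle $b$ with associated affine action $\alpha$.
\begin{enumerate}
\item The key input is the classical fact that, for $0<p\le 2$, the kernel $\psi(f,h)=\norm{f-h}_p^p$ on $L^p(X)$ is conditionally negative definite. This comes from the fact that $t\mapsto |s-t|^p$ is conditionally negative definite on $\bbR$ (or $\mathbb{C}$), via Lévy/stable distributions, integrated over $X$.
\item The kernel $\psi$ is invariant under the isometric action $\alpha$. Hence the GNS-type construction for conditionally negative definite kernels gives three things: a real Hilbert space $\mathcal{H}$, a map $\Phi\colon L^p(X)\to\mathcal{H}$ with $\norm{\Phi(f)-\Phi(h)}^2=\norm{f-h}_p^p$, and an affine isometric action of $G$ on $\mathcal{H}$ for which $\Phi$ is equivariant.
\item By (T) and \cref{thm:DG}, every affine isometric action of $G$ on a Hilbert space has a fixed point, so all its orbits are bounded. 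Since $\Phi$ is equivariant and $\norm{f-h}_p^p=\norm{\Phi(f)-\Phi(h)}^2$, the $\alpha$-orbit of $0$ is bounded in $L^p(X)$, i.e.\ $b$ is bounded.
\item Since $1<p<\infty$, the space $L^p(X)$ is uniformly convex, so the bounded orbit has a unique Chebyshev center. Because $\alpha$ preserves the orbit, the center is $\alpha$-fixed. Thus $b$ is a coboundary and $H^1(G,L^p(X))=0$.
\end{enumerate}

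The main obstacle is step (2): building an honest affine isometric action on $\mathcal{H}$ out of the invariant kernel. The construction must be arranged so that continuity of the $G$-action on $\mathcal{H}$ is inherited from continuity of $b$; this follows from $\norm{\Phi(\alpha_g f)-\Phi(f)}^2=\norm{\alpha_g f-f}_p^p$. The conditional negative definiteness in step (1) I would quote as a standard result rather than reprove. Steps (3) and (4) are routine.
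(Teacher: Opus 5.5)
The paper gives no proof of this statement and only cites \cite{baderfurmangelandermonod:BFGM:property}. Your argument is correct and is essentially the standard one from that source: the kernel $\norm{f-h}_p^p$ is conditionally of negative type for $p\le 2$, so property (T) (via \cref{thm:DG}) bounds the cocycle, uniform convexity of $L^p$ for $p>1$ then yields a fixed point, and the converse needs only $p=2$. Two small points to tidy: the GNS Hilbert space is real, so you should complexify it before invoking \cref{thm:DG} and then take the real part of the resulting vector; and no separability reduction is needed in the converse, since every Hilbert space is $L^2$ of a counting measure.
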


The analogous theorem for $p>2$ does not hold for general groups with property (T), but it does for lattices in simple groups of higher rank. 

\begin{theorem}[Bader-Furman-Gelander-Monod, \cite{baderfurmangelandermonod:BFGM:property} and \cite{badergelandermonod:BGM:fixedpoint}] \label{thm:H1Lp}
    Let $\Gamma$ be a lattice in a simple group of rank at least 2.
    Then $H^1(\Gamma,L^p(X))=0$ for every measure space $X$, every $1 \leq p < \infty$ and every linear isometric action of $\Gamma$ on $L^p(X)$.
\end{theorem}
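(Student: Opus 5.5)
The plan is to treat the ranges $1<p<\infty$ and $p=1$ separately. In both cases I would pass from $\Gamma$ to the ambient group $G$ by induction, since the vanishing of $H^1$ for lattices in higher rank is proved this way.

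\textbf{Step 1: induction.} Let $\Gamma<G$ be a lattice in a simple group of rank at least $2$, with an isometric action on $V=L^p(X)$. Form the induced module $\mathrm{Ind}_\Gamma^G V = L^p(G/\Gamma, V)$, using a measurable section $G/\Gamma\to G$ or a strict fundamental domain. This space is again an $L^p$-space, namely $L^p(G/\Gamma\times X)$, with an isometric $G$-action; here $\Gamma$ acts on $V$ by linear isometries, which by Banach--Lamperti are of the form of a measure class preserving action twisted by a cocycle, so the induced action is again of this type. Shapiro's lemma for continuous cohomology with Banach coefficients gives $H^1(\Gamma,V)\cong H^1_c(G,\mathrm{Ind}V)$. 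For $H^1$ this holds for arbitrary lattices, not only cocompact ones: one can work with bounded cocycles via an integrable cocycle, or use the affine-action formulation and Monod--Shalom induction, provided $\Gamma$ is $L^p$-integrable. That is automatic for higher rank lattices by Shalom's integrability result. Alternatively, I would use the affine version directly: an affine isometric action of $\Gamma$ on $V$ induces one of $G$ on $L^p(G/\Gamma,V)$, and a $G$-fixed point yields a $\Gamma$-fixed point.

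\textbf{Step 2: vanishing for $G$.} It suffices to show that every continuous affine isometric $G$-action on an $L^p$-space has a fixed point, for $1\le p<\infty$. For $1<p<\infty$, $L^p$ is uniformly convex. I would use the BFGM argument with Mautner phenomenon and Howe--Moore type properties. Pick a Cartan subgroup $A\cong\bbR^r$ with $r\ge2$ and a root subgroup $U$ commuting with a singular one-parameter subgroup. By the Howe--Moore property for isometric representations on uniformly convex (super-reflexive) spaces, the linear part has no almost invariant vectors modulo invariants. Splitting off invariants, the affine action restricted to a subgroup $Z(U)\times U'$ acts on an $L^p$ space. A cocycle argument then shows that the orbit of $U$ is bounded: commuting subgroups, one of which acts without almost invariant vectors, force displacement to be controlled. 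Thus $U$ has a fixed point, and generation of $G$ by such unipotents, together with Mautner, gives a global fixed point.

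\textbf{Step 3: the case $p=1$.} Here I would follow BGM. Over an $L^1$-space, affine isometric actions correspond, via the median structure or via $L^1\hookrightarrow L^p$ powers (the Mazur map is a uniformly continuous homeomorphism of unit spheres, though not of affine structure), to actions on spaces with walls. Such an action with unbounded orbits yields an unbounded action on a median space, hence a failure of property (T) for $\Gamma$ itself. This contradicts Kazhdan's \cref{thm:Kaz67}-type result for higher rank, combined with \cref{thm:BGM}. In fact \cref{thm:BGM} directly gives $H^1(\Gamma,L^1(X))=0$ once $\Gamma$ has (T), which holds for lattices in higher rank simple groups. So the $p=1$ case reduces immediately to property (T).

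\textbf{Main obstacle.} The main obstacle is Step 2 for $p>2$: property (T) alone does not suffice there, and the required Howe--Moore/Mautner analysis for non-Hilbertian uniformly convex spaces, including the decomposition into invariants plus a complement in $L^p$, is the hardest part. A secondary technical point is the integrability needed for induction from non-uniform lattices.
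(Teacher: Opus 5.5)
Your decomposition is the one behind the paper's attribution; the paper gives no proof of this theorem and only cites it. For $1<p<\infty$ you use the higher-rank fixed point theorem of Bader--Furman--Gelander--Monod for $G$, plus affine induction to the $p$-integrable lattice $\Gamma$. For $p=1$ you use property (T) of $\Gamma$ together with \cref{thm:BGM}.

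Step~3 is correct in its final form. The median/walls discussion only gives bounded orbits; it is the $L$-embedded fixed point theorem, \cref{BGM}, that produces a fixed point.

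In Step~1, drop the claim $H^1(\Gamma,V)\cong H^1_c(G,\mathrm{Ind}\,V)$ for all lattices. For non-uniform lattices even the paper only gets surjectivity of the Shapiro restriction (\cref{cor:shapiro_lemma_for_lattices}). You only need ``$G$-fixed point $\Rightarrow$ $\Gamma$-fixed point'', which integrable induction does give.

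The genuine gap is Step~2, read as an argument rather than as a citation. For commuting $C,U$ the cocycle identity gives $\rho(c)b(u)-b(u)=\rho(u)b(c)-b(c)$. Hence $\max_{c\in K}\|\rho(c)b(u)-b(u)\|\le 2\max_{c\in K}\|b(c)\|$ for compact $K\subset C$. This bounds $b|_U$ only if $\rho|_C$ has no \emph{almost} invariant vectors off $V^G$.
\begin{itemize}
\item Howe--Moore/Mautner does not provide this; it only says non-compact subgroups have no non-zero invariant vectors.
\item The spectral gap of $G$ itself is property (T) in $L^p$-form, not Howe--Moore.
\item The subgroups you name are amenable: root groups, singular one-parameter subgroups, and centralizers of root groups (in $\SL_3$ these are all solvable). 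For amenable $C$ the needed spectral gap is false. $V=L^p(G)$ has no $G$-invariant vectors, yet its restriction to any amenable closed subgroup has almost invariant vectors by Reiter's condition.
\end{itemize}
So the boundedness of the $U$-orbit does not follow.

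The missing idea is a \emph{non-amenable} subgroup with spectral gap that commutes with a non-compact subgroup, together with a proof of that spectral gap. Take a singular torus $D$ and the semisimple part $S$ of its centralizer; in $\SL_3$, take $D=\{\mathrm{diag}(t,t,t^{-2})\}$ and the block $\SL_2$. That $S$ has no almost invariant vectors when $V^G=0$ is a real input. Property (T) gives a spectral gap for $G$. Passing it down to $S$ needs uniform decay: Oh's bounds and Gorfine's theorem in the unitary case, transferred to $L^p$ via Banach--Lamperti and the Mazur map. This is exactly \cref{lem:levilegitunitary} and \cref{lem:levilegit}.

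With $C=S$ your estimate then works:
\begin{itemize}
\item it bounds $b|_D$, and uniform convexity gives a $D$-fixed point;
\item after normalizing $b|_D=0$, the identity $b(d^nud^{-n})=\rho(d^n)b(u)$ forces $b=0$ on the horospherical subgroups contracted by $d^{\pm1}$;
\item these generate $G=G^+$ in the simply connected case.
\end{itemize}

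The paper's higher-degree version of this mechanism (\cref{action of C on cohomology of N}, \cref{product of two groups}) uses the opposite orientation. The central torus $Z(L)$ only needs $V^{Z(L)}=0$. The spectral gap of $S$ enters through Hausdorffness of the cohomology of a lattice in $S$. The $L$-embedded fixed point theorem then makes it work for $p=1$ as well.
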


The cohomological definition of property (T) due to Delorme-Guichardet, \cref{thm:DG}, was extended to higher degrees by two of us. 

\begin{definition}[\cite{badersauer:higherkazhdanproperty}]
      An lcsc $G$ has property $(T_n)$ if for every unitary $G$-representation~$V$ without non-trivial invariant vectors, we have $H^i(G,V)=0$ for every $i\leq n$.  
\end{definition}

In the same paper, the following counterpart of Kazhdan's original result was proved.

\begin{theorem}[\cite{badersauer:higherkazhdanproperty}] \label{thm:BShigher}
    Let $G$ be a connected simple Lie group of rank $r$ and let $\Gamma<G$ be a lattice.
    Then $G$ and $\Gamma$ satisfy property $(T_{r-1})$.
\end{theorem}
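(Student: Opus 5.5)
The plan is to prove the statement for $G$ first and then transfer it to $\Gamma$ by induction. For the lattice, if $V$ is a unitary $\Gamma$-representation without invariant vectors, Shapiro's lemma gives $H^i(\Gamma,V)\cong H^i(G,\mathrm{Ind}_\Gamma^G V)$, where $\mathrm{Ind}_\Gamma^G V=L^2(G/\Gamma,V)$ is again unitary because $\Gamma$ has finite covolume. The induced module may have $G$-invariant vectors, namely the constant functions with values in $V^\Gamma$. Since $V^\Gamma=0$, the $G$-invariants vanish, so the statement for $\Gamma$ follows from the statement for $G$. The vanishing of $H^0$ is tautological, and the nontrivial claim is vanishing in degrees $1\le i\le r-1$.

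For $G$, I would decompose $V$ as a direct integral of irreducible unitary representations. By Howe--Moore (and $V^G=0$), the matrix coefficients decay at infinity. The main tool is a spectral sequence relative to a parabolic subgroup. Fix a minimal parabolic $P=MAN$ with split torus $A\cong\bbR^r$. By Howe--Moore again, any $G$-representation without invariant vectors has no nonzero vectors invariant under any noncompact closed subgroup, in particular under $A$ or under a root subgroup.

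The core step is to show vanishing of the continuous cohomology of $A$, or of suitable Levi pieces, with coefficients in $V$ below degree $r$. Here the model is the Künneth/Koszul computation: for $\bbR^r$ acting on a unitary space with no invariant vectors and a spectral gap in every coordinate direction, cohomology vanishes below degree $r$ and can only survive in top degree. I would then pass from $A$ to $P$ by the Hochschild--Serre spectral sequence for $N\triangleleft P$, and from $P$ to $G$ via the isomorphism $H^*(G,V)\hookrightarrow H^*(P,V)$ (valid since $G/P$ is compact, so restriction is injective through averaging). I would also use Mautner phenomena: vectors invariant under $A$-conjugated pieces of $N$ are $G$-invariant, which handles the $N$-cohomology contributions.

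The main obstacle is the lack of spectral gap uniformity. Vanishing of invariants does not give a reduced-vs-nonreduced control: one needs the cohomology to be Hausdorff. I would try to use property (T) of $G$ (for $r\ge2$), which gives a uniform gap for $G$ on $V$. I would then transfer it, via Howe--Moore quantitative decay (Oh's uniform bounds), to a uniform gap for each one-parameter subgroup in $A$. That makes each factor's Koszul differential have closed range, so the Koszul complex of $A$ is exact below degree $r$.
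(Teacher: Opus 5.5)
There are genuine gaps, both in the argument for $G$ and in the transfer to $\Gamma$.

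\textbf{The argument for $G$.} It breaks at its key step. Property (T) and Oh's bounds do \emph{not} give a uniform spectral gap for one-parameter subgroups of $A$. Take $V=L^2(G)$: it has $V^G=0$, but its restriction to any closed amenable subgroup $H$ (a one-parameter subgroup of $A$, or $A$, or $N$) is a multiple of the regular representation of $H$, so it has almost invariant vectors. Oh's bounds exclude almost invariant vectors only for non-amenable subgroups, such as the rank-one groups $H_\alpha$ used in \cref{lem:levilegitunitary}. So the Koszul differentials need not have closed range. Moreover, if even one direction of $A$ had a gap, $H^*(A,V)$ would vanish in all degrees, so the rank cannot enter through $A$.

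The other links fail as well.
\begin{itemize}
\item Restriction $H^*(G,V)\to H^*(P,V)$ is not injective in general, and averaging does not help. The splitting in \cref{cor:restriction_is_injective_cocolattice} needs a finite $G$-invariant measure on $G/P$, which does not exist because $P$ is not unimodular. Indeed, the generator of $H^2_c(\SL_2(\bbR),\bbR)\cong\bbR$ restricts to $0$ on the Borel subgroup, whose Lie algebra has vanishing $H^2$.
\item Mautner only gives $V^N=0$. The groups $H^q(N,V)$ for $q\ge 1$ are not controlled and are typically non-Hausdorff (e.g.\ $H^1(N,L^2(G))$). So even the $E_2$-page of Hochschild--Serre is unavailable (cf.\ \cref{thm:partial_hochschild_serre}).
\end{itemize}

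What is missing is a mechanism in which amenable subgroups enter only through the absence of invariant vectors. In the paper this works as follows. The opposition complex of the Tits building is a wedge of $(r-1)$-spheres (\cref{sphericity of opposition complex}) with Levi subgroups as stabilizers. Together with the measurable Shapiro lemma, it reduces vanishing of $H^i(G,V)$ for $i<r$ to vanishing of $H^i(L,V)$ for $i\le\ssrank(L)$ for proper Levi subgroups $L$ (\cref{prop:mainGinductionstep}). That vanishing is proved by induction on the rank. The semisimple part $S$ of $L$ has vanishing below its rank and Hausdorff cohomology at its rank; the Hausdorffness comes via a cocompact lattice and ultrapowers. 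The non-compact centre $Z(L)$, about which one only uses $V^{Z(L)}=0$, then kills the top degree through the fixed point theorem in $L$-embedded spaces (\cref{action of C on cohomology of N} and \cref{product of two groups}).

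\textbf{The transfer to $\Gamma$.} Your argument is fine for uniform lattices. For non-uniform ones such as $\SL_n(\bbZ)$, the identification $H^i(\Gamma,V)\cong H^i(G,L^2(G/\Gamma,V))$ is unjustified. Shapiro's isomorphism (\cref{thm:shapiro}) holds for the local induction $I^2_{\loc}(V)$, which agrees with the $L^2$-induction only when $G/\Gamma$ is compact (\cref{thm:cocoshapiro}). Otherwise you only have a comparison map $H^i(G,I^2(V))\to H^i(\Gamma,V)$, which need not be an isomorphism. For example, for a torsion-free non-uniform lattice in $\SL_2(\bbR)$ with several cusps, the Eisenstein classes in $H^1(\Gamma,\mathbb{C})$ are not in its image.

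Vanishing only needs surjectivity of this map for $i<r$, but that is a deep input:
\begin{itemize}
\item $\Gamma$ is a polynomial lattice, by reduction theory;
\item the Leuzinger--Young polynomial filling bounds make $H^i_{\pol}(\Gamma,V)\to H^i(\Gamma,V)$ an isomorphism for $i<r$;
\item polynomial cochains can then be induced into $I^2(V)$ (\cref{thm: surjective Shapiro}, \cref{cor:shapiro_lemma_for_lattices}).
\end{itemize}
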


One of our main results of this paper is the following joint generalization of \cref{thm:BShigher} and \cref{thm:H1Lp}.

\begin{theorem} \label{thm:LpHi}
    Let $G$ be a connected simple Lie group with finite center and let $\Gamma<G$ be a lattice. Denote $r=\rank (G)$ and assume $r\geq 2$. We have $H^i(\Gamma,L^p(X))=0$ for every $i\leq r-1$, for every measure space $X$, for every $1 \leq p < \infty$, and every linear isometric action of $\Gamma$ on $L^p(X)$ without non-trivial invariant vectors. Further, $H^r(\Gamma,L^p(X))$ is Hausdorff.
\end{theorem}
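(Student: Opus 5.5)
We plan to pass from $\Gamma$ to $G$ by induction, and then prove vanishing for $G$ itself by the Bader--Sauer strategy for property $(T_{r-1})$, with Hilbert-space tools replaced by $L^p$-adapted ones. \textbf{Step 1 (induction).} Given an isometric $\Gamma$-action on $V=L^p(X)$, the induced module $\mathrm{Ind}_\Gamma^G V = L^p(G/\Gamma, V)$ (measurable $\Gamma$-equivariant maps $G\to V$, $p$-integrable on a fundamental domain) is again an $L^p$-space, namely $L^p(G/\Gamma\times X)$ with a measure-class-preserving isometric $G$-action. Shapiro's lemma gives $H^i(\Gamma,V)\cong H^i(G,\mathrm{Ind}V)$ topologically. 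If $V$ has no $\Gamma$-invariant vectors, then $\mathrm{Ind}V$ has no $G$-invariant vectors, since a $G$-invariant induced vector is a constant $\Gamma$-invariant function. For non-uniform lattices we use $L^p$-integrability of $\Gamma$ (known for higher rank lattices by Shalom/Lubotzky--Mozes--Raghunathan type estimates) so that Shapiro works in continuous cohomology with the $L^p$ induction; alternatively we restrict to $p$-integrable cocycles. It therefore suffices to prove: for every isometric $L^p$-representation $W$ of $G$ with $W^G=0$, we have $H^i(G,W)=0$ for $i\le r-1$ and $H^r(G,W)$ is Hausdorff.

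\textbf{Step 2 (reduction to a parabolic / Levi).} Take a minimal parabolic $P=MAN$. Following \cite{badersauer:higherkazhdanproperty}, we use a spectral sequence, or equivalently a Hochschild--Serre argument, for the pair $(G,P)$, or for a chain of standard parabolics, combined with Mautner-type phenomena. The key inputs replacing unitary tools are two facts. First, the Howe--Moore property for isometric $L^p$-representations: matrix coefficients $\langle gv,w\rangle$ with $v\in L^p$, $w\in L^{q}$ vanish at infinity when $W^G=0$. This holds for $1<p<\infty$ via uniform convexity (Shalom/BFGM), and for $p=1$ via a Lamperti decomposition into a measure-class-preserving action twisted by a cocycle. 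Second, the Mautner phenomenon: vectors invariant under a noncompact subgroup of $A$ or $N$ are $G$-invariant. With these we show that the cohomology of the unipotent radicals with coefficients in $W$ vanishes in low degrees, and the centralizer torus then acts with no invariants. Induction on the rank over the chain of Levi subgroups yields vanishing up to degree $r-1$, exactly as in the $(T_{r-1})$ proof. For the Hausdorff statement in degree $r$ we show a uniform ``spectral gap'' for $A$: the coboundary map has closed image because some element $a\in A$ acts on the relevant cochain spaces with $\|a^k\|\to$ contraction on the complement of invariants.

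\textbf{Main obstacle.} The main difficulty is the analytic core of Step 2 for $L^p$ with $p\neq 2$, especially $p=1$. Without orthogonal projections or spectral theory, one must prove that $\mathrm{id}-\pi(a)$ is invertible, or at least has closed range, on cochains for a suitable $a\in A$ when there are no invariants. My first attempt is to use the structure of $L^p$ isometries (Banach--Lamperti): $W$ becomes $L^p(Y)$ with a nonsingular $G$-action, and then $a$-contraction follows from ergodic-theoretic mixing of the associated $G$-space. This lets us build explicit bounded homotopies, averaging over large balls in $A$ and using polynomial-growth estimates on $N$-cochains, in the style of the Bader--Sauer contraction argument. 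For $p=1$, reflexivity fails, and we expect to need an extra approximation argument to obtain closed range.
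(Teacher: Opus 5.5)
Your proposal has a genuine gap. Step 2 carries the entire content of the theorem, but it is a list of tools rather than an argument, and both concrete mechanisms you name fail.

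First, the cohomology of a unipotent radical $N$ does not vanish in low degrees for any nonzero $W$. Choose $a\in A$ with $a^{-k}na^{k}\to 1$ uniformly on compacta of $N$. Then the vectors $\pi(a^k)v$ satisfy $\|n\pi(a^k)v-\pi(a^k)v\|=\|\pi(a^{-k}na^k)v-v\|\to 0$ uniformly on compacta, so $W|_N$ has almost invariant vectors. At the same time $W^N=W^G=0$ by Mautner. Hence $B^1(N,W)$ is not closed, and $H^1(N,W)$ is non-Hausdorff, in particular nonzero. For the same reason a Hochschild--Serre argument over parabolics cannot even identify its $E_2$-page (compare \cref{thm:partial_hochschild_serre}, which needs Hausdorffness).

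Second, an isometry contracts no nonzero subspace. Even the weaker claim that $\mathrm{id}-\pi(a)$ has closed range fails in general, e.g.\ for $W=L^p(G)$, whose restriction to $\langle a\rangle$ is an amplification of $\ell^p(\mathbb Z)$.

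The missing ideas are those of the paper:
\begin{itemize}
\item Replace parabolics by \emph{Levi} subgroups, which are the simplex stabilizers of the opposition complex. Von Heydebreck's sphericity of that complex, made usable through measurable homological algebra (strong exactness plus a measurable Shapiro lemma), gives \cref{prop:mainGinductionstep}. This reduces vanishing for $G$ below the rank to $H^i(L,V)=0$ for proper Levi $L$ and $i\le\ssrank(L)$.
\item For $L\supset S\cdot Z(L)$, first check that spectral gap for the factors passes to $S$ (Oh's uniform decay plus the Mazur map). Then pass to a cocompact lattice $\Gamma_S<S$ and use the induction hypothesis with Shapiro.
\item Show that the non-compact center $Z(L)$ has no nonzero invariants on the Hausdorff group $H^{\rank(S)}(\Gamma_S,V)$, via the fixed point theorem for $L$-embedded spaces (\cref{action of C on cohomology of N}). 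This is what handles $p=1$ without reflexivity.
\item Hausdorffness at the rank comes not from a closed-range estimate but from the ultrapower lemma \cref{lem:uptrick_34} on a cocompact product lattice. This uses that ``no almost invariant vectors for each factor'' is ultrapower-stable, and the conclusion is transferred back to $G$ by split injectivity of restriction.
\end{itemize}

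Step 1 is also incomplete for non-uniform $\Gamma$. With the global induction $I^p(V)$ you only have a comparison map $H^*(G,I^p(V))\to H^*(\Gamma,V)$, not a Shapiro isomorphism. Integrability lets you induce cocycles of polynomial growth, and every $1$-cocycle has linear growth. In degrees $\ge 2$, however, you need every class of $\Gamma$ to have a polynomially bounded representative, i.e.\ surjectivity of $H^i_{\pol}(\Gamma,V)\to H^i(\Gamma,V)$. Below the rank this is the deep Leuzinger--Young input on filling functions.

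Combined with the polynomial-lattice property, this yields only surjectivity of the Shapiro restriction for $i<r$ (\cref{cor:shapiro_lemma_for_lattices}). So Hausdorffness of $H^r(\Gamma,V)$ cannot be pulled back from $G$. The paper gets it by applying \cref{lem:uptrick_34} to $\Gamma$ itself, which is of type $FP_\infty(\mathbb R)$ and has property (T). Property (T) makes ``no invariant vectors'' coincide with ``no almost invariant vectors'', so the class is stable under ultrapowers.
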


The proof will be given in \cref{sec: vanishing}, see \cref{cor:main_simple_lattice}. A variant that holds for semisimple groups is given in \cref{thm:mainGsemisimpleLp}.
We get the following generalization of \cref{thm:SLn}.

\begin{theorem} \label{thm:fixedsemisimple}
    Let $G$ be a connected semisimple Lie group with finite center and no compact factors. Assume $G$ has property (T). Let $\Gamma<G$ be an irreducible lattice.
    Then every action of $\Gamma$ on a contractible simplicial complex of dimension less than $\rank(G)$ has a finite orbit.
\end{theorem}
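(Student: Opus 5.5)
The plan is to combine \cref{thm:2ndversion} with an $L^1$-vanishing statement for $\Gamma$ in degrees $0,\dots,\rank(G)-1$. Set $r=\rank(G)$ and $d=r-1$. By \cref{thm:2ndversion}, it suffices to show that $H^i(\Gamma,V)=0$ for $0\le i\le r-1$ and for every $L^1$-space $V$ on which $\Gamma$ acts isometrically without non-trivial invariant vectors. Every action on a contractible simplicial complex of dimension less than $r$ is an action on one of dimension at most $d$, and such an action then has a finite orbit.

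For the vanishing, the simple case is \cref{thm:LpHi} with $p=1$. Since $G$ here is only semisimple, I would instead invoke the semisimple variant \cref{thm:mainGsemisimpleLp}, which I expect to read roughly as follows. For an irreducible lattice $\Gamma$ in a connected semisimple $G$ with finite center, no compact factors and property (T), the cohomology $H^i(\Gamma,L^p(X))$ vanishes for $i\le \rank(G)-1$ whenever there are no non-trivial invariant vectors. The mechanism I anticipate is the following.
\begin{enumerate}
\item Use Shapiro's lemma to pass from $\Gamma$ to the induced $L^p$-representation $L^p(G/\Gamma, L^p(X))\cong L^p(G/\Gamma\times X)$ of $G$.
\item Decompose $G=G_1\cdots G_k$ into simple factors and run a Künneth/Hochschild--Serre type argument. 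Each factor $G_j$ contributes vanishing below its own rank $r_j$, and these degrees add up to vanishing below $\sum r_j=r$.
\end{enumerate}

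The main obstacle is the invariant-vector hypothesis in step (2). The absence of $\Gamma$-invariant vectors in $V$ does not immediately imply that the induced $G$-module has no vectors invariant under a single factor $G_j$. This is where irreducibility and property (T) enter. Irreducibility (via Moore ergodicity, i.e.\ the Howe--Moore property of $G_j$ acting on $G/\Gamma$) should let one split the induced module as the $G_j$-invariant part plus a complement. Property (T), which is needed for factors of rank one, guarantees the spectral gap that makes this splitting work. I would take this bookkeeping to be handled inside \cref{thm:mainGsemisimpleLp} and verify only that its hypotheses coincide with those of \cref{thm:fixedsemisimple}.

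The remaining check is that the conclusion of \cref{thm:2ndversion} requires vanishing in all degrees $0\le i\le d=r-1$. The semisimple theorem provides exactly this range, with degree $0$ holding trivially because $V^\Gamma=0$.
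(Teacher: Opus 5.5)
Your reduction to \cref{thm:2ndversion} is where the argument breaks. That theorem needs $H^i(\Gamma,V)=0$ for $i\le\rank(G)-1$ for \emph{every} $L^1$-space $V$ with $V^\Gamma=0$. For non-simple $G$ the paper does not provide this, and in fact it is false. \Cref{thm:mainGsemisimpleLp} is a statement about the simply connected group $G$ itself. Its hypothesis is that \emph{each} non-compact factor has no almost invariant vectors. As you noticed, $V^\Gamma=0$ does not give this for $I^1(V)$, and no Howe--Moore or spectral-gap splitting repairs it, because the part of $I^1(V)$ invariant under a factor can carry cohomology below the rank.

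For instance, let $\Gamma$ be a cocompact irreducible lattice in $G=\SL_3(\bbR)^3$, which has rank $6$ and property (T). Let $\Gamma$ act on $V=L^1(G_1)$ by left translation, through its dense projection to the first factor $G_1$. Then $V^\Gamma=V^{G_1}=0$. However, $H^*(G_1,V)$ is a direct summand of $H^*(G,V)$: inflate along $G\to G_1$, then restrict to $G_1$. In turn $H^*(G,V)$ is a direct summand of $H^*(\Gamma,V)$ by \cref{cor:restriction_is_injective_cocolattice}. Since continuous cohomology of $G_1$ vanishes above $\dim \SL_3(\bbR)/\mathrm{SO}(3)=5$, integration $L^1(G_1)\to\mathbb{C}$ induces a surjection $H^5(G_1,V)\to H^5(G_1,\mathbb{C})\cong\mathbb{C}$, the target being spanned by the invariant volume form. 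Thus $H^5(\Gamma,V)\neq 0$ with $5\le\rank(G)-1$. So the hypothesis of \cref{thm:2ndversion} fails for this $\Gamma$, even though the statement you want holds for it.

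A related warning sign: nothing in your outline uses that $G$ is connected, yet the statement fails with $p$-adic factors. The group $\SL_3(\bbZ[1/p])$ is an irreducible lattice in the rank-$4$ group $\SL_3(\bbR)\times\SL_3(\mathbb{Q}_p)$, and it acts on the $2$-dimensional Bruhat--Tits building of $\SL_3(\mathbb{Q}_p)$ with all orbits infinite.

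The missing idea is to give up on vanishing for all $L^1$-modules. Work only with $V=\bigoplus_{k\le\dim X}\bar\Delta_k(X)$, an $\ell^1$-space over the countable set of simplices, and feed the result directly into \cref{thm:finite_orbit_main}. The paper (\cref{cor:farb_for_semisimple}) splits into two cases.

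\emph{First case:} some non-compact factor $H$ has non-zero invariant vectors in $I^1(V)$. Then \cref{prop:continuous_vector_invariant} yields a non-zero $G/H$-continuous vector $f\in V$. Evaluate the $\Gamma$-translates of $f$ at a simplex $x_0$ with $f(x_0)\neq 0$. This gives a function on $\Gamma$ that extends continuously to the connected group $G/H$, with values in a countable set, so it is constant. The $\ell^1$-condition then forces the orbit of $x_0$ to be finite (\cref{thm:invariant_vectors_implies_finite}). This case is settled geometrically rather than cohomologically, and it is exactly where connectedness enters.

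\emph{Second case:} no non-compact factor has invariant vectors in $I^1(V)$. Property (T) of the factors upgrades this to the absence of almost invariant vectors, which is the hypothesis of \cref{cor:vanishin_simple_connected for semisimple}. Hence $H^k(G,I^1(V))=0$ for $k<\rank(G)$. Since $\Gamma$ need not be cocompact, Shapiro is not an isomorphism here. Instead one uses that the Shapiro restriction $H^k(G,I^1(V))\to H^k(\Gamma,V)$ is surjective below the rank (\cref{cor:shapiro_lemma_for_lattices}), which rests on polynomiality of the lattice and the Leuzinger--Young filling estimates. This gives $H^k(\Gamma,\bar\Delta_k(X))=0$ for $1\le k\le\dim X$, and \cref{thm:finite_orbit_main} produces the finite orbit.
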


The case where $G$ is simple is now an obvious corollary of \cref{thm:LpHi} together with \cref{thm:2ndversion}. 
The semisimple case will follow from \cref{thm:mainGsemisimpleLp} plus some extra work, see \cref{cor:farb_for_semisimple}.

While focusing in this introduction mainly on Lie groups, we cannot help but stating the following result regarding lattices in $p$-adic groups.
For a discussion and a proof of the theorem below, see \cref{sec:actions} and \cref{thm:padicquestionagain}.

\begin{theorem} \label{thm:padicquestion}
    For a simple $p$-adic group $G$ of rank $r$ and a lattice $\Gamma<G$, the answer to the analog to \cref{ques:d} is $d=r$.
\end{theorem}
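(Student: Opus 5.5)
We need to prove two inequalities for a simple $p$-adic group $G$ of rank $r$ and a lattice $\Gamma<G$: first, $\Gamma$ acts on some contractible $r$-dimensional simplicial complex with all orbits infinite, so $d\le r$; second, every action of $\Gamma$ on a contractible simplicial complex of dimension at most $r-1$ has a finite orbit, so $d\ge r$.

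\textbf{Upper bound.} We take the Bruhat--Tits building $X$ of $G$. It is a contractible (indeed CAT(0)) simplicial complex of dimension exactly $r$, and $G$ acts on it by simplicial automorphisms with compact open stabilizers. Suppose a $\Gamma$-orbit were finite. Then a finite-index subgroup $\Gamma'<\Gamma$ fixes a vertex, so $\Gamma'$ lies in a compact open subgroup $K$ of $G$. But $\Gamma'$ is again a lattice, hence discrete, so $\Gamma'\subseteq \Gamma'\cap K$ is finite. A finite group has finite covolume only if $G$ is compact, and $G$ is non-compact since $r\ge 1$. This is a contradiction, so all $\Gamma$-orbits on $X$ are infinite. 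For $r=1$ this is just the tree, and the lower bound below is trivial there, since contractible $0$-dimensional complexes are points.

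\textbf{Lower bound.} We combine a $p$-adic analogue of \cref{thm:LpHi} for $p=1$ with \cref{thm:2ndversion}, taking $d=r-1$. Concretely, we need that $H^i(\Gamma,L^1(X))=0$ for all $i\le r-1$, for every isometric $\Gamma$-action on an $L^1$-space without non-trivial invariant vectors. The plan for this has three steps.
\begin{enumerate}
\item Use Shapiro's lemma (induction) to pass from $\Gamma$ to $G$. The induced module $L^1(G/\Gamma, L^1(X))$ is again an $L^1$-space. Since $\Gamma$ is cocompact or at least of finite covolume, the $L^1$-induction is legitimate (integrability conditions). Moreover, the absence of $\Gamma$-invariant vectors passes to the absence of $G$-invariant vectors.
\item Prove the vanishing $H^i(G,V)=0$ for $i\le r-1$ and every $L^1$-representation $V$ of $G$ without invariant vectors. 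The route is the paper's general machinery over local fields: Howe--Moore type decay/mixing for $L^p$-representations, and a Künneth/spectral-sequence argument over a maximal split torus, which has rank $r$, or over parabolic subgroups. We assume this is what \cref{sec: vanishing} establishes uniformly for local fields of characteristic $0$.
\item Feed the resulting vanishing into \cref{thm:2ndversion} (more precisely \cref{thm:finite_orbit_main}) with $d=r-1$, which gives a finite orbit.
\end{enumerate}

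The main obstacle is step 2 in the $p$-adic setting with $L^1$-coefficients, not $L^p$ with $p>1$. For $L^1$ the spaces are not uniformly convex, so the usual reduction to ``no almost invariant vectors'' does not apply directly. The first thing we would try is to deduce the $L^1$ case from the $L^p$ case via the Mazur map, or via the structure of $L^1$-isometries (Banach--Lamperti), which reduces the action to a measure-class-preserving action twisted by a cocycle. We would also check that nothing in \cref{thm:finite_orbit_main} depends on the Lie-group setting; it is purely group-theoretic.
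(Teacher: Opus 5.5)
Your route is the same as the paper's. The upper bound comes from the proper action on the $r$-dimensional Bruhat--Tits building; your argument that a finite orbit would put a finite-index subgroup of $\Gamma$ inside a compact open vertex stabilizer, forcing it to be finite, is fine. The lower bound comes from \cref{cor:main_simple_lattice} with $p=1$ fed into \cref{thm:2ndversion}, as in \cref{thm:farb_simple}.

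There is, however, a missing step. The vanishing results of \cref{sec: vanishing} (\cref{thm:mainGsimpleLp}, hence \cref{cor:main_simple_lattice}) are proved only for \emph{simply connected} $G$. A general simple $p$-adic group, e.g.\ $\mathrm{PGL}_n(\mathbb{Q}_p)$, is not of this form, so you cannot assume the results hold ``uniformly''. The paper first reduces to the simply connected case:
\begin{itemize}
\item there is $\pi\colon\tilde G\to G$ with $\tilde G$ simply connected and $\ker\pi$ finite;
\item by finiteness of $H^1(k,\ker\pi)$, the image $\pi(\tilde G)$ has finite index in $G$;
\item then $\Gamma_0=\Gamma\cap\pi(\tilde G)$ has finite index in $\Gamma$, and $\tilde\Gamma=\pi^{-1}(\Gamma_0)$ is a lattice in $\tilde G$;
\item applying \cref{thm:2ndversion} to $\tilde\Gamma$, acting on the complex through $\Gamma_0$, gives a finite $\Gamma_0$-orbit, hence a finite $\Gamma$-orbit.
\end{itemize}
Doing the reduction at the level of orbits avoids proving the vanishing for $\Gamma$ itself. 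That would not follow directly from the simply connected case, since $V^\Gamma=0$ does not force $V^{\Gamma_0}=0$.

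Two smaller points. In your step (1), finite covolume alone is not enough. For a non-uniform lattice, \cref{thm:cocoshapiro} only gives a comparison map $H^*(G,I^1(V))\to H^*(\Gamma,V)$, and the real case needs the polynomial-lattice machinery of \cref{sec:shapiro}. What you should invoke is that lattices in $p$-adic groups of characteristic $0$ are automatically cocompact (Tamagawa), so \cref{thm:cocoshapiro} is an isomorphism.

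Also, the $L^1$ case you flag as the main obstacle is already covered. \cref{thm:mainGsimpleLp} allows $p=1$. Property (T) of $G$ together with Bader--Furman--Gelander--Monod turns $V^G=0$ into absence of almost invariant vectors, and the Mazur-map transfer you propose is exactly \cref{lem:noaiBL}. The proof in \cref{sec: vanishing} goes by induction on the rank over Levi subgroups via the opposition complex, not via Howe--Moore and a torus; since you only cite it, this does not affect your argument.
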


In \cite{farb:group} Farb defined the group property $\FA_n$ to be the property that every cellular action on an $n$-dimensional CAT(0) cell complex has a fixed point.
By taking the circumcenter of a finite orbit, we obtain the following. See \cref{cor:farb_for_semisimple}.

\begin{corollary}[Farb's Conjecture] \label{mcor:farb}
   Let $G$ be a connected semisimple Lie group of rank $r$ with finite center and  property (T). Then any irreducible lattice in $G$ satisfies property $\FA_{r-1}$.  
\end{corollary}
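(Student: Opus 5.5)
The plan is to deduce \cref{mcor:farb} from \cref{thm:fixedsemisimple} together with the standard circumcenter argument for CAT(0) spaces. Let $\Gamma<G$ be an irreducible lattice and let $X$ be an $n$-dimensional CAT(0) cell complex with $n\leq r-1$, on which $\Gamma$ acts cellularly. We may assume $G$ has no compact factors, since these can be quotiented out without changing the rank, and the image of $\Gamma$ in the quotient is still an irreducible lattice.

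First, replace $X$ by a simplicial complex. A CAT(0) space is contractible, so $X$ is contractible. Take the barycentric subdivision $X'$ of the cell structure. This is a simplicial complex of the same dimension $n$, homeomorphic to $X$, and hence contractible. The cellular action of $\Gamma$ induces a simplicial action on $X'$. Since $n<\rank(G)$, \cref{thm:fixedsemisimple} gives a finite $\Gamma$-orbit of some simplex $\sigma$ of $X'$. Consequently the set $F$ of vertices of the simplices in this orbit is a finite, nonempty, $\Gamma$-invariant subset of $X$. Here we use that the vertices of $X'$ are barycenters of cells of $X$, and that the isometric action permutes them.

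Second, apply the circumcenter lemma. The cellular action is isometric for the piecewise Euclidean (or hyperbolic) CAT(0) metric, which is the convention in Farb's definition. In a complete CAT(0) space, a bounded set has a unique circumcenter, namely the center of the smallest closed ball containing it. By uniqueness, the circumcenter is fixed by every isometry preserving the set. Completeness holds because we may assume finitely many shapes of cells (Bridson's theorem), or because Farb's setting already assumes a complete metric. In any case, a finite set has a circumcenter whenever the space is CAT(0) and complete; if the space is not complete, we pass to its completion and use that the circumcenter of $F$ lies in the closed convex hull of $F$, which stays within finitely many cells. Thus the circumcenter of $F$ is a $\Gamma$-fixed point, and $\Gamma$ has $\FA_{r-1}$.

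The main obstacle is that all the real content sits in \cref{thm:fixedsemisimple}; at this level the only delicate point is the metric technicality of ensuring that circumcenters exist, that is, completeness or finite shapes. I would handle this by noting that the closed convex hull of a finite set in a cell complex with a finite-dimensional cell structure lies in a finite subcomplex. That subcomplex is complete, so the circumcenter argument applies there.
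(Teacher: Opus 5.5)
Your proposal is correct and follows essentially the paper's route. You get a finite orbit from \cref{thm:fixedsemisimple} (proved as \cref{cor:farb_for_semisimple}), applied to the barycentric subdivision, and then take the circumcenter of that orbit.

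One step needs repair. When you quotient out the compact factors $K$, the finite normal subgroup $\Gamma\cap K$ may act nontrivially on $X$. So the $\Gamma$-action does not descend to the image lattice in $G/K$. Instead, let $\Gamma/(\Gamma\cap K)$ act on the fixed set of $\Gamma\cap K$, which is a nonempty convex (hence contractible) subcomplex of the barycentric subdivision, and apply the theorem there. Note that the paper's own proof of \cref{cor:farb_for_semisimple} assumes no compact factors, and this assumption is used through \cref{cor:shapiro_lemma_for_lattices}. So this reduction cannot simply be skipped.

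Your fallback claim, that closed convex hulls of finite sets meet only finitely many cells, is not justified. It is also unnecessary once you assume completeness, as in Farb's setting or under finitely many shapes.
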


Some cases were known before.
Farb proved this conjecture for some \emph{non-cocompact} lattices and asked whether it holds in general.
For cocompact lattices in a \emph{product} of groups, the harmonic map method solves the question. See \cite{Monod-SRproducts} and \cite{Gelander-Karlsson-Margulis}.
Recently, Frączyk and Lowe made some significant progress toward the conjecture using the method of minimal surfaces, see
\cite{fraczyklowe:minimal}.
We mention in particular their work on cocompact lattices in the exceptional rank 1 group $F_4^{-20}$, which they showed to satisfy $\FA_2$.
These lattices were shown to be $(T_3)$ in \cite{badersauer:higherkazhdanproperty}.
Frączyk and Lowe conjectured that groups with property $(T_n)$ have property $\FA_n$, a conjecture that we find very appealing. Fraczyk and Lowe recently announced a proof of Farb's conjecture for $SL_n(\bbZ)$ using the technique of minimal surfaces. 
We conjecture in tandem the following, which implies Frączyk and Lowe's conjecture.
\begin{conjecture}
    If a group $\Gamma$ has property $(T_n)$ then $H^i(\Gamma,L^1(X))=0$ for every $i\leq n$, every measure space $X$, and every linear isometric action of $\Gamma$ on $L^1(X)$ without non-trivial invariant vectors and $H^{n+1}(\Gamma,L^1(X))$ is Hausdorff.
\end{conjecture}

Another application of (the semisimple version of) \cref{thm:LpHi} is the proof of the following conjecture by Gromov.
For background regarding $L^p$-cohomology, see \cite{Bourdon-Remy-CE}. 
Since for a symmetric space $X$ there exists a simply connected semisimple real algebraic group $G$, such that the $L^p$-cohomology of $X$ is isomorphic to $H^*\bigl(G,L^p(G)\bigr)$, Gromov's conjecture follows from \cref{cor:Lp_cohomology_alg}.

\begin{corollary}[Gromov's conjecture] \label{cor:Gromov}
If $G$ is a connected semisimple Lie group with finite center and of rank $r$, then for every $1<p<\infty$ the $L^p$-cohomology of the associated symmetric space vanishes in degrees $i<r$ and it is Hausdorff in degree $r$.
\end{corollary}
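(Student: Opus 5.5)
The plan is to reduce Gromov's conjecture to the vanishing and Hausdorffness of continuous cohomology $H^*\bigl(G,L^p(G)\bigr)$, with $G$ acting by the regular representation. We then deduce this from the semisimple version of \cref{thm:LpHi}, namely \cref{thm:mainGsemisimpleLp}, applied to the group $G$ itself rather than to a lattice.

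\textbf{Step 1: reduction to group cohomology.} Let $X=G/K$ be the symmetric space. Since the $L^p$-cohomology of $X$ is a quasi-isometry invariant, we may replace $G$ by the simply connected semisimple real algebraic group with the same symmetric space, as indicated before the statement. Its rank is still $r$, and dropping compact factors does not change $X$. By the standard identification (van Est type, via the $G$-equivariant $L^p$ de Rham complex on $X$, which is a relatively injective resolution), the $L^p$-cohomology of $X$ is isomorphic, as a topological vector space, to $H^*\bigl(G,L^p(G)\bigr)$. Here $G$ acts on $L^p(G)$ by left translation, which is a linear isometric action on an $L^p$-space. So it suffices to show $H^i\bigl(G,L^p(G)\bigr)=0$ for $i<r$ and that $H^r\bigl(G,L^p(G)\bigr)$ is Hausdorff.

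\textbf{Step 2: applying the vanishing theorem.} Since $G$ is noncompact, $L^p(G)$ has no nonzero invariant vectors for $p<\infty$. The semisimple variant requires more than this, though: for $G=G_1\times\dots\times G_k$, the hypothesis will be that no simple factor $G_j$ has nonzero invariant vectors, with the vanishing range governed by the total rank. For the regular representation this holds, because a $G_j$-invariant $L^p$-function on $G$ is constant along the noncompact closed subgroup $G_j$. By Fubini such a function is in $L^p(G_j)$ on almost every fiber, and a constant in $L^p(G_j)$ must be zero. Hence \cref{thm:mainGsemisimpleLp}, in its form for $G$ rather than for lattices (\cref{cor:Lp_cohomology_alg}), gives vanishing in degrees $\le r-1$ and Hausdorffness in degree $r$.

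\textbf{Main obstacle.} The delicate point is making sure the semisimple theorem genuinely yields the range $i<\rank(G)$ for the full product, and not only $i<\min_j\rank(G_j)$. I would handle this with a Künneth-type argument on $L^p(G)=L^p(G_1)\hat\otimes\cdots$, or with a Hochschild--Serre spectral sequence over the factors. Each factor $G_j$ acts without invariant vectors, so its cohomology vanishes below $\rank(G_j)$ and is Hausdorff in degree $\rank(G_j)$. The Hausdorffness is what keeps the spectral sequence under control, and the vanishing degrees add up to $\sum_j\rank(G_j)=r$.

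Rank one factors need separate treatment, since property (T) may fail for them. Here I would use that for $1<p<\infty$, $H^0$ vanishes and $H^1\bigl(G_j,L^p(G_j)\bigr)$ is Hausdorff. This should follow from non-amenability of $G_j$ and the resulting spectral gap of the regular representation on $L^p$, which is exactly the degree-$r_j=1$ input needed. Finally, the restriction $p>1$ is used precisely here, through reflexivity and spectral gap.
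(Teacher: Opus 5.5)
Your overall route is the paper's: identify the $L^p$-cohomology of the symmetric space with $H^*\bigl(G,L^p(G)\bigr)$ for a simply connected $G$, and apply \cref{thm:mainGsemisimpleLp} to the left regular representation. The gap is in Step 2, where you check the wrong hypothesis. \cref{thm:mainGsemisimpleLp} requires that no non-compact factor of $G$ has \emph{almost} invariant vectors in $V$. Your Fubini argument only rules out invariant vectors, and that is genuinely weaker. For a rank-one factor without property (T), e.g.\ of type $\mathrm{SO}(n,1)$, a representation can have almost invariant vectors but no invariant ones; then $H^1$ is not Hausdorff, so the theorem would be false under your version of the hypothesis.

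Checking the correct hypothesis is exactly the content of the paper's proof of \cref{cor:Lp_cohomology_alg}, which goes as follows. Fix a non-compact factor $H$. By \cref{lem:noaiBL} (the Mazur map) it suffices to treat $p=2$. As an $H$-representation, $L^2(G)\cong L^2(G/H)\bar\otimes L^2(H)$ with trivial action on the first tensor factor, i.e.\ an amplification of the regular representation of $H$. Since $H$ is non-amenable, $L^2(H)$, hence $L^2(G)$, has no $H$-almost invariant vectors.

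Your last paragraph gestures at non-amenability, but with three shortcomings: it treats only rank-one factors, it works in $L^p(G_j)$ rather than $L^p(G)$ restricted to $G_j$, and it does not transfer between different $p$. For the higher-rank factors you would at least need property (T) plus the Bader--Furman--Gelander--Monod theorem to upgrade ``no invariants'' to ``no almost invariants''. The non-amenability argument handles every factor at once.

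Your ``main obstacle'' rests on a misreading. In this paper $\rank(G)$ of a semisimple group is the sum of the ranks of its factors, and \cref{thm:mainGsemisimpleLp} already gives vanishing for all $i<\rank(G)$. The additivity is built in through the opposition complex of the join of the Tits buildings (\cref{prop:mainGinductionstep}).

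The K\"unneth/Hochschild--Serre detour would not work as sketched anyway. The relevant $E_2$-terms $H^p\bigl(G_2,H^{r_1}(G_1,V)\bigr)$ have coefficients that are not $L^p$-spaces, so the factor theorem does not apply to them. The product argument the paper does have (\cref{action of C on cohomology of N}, \cref{cor:products_of_discrete}) gains only one degree per additional factor.

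Finally, $p>1$ is not used in the group-cohomological statement. \cref{cor:Lp_cohomology_alg} holds for $1\le p<\infty$, since spectral gap passes to $p=1$ through the Mazur map, so reflexivity plays no role.
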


We note that Gromov's conjecture for $1<p\leq 2$ was proved recently by Stern using geometric methods, see \cite{Stern}.
For every $1<p<\infty$, some partial results, in particular regarding the 2nd cohomology, were obtained by L{\'o}pez Neumann, see \cite{lopezneumann2026vanishing}. 
For more on the topic, as well as some other aspects of the conjecture, see the work of Bourdon-R\'emy, \cite{Bourdon-Remy-QI}, \cite{Bourdon-Remy-NV} and \cite{Bourdon-Remy-CE}.

\subsection*{The structure of the paper}

Roughly, we follow three steps. 
Simple Lie groups have \emph{vanishing of cohomology with $L^p$-coefficients}.
This vanishing property passes down to lattices using an \emph{induction technique} introduced in~\cite{badersauer:higherkazhdanproperty}. 
Vanishing of cohomology with $L^1$-coefficients implies a \emph{fixed point property}.

The next section, \cref{sec:prelim}, deals with various preliminaries, and it could be possibly omitted on a first read,
maybe with the exception of \cref{subsec:cohomproduct},
which discusses the cohomology of products, giving a variation on a theme developed in \cite{Baderrosendalsauer:onthe} which is instrumental for our discussion.

In \cref{sec:shapiro} we discuss the machinery needed to reduce vanishing of cohomology results from semisimple groups to their lattices.
This part of the work is not novel: we use a technique that was already developed in \cite{badersauer:higherkazhdanproperty} in the setup of unitary representations.
However, we need to reproduce it in the generality of $L^p$-spaces.

The most novel parts, also the technical heart of the paper, 
are \cref{section:Measurable homological algebra} and \cref{sec: vanishing}.
We devote \cref{sec: vanishing} to the proof of vanishing of cohomology theorem for semisimple groups.
The proof will be carried by an induction argument, based on reduction to Levi subgroups and using results from \cref{subsec:cohomproduct}.
The Levi subgroups are all parametrized together in a structure called \emph{the opposition complex}.
For the induction, we will have to deal with this complex.
There are several ways to approach this problem.
We chose to do it via a theory of \emph{measurable homological algebra},
which we develop in \cref{section:Measurable homological algebra}. This section is of independent interest. The contribution of this theory to our main results is only in the proof of \cref{prop:mainGinductionstep}.

Lastly, we will develop our fixed point machinery in \cref{sec:actions}.
Its first half is a stand-alone discussion of ``vanishing of cohomology implies fixed point'', and in its second half we will discuss various applications, based on the cohomological vanishing theorem provided in earlier sections.

\subsection*{AI statement}
No AI tools were
used to develop the ideas, arguments, results and proofs in it, nor to draft the manuscript.
AI (ChatGPT 5.6 and Claude Opus~5) was used for proofreading and locating references.
The
authors take full responsibility for the results.

\subsection*{Acknowledgment}

We thank Michael Glasner, Yuval Gorfine, Alex Lubotzky and Izhar Oppenheim for many inspiring conversations. The authors would like to thank the Isaac Newton Institute for Mathematical Sciences, Cambridge, for support and hospitality during the program \emph{Operators, Graphs and Groups}, where part of the work on this paper was undertaken. This work was supported by EPSRC grant EP/Z000580/1.

\section{Preliminaries} \label{sec:prelim}

In this section we provide the setup used throughout the paper and discuss necessary preliminaries.
In the first, very short \cref{sec:semisimple_groups}, we give our conventions regarding semisimple groups.
In \cref{sec:FA} we discuss the categories of Banach representations we deal with,
$L^p$-spaces and L-embedded spaces.
In \cref{sec:contcohom} we recall the basics of continuous cohomology.
In \cref{subsec:cohomproduct} we give special attention to the cohomology of product of groups.
In particular, we prove \cref{action of C on cohomology of N} which will be instrumental later, in \cref{sec: vanishing}.
While this proposition is not entirely novel, its applicability to L-embedded spaces is new. 

\subsection{Semisimple groups} \label{sec:semisimple_groups}

We regard in this paper \emph{semisimple Lie groups} and their lattices, as well as certain generalizations thereof. 
Recall that every connected, center-free, semisimple Lie group coincides with the identity component of the real points of a real semisimple algebraic group, namely the automorphism group of its Lie algebra.


When we say that $G$ is a \emph{semisimple group} (without mentioning the word ``Lie''), we mean that $G \cong \prod_{i=1}^l G_i$, where for each $i$, as topological groups, $G_i\cong \bfG_i(k_i)$ for some local field $k_i$ and a $k_i$-algebraic group $\bfG_i$ which is semisimple.
The \emph{rank} of $G$ is defined to be the sum of the ranks of its factors, $\rank(G)=\sum\rank_{k_i}(\bfG_i)$.
We will say that $G$ is \emph{connected and simply connected} if the algebraic groups $\bfG_i$ are connected and simply connected.

We will also regard \emph{reductive groups}.
By this we mean a group $G \cong \prod_{i=1}^l G_i$ as above, where the corresponding algebraic groups $\bfG_i$ are assumed to be reductive.
By a \emph{reductive subgroup} of such a reductive group $G$ 
we mean a closed subgroup $H \cong \prod_{i=1}^l H_i$ where for every $i$, 
$H_i\cong \bfH_i(k_i)$ for some reductive $k_i$-algebraic group $\bfH_i<\bfG$.
If each $\bfH_i<\bfG_i$ is a $k_i$-Levi subgroup, we say that $H$ is a \emph{Levi subgroup} of $G$ respectively.

A reductive subgroup $H$ is said to be a \emph{semisimple subgroup} if each $\bfH_i$ is semisimple.
In particular, the \emph{semisimple part} of 
the reductive group $G$ is the semisimple subgroup $H<G$
where each $\bfH_i<\bfG_i$ is the commutator subgroup of the identity component.
We define the rank of $G$ as above, but we will be careful to call it the \emph{reductive rank} of $G$, distinguishing it from the \emph{semisimple rank} of $G$, denoted $\ssrank(G)$, which is the rank of its semisimple part.

By an \emph{almost simple factor} of a reductive group $G$ we mean an almost simple factor of any of the groups $G_i$.
A lattice in a reductive group, $\Gamma<G$, is said to be \emph{irreducible} if it is dense modulo each non-compact almost simple factor of $G$.
%
We note that for admitting an irreducible lattice, all $k_i$ need to be of the same characteristic and a certain compatibility must occur between the various $\bfG_i$, by Margulis Arithmeticity Theorem.


\subsection{Functional-analytic preliminaries} \label{sec:FA}

\subsubsection{\texorpdfstring{$L^p$}{}-spaces} 

In this subsection we set up the notation and give some preliminaries regarding $L^p$-spaces, for $1\leq p<\infty$.
Banach spaces and linear maps, unless otherwise specified, are over the complex numbers.
We typically denote measure spaces simply by $X$ or $Y$.
These are sets endowed with a $\sigma$-algebra and a corresponding positive and $\sigma$-additive measure.
We then denote by $L^p(X)$ the Banach space of complex valued (classes of) $L^p$-functions, defined up to null sets, endowed with the $L^p$-norm.
When regarding the real Banach space of real valued functions, we emphasize it by writing $L^p(X,\mathbb{R})$.

By an \emph{$L^p$-space} we mean a complex Banach space $V$ which is isometrically isomorphic to $L^p(X)$ for some measure space $X$.
Choosing such an isometry imposes on $V$ the extra structure of a \emph{complex  Banach lattice} which we now recall. Our main sources are \cite{Lindestrausstzafriri:classical} and \cite{schaefer:Banachlattices}.

The theory of Banach lattices deals primarily with ordered real Banach spaces.
Recall that a \emph{real Banach lattice} is a triple $(U,\|\cdot\|,\leq)$ where $(U,\|\cdot\|)$ is a real Banach space and $\leq$ is a partial order on $U$ such that $U_+=\{x\in U\mid 0\leq x\} \subset U$ is a cone, every $x,y\in U$ have a least upper bound $x\vee y$ and a greatest lower bound $x\wedge y$, and the \emph{Banach lattice compatibility axiom} $|x|\leq |y| \Rightarrow \|x\|\leq \|y\|$ is satisfied, where $|x|=x \vee (-x)$, see \cite[Definition 1.a.1]{Lindestrausstzafriri:classical} and \cite[Definition II.5.1]{schaefer:Banachlattices}.
It follows from the axioms that the positive cone $U_+$ is closed in $U$ and the order operations $\vee$, $\wedge$ and $|\cdot|:U\to U_+$ are uniformly continuous, see \cite[Proposition II.5.2]{schaefer:Banachlattices}.
The following fundamental result in the theory of real Banach lattices is sometimes called \emph{Krivine's functional calculus}.

\begin{theorem}[{\cite[Theorem 1.d.1]{Lindestrausstzafriri:classical}}] \label{thm:1.d.1}
    Given a real Banach lattice $U$ and $x_1,\ldots,x_n \in U$ there is a unique linear lattice morphism from the lattice of all positively homogeneous continuous functions on $\mathbb{R}^n$ to $U$ such that $p_i\mapsto x_i$, where $p_i$ is the projection on the $i$'th axis.
\end{theorem}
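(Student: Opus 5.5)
The plan is to prove Krivine's functional calculus by building the map on a dense class of "lattice-polynomial" functions, checking that it is well defined there, and then extending by continuity. Let $H_n$ denote the vector lattice of positively homogeneous continuous functions on $\mathbb{R}^n$. Restriction to the sphere identifies $H_n$ with $C(S^{n-1})$.

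\emph{Uniqueness.} Let $\mathcal{L}\subset H_n$ be the smallest vector sublattice containing $p_1,\dots,p_n$. Its elements are finite expressions built from the $p_i$ using linear combinations, $\vee$ and $\wedge$. $\mathcal{L}$ contains $|p_1|+\dots+|p_n|$, which is strictly positive on $S^{n-1}$, and it separates points of $S^{n-1}$. By the lattice version of Stone--Weierstrass (Kakutani--Krein), $\mathcal{L}$ is uniformly dense in $C(S^{n-1})$.

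A lattice morphism $\Phi\colon H_n\to U$ is determined on $\mathcal{L}$ by the values $\Phi(p_i)=x_i$. Uniqueness then follows once we know that every lattice morphism is continuous for the sup-norm on $S^{n-1}$. For this, set $e=|p_1|+\dots+|p_n|$. Then $|f|\le \|f\|_{\infty}\, e$, so $|\Phi f|=\Phi|f|\le \|f\|_\infty \Phi(e)$. The Banach lattice compatibility axiom gives $\|\Phi f\|\le \|f\|_\infty\|\Phi e\|$, and this is the required continuity.

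\emph{Existence.} Define $\Phi$ on $\mathcal{L}$ by substituting $x_i$ for $p_i$ in any lattice expression. The main obstacle is showing this is well defined: two different expressions may give the same function on $\mathbb{R}^n$ but could a priori give different elements of $U$. I would handle this as follows.

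First reduce to a vector lattice of the form $C(K)$. Let $I$ be the closed ideal of $U$ generated by $u=|x_1|\vee\dots\vee|x_n|$. Equipped with the order-unit norm $\|y\|_u=\inf\{\lambda: |y|\le\lambda u\}$, this ideal is an AM-space with unit. By Kakutani's representation theorem it is lattice isomorphic to some $C(K)$.

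In $C(K)$ the lattice operations are pointwise, so for any lattice expression $E$ we get $E(x_1,\dots,x_n)(t)=E(x_1(t),\dots,x_n(t))$. The right-hand side depends only on the function $E$ on $\mathbb{R}^n$. Hence the value is independent of the chosen expression, and $\Phi$ is a well-defined lattice morphism on $\mathcal{L}$.

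Moreover $|x_i(t)|\le 1$ for each $t$, while $e\ge$ const$\cdot\|\cdot\|$ on the sphere. Using positive homogeneity this gives
\[
|\Phi f|\le \|f\|_\infty \,(|x_1|+\dots+|x_n|)\le n\|f\|_\infty u .
\]
So $\Phi$ is bounded from $(\mathcal{L},\|\cdot\|_\infty)$ into $U$, again by the compatibility axiom.

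Extend $\Phi$ to all of $H_n$ by density. The lattice operations are uniformly continuous in both $H_n$ and $U$ (\cite[Proposition II.5.2]{schaefer:Banachlattices}), so the extension is still a linear lattice morphism with $p_i\mapsto x_i$.

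Where the argument can be made cleaner, I would replace Kakutani's representation by a direct argument with the real-valued lattice homomorphisms $U\to\mathbb{R}$ restricted to $I$. These separate points of $I$ and play the role of the points $t\in K$.
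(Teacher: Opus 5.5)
The paper gives no proof of this statement; it only cites \cite[Theorem~1.d.1]{Lindestrausstzafriri:classical}. Your argument is essentially the standard proof of that result. You realize the ideal generated by $u=\bigvee_i|x_i|$, with its order-unit norm, as a $C(K)$ via Kakutani's theorem and compute pointwise. Uniqueness then comes from density of the sublattice generated by the $p_i$, together with your automatic bound $\|\Phi f\|\le\|f\|_\infty\|\Phi(\sum_i|p_i|)\|$, which correctly covers all lattice morphisms and not only continuous ones. One simplification: once you have a lattice isometry $T\colon C(K)\to I_u$, you can define the map on all of $H_n$ at once by $f\mapsto T\bigl(f\circ(T^{-1}x_1,\dots,T^{-1}x_n)\bigr)$. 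This is already linear and lattice preserving, so the detour through lattice expressions, well-definedness on $\mathcal{L}$ and extension by continuity is unnecessary.

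Some statements need fixing, though none affects the strategy.
\begin{enumerate}
\item The object must be the \emph{principal} ideal $I_u=\{y\in U: |y|\le\lambda u \text{ for some }\lambda\}$, not the closed ideal generated by $u$. On the closure the order-unit norm is infinite in general: for $U=L^1[0,1]$ and $u=1$ the closed ideal is all of $L^1$. You should also say why $(I_u,\|\cdot\|_u)$ is complete, which Kakutani's theorem needs. Since $\|y\|\le\|y\|_u\|u\|$ and $U_+$ is closed, a $\|\cdot\|_u$-Cauchy sequence converges in $U$ to some $y$ with $|y_k-y|\le\varepsilon u$ for large $k$.
\item ``Separates points and contains a strictly positive element'' is not a sufficient hypothesis for the lattice Stone--Weierstrass theorem; the one-dimensional sublattice $\mathbb{R}\cdot(1+t)\subset C[0,1]$ satisfies both. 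What you need is two-point interpolation, which $\mathcal{L}$ does have on $S^{n-1}$. For linearly independent $s,t$ use linear combinations of the $p_i$. For $t=-s$ use $a\ell+b|\ell|$, where $\ell$ is linear with $\ell(s)=1$.
\item Your closing alternative fails as stated. Real-valued lattice homomorphisms defined on $U$ need not separate points of $I_u$; for instance $L^p[0,1]$, $1\le p<\infty$, admits none besides $0$. The homomorphisms that do separate points are those defined on $(I_u,\|\cdot\|_u)$ itself, i.e.\ the points of $K$. They typically do not extend to $U$, and proving that they separate points is exactly the content of Kakutani's theorem.
\end{enumerate}
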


The process of \emph{complexification} of a real Banach lattice is given equivalently, but differently, in \cite[\S II.11]{schaefer:Banachlattices} and \cite[p. 43]{Lindestrausstzafriri:classical}. We now present it as in the latter reference.
Given a real Banach lattice $U$, we identify $\mathbb{C}\otimes_{\mathbb{R}} U$
with $U+iU$, using the standard identification $U\cong \mathbb{R}\otimes_{\mathbb{R}} U$.
In particular, we consider the cone of positive elements $U_+$ as a subset of $\mathbb{C}\otimes_{\mathbb{R}} U$ and we regard the latter as an ordered vector space accordingly.
We consider the absolute value operator $|\cdot|:\mathbb{C}\otimes_{\mathbb{R}} U\to U_+$ given by $x+iy\mapsto (x^2+y^2)^{\frac{1}{2}}$, where we interpret this formula by means of \cref{thm:1.d.1}, 
and we endow $\mathbb{C}\otimes_{\mathbb{R}} U$ with the norm $\|z\|_{\mathbb{C}\otimes_{\mathbb{R}} U}=\|(|z|)\|_U$.
We define the complexification $U_{\mathbb{C}}$ of $U$ to be the complex ordered Banach space thus obtained.

A \emph{complex Banach lattices}, by definition, is an ordered complex Banach space $(V,\|\cdot\|,\leq)$ which is isomorphic as such to the complexification of a real Banach lattice. We note that the corresponding real Banach lattice $V_{\mathbb{R}}$ is uniquely determined as the span of the positive cone of $V$, endowed with the restricted norm and order relation. Accordingly, the absolute value operator $|\cdot|:V\to V_+$ is canonically determined and could be regarded as a part of the complex Banach structure.

\begin{lemma} \label{lem:upc}
    An ultraproduct of complex (real) Banach lattices is a complex (real) Banach lattice.
\end{lemma}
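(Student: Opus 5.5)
We first treat the real case. Let $(U_\alpha)_{\alpha\in A}$ be real Banach lattices, $\mathcal{U}$ an ultrafilter on $A$, and $U=\prod_{\mathcal U}U_\alpha$ the Banach space ultraproduct. This is the quotient of $\ell^\infty$-bounded families $(x_\alpha)$ by the $\mathcal U$-null families, those with $\lim_{\mathcal U}\|x_\alpha\|=0$. We declare $[x_\alpha]\geq 0$ if it has a representative with $x_\alpha\geq 0$ for all $\alpha$. Equivalently, $\lim_{\mathcal U}\|x_\alpha^-\|=0$, where $x^-=(-x)\vee 0$.

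The equivalence uses uniform continuity of the lattice operations \cite[Proposition II.5.2]{schaefer:Banachlattices}. In fact we only need the elementary inequality $\|x\vee y-x'\vee y'\|\leq \|x-x'\|+\|y-y'\|$, which follows from $|x\vee y-x'\vee y'|\leq |x-x'|+|y-y'|$ and the compatibility axiom. Given $\lim_{\mathcal U}\|x_\alpha^-\|=0$, we replace $x_\alpha$ by $x_\alpha+x_\alpha^-=x_\alpha\vee 0$, which differs by a null family and is positive.

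We then define $[x_\alpha]\vee[y_\alpha]=[x_\alpha\vee y_\alpha]$. This is well defined on equivalence classes by the Lipschitz estimate above, and coordinatewise bounded since $\|x\vee y\|\leq\|x\|+\|y\|$. The same holds for $\wedge$.

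Next we verify the axioms.
\begin{itemize}
\item The positive set is a convex cone, since positivity is preserved coordinatewise.
\item It is proper: if $[x]\geq0$ and $-[x]\geq0$, then $\lim_{\mathcal U}\|x_\alpha^-\|=\lim_{\mathcal U}\|x_\alpha^+\|=0$, so $\|x_\alpha\|=\||x_\alpha|\|\leq\|x_\alpha^+\|+\|x_\alpha^-\|\to0$.
\item The order is compatible with addition and positive scaling.
\item The coordinatewise $\vee$ is the least upper bound. Indeed $[x]\vee[y]-[x]=[(x_\alpha\vee y_\alpha)-x_\alpha]\geq0$. If $[z]\geq[x],[y]$, choose positive representatives of $z-x$ and $z-y$, adjust $x_\alpha,y_\alpha$ by null families so that $z_\alpha\geq x_\alpha,y_\alpha$ coordinatewise, and conclude $z_\alpha\geq x_\alpha\vee y_\alpha$. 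The representative change is harmless by well-definedness.
\item Compatibility: if $|[x]|\leq|[y]|$, then $|y_\alpha|-|x_\alpha|=w_\alpha+n_\alpha$ with $w_\alpha\geq0$ and $n_\alpha$ null. Hence $|x_\alpha|\leq |y_\alpha|-n_\alpha\leq |y_\alpha|+|n_\alpha|$, so $\|x_\alpha\|\leq\|y_\alpha\|+\|n_\alpha\|$, and taking $\mathcal U$-limits gives $\|[x]\|\leq\|[y]\|$.
\end{itemize}

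For the complex case, write $V_\alpha=(U_\alpha)_{\mathbb C}$ and set $U=\prod_{\mathcal U}U_\alpha$. We show that $\prod_{\mathcal U}V_\alpha\cong U_{\mathbb C}$ as ordered complex Banach spaces. The natural map sends $[x_\alpha+iy_\alpha]$ to $[x_\alpha]+i[y_\alpha]$.

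Norms compare via $\|x\|\vee\|y\|\leq\|x+iy\|\leq\|x\|+\|y\|$, because $|x|,|y|\leq|x+iy|\leq|x|+|y|$. So the map is a well-defined linear bijection, and the real and imaginary parts of bounded families are bounded.

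The key point is isometry. This requires that the modulus $(x^2+y^2)^{1/2}$ of \cref{thm:1.d.1} computed in $U$ equals the class of the coordinatewise moduli. Here I expect the main obstacle, and I would handle it as follows. By homogeneity, $f(s,t)=(s^2+t^2)^{1/2}$ is the uniform limit on the unit circle of finite lattice expressions $\sup_{k}(\cos\theta_k\, s+\sin\theta_k\, t)$. Krivine's calculus gives $\bigl\|\,|x+iy|-\sup_k(\cos\theta_k x+\sin\theta_k y)\bigr\|\leq\varepsilon(\||x|\vee|y|\|)$ uniformly in the lattice. Since finite lattice expressions commute with the ultraproduct by the real case, and the error bound is uniform in $\alpha$, the moduli agree in the limit. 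Then $\|[x]+i[y]\|=\lim_{\mathcal U}\|x_\alpha+iy_\alpha\|$.

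The positive cones also correspond, so the ultraproduct of the complex Banach lattices $V_\alpha$ is the complexification of the real Banach lattice $U$, which is the definition of a complex Banach lattice.
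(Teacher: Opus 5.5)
Your proof is correct and follows the same route as the paper. The real case holds because the lattice operations and axioms pass coordinatewise to the ultraproduct, and the complex case reduces to showing that the Krivine modulus $|x+iy|$ commutes with ultraproducts, so that the ultraproduct of the complexifications is the complexification of the real ultraproduct. The only difference is in that last step: the paper appeals to the uniqueness clause of \cref{thm:1.d.1}, while you verify it directly by approximating $(s^2+t^2)^{1/2}$ uniformly on the circle by finite suprema of linear forms, with an error bound independent of the lattice, which is a more explicit version of the same argument.
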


\begin{proof}
    The fact that an ultraproduct of real Banach lattices is a real Banach lattice is standard and easy. It follows from the fact that the defining axioms are defined pointwise, thus they commute with ultraproducts.
    We note that also the linear lattice morphisms alluded to in \cref{thm:1.d.1} commute with ultraproducts, by their uniqueness property. It follows that the absolute value map $|\cdot|$ 
    agrees on the ultraproduct of the complexifications and the complexification of the ultraproduct, thus the complexification process commutes with ultraproducts. We conclude that an ultraproduct of complex Banach lattices is a complex Banach lattice.
\end{proof}

A real or complex Banach lattice is called an \emph{abstract $L^p$-space} if it satisfies the following axiom.
\[     x,y \geq 0,~x\wedge y=0 \quad \Longrightarrow \quad \|x+y\|^p=\|x\|^p+\|y\|^p. \]

We say that $X$ is a \emph{Lebesgue space} if it is isomorphic as a measure space to a standard Borel space endowed with a $\sigma$-finite measure.

\begin{theorem}[Kakutani's realization, cf. {\cite[Theorem 7]{Kakutani:concrete}}] \label{thm:kak}
For every measure space $X$, $L^p(X)$ is a complex abstract $L^p$-space and $L^p(X,\mathbb{R})$ is a real abstract $L^p$-space.
If $X$ is a Lebesgue space then $L^p(X)$ is separable.
Conversely, every complex (or real) abstract $L^p$-space is isometric and order isomorphic to some $L^p(X)$ (or $L^p(X,\mathbb{R})$), where $X$ is a measure space. If the abstract $L^p$-space is separable the space $X$ could be taken to be a Lebesgue space.
\end{theorem}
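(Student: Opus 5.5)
The plan splits \cref{thm:kak} into its three parts: $L^p(X)$ is an abstract $L^p$-space; separability over a Lebesgue space; and the converse representation. Only the last is substantial.

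\emph{Direct part.} For the real space $L^p(X,\mathbb{R})$ take the pointwise a.e.\ order. Then $f\vee g$ and $f\wedge g$ are pointwise maxima and minima, and the compatibility axiom $|f|\le|g|\Rightarrow\|f\|_p\le\|g\|_p$ is monotonicity of the integral. If $f,g\ge0$ and $f\wedge g=0$, then $f$ and $g$ have a.e.\ disjoint supports. Hence $|f+g|^p=|f|^p+|g|^p$ pointwise, and integrating gives the $L^p$ axiom. For the complex case, Krivine's calculus (\cref{thm:1.d.1}) applied to $(x^2+y^2)^{1/2}$ agrees with the pointwise modulus. This is clear on simple functions, and both sides are continuous, so it holds everywhere. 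Therefore the complexification norm of $L^p(X,\mathbb{R})$ is the usual $L^p$ norm on $L^p(X)$, and the axiom passes to $L^p(X)$ because it only involves positive elements.

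\emph{Separability.} If $X$ is standard Borel with a $\sigma$-finite measure, exhaust $X$ by countably many sets of finite measure. A countable generating algebra of Borel sets approximates every finite-measure set in measure. Rational (or Gaussian-rational) combinations of indicators of these sets are then dense in $L^p$, by density of simple functions.

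\emph{Converse (main obstacle).} Work with a real abstract $L^p$-space $U$; the complex case follows by complexifying both sides. The route is first the separable case, then a Zorn decomposition. Suppose $U$ is separable. Pick a weak unit: for a dense sequence $(x_n)$, set $e=\sum 2^{-n}|x_n|/\|x_n\|$. Then $e\ge0$ and $e\wedge|x|=0$ forces $x=0$. Consider the Boolean algebra of \emph{components} of $e$, meaning elements $c$ with $c\wedge(e-c)=0$. On it define $\mu(c)=\|c\|^p$. The disjointness axiom makes $\mu$ finitely additive, and norm continuity, together with order continuity of the norm, gives countable additivity. Order continuity of the norm is where the $L^p$ axiom is used: a disjoint sequence with bounded partial sums has $\sum\|c_k\|^p<\infty$, hence converges.

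The key step is to show that the map sending $\mathbf 1_c$ to $c^{1/p}$ extends to an isometric lattice isomorphism from $L^p$ of the measure algebra onto $U$. One could instead first identify the principal ideal generated by $e$ with $C(K)$ (Kakutani--Krein) and use the Riesz representation to get a measure on $K$. Either way the issues are the same. First, isometry on disjoint simple combinations follows from the $p$-additivity axiom. Second, completeness gives a closed image. Third, density of the image uses Freudenthal's spectral theorem: in the band generated by $e$, which is all of $U$, every element is a norm limit of combinations of components. That last point needs the Dedekind $\sigma$-completeness of $U$, which again comes from order continuity of the norm.

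Separability of $U$ makes the measure algebra separable. By the Carathéodory/Maharam classification it is then realized on a standard Borel space with a finite measure, which is the Lebesgue-space clause. In the nonseparable case, Zorn's lemma gives a maximal family of pairwise disjoint positive elements. Each generates a band, and these bands are separable, for instance bands generated by a single element. $U$ is the $\ell^p$-direct sum of these bands by the $L^p$ axiom and order continuity, and the disjoint union of the corresponding measure spaces gives $X$.
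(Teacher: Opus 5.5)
Your direct part and the separability clause are fine; the paper calls them straightforward. Reducing the complex converse to the real one is also exactly what the paper does. For the real converse the paper simply cites \cite[Theorem 1.b.2]{Lindestrausstzafriri:classical}, and for the separable clause \cite{Bohnenblust}.

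\textbf{The gap is in your nonseparable case.} The claim that a band generated by a single element is separable is false, so the reduction to separable bands cannot work. Take $U=L^p(\{0,1\}^{\kappa},\lambda)$ with $\kappa$ uncountable and $\lambda$ the product of fair coin measures. The constant function $1$ is a weak unit, so its band is all of $U$. Worse, every nonzero band contains $L^p(A)$ for some $A$ with $\lambda(A)>0$. Up to null sets, $A$ depends on a countable set $J$ of coordinates. For the uncountably many $\alpha\notin J$, the functions $\mathbf{1}_A\mathbf{1}_{\{x_\alpha=1\}}$ are pairwise at distance $(\lambda(A)/2)^{1/p}$. So $U$ has no nonzero separable band at all, and neither the separable case nor Carath\'eodory's theorem can reach it. This case matters here: \cref{lem:Lp_closed_under_ultraproducts} applies the converse to ultraproducts, which are nonseparable.

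\textbf{How to repair it.} Your weak-unit argument uses separability only to produce $e$ and to realize the measure algebra. So apply it directly to each band $\{e_i\}^{dd}$ of your maximal disjoint family, with $e_i$ as weak unit. Realize the resulting finite, possibly nonseparable, $\sigma$-complete measure algebra by a genuine measure space via the Loomis--Sikorski theorem: pull $\mu$ back to the representing $\sigma$-field. Since $\mu$ is strictly positive, its null sets are exactly the representing $\sigma$-ideal. Then $U$ is the $\ell^p$-sum of these bands:
\begin{itemize}
\item band projections $P_i$ exist by $\sigma$-Dedekind completeness;
\item $\sum_i\|P_ix\|^p\le\|x\|^p$, so only countably many $P_ix$ are nonzero;
\item $x-\sum_iP_ix$ is disjoint from every $e_i$, so it vanishes by maximality.
\end{itemize}
This gives $L^p$ of the disjoint union. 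Carath\'eodory's theorem is then needed only for the Lebesgue-space clause.

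\textbf{Two smaller points in the weak-unit step.} First, the key map must be $\mathbf{1}_c\mapsto c$, not $\mathbf{1}_c\mapsto c^{1/p}$. With $\mu(c)=\|c\|^p$ you have $\|\mathbf{1}_c\|_{L^p(\mu)}=\|c\|$. By contrast, $c^{1/p}$ is not defined in an abstract Banach lattice, since Krivine's calculus only handles positively homogeneous functions, and in a concrete $L^p$ it has the wrong norm. Your isometry check on disjoint step functions is correct precisely for $\mathbf{1}_c\mapsto c$.

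Second, your disjoint-sum argument only shows that order-bounded disjoint sequences are norm null. That already makes the Boolean algebra of components $\sigma$-complete. Freudenthal's theorem and the band projections, however, need order continuity and $\sigma$-Dedekind completeness of $U$ itself. Getting these from the disjoint statement is a standard but nontrivial Banach-lattice theorem, which you should cite rather than assert.
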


\begin{proof}
The first half of the theorem is straight forward.
The complex version of the second half follows from the real one,
and the real one is \cite[Theorem 1.b.2]{Lindestrausstzafriri:classical}.
The fact about separable $L^p$-spaces goes back to \cite{Bohnenblust}.
\end{proof}

By Kakutani's realization, we get that this class of Banach spaces is stable under taking ultrapowers.

\begin{lemma} \label{lem:Lp_closed_under_ultraproducts}
    For a fixed $p$, any ultraproduct of $L^p$-spaces is an $L^p$-space.
\end{lemma}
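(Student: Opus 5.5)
The plan is to combine \cref{lem:upc} with Kakutani's realization, \cref{thm:kak}. Fix $1\leq p<\infty$, a family $(V_j)_{j\in J}$ of $L^p$-spaces and an ultrafilter $\mathcal{U}$ on $J$. Choosing isometries $V_j\cong L^p(X_j)$ makes each $V_j$ a complex Banach lattice. By \cref{thm:kak} each is moreover an abstract $L^p$-space. By \cref{lem:upc} the ultraproduct $V=\prod_{\mathcal{U}}V_j$ is a complex Banach lattice, with the coordinatewise order operations and absolute value. The converse direction of \cref{thm:kak} will then realize $V$ as some $L^p(X)$, once we check that $V$ satisfies the abstract $L^p$ axiom.

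To verify the axiom, we work with the real part, since $V_{\mathbb{R}}$ is the ultraproduct of the $(V_j)_{\mathbb{R}}$. Take $x=[x_j]$ and $y=[y_j]$ in $V$ with $x,y\geq0$ and $x\wedge y=0$. We cannot assume that $x_j\wedge y_j=0$ for the chosen representatives, only that $\lim_{\mathcal{U}}\|x_j\wedge y_j\|=0$. So we first normalize the representatives:
\begin{itemize}
\item replace $x_j$ by $|x_j|$ and $y_j$ by $|y_j|$, which changes neither class, because $|\cdot|$ is uniformly continuous and computed coordinatewise;
\item set $x_j'=x_j-x_j\wedge y_j$ and $y_j'=y_j-x_j\wedge y_j$.
\end{itemize}
These satisfy $x_j',y_j'\geq0$ and $x_j'\wedge y_j'=0$. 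This is the lattice identity $(a-a\wedge b)\wedge(b-a\wedge b)=0$, which is easily checked pointwise in $L^p(X_j,\mathbb{R})$ or by Krivine's calculus (\cref{thm:1.d.1}). Since $\|x_j-x_j'\|=\|x_j\wedge y_j\|\to0$ along $\mathcal{U}$, and likewise for $y_j$, the new sequences still represent $x$ and $y$.

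Applying the abstract $L^p$ axiom in each $V_j$ gives $\|x_j'+y_j'\|^p=\|x_j'\|^p+\|y_j'\|^p$. Taking the limit along $\mathcal{U}$, and using that the ultraproduct norm is the $\mathcal{U}$-limit of the coordinate norms, yields $\|x+y\|^p=\|x\|^p+\|y\|^p$. Hence $V$ is a complex abstract $L^p$-space, and by \cref{thm:kak} it is isometrically order isomorphic to $L^p(X)$ for some measure space $X$.

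The only step needing care is the choice of disjoint representatives; the rest is formal. If a coordinatewise justification of the lattice identity is preferred, it can be verified concretely inside $L^p(X_j,\mathbb{R})$, where it reduces to a pointwise statement about real numbers.
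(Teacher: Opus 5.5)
Your proposal is correct and follows essentially the same route as the paper: \cref{lem:upc} and the first half of \cref{thm:kak} make the ultraproduct a Banach lattice, the abstract $L^p$ axiom is checked using the disjoint representatives $x_j-x_j\wedge y_j$ and $y_j-x_j\wedge y_j$, and the second half of \cref{thm:kak} realizes the result as an $L^p$-space. Your extra step of replacing representatives by their absolute values just makes explicit the paper's tacit choice of positive representatives.
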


\begin{proof}
Realizing the $L^p$-spaces as abstract $L^p$-spaces, using the first part of \cref{thm:kak}, we get an ultraproduct of abstract $L^p$-spaces, which is a Banach lattice by \cref{lem:upc}.
For $x,y \geq 0$ such that $x\wedge y=0$ in the ultraproduct, 
we have corresponding representatives $(x_n)$ and $(y_n)$ satisfying for every $n$, $x_n,y_n \geq 0$ and satisfying $x_n \wedge y_n \to 0$ along the underlying ultrafilter.
Setting $x'_n=x_n-x_n \wedge y_n$ and $y'_n=y_n-x_n \wedge y_n$, 
the sequences $(x'_n)$ and $(y'_n)$ represent $x$ and $y$ as well.
Since $x'_n,y'_n \geq 0$ and $x'_n\wedge y'_n=0$,
we have that $\|x'_n+y'_n\|^p=\|x'_n\|^p+\|y'_n\|^p$,
thus also $\|x+y\|^p=\|x\|^p+\|y\|^p$.
We conclude that the ultraproduct is an abstract $L^p$-space,
and the lemma follows by the second part of \cref{thm:kak}.
\end{proof}

The following fact is used in~\cite{badersauer:higherkazhdanproperty} in the context of unitary representation. It holds true for $L^p$-spaces with the same proof. 
Note, however, it does not hold for arbitrary ultralimits, just for ultrapowers.

\begin{lemma}\label{lem:no_almost_invariants_ultrapowers}
Let $\Gamma$ be a discrete group. For each $1\leq p<\infty$, the class of $L^p$-spaces with linear isometric $\Gamma$-actions without almost invariant vectors is closed under taking ultrapowers. 
\end{lemma}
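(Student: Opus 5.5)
The statement has two parts. First, an ultrapower of an $L^p$-space is again an $L^p$-space, and the $\Gamma$-action passes to it by isometries. Second, the absence of almost invariant vectors is preserved. The first part follows from \cref{lem:Lp_closed_under_ultraproducts}. Let $V$ be an $L^p$-space with a linear isometric $\Gamma$-action and $\omega$ a nonprincipal ultrafilter on an index set $I$. We form $V_\omega=\ell^\infty(I,V)/\{(v_i):\lim_\omega\|v_i\|=0\}$ and let $\Gamma$ act diagonally, $\gamma\cdot[(v_i)]=[(\gamma v_i)]$. This is well defined and isometric because each $\gamma$ acts isometrically on $V$, so it preserves both bounded sequences and null sequences.

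For the second part we argue by contraposition and reformulate "no almost invariant vectors" as a uniform spectral-gap inequality. Suppose $V$ has no almost invariant vectors. Then there are a finite set $S\subset\Gamma$ and $\varepsilon>0$ with
\[
\max_{s\in S}\|sv-v\|\geq \varepsilon\|v\| \qquad\text{for all } v\in V.
\]
If no such pair existed, then for every finite $S$ and every $\varepsilon$ there would be a unit vector that is $(S,\varepsilon)$-invariant, which is exactly the definition of almost invariant vectors. Here we use that $\Gamma$ is discrete, so that "compact" means "finite". This is the step where ultrapowers differ from general ultraproducts: the pair $(S,\varepsilon)$ depends only on $V$, so it serves simultaneously for every coordinate of the ultrapower. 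For an ultraproduct of different spaces, each factor could need its own pair, and these pairs need not have a common bound.

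We now transfer the inequality to $V_\omega$. Take $x=[(v_i)]\in V_\omega$ and $s\in S$. Then $\|sx-x\|=\lim_\omega\|sv_i-v_i\|$ and $\|x\|=\lim_\omega\|v_i\|$. Since $S$ is finite, the maximum over $S$ commutes with the ultralimit. Applying the inequality coordinatewise and passing to the limit gives
\[
\max_{s\in S}\|sx-x\|\geq\varepsilon\|x\| \qquad\text{for all } x\in V_\omega.
\]
This excludes $(S,\varepsilon/2)$-invariant unit vectors in $V_\omega$, so $V_\omega$ has no almost invariant vectors.

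The only step that needs care is the initial reformulation into a single finite set $S$ and a single $\varepsilon$. After that, the argument is simply that ultralimits preserve non-strict inequalities between continuous expressions in finitely many norms.
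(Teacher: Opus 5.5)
Your proof is correct and follows the paper's route. The $L^p$-structure of the ultrapower comes from \cref{lem:Lp_closed_under_ultraproducts}, and the preservation of the spectral gap, which the paper cites as \cite[Lemma~35]{badersauer:higher:survey}, is exactly your argument: a single finite $S$ and $\varepsilon$ witnessing the absence of almost invariant vectors in $V$ apply coordinatewise and pass to the ultralimit. What you call contraposition is in fact a direct argument, but that is only a wording issue.
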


\begin{proof}
The ultrapower is again an $L^p$-space by \cref{lem:Lp_closed_under_ultraproducts}. Having no almost invariant vectors passes to ultrapower by~\cite[Lemma~35]{badersauer:higher:survey}. 
\end{proof}

We are interested in the group of linear isometries between $L^p$-spaces.
This group is better behaved when the space is separable, hence the following observation will be useful.

\begin{lemma} \label{lem:Lpcofinal}
    Let $V$ be an $L^p$-space and $V_0<V$ a separable subspace. Then there exists an intermediate separable subspace $V_0\subset V_1\subset V$ that is an $L^p$-space as well.
    Moreover, if $G$ is an lcsc group which acts by linear isometries on $V$, then $V_1$ could be chosen to be $G$-invariant.
\end{lemma}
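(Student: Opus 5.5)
The plan is a closure argument: alternately enlarge a separable subspace to a closed sublattice and to a $G$-invariant subspace, then take the closure of the union. Realize $V$ as an abstract $L^p$-space via \cref{thm:kak}, i.e.\ as $L^p(X)$ with its complex Banach lattice structure. The key observation is that a closed subspace $W<V$ which is a complex sublattice is itself an abstract $L^p$-space. Being a complex sublattice means $W$ is closed under complex conjugation (equivalently $W=W_{\mathbb{R}}+iW_{\mathbb{R}}$ with $W_{\mathbb{R}}=W\cap V_{\mathbb{R}}$) and $W_{\mathbb{R}}$ is closed under $\vee,\wedge$. Indeed, the lattice operations, the absolute value and the norm are inherited from $V$. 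The disjointness axiom $x,y\ge0$, $x\wedge y=0\Rightarrow\|x+y\|^p=\|x\|^p+\|y\|^p$ holds in $W$ because it holds in $V$ and $\wedge$ is computed in $V$. By the second part of \cref{thm:kak}, $W$ is then an $L^p$-space.

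\emph{Separable sublattice hull.} Given a separable subspace $S<V$, pick a countable dense $D\subset S$. Let $D'$ be the closure of $\{\operatorname{Re} d,\operatorname{Im} d : d\in D\}$ under the operations $\vee$, $\wedge$, real rational linear combinations and $|\cdot|$. This set is still countable. Its closed real span $W_{\mathbb{R}}$ is a closed real sublattice, since the lattice operations are uniformly continuous (\cite[Proposition II.5.2]{schaefer:Banachlattices}) and the rational span is dense in the real span. Then $L(S)=W_{\mathbb{R}}+iW_{\mathbb{R}}$ is a separable closed complex sublattice containing $S$. It is closed in $V$ because $\|\operatorname{Re} z\|,\|\operatorname{Im} z\|\le\|z\|$.

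\emph{Invariance.} For the $G$-invariant version, first fix a countable dense subset $\{g_n\}\subset G$, using second countability. Given a separable $S$, set $I(S)=\overline{\operatorname{span}}\{g_n s : n, s\in S\}$. This is separable. The main subtlety is whether $I(S)$ is actually $G$-invariant. I would first try the convenient assumption that the action is strongly continuous, as the paper's setting of continuous representations presumably entails. Then for $g\in G$ and $s\in S$, choosing $g_{n_k}\to g$ gives $g_{n_k}s\to gs$, so the full orbit $Gs$ lies in the closure of $\{g_ns\}$. Now iterate: $V_0\subset L(V_0)\subset I(L(V_0))\subset L(I(L(V_0)))\subset\cdots$, and let $V_1$ be the closure of the union. $V_1$ is separable, contains $V_0$, and is a closed complex sublattice, since an increasing union of sublattices is a sublattice and the closure stays one by continuity. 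So $V_1$ is an $L^p$-space.

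For $G$-invariance of $V_1$: each $I$-step contains the $\{g_n\}$-translates of the previous stage, so the union is $\{g_n\}$-invariant, and its closure is $G$-invariant by density and strong continuity. If the action is not assumed continuous, the fallback is to use that $G$ acts by isometries that are lattice automorphisms up to multiplier (Lamperti/Banach for $p\neq2$). That route is messier; I expect the continuity assumption to be implicit, and that this step is the only point needing care.
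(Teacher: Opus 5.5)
Your proof is correct and is essentially the paper's argument. The paper closes a countable dense $D_0\subset V_0$ in a single step under a countable dense subgroup $\Gamma<G$, $\mathbb{Q}(i)$-linear combinations, $\Re$ and $|\cdot|$; the closure is then a $\Gamma$-invariant, hence (by continuity of the action) $G$-invariant, closed sublattice, so Kakutani's realization applies. Your alternating $L$/$I$ iteration is an unrolled version of that same countable closure.

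One correction: continuity of the action is genuinely necessary, not just convenient. For translations of $\mathbb{R}$ on $\ell^p(\mathbb{R})$, the orbit of $\delta_0$ is an uncountable $2^{1/p}$-separated set, so no separable invariant subspace contains $\delta_0$. The Lamperti fallback therefore cannot succeed and should simply be dropped.
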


\begin{proof}
    We will prove the equivariant statement. We choose a countable dense subset $D_0$ in $V_0$ and a countable dense subgroup $\Gamma<G$. 
    We regard $V$ as an abstract $L^p$-space.
    We let $D_1$ be the smallest set containing $D_0$ and closed under $\Gamma$, $\mathbb{Q}(i)$-linear combinations, $\Re$, and $|\cdot|$; it is countable.
    We let $V_1$ be the closure of $D_1$. Then $V_1$ is a $\Gamma$-invariant sublattice of $V$, hence a $G$-invariant abstract $L^p$-space.
    We are done by \cref{thm:kak}.
\end{proof}

\begin{lemma}\label{lem:Lp_equivariant_projection}
Let $1\leq p<\infty$. Let $G$ be an lcsc group. Let $G$ act by linear isometries on an $L^p$-space $V$. A closed separable $G$-invariant subspace $V_0\subset V$ is linearly isometric to an $L^p$-space if and only if it admits a $G$-equivariant contractive linear projection $V\to V_0$.
\end{lemma}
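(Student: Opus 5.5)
The argument splits into two directions of very different difficulty: an averaging argument for ``$\Leftarrow$'' and the classical theory of $L^p$-subspaces for ``$\Rightarrow$''.

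\emph{The direction ``$\Leftarrow$''.} Let $P\colon V\to V_0$ be a contractive linear projection, $V_0$ separable. The case $p=2$ is trivial, since every closed subspace of a Hilbert space is a Hilbert space. For general $p$ I would invoke the classical theorem that the range of a contractive projection on an $L^p$-space ($1\leq p<\infty$) is isometric to an $L^p$-space. This is due to Ando for $1<p<\infty$, Douglas and Tzafriri for $p=1$, and Lacey for the general statement; see \cite{Lindestrausstzafriri:classical}. In the separable case, \cref{thm:kak} lets us take the measure space to be a Lebesgue space. Equivariance of $P$ plays no role here.

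\emph{The direction ``$\Rightarrow$''.} Suppose $V_0$ is isometric to an $L^p$-space. For $p\neq2$ the key tool is the same classical result in its converse form: a subspace of $L^p$ that is isometric to some $L^p$-space is $1$-complemented. Moreover the contractive projection onto it is unique. For $p>1$ this follows from strict convexity and smoothness together with the description of the projection as a conditional expectation twisted by a weight (Ando, Tzafriri). For $p=1$ one uses the Douglas-type description. Given uniqueness, equivariance is automatic. For $g\in G$, the map $gPg^{-1}$ is again a contractive projection onto $gV_0=V_0$, so $gPg^{-1}=P$. For $p=2$ the orthogonal projection is the unique contractive projection and is clearly equivariant.

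\emph{Main obstacle.} The delicate point is the uniqueness of the contractive projection, especially for $p=1$, where uniqueness can fail in general. If it fails, I would instead construct an equivariant projection by averaging. Since $G$ need not be amenable, a naive average over $G$ is unavailable. Instead I would use the structure theorem: $V_0$ is the image of an isometric embedding $L^p(Y)\to L^p(X)$. By Lamperti's theorem (and Hardin's extension to subspaces for $p\neq2$), such an embedding is a weighted composition operator, given by a weight $h$ and a $\sigma$-subalgebra. The canonical projection is then $f\mapsto h\,\mathbb{E}(f\bar h|h|^{p-2}\mid\mathcal{B})/\mathbb{E}(|h|^p\mid\mathcal{B})$, suitably normalized. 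This construction is natural: isometries of $V$ preserving $V_0$ induce compatible transformations of the data $(h,\mathcal{B})$, and hence commute with the projection.

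Separability of $V_0$ enters only to pass to standard Lebesgue spaces via \cref{thm:kak}. That in turn makes the isometry group of $V$ act by Lamperti-type operators, given by measurable point maps.
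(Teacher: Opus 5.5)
Your skeleton matches the paper's: Ando-type results give existence and the converse, and uniqueness of the projection forces $gPg^{-1}=P$. For $p>1$ this works, since $L^p$ is smooth and a contractive projection onto a given subspace is then unique. The gap is at $p=1$, which is where the lemma has real content. There your claim that the contractive projection onto $V_0$ is unique is false. In $\ell^1_2$ with $V_0=\mathbb{C}e_1$, every map $(x_1,x_2)\mapsto(x_1+cx_2,0)$ with $|c|\le 1$ is a contractive projection onto $V_0$.

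Your fallback does not repair this. Saying the weighted conditional expectation is ``natural'' because isometries transform $(h,\mathcal{B})$ compatibly is exactly what needs proof. An element $g$ sends the representing pair $(h,\mathcal{B})$ to a \emph{different} pair representing the same $V_0$. To get $gPg^{-1}=P$ you must show the formula depends only on $V_0$, not on the pair, and that is a uniqueness statement in disguise.

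The missing idea is a normalization that restores uniqueness. Let $S$ be the common support of $V_0$. It is $\sigma$-finite because $V_0$ is separable. Since $G$ preserves $V_0$ and isometries of $L^p$ ($p\neq 2$) preserve disjointness, $G$ preserves $L^1(X\setminus S)$. The paper restricts to projections that vanish on $L^1(X\setminus S)$ (the example above fails precisely there).

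Uniqueness holds within this class. Choose $h\in V_0$ with support $S$ and conjugate by $f\mapsto hf$ from $L^1(S,|h|\mu)$ onto $L^1(S,\mu)$. The conjugated projections have the same range, which contains the constant $1$. A contractive projection on $L^1$ of a finite measure fixing $1$ is the conditional expectation determined by its range (the paper cites Ando's Lemma~4 here). Since $gPg^{-1}$ stays in the class, it equals $P$.

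Your formula is exactly this projection. So you could also finish your own route: show directly that $\mathcal{B}$ is determined by $V_0$, and that replacing $h$ by $h\psi$, with $\psi$ $\mathcal{B}$-measurable and nonvanishing on $S$, leaves the formula unchanged.

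Finally, separability does not make $V$ an $L^p$-space over a Lebesgue space. $V$ itself need not be separable, so point-realized Lamperti maps on $V$ are not available. The role of separability is to make $S$ $\sigma$-finite. Then the finite-measure results apply on $L^p(S)$, and the projection is extended by zero off $S$.
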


\begin{proof}
The cases $V_0=0$ and $p=2$ are clear. First assume that $V_0$ is linearly isometric to an $L^p$-space. An application of \cite[Theorem~4]{ando:contractive} yields a contractive projection $P\colon V\to V_0$. One caveat is in order. The paper \cite{ando:contractive} requires $L^p$-spaces to be realized
by $L^p(X,\mu)$ with a finite measure~$\mu$. Up to linear isometry, this is equivalent to requiring that~$\mu$ is $\sigma$-finite. 
We can reduce to this situation as follows. We write $V=L^p(X,\mu)$. The measure restricted to the common support $S$ of $V_0$ is $\sigma$-finite because $V_0$ is separable. Now apply \cite[Theorem~4]{ando:contractive} to $V_0\subset L^p(S,\mu)$ and extend the projection by zero on $L^p(X\backslash S,\mu)$. 

This projection is unique among those vanishing on $L^p(X\setminus S,\mu)$: for $p>1$, this follows from Lemma~1 and the subsequent observation in~\cite{ando:contractive}. For $p=1$, choose $h\in V_0$ with support $S$ by \cite[Lemma~3]{ando:contractive}. After conjugating by the isometry $f\mapsto hf$ from $L^1(S,|h|\,d\mu)$ onto $L^1(S,\mu)$, uniqueness follows from \cite[Lemma~4]{ando:contractive}, since the conjugated projections have the same range and fix the constant function $1$.

Since $G$ preserves $V_0$ and $S$, $gPg^{-1}$ has the same properties for every $g\in G$. So uniqueness implies equivariance.

Conversely, given a $G$-equivariant contractive projection $V\to V_0$, restrict it to $L^p(S,\mu)$, where $S$ is the common $\sigma$-finite support of $V_0$ as above. This is still a contractive projection onto $V_0$, so \cite[Theorem~4]{ando:contractive}, with the same reduction to a finite measure, implies that $V_0$ is linearly isometric to an $L^p$-space.
\end{proof}

From now on we will focus exclusively on separable $L^p$-spaces.

\begin{definition}
Let $X$ be a Lebesgue space endowed with a measure $\mu$.
We denote by $\Aut(X)$ the group of all measure class preserving automorphisms of $X$. We denote by $T< \mathbb{C}^\times$ the unit circle and by $L(X,T)$ the group of all classes of measurable maps from $X$ to $T$, defined up to null sets. $\Aut(X)$ acts on $L(X,T)$ by group automorphisms, and we denote by $\BL(X)=\Aut(X)\ltimes L(X,T)$ the corresponding semidirect product, which we call the \emph{Banach-Lamperti} group of $X$. 
For every $1\leq p <\infty$ we define the Banach Lamperti map $\Phi_p\colon \BL(X)\to \Isom(L^p(X))$ by applying pointwise multiplication for elements of $L(X,T)$ and assigning for $\phi\in \Aut(X)$ the isometry of $L^p(X)$ given by 
\[ f \mapsto \left(\frac{d\phi^*\mu}{d\mu}\right)^{\frac{1}{p}}\cdot f\circ \phi^{-1}. \]
\end{definition}

\begin{definition}
    The \emph{Mazur map} $M_{p_1,p_2}$ from the unit sphere of $L^{p_1}(X)$ to the unit sphere of $L^{p_2}(X)$ is defined by $f\mapsto \operatorname{sgn}(f) \cdot |f|^{\frac{p_1}{p_2}}$, where $\operatorname{sgn}(z)=z/|z|$ for $z\neq0$ and $\operatorname{sgn}(0)=0$.
\end{definition}

\begin{theorem}[Banach-Lamperti, Mazur] \label{thm:BLM}
Let $X$ be a Lebesgue space and endow the groups $\Isom(L^p(X))$ with the SOT topology.
The group $\BL(X)$ has a group topology such that the maps 
$\Phi_p$ are all topological group isomorphisms onto their images, which are closed, and $\Phi_p$ is actually surjective for $p\neq 2$.
For each pair $p_1,p_2$, the Mazur map $M_{p_1,p_2}$ is uniformly continuous and it conjugates the actions of $\BL(X)$ on the corresponding unit spheres via the  Banach-Lamperti maps, that is for every $g\in \BL(X)$, $M_{p_1,p_2}\circ \Phi_{p_1}(g)=\Phi_{p_2}(g)\circ M_{p_1,p_2}$.
\end{theorem}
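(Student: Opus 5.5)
The statement combines three classical ingredients, so the plan is to assemble them and supply the topological bookkeeping. \textbf{Step 1 (Banach--Lamperti).} I take as known the Banach--Lamperti description of the isometries of $L^p(X)$ for $p\neq 2$: every surjective linear isometry has the form $f\mapsto h\cdot(\frac{d\phi^*\mu}{d\mu})^{1/p}\, f\circ\phi^{-1}$ with $h\in L(X,T)$ and $\phi\in\Aut(X)$. Injectivity of $\Phi_p$ (for every $p$) is checked by applying a putative kernel element to indicator functions of sets of finite measure. Applying $\Phi_p(g)$ to $1_A$ gives a function supported on $\phi(A)$, so $\phi$ acts trivially on the measure algebra and is the identity mod null sets. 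The multiplier $h$ is then recovered as the image of $1_A$ divided by $1_A$, hence $h=1$. It is also routine to check that $\Phi_p$ is a homomorphism, with the cocycle identity for Radon--Nikodym derivatives handling the composition.

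\textbf{Step 2 (topology and Mazur equivariance).} I define the topology on $\BL(X)$ as the pullback of the SOT on $\Isom(L^p(X))$ via $\Phi_p$, say for $p=1$. The key point is that this topology does not depend on $p$. For this I first verify the intertwining identity $M_{p_1,p_2}\circ\Phi_{p_1}(g)=\Phi_{p_2}(g)\circ M_{p_1,p_2}$ pointwise. The multiplier $h$ has modulus one, so it commutes with $\operatorname{sgn}$ and passes through unchanged. The density factor transforms as $\bigl((\tfrac{d\phi^*\mu}{d\mu})^{1/p_1}\bigr)^{p_1/p_2}=(\tfrac{d\phi^*\mu}{d\mu})^{1/p_2}$, and composition with $\phi^{-1}$ commutes with pointwise operations.

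Next I invoke the classical uniform continuity of the Mazur map on unit spheres, via the estimate $\|M_{p_1,p_2}f-M_{p_1,p_2}g\|_{p_2}\le C\|f-g\|_{p_1}^{\min(1,p_1/p_2)}$. The Mazur map is a homeomorphism between the spheres, with inverse $M_{p_2,p_1}$. Since SOT convergence of isometries can be tested on the unit sphere, equivariance together with uniform continuity shows that SOT convergence of $\Phi_{p_1}(g_n)$ is equivalent to SOT convergence of $\Phi_{p_2}(g_n)$. So all the pulled-back topologies agree, and $\Phi_p$ is a topological group isomorphism onto its image.

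\textbf{Step 3 (closedness).} For $p\neq2$ the image is all of $\Isom(L^p(X))$, hence closed. For $p=2$, suppose $\Phi_2(g_n)\to U$ in SOT with $U$ unitary. I transport along the Mazur map: $M_{2,1}\circ U\circ M_{1,2}$ is the SOT limit on the unit sphere of $\Phi_1(g_n)$. By completeness of the SOT limit on $\Isom(L^1)$, this limit equals $\Phi_1(g)$ for some $g$, after extending by homogeneity and using surjectivity for $p=1$. One must check that the limit map on the sphere extends to a linear isometry; this holds because it is a pointwise limit of linear isometries restricted to the sphere, and the restrictions converge uniformly continuously through the Mazur map. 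It then follows that $U=\Phi_2(g)$.

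The main obstacle is this closedness step together with the SOT-completeness issue. The SOT limit of isometries of $L^1$ need not be surjective, and is surjective only if the inverses also converge. I would handle this by applying the same Mazur argument to $g_n^{-1}$, using that $U^{-1}=U^*$ is the SOT limit of $\Phi_2(g_n)^{-1}$ for unitaries.
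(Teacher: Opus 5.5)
Your proof is correct. Steps 1--2 follow the paper's argument: Banach--Lamperti gives surjectivity for $p\neq 2$, and the Mazur intertwining together with continuity of $M_{p_1,p_2}$ and of its inverse $M_{p_2,p_1}$ shows that the SOT topologies pulled back along the various $\Phi_p$ coincide. The paper pulls back along $\Phi_2$ rather than $\Phi_1$, which is immaterial.

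The one genuine difference is the closedness of $\operatorname{im}\Phi_2$ in the unitary group, which the paper simply declares clear. Presumably it has in mind an intrinsic description of $\operatorname{im}\Phi_2$, for instance as the unitaries $U$ with $U L^\infty(X) U^{-1}=L^\infty(X)$. That condition is visibly SOT-closed, but identifying it with $\operatorname{im}\Phi_2$ requires point realization of automorphisms of the measure algebra, i.e.\ essentially the Lamperti analysis carried out at $p=2$.

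You instead deduce closedness at $p=2$ from surjectivity at $p=1$:
\begin{itemize}
\item conjugating by the Mazur map turns an SOT limit $U$ of $\Phi_2(g_n)$ into an SOT limit of $\Phi_1(g_n)$;
\item running the same argument on the inverses shows this limit is surjective;
\item Banach--Lamperti then identifies the limit with some $\Phi_1(g)$;
\item equivariance gives $U=\Phi_2(g)$.
\end{itemize}
This uses nothing beyond the ingredients already in play, which is a real economy over the paper's implicit route.

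Three small remarks.
\begin{itemize}
\item Inversion is SOT-continuous on any group of surjective isometries, since $\|T_n^{-1}x-T^{-1}x\|=\|x-T_nT^{-1}x\|$. So you do not need $U^{-1}=U^*$.
\item Steps 2--3 use only continuity of the Mazur maps. Uniform continuity matters only later, for the statements about almost invariant vectors.
\item Working with sequences is legitimate because the SOT on these isometry groups of separable spaces is metrizable.
\end{itemize}
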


\begin{proof}
It is clear that the maps $\Phi_p$ are injective
and that the image of $\Phi_2$ is closed.
We identify $\BL(X)$ with its image in $\Isom(L^2(X))$, thus endow it with a group topology, making $\Phi_2$ an isomorphism onto its image.
The surjectivity of $\Phi_p$ for $p\neq 2$ is the main point of the classical Banach-Lamperti Theorem, \cite[Theorem 3.1]{Lamperti} (see also the Remark at the end of \S3 regarding point realization).
The uniform continuity of the Mazur map is given in \cite[Theorem 9.1]{Benyamini-Lindenstrauss}.
The commutation relation was observed in \cite[Lemma 4.2]{baderfurmangelandermonod:BFGM:property}.
It follows that the maps $\Phi_p$ are all topological group isomorphisms onto their images, which are closed.
\end{proof}

In view of the above, from now on we consider the groups $\BL(X)$ as a topological group.
The following corollary of \cref{thm:BLM} was observed in \cite{baderfurmangelandermonod:BFGM:property}.

\begin{lemma} \label{lem:noaiBL}
Given a Lebesgue space $X$ and a continuous homomorphism $\pi\colon G\to \BL(X)$,
the various representations $\Phi_p\circ \pi$ of $G$ on $L^p(X)$, $1\leq p<\infty$ either all have almost invariant vectors or none of them has almost invariant vectors.
\end{lemma}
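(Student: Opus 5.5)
The plan is to transport almost invariant unit vectors from one $p_1$ to another $p_2$ using the Mazur map, relying only on \cref{thm:BLM}. Fix $1\leq p_1,p_2<\infty$ and suppose $\Phi_{p_1}\circ\pi$ has almost invariant vectors. That is, for every compact $K\subset G$ and every $\epsilon>0$ there is a unit vector $f\in L^{p_1}(X)$ with $\sup_{g\in K}\|\Phi_{p_1}(\pi(g))f-f\|<\epsilon$. Since the roles of $p_1$ and $p_2$ are symmetric, it suffices to show that $\Phi_{p_2}\circ\pi$ then has almost invariant vectors as well.

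Given $K$ and $\epsilon$, I first use the uniform continuity of $M_{p_1,p_2}$ on the unit sphere of $L^{p_1}(X)$, as provided by \cref{thm:BLM}. This gives a $\delta>0$ such that unit vectors $f,f'$ with $\|f-f'\|_{p_1}<\delta$ satisfy $\|M_{p_1,p_2}f-M_{p_1,p_2}f'\|_{p_2}<\epsilon$. Next, I pick a unit vector $f$ which is $(K,\delta)$-invariant for $\Phi_{p_1}\circ\pi$, and set $h=M_{p_1,p_2}f$; this is a unit vector in $L^{p_2}(X)$.

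For $g\in K$, the intertwining relation $M_{p_1,p_2}\circ\Phi_{p_1}(\pi(g))=\Phi_{p_2}(\pi(g))\circ M_{p_1,p_2}$ gives
\[
\|\Phi_{p_2}(\pi(g))h-h\|_{p_2}=\|M_{p_1,p_2}(\Phi_{p_1}(\pi(g))f)-M_{p_1,p_2}f\|_{p_2}<\epsilon,
\]
using that $\Phi_{p_1}(\pi(g))f$ is again a unit vector, since $\Phi_{p_1}(\pi(g))$ is an isometry. Hence $h$ is $(K,\epsilon)$-invariant, which proves the claim.

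The only point needing care is that \emph{uniform} continuity is what allows a single $\delta$ to serve all $g\in K$ simultaneously; pointwise continuity would not be enough. Continuity of each $\Phi_p\circ\pi$ holds because $\Phi_p$ is a topological isomorphism onto its image, but it is not actually needed for the transfer. The main obstacle is the uniform continuity of the Mazur map itself, which is taken here from \cref{thm:BLM}.
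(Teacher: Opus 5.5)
Your proof is correct and takes the same approach as the paper, which deduces the lemma from the uniform continuity and the equivariance of the Mazur map (citing \cite[\S4.a]{baderfurmangelandermonod:BFGM:property}). You have simply written that transfer out explicitly, including the observation that uniform continuity gives one $\delta$ that works for every $g\in K$ simultaneously.
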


\begin{proof}
    This follows from the uniform continuity and the equivariance property of the Mazur map, see \cite[\S4.a]{baderfurmangelandermonod:BFGM:property}.
\end{proof}


\begin{definition} \label{def:noaiBL}
    Given a locally compact second countable group $G$ and a continuous homomorphism $\pi\colon G\to \BL(X)$ we say that $\pi$ has almost invariant vectors if for some (equivalently, for all) $p$, the corresponding representation $\Phi_p\circ\pi$ has almost invariant vectors.
\end{definition}

Let $X$ and $Y$ be two Lebesgue spaces, let $T<\mathbb{C}^\times$ denote the unit circle, as before.
For $1\leq p<\infty$, the pointwise product map $L^p(X)\times L^p(Y) \to L^p(X\times Y)$
gives rise to a well defined (SOT-)continuous homomorphism 
$\Isom(L^p(X))\times\Isom(L^p(Y))\to\Isom(L^p(X\times Y))$.
Similarly, pointwise multiplication gives a homomorphisms $L(X,T) \times L(Y,T) \to L(X\times Y,T)$, which together with the obvious homomorphism 
$\Aut(X) \times \Aut(Y) \to \Aut(X\times Y)$ gives rise to a homomorphism 
$\BL(X) \times \BL(Y) \to \BL(X\times Y)$.
We observe the compatibility of the above homomorphisms

\[
\begin{tikzcd}[column sep=huge, row sep=large]
\BL(X) \times \BL(Y)
    \arrow[r]
    \arrow[d]
&
\BL(X\times Y)
    \arrow[d]
\\
\Isom(L^p(X)) \times \Isom(L^p(Y))
    \arrow[r]
&
\Isom(L^p(X\times Y))
\end{tikzcd}
\]
and deduce that the homomorphism $\BL(X) \times \BL(Y) \to \BL(X\times Y)$ is continuous.
Given an lcsc group $G$ and continuous homomorphisms $\pi\colon G\to \BL(X)$ and $\rho:G\to \BL(Y)$ we denote by $\pi\otimes \rho:G \to \BL(X\times Y)$ the corresponding composition
$G \to \BL(X) \times \BL(Y) \to \BL(X\times Y)$.

\begin{lemma} \label{lem:spectral gap induction}
    Let $G$ be a non-compact simply connected simple group over a local field of characteristic 0,
    acting on a separable $L^p$-space $V$ by linear isometries.
    Let $X$ be a Lebesgue space endowed with a probability measure preserving action of $G$ and consider the corresponding linear isometric representation of $G$ on $L^p(X,V)$.
    Then the action of $G$ on $V$ has almost invariant vectors if and only if the action  of $G$ on $L^p(X,V)$ has almost invariant vectors.
\end{lemma}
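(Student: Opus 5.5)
The plan is to prove both implications directly. The hypotheses on $G$ (simple, simply connected, characteristic $0$) will play essentially no role; the only structure used is that $X$ carries a probability measure preserving action and that $V$ is an $L^p$-space. By \cref{thm:kak} and separability, realize $V=L^p(Y)$ with $Y$ a Lebesgue space. By \cref{thm:BLM}, the action of $G$ on $V$ is given by a continuous homomorphism $\rho\colon G\to\BL(Y)$ with $\Phi_p\circ\rho$ the given representation. (For $p=2$ we simply work with the unitary action directly; the argument below only uses the form of the action on fibers, which for $p=2$ I would check by hand.) Identify $L^p(X,V)=L^p(X\times Y)$. The $G$-action is then $(gF)(x,\cdot)=\rho(g)\bigl(F(g^{-1}x,\cdot)\bigr)$, which is the representation $\Phi_p\circ(\pi\otimes\rho)$, where $\pi\colon G\to \BL(X)$ comes from the measure preserving action on $X$. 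I would verify this identification explicitly, including the Radon–Nikodym factors; on the $X$ side they are trivial since $\mu_X$ is preserved.

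\emph{Easy direction.} Suppose $v_n\in V$ are unit vectors that are almost invariant. Let $F_n(x)=v_n$ be the constant functions. Because $\mu_X$ is a probability measure, $\|F_n\|_{L^p(X,V)}=\|v_n\|=1$. Moreover $gF_n-F_n$ is the constant function $gv_n-v_n$, so $\|gF_n-F_n\|=\|gv_n-v_n\|$, uniformly on compact sets. Hence $L^p(X,V)$ has almost invariant vectors.

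\emph{Converse.} Given $F\in L^p(X\times Y)$, define the fiberwise norm
\[
N(F)(y)=\Bigl(\int_X |F(x,y)|^p\,d\mu_X(x)\Bigr)^{1/p}\in L^p(Y).
\]
By Fubini, $\|N(F)\|_{L^p(Y)}=\|F\|_{L^p(X\times Y)}$. Two properties of $N$ do the work.
\begin{enumerate}
\item Pointwise in $y$, the reverse triangle inequality in $L^p(X)$ gives $|N(F)(y)-N(F')(y)|\le N(F-F')(y)$. Integrating yields $\|N(F)-N(F')\|\le\|F-F'\|$.
\item $N$ is equivariant: $N(gF)=\Phi_p(\rho(g))N(F)$. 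To see this, decompose $\rho(g)$ into a phase in $L(Y,T)$ and an element of $\Aut(Y)$ with its factor $(d\phi^*\mu/d\mu)^{1/p}$. Both act on the $y$-variable in a way that commutes with taking the $L^p(X)$-norm of the fiber: the phase has modulus one, and the Radon–Nikodym factor depends only on $y$ and is positive. The $x$-translation by $g^{-1}$ preserves $\mu_X$, so it does not change fiber norms.
\end{enumerate}
Now let $F_n$ be almost invariant unit vectors and set $f_n=N(F_n)$. Then $\|f_n\|=1$, and
\[
\|\Phi_p(\rho(g))f_n-f_n\|=\|N(gF_n)-N(F_n)\|\le\|gF_n-F_n\|,
\]
uniformly on compact subsets of $G$. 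So $V$ has almost invariant vectors.

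The main point requiring care is the equivariance in step (2), that is, checking that the isometric action on $L^p(X,V)$ really is fiberwise of Banach–Lamperti type in the $Y$-variable. This is precisely where \cref{thm:BLM} is needed: for a general Banach space $V$ the "norm of the fiber" would not be a vector of $V$. For $p=2$ I would either argue directly in the Hilbert setting, using an orthonormal decomposition, or reduce to some $p\neq2$ via \cref{lem:noaiBL}, after first realizing the unitary action by elements of $\BL(Y)$ if possible.
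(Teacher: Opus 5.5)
There is a genuine gap in step (2), and it breaks the strategy. Write $\rho(g)=(\phi_g,\chi_g)\in\Aut(Y)\ltimes L(Y,T)$, and let $J_g$ denote the Radon--Nikodym factor appearing in $\Phi_p(\phi_g)$. Then $(gF)(x,y)=\chi_g(y)J_g(y)^{1/p}F(g^{-1}x,\phi_g^{-1}y)$, up to the placement of the phase. Invariance of $\mu_X$ then gives
\[
N(gF)=J_g^{1/p}\cdot N(F)\circ\phi_g^{-1}=\bigl|\Phi_p(\rho(g))N(F)\bigr|,
\]
and not $\Phi_p(\rho(g))N(F)$. Because the phase has modulus one, it drops out of $N(gF)$. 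It does not drop out of $\Phi_p(\rho(g))N(F)$.

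So $N$ only intertwines $L^p(X,V)$ with the ``modulus'' representation $g\mapsto\Phi_p(\phi_g)$, obtained by discarding the $L(Y,T)$-component of $\rho$. Your argument therefore only shows that this modulus representation has almost invariant vectors, which is strictly weaker.

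Your remark that the hypotheses on $G$ play no role should have been a warning sign. Take $G=\mathbb{Z}$, $V=\mathbb{C}$ with $n\cdot z=e^{in\theta}z$ where $e^{i\theta}\neq 1$, and $X=\bbR/2\pi\bbZ$ with Lebesgue measure and $n\cdot x=x+n\theta$. Then $V$ has no almost invariant vectors, yet $F(x)=e^{ix}$ is an invariant vector in $L^p(X,V)$. In this example $Y$ is a point and $N(F)\equiv 1$, while $\|\Phi_p(\rho(1))N(F)-N(F)\|=|e^{i\theta}-1|$ is not small. This is exactly where (2) fails: the dynamics on $X$ can absorb the phase part of the action on $V$.

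This is not just a defect of generality. Even for $G$ as in the lemma, the modulus representation can have invariant vectors while $V$ has no almost invariant vectors. For instance, let $G$ have property (T) and take $V=I^p_\Lambda(\chi)\cong L^p(G/\Lambda)$ for a nontrivial character $\chi$ of a lattice $\Lambda$. The modulus representation is then the Koopman representation, which fixes constants. But $V^G=0$ by Shapiro, so $V$ has no almost invariant vectors by Bader--Furman--Gelander--Monod. So you need an input that controls how the phases of $\rho$ interact with the Koopman action on $X$, and this is where the hypotheses on $G$ enter.

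The paper keeps your setup, namely $V=L^p(Y)$ and $L^p(X,V)\cong L^p(X\times Y)$ with action $\Phi_p\circ(\pi\otimes\rho)$. It proves the converse at $p=2$ by identifying $L^2(X,V)\cong L^2(X)\bar\otimes V$ and invoking \cite[Theorem 1.1]{Gorfine:spectral}. It then transfers to $p\neq 2$ by applying \cref{lem:noaiBL} first to $\rho$ and then to $\pi\otimes\rho$. The $\BL$-realization is only used for $p\neq 2$, where $\Phi_p$ is surjective. Your plan to realize a unitary action inside $\BL(Y)$ when $p=2$ is not available in general, since $\Phi_2$ is not onto the unitary group.
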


\begin{proof}
$V$ embeds in $L^p(X,V)$ as the subspace of constant functions, so if $V$ has almost invariant vectors then also $L^p(X,V)$ does. We will assume that $V$ has no almost invariant vectors and argue to show that $L^p(X,V)$ has no almost invariant vectors.
In case $p=2$, $V$ is a unitary representation and $L^2(X,V)$ is isomorphic to the Hilbertian tensor product $L^2(X)\bar{\otimes} V$ which has no almost invariant vectors by \cite[Theorem 1.1]{Gorfine:spectral}.
We thus assume $p\neq 2$.
We use \cref{thm:kak} to find a Lebesgue space $Y$ such that $V\cong L^p(Y)$.
The $G$-action on $X$ gives a continuous homomorphism $\pi\colon G \to \Aut(X) < \BL(X)$.
\cref{thm:BLM} guarantees that the $G$ representation on $L^p(Y)$ is via a certain continuous homomorphism $\rho:G\to \BL(Y)$.
We observe that $L^p(X,V)\cong L^p(X\times Y)$ in $G$-equivariant way, where the action on the right hand side is via $\Phi_p\circ (\pi\otimes \rho)$. We note that $\Phi_2\circ \rho$ has no almost invariant vectors by \cref{lem:noaiBL}.
By the case $p=2$, $L^2(X,L^2(Y))\cong L^2(X\times Y)$ has no almost invariant vectors.
We conclude by \cref{lem:noaiBL} that $L^p(X,V)\cong L^p(X\times Y)$ has no almost invariant vectors as well.
\end{proof}

In case the action on $X$ is trivial, $G$ could be an arbitrary lcsc group.

\begin{lemma} \label{lem:amplification}
    Let $G$ be an lcsc group acting on a separable $L^p$-space $V$ by linear isometries.
    Let $X$ be a probability measure space (with no $G$-action) and consider the corresponding action of $G$ on $L^p(X,V)$.
    Then the action of $G$ on $V$ has almost invariant vectors if and only if the action  of $G$ on $L^p(X,V)$ has almost invariant vectors.
\end{lemma}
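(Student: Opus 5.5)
The easy direction is the same as in \cref{lem:spectral gap induction}. Since $X$ is a probability space, the constant functions give an isometric $G$-equivariant embedding $V\hookrightarrow L^p(X,V)$. Hence almost invariant vectors in $V$ yield almost invariant vectors in $L^p(X,V)$. For the converse we assume $V$ has no almost invariant vectors and show that $L^p(X,V)$ has none either.

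Our plan is to avoid Hilbert space methods and work directly with $L^p$. Let $K\subset G$ be compact. Because $V$ has no almost invariant vectors, we may fix $\epsilon>0$ with $\max_{g\in K}\|gv-v\|\geq \epsilon\|v\|$ for all $v\in V$. The key point is that $G$ acts fiberwise on $L^p(X,V)$: for $f\in L^p(X,V)$ we have $(gf)(x)=g(f(x))$. Suppose $f$ is a unit vector with $\sup_{g\in K}\|gf-f\|<\delta$. We want to integrate the pointwise inequality, but the maximum over $K$ sits inside the integral, while almost invariance only controls each $g$ separately.

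To handle this, we replace the maximum by an average. Fix a Haar measure and let $m$ be normalized Haar measure restricted to a compact neighborhood $K'$ of $K$ (or of $KK$). The claim is that there is $\epsilon'>0$ with
\[
\int_{K'}\|gv-v\|^p\,dm(g)\geq \epsilon'\|v\|^p \qquad\text{for all } v\in V.
\]
This follows from no almost invariant vectors by the triangle inequality. If the average were small, Chebyshev would give a set of $g$ of large measure on which $\|gv-v\|$ is small. Any $k\in K$ can be written as $k=g_1^{-1}g_2$ with $g_1,g_2$ in that set, since two large subsets of $K'$ must meet after translation by $k$; this needs $K'\supseteq K^{\pm1}\cdot U$ chosen suitably. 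Then $\|kv-v\|\leq \|g_2v-v\|+\|g_1v-v\|$ is small, a contradiction. We expect this averaging step to be the main (though routine) obstacle: one has to choose $K'$ and the measure so that these translates overlap. Alternatively, one could cite the standard equivalence between having no almost invariant vectors and having no $L^p$-averaged almost invariant vectors.

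With the averaged inequality in hand, Fubini applied to $f$ finishes the argument:
\[
\int_{K'}\|gf-f\|^p\,dm(g)=\int_X\int_{K'}\|g f(x)-f(x)\|^p\,dm(g)\,dx\geq \epsilon'\|f\|^p.
\]
Hence some $g\in K'$ satisfies $\|gf-f\|\geq \epsilon'^{1/p}\|f\|$. So $L^p(X,V)$ has no $(K',\epsilon'^{1/p})$-invariant vectors. Measurability of $(g,x)\mapsto \|gf(x)-f(x)\|$ follows from strong continuity of the action and separability of $V$. Note that we never used an action of $G$ on $X$; this is exactly why $G$ may be an arbitrary lcsc group here.
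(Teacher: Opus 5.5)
Your argument is correct, but it takes a different route from the paper. The paper reuses the reduction from \cref{lem:spectral gap induction}:
\begin{itemize}
\item realize $V\cong L^p(Y)$ and write the action through $\rho\colon G\to\BL(Y)$ via \cref{thm:BLM};
\item identify $L^p(X,V)\cong L^p(X\times Y)$, with $G$ acting through $\Phi_p\circ(\mathrm{triv}\otimes\rho)$;
\item use the Mazur map (\cref{lem:noaiBL}) to pass to $p=2$, where $L^2(X,V)\cong L^2(X)\bar\otimes V$ is an amplification of $V$.
\end{itemize}
You instead use directly that the action on $L^p(X,V)$ is fiberwise. Once the maximum over $K$ is traded for an $L^p$-average over $K'$ (a standard reformulation of spectral gap), Tonelli gives the bound at once.

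Your route is more elementary and more general. It never uses that $V$ is an $L^p$-space: any Banach space with a continuous isometric action works, for every $1\le p<\infty$. It also avoids the Banach--Lamperti and Mazur machinery.

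The paper's route buys economy. It is verbatim the framework of \cref{lem:spectral gap induction}. There $G$ also acts on $X$, the action is no longer fiberwise, and your Fubini argument is unavailable (the $p=2$ case there needs Gorfine's theorem).

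Two small points to tidy up:
\begin{itemize}
\item $K$ and $\epsilon$ must be the pair witnessing the absence of almost invariant vectors, not an arbitrary compact $K$.
\item In the averaging step, write $k=g_2g_1^{-1}$ rather than $k=g_1^{-1}g_2$; the bound $\|kv-v\|\le\|g_1v-v\|+\|g_2v-v\|$ still holds. You then need $kA\cap A\neq\emptyset$, which involves only left translates. Take $K'=Q\cup KQ$ for a compact $Q$ with nonempty interior and require $\mu(K'\setminus A)<\mu(Q)/2$. As you wrote it, you need right translates, so you must account for the modular function on $K$.
\end{itemize}
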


\begin{proof}
The proof is essentially the same as the proof of \cref{lem:spectral gap induction}, except for the case $p=2$, which is simpler.
In this case, for the Hilbert space $V$, $L^2(X,V)\cong L^2(X)\bar{\otimes} V$ is an amplification of $V$ and one has almost invariants if and only if the other does.
\end{proof}

\subsubsection{\texorpdfstring{$L$}{}-embedded Banach spaces}

Closed subspaces $V$ and $W$ of a Banach space $U$ are said to be \emph{$\ell^1$-direct complementary} if $U=V\oplus W$
and for $v\in V$ and $w\in W$ we have $||v+w||=||v||+||w||$.
In such a case we say that $V$ is an \emph{$\ell^1$-direct complement} and we observe it determines $W$ uniquely.

\begin{definition}
    A Banach space $V$ is \emph{$L$-embedded} if its image in its bidual $V^{**}$ is an $\ell^1$-direct complement.
    In such a case we denote by $V_s<V^{**}$ its unique $\ell^1$-direct complement.
\end{definition}

For a thorough discussion of this class of Banach spaces, see the manuscript \cite{harmandwernerwerner:Mideals}.
Reflexive Banach spaces are $L$-embedded in a trivial way. 
In particular, $L^p$-spaces for $1<p<\infty$ are $L$-embedded.
It turns out that this also holds for $p=1$.

\begin{proposition}
    Every $L^1$-space is L-embedded. 
\end{proposition}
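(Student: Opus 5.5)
The plan is to realize $V\cong L^1(X)$ via \cref{thm:kak} and decompose its bidual, using Banach lattice duality. The dual $V^*$ is $L^\infty(X)$, and $V^{**}=L^\infty(X)^*$ is an abstract $L$-space, i.e.\ a real (complexified) Banach lattice satisfying $\|x+y\|=\|x\|+\|y\|$ for disjoint positive $x,y$. Inside it, $V$ sits as the canonical image. I would show two things: that this image is a \emph{band} (an order-closed ideal, closed under suprema of bounded upward directed families), and that in an abstract $L$-space every band is a projection band whose complement is its disjoint complement. Then $V^{**}=V\oplus V^{\perp_d}$, and the norm additivity for disjoint elements gives $\|v+w\|=\|\,|v|+|w|\,\|=\|v\|+\|w\|$. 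This yields the $\ell^1$-direct decomposition.

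The key steps, in order, are as follows.
\begin{enumerate}
\item Identify $V^{**}$ with the space of finitely additive bounded (complex) measures on the measure algebra of $X$ that are absolutely continuous with respect to $\mu$. Under this identification, $V$ corresponds to the countably additive ones, via the Radon--Nikodym theorem. If $\mu$ is not $\sigma$-finite, reduce first to the $\sigma$-finite case, since every element of $V$ has $\sigma$-finite support; a direct-sum argument should handle this.
\item Apply the Yosida--Hewitt decomposition: every bounded finitely additive $\nu$ splits uniquely as $\nu=\nu_c+\nu_{pa}$, with $\nu_c$ countably additive and $\nu_{pa}$ purely finitely additive. One can obtain $\nu_c$ for positive $\nu$ as the largest countably additive measure below $\nu$, and then extend by linearity to real and complex measures.
\item Verify the norm identity $\|\nu\|=\|\nu_c\|+\|\nu_{pa}\|$ in total variation. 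This holds because $\nu_c$ and $\nu_{pa}$ are lattice-disjoint, and because $|\nu|=|\nu_c|+|\nu_{pa}|$ for disjoint elements. Since the total variation norm is additive on positive elements of an $L$-space, the identity follows.
\item Conclude by setting $V_s$ equal to the purely finitely additive part. This subspace is closed, and it is exactly the $\ell^1$-complement of $V$.
\end{enumerate}

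The main obstacle is steps 1 and 2: showing that the countably additive part of an absolutely continuous finitely additive measure lies in the image of $L^1$, and that the decomposition is lattice-disjoint. For positive $\nu$ I would define
\[
\nu_c(A)=\inf\Bigl\{\sum_n \nu(A_n) \;:\; A=\bigsqcup_n A_n\Bigr\},
\]
check that it is countably additive and maximal, and verify that $(\nu-\nu_c)\wedge\lambda=0$ for every countably additive $\lambda\geq0$. Alternatively, I would cite the general lattice fact that the order-continuous dual $(V^*)^\sim_n$, which equals $V$ for $V=L^1$, is a band in $V^{**}$. Since $V^{**}$ is Dedekind complete, this band is a projection band. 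This is the cleanest route, and I would rely on standard references such as \cite{schaefer:Banachlattices} or \cite{harmandwernerwerner:Mideals}.
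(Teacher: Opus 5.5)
Your proposal is correct. The paper gives no argument of its own: it simply cites Harmand--Werner--Werner (Chapter IV, Example 1.1). Your decomposition of $L^1(\mu)^{**}=L^\infty(\mu)^*$ into the band of countably additive (order continuous) functionals and its disjoint complement, i.e.\ the Yosida--Hewitt decomposition, is the standard argument behind that fact.

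The abstract lattice route, where an AL-space is a band in its Dedekind complete AL-bidual, handles arbitrary measure spaces at once. The concrete measure-theoretic route instead needs your non-$\sigma$-finite reduction. That reduction relies on the infinite case of \cref{lem:l1sums}, which the paper takes from the same reference.
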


\begin{proof}
    See \cite[Chapter IV, Example 1.1]{harmandwernerwerner:Mideals}.
\end{proof}

The following fixed point theorem will be useful for us.

\begin{theorem}{\cite[Theorem A]{badergelandermonod:BGM:fixedpoint}} \label{BGM}
Let $V$ be an L-embedded Banach space
and let $B\subset V$ be a non-empty bounded subset.
Then there is a point in $V$ which is fixed by every isometry of $V$ which preserves $B$. 
\end{theorem}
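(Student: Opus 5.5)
The plan is to take the Chebyshev-center approach of Bader--Gelander--Monod, which exploits the $\ell^1$-decomposition $V^{**}=V\oplus V_s$. Let $\mathcal G$ be the group of isometries of $V$ that preserve $B$. We may assume each such isometry is affine; if only linear isometries are wanted, the same argument applies, since the fixed point we construct is canonical. Each $g\in\mathcal G$ extends to an isometry $g^{**}$ of $V^{**}$ that is weak-$*$ continuous. The first thing to check is that $g^{**}$ preserves the decomposition $V\oplus V_s$. Preserving $V$ is clear. For $V_s$, note that $g^{**}(V_s)$ is again an $\ell^1$-complement of $V$, and $\ell^1$-complements are unique. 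This uniqueness is exactly the observation the paper makes right after defining $\ell^1$-direct complements.

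Next, consider the function $r(x)=\sup_{b\in B}\|x-b\|$ on $V^{**}$. It is convex and weak-$*$ lower semicontinuous, because each map $x\mapsto\|x-b\|$ is weak-$*$ l.s.c. Let $r_0=\inf_{V^{**}} r$. By Banach--Alaoglu, the set $C=\{x\in V^{**} : r(x)=r_0\}$ is nonempty, convex, weak-$*$ compact and bounded, and it is invariant under every $g^{**}$.

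The heart of the argument is to transfer this to $V$. Write $x=v+s$ with $v\in V$ and $s\in V_s$. Since $b\in V$, we get $\|x-b\|=\|v-b\|+\|s\|$, and hence $r(x)=r_V(v)+\|s\|$. So the minimum of $r$ can only be attained when $s=0$. It follows that $r_0=\inf_V r_V$, and $C\subset V$ is exactly the set of Chebyshev centers of $B$ in $V$. In particular $C$ is a nonempty bounded convex subset of $V$ that is weakly compact, since on $V$ the weak-$*$ topology of $V^{**}$ restricts to the weak topology. It is also $\mathcal G$-invariant.

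It remains to find a common fixed point in $C$. This is the step I expect to be the main obstacle, because we have no uniform convexity and cannot argue that the center is unique. My first attempt would be to apply Ryll-Nardzewski's theorem, which requires the action of $\mathcal G$ on the weakly compact convex set $C$ to be distal for the norm. Isometries do act distally in the norm: $\|gx-gy\|=\|x-y\|$ is bounded away from zero whenever $x\neq y$. Ryll-Nardzewski then gives a point of $C$ fixed by all of $\mathcal G$. This applies because the affine isometries are weakly continuous, and by the Mazur--Ulam theorem every surjective isometry is affine. I would also handle the case where $\mathcal G$ is not a group but merely a set of isometries each preserving $B$. One can replace it with the semigroup it generates and apply the semigroup form of Ryll-Nardzewski, which also needs only norm-distality. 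The resulting point in $V$ is fixed by every isometry preserving $B$, as the statement requires.
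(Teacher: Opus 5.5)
Your argument is correct and is essentially the Bader--Gelander--Monod proof that the paper cites without reproducing. You minimize $r(x)=\sup_{b\in B}\|x-b\|$ over weak-$*$ compact sublevel sets in $V^{**}$, and the splitting $r(v+s)=r(v)+\|s\|$ forces every Chebyshev center into $V$; this gives a nonempty weakly compact convex set invariant under the (affine, by Mazur--Ulam) isometries preserving $B$, and Ryll-Nardzewski then yields a common fixed point. There are two harmless slips: for affine $g$ only the linear part of $g^{**}$ preserves $V_s$, which you never need since invariance of $C$ follows directly from $g(B)=B$; and the Ryll-Nardzewski point is not canonical, so the reduction to affine maps should rest on Mazur--Ulam for surjective isometries rather than on canonicity.
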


The class of $L$-embedded Banach spaces is not closed under taking subspaces or quotients, but in some specific situations.
The slightly technical \cref{lem:Limage} below gives one of them.
Before stating it we will need some preparation.

\begin{lemma} \label{lem:l1sums}
    An $\ell^1$ direct sum of L-embedded spaces is L-embedded and for L-embedded spaces $V_1,\ldots,V_n$ the identification 
    \[ (V_1 \oplus_1 \cdots \oplus_1 V_n)^{**}\cong (V_1)^{**} \oplus_1 \cdots \oplus_1 (V_n)^{**} \]
    restricts to 
    \[ (V_1 \oplus_1 \cdots \oplus_1 V_n)_s \cong (V_1)_s \oplus_1 \cdots \oplus_1 (V_n)_s. \]
\end{lemma}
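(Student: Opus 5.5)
By induction on $n$ it suffices to treat $n=2$: the $\ell^1$-sum is associative and isometric to the iterated sum, and the identification of biduals is compatible with iteration. So let $U=V_1\oplus_1 V_2$ with $V_1,V_2$ L-embedded.

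First I would record the canonical identifications. Since $U^*\cong V_1^*\oplus_\infty V_2^*$, we get
\[
U^{**}\cong (V_1^*\oplus_\infty V_2^*)^*\cong V_1^{**}\oplus_1 V_2^{**}.
\]
Under this isometric identification the canonical embedding $U\to U^{**}$ is the direct sum of the canonical embeddings $V_i\to V_i^{**}$. Both facts are routine dualities of $\ell^\infty$/$\ell^1$ sums of two spaces, with the norm of a functional on an $\ell^\infty$-sum being the sum of the norms of its restrictions.

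Next I would set $W=(V_1)_s\oplus(V_2)_s\subset U^{**}$ and verify that it is an $\ell^1$-direct complement of $U$. For the decomposition: an element $(\phi_1,\phi_2)\in V_1^{**}\oplus_1V_2^{**}$ splits as $\phi_i=v_i+s_i$ with $v_i\in V_i$ and $s_i\in (V_i)_s$, which gives $U^{**}=U\oplus W$ algebraically; closedness of $W$ is clear. For the norms: given $u=(v_1,v_2)\in U$ and $w=(s_1,s_2)\in W$,
\[
\|u+w\|=\|v_1+s_1\|+\|v_2+s_2\|=\|v_1\|+\|s_1\|+\|v_2\|+\|s_2\|=\|u\|+\|w\|,
\]
using that $V_i$ is L-embedded. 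Hence $U$ is L-embedded. Since the $\ell^1$-complement is uniquely determined, as observed before the definition, this gives $U_s=W$, which is the claimed restriction of the identification.

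The main (mild) obstacle is the care in the first step: one must check that the bidual identification really is the composite of the two dualities and that it intertwines the canonical embeddings. Only with this in hand do "the image of $U$" and "$W$" sit in $U^{**}$ as claimed. Everything else is bookkeeping.
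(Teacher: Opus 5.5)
Your computation is correct for finitely many summands. It is exactly the verification the paper calls straightforward, and it proves the second assertion of the lemma in full, since that assertion concerns only $V_1,\ldots,V_n$.

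The gap is in the first sentence, which is meant for arbitrary $\ell^1$-direct sums: the paper's proof explicitly defers ``the general statement'' to \cite[Chapter IV, Proposition 1.5]{harmandwernerwerner:Mideals}. Your opening reduction by induction on $n$ silently restricts to finite families, and your method does not extend to infinite ones. Take $U=\ell^1(I;V_i)$ with infinitely many $V_i\neq 0$. Then $U^*=\ell^\infty(I;V_i^*)$, but $U^{**}$ is strictly larger than $\ell^1(I;V_i^{**})$. Already for $V_i=\mathbb{C}$ one has $(\ell^1)^{**}=(\ell^\infty)^*\supsetneq\ell^1$. So $U\oplus\ell^1(I;(V_i)_s)$ is not all of $U^{**}$, and your decomposition step fails.

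The missing ingredient is that $J=c_0(I;V_i^*)$ is an $M$-ideal in $\ell^\infty(I;V_i^*)$. This yields an $L$-decomposition
\[
U^{**}=\ell^\infty(I;V_i^*)^*=J^\#\oplus_1 J^\perp,\qquad J^\#\cong J^*=\ell^1(I;V_i^{**}),
\]
with the canonical copy of $U$ lying in $J^\#$. On $J^\#$ your coordinatewise argument works verbatim. Writing $\phi_i=v_i+s_i$ with $\|\phi_i\|=\|v_i\|+\|s_i\|$ gives both summability conditions, so $\ell^1(I;V_i^{**})=\ell^1(I;V_i)\oplus_1\ell^1(I;(V_i)_s)$. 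Hence $U$ is $L$-embedded with $U_s=\ell^1(I;(V_i)_s)\oplus_1 J^\perp$.

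For the paper's purposes your finite argument is enough, since the lemma is only applied to $V^n$ and $V^m$ in \cref{lem:algA} and \cref{lem:Limage}. As a proof of the lemma as stated, however, it covers only the finite case.
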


\begin{proof}
    The statement is straightforward for finite direct sums. For the general statement, see \cite[Chapter IV, Proposition 1.5]{harmandwernerwerner:Mideals}.
\end{proof}

\begin{lemma} \label{lem:algA}
    Let $V$ be an L-embedded Banach space and let $A<\End(V)$ be the norm closed algebra generated by all linear, surjective isometries of $V$.
    Then the action of $A$ on $V$ extends to an action on $V^{**}$ which preserves $V_s$.
    For $m,n\in\mathbb{N}$, every matrix $T\in M_{n\times m}(A)$, viewed as an operator $V^n\to V^m$, where these are taken with the $\ell^1$ direct sum norm, extends to an operator $(V^n)^{**}\to (V^m)^{**}$ and the image of $(V^n)_s$ is contained in $(V^m)_s$.
\end{lemma}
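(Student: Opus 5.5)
The plan is to work with the L-projection $P\colon V^{**}\to V^{**}$ onto $V$ (viewed inside $V^{**}$) with kernel $V_s$. For $\Phi\in V^{**}$ this projection satisfies $\|\Phi\|=\|P\Phi\|+\|\Phi-P\Phi\|$. The whole lemma reduces to the following claim: every operator $S$ in the norm closed algebra $A$ extends to $S^{**}$, and $S^{**}$ commutes with $P$. Granted this, $S^{**}$ preserves $V=\operatorname{im}P$ and $V_s=\ker P$.

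\emph{Step 1: a single isometry.} Let $T$ be a surjective linear isometry of $V$. Then $T^{**}$ is a surjective isometry of $V^{**}$ which restricts to $T$ on $V$. Set $Q=T^{**}P(T^{**})^{-1}$. This is again an L-projection, and its range is $T^{**}(V)=V$. The key point is that an L-projection is determined by its range: for L-projections $P,Q$ one has $PQ=QP$ (Cunningham's lemma, see \cite[Chapter I]{harmandwernerwerner:Mideals}). Two commuting projections with the same range coincide, since $P=QP=PQ=Q$. Hence $Q=P$, i.e.\ $T^{**}P=PT^{**}$. In particular $T^{**}(V_s)\subseteq V_s$.

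\emph{Step 2: passing to the algebra.} The set $\{S\in\End(V): S^{**}P=PS^{**}\}$ is a subalgebra, since $(S_1S_2)^{**}=S_1^{**}S_2^{**}$ and bidualization is linear. It is also norm closed, because $S\mapsto S^{**}$ is isometric. By Step 1 it contains all surjective isometries, so it contains $A$. The extension of $S\in A$ to $V^{**}$ is $S^{**}$, and this extension is compatible with products, so the action of $A$ on $V$ extends to an action on $V^{**}$ preserving $V_s$. This proves the first claim.

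\emph{Step 3: matrices.} Let $T=(T_{ij})\in M_{n\times m}(A)$. It extends to $T^{**}$, and under the identification of \cref{lem:l1sums}, $(V^n)^{**}\cong (V^{**})^n$, the operator $T^{**}$ is the matrix $(T_{ij}^{**})$. This holds because biduals commute with finite direct sums and with the coordinate inclusions and projections. Again by \cref{lem:l1sums}, $(V^n)_s\cong (V_s)^n$. Each entry $T_{ij}^{**}$ maps $V_s$ into $V_s$ by Step 2, so $T^{**}$ maps $(V_s)^n$ into $(V_s)^m$.

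The main obstacle is Step 1, namely the fact that an L-projection is determined by its range, via commutation of L-projections. If that citation is unavailable, I would prove it directly. For $x$ in the range of $P$ one computes $\|x\|=\|Qx\|+\|x-Qx\|$, then expands the norms using both decompositions. Iterating the resulting inequalities gives $\|Qx - PQx\|=0$ and the symmetric identity. This is the standard Cunningham argument.
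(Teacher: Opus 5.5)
Your proof is correct and follows essentially the paper's route. The paper's isometry step also uses that $T^{**}(V_s)$ is an $\ell^1$-complement of $T^{**}(V)=V$, hence equals $V_s$ by uniqueness of $\ell^1$-complements, which is exactly your ``an L-projection is determined by its range''. It then passes to $A$ by linearity and norm continuity, and to matrices via \cref{lem:l1sums}.

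You do not need Cunningham's lemma here, since uniqueness has a direct proof. If $W$ is another $\ell^1$-complement of $V$ and $w=v+s$ with $w\in W$, $v\in V$, $s\in V_s$, then $\|w\|=\|v\|+\|s\|$ and $\|s\|=\|w\|+\|v\|$ together force $v=0$.
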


\begin{proof}
    The first statement holds for each isometry of $V$, by the uniqueness of the $\ell^1$ complement $V_s$, and it extends to $A$ by linearity and continuity.
    The second statements follows by \cref{lem:l1sums} for elementary matrices, that is a matrix with a single non 0 entry, and extends by linearity.
\end{proof}

\begin{lemma} \label{lem:Limage}
    Let $V$ be an L-embedded Banach space and let $T\in M_{n\times m}(A)$ be as in \cref{lem:algA}, considered as an operator $V^n\to V^m$, where these are taken with the $\ell^1$-direct sum norm. 
    If the image of $T$ is closed in $V^m$ then this subspace is L-embedded.
\end{lemma}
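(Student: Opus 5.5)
Write $W=\im(T)\subset V^m$, closed by assumption. Set $U=V^m$ with the $\ell^1$-norm; it is L-embedded by \cref{lem:l1sums}, with $U^{**}=U\oplus_1 U_s$ and $U_s=(V_s)^m$. Let $P\colon U^{**}\to U$ be the associated L-projection, i.e.\ the projection onto $U$ along $U_s$. Via the weak$^*$ closure, $W^{**}$ is identified with $W^{\perp\perp}\subset U^{**}$. The goal is to show
\[ W^{\perp\perp}=W\oplus_1 (W^{\perp\perp}\cap U_s). \]
Since $P$ is an L-projection on $U^{**}$, its restriction to $W^{\perp\perp}$ is still an L-projection. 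So it suffices to prove $P(W^{\perp\perp})\subset W$: then $\xi=P\xi+(\xi-P\xi)$, the first term lies in $W$, and the second lies in $W^{\perp\perp}\cap U_s$.

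To prove $P(W^{\perp\perp})\subset W$, use the surjection $T\colon V^n\to W$. Since $W$ is closed, the open mapping theorem shows $T$ is open onto $W$. Then $T^{**}\colon (V^n)^{**}\to W^{**}=W^{\perp\perp}$ is surjective, because the bidual of a surjection between Banach spaces is surjective (dualize twice; $T^*$ is an isomorphism onto its closed range). So every $\xi\in W^{\perp\perp}$ has the form $\xi=T^{**}\eta$ with $\eta\in(V^n)^{**}$. Decompose $\eta=\eta_0+\eta_s$ with $\eta_0\in V^n$ and $\eta_s\in (V^n)_s=(V_s)^n$, using \cref{lem:l1sums}. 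By \cref{lem:algA}, $T^{**}\eta_s\in (V^m)_s=U_s$, while $T^{**}\eta_0=T\eta_0\in W\subset U$. By uniqueness of the decomposition $U^{**}=U\oplus U_s$, we get $P\xi=T\eta_0\in W$, as required.

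It remains to check that $W^{\perp\perp}\cap U_s$ is a closed complement with the $\ell^1$ property, which is immediate from $P$ being an L-projection, and to confirm the identification of $W^{**}$ with $W^{\perp\perp}$ isometrically. The only real obstacle is the compatibility step: the extension of $T$ to biduals given in \cref{lem:algA} must coincide with $T^{**}$. This holds because the extension of each isometry to $V^{**}$ is its bidual, and this property is preserved by linear combinations, norm limits, and matrix assembly. Once that is settled, the argument closes.
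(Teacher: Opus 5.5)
Your proof is correct and follows the paper's route. The paper likewise identifies $W^{**}$ with the closed subspace $T^{**}\bigl((V^n)^{**}\bigr)\subset (V^m)^{**}$ and uses \cref{lem:algA} to see that $(V^n)_s$ is sent into $(V^m)_s$, so that the L-projection maps $W^{\perp\perp}$ into $W$. It then cites \cite[Chapter IV, Theorem 1.2]{harmandwernerwerner:Mideals} for the criterion that such a closed subspace is L-embedded. You instead verify that criterion directly. You also spell out what the paper leaves implicit: the closed-range argument showing $T^{**}$ maps onto $W^{\perp\perp}$, and the fact that the extension in \cref{lem:algA} is the bidual operator.
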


\begin{proof}
We identify $(T(V^n))^{**}$ with the closed subspace $T((V^n)^{**})<(V^m)^{**}$,
so the statement follows from \cref{lem:algA} see \cite[Chapter IV, Theorem 1.2]{harmandwernerwerner:Mideals}.
\end{proof}

\subsection{Continuous group cohomology and its variants} \label{sec:contcohom}

\subsubsection{Continuous group cohomology}\label{subsub: group cohomology}
Let $G$ be a topological group. A \emph{(continuous) $G$-module} is an abelian topological group $V$ on which $G$ acts continuously by automorphisms. 
The group 
\[C^k(G,V)=C\bigl(G^{k+1}, V\bigr)^G,\] 
where $k\ge 0$, of \emph{(continuous) $k$-cochains} of $G$ with values in $V$ is the set of continuous $G$-equivariant maps $G^{k+1}\rightarrow V$, where $G$ acts on $G^{k+1}$ diagonally by left multiplication. 
It is a topological abelian group with respect to pointwise addition and the compact-open topology. 
We obtain a cochain complex $C^\ast(G,V)$ via the usual (homogeneous) \emph{differential} or \emph{coboundary map} 
\[\delta^k\colon C^{k-1}(G,V)\rightarrow C^k(G,V),~~\delta^k(c)(g_0,\dots,g_k)=\sum_{i=0}^k(-1)^ic(g_0,\dots,g_{i-1},g_{i+1},\dots,g_k).\]

The \emph{continuous cohomology} $H^*_c(G,V)$ of $G$ with coefficients in $V$ is the cohomology of $C^\ast(G,V)$. 
We will usually omit the subscript $c$, except in \cref{section:Measurable homological algebra}.
The topology on the continuous cohomology is not necessarily Hausdorff. 
Let $Z^k(G,V):=\ker\delta^{k+1}$ and $B^k(G,V)=\image\delta^{k}$ be the \emph{$k$-cocycles} and \emph{$k$-coboundaries} respectively. 
The cohomology $H^k(G,V)$ is  Hausdorff if and only if the space of coboundaries $B^k(G,V)$ is closed.
We denote by $\Bar{H}^k(G,V)$ the largest Hausdorff quotient of $H^k(G,V)$, that is, $\Bar{H}^k(G,V)=Z^k(G,V)/\overline{B^k(G,V)}$. 
In degree zero we have $H^0(G,V)=\Bar{H}^0(G,V)=\ker\delta^1=V^G$, the subgroup of invariants. We will usually omit the superscript of $\delta^k$ and just write $\delta$. 

The following variation of the Lyndon-Hochschild-Serre spectral sequence for continuous cohomology is essentially due to Blanc. 

\begin{theorem}\label{thm:partial_hochschild_serre}
Let $G$ be an lcsc group and let $N\lhd G$ be a closed normal subgroup. 
Let $V$ be a continuous Fr\'echet $G$-module and let $n\geq 0$. If $H^q(N,V)$ is Hausdorff for $0\leq q\leq n$, then there is a first-quadrant spectral sequence converging to $H^{p+q}(G,V)$ such that
\[
E_2^{p,q}\cong H^p\bigl(G/N,H^q(N,V)\bigr)
\qquad\text{for all }p\geq 0\text{ and }0\leq q\leq n.
\]
Here $H^q(N,V)$ carries its natural Fr\'echet $G/N$-module structure.
\end{theorem}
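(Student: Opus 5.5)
We follow Blanc's construction of the Hochschild--Serre spectral sequence for continuous cohomology. The plan is to identify the $E_2$-page only in the range where the intermediate cohomology is Hausdorff.

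\textbf{Step 1: the double complex.} Consider
\[ C^{p,q}=C\bigl((G/N)^{p+1},\,C(G^{q+1},V)\bigr)^{G}, \]
with the two commuting differentials, the homogeneous coboundaries in the $p$- and $q$-directions. Since $G$ is lcsc and $V$ is Fr\'echet, every space involved is Fr\'echet with the compact-open topology.

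\textbf{Step 2: the column filtration.} For fixed $q$, the complex $p\mapsto C^{p,q}$ is the standard resolution computing continuous cohomology of $G/N$-equivariant maps. It is exact except in degree $p=0$, where it is $C(G^{q+1},V)^G$. Exactness follows because $G\to G/N$ admits a continuous (in fact Borel, locally bounded; Mackey/Bruhat) cross-section. This gives a contracting homotopy for the relatively injective module $C(G^{q+1},V)$.

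Consequently, the total complex has cohomology $H^*(C(G^{*+1},V)^G)=H^*(G,V)$. This yields a first-quadrant spectral sequence converging to $H^{p+q}(G,V)$.

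\textbf{Step 3: the row filtration.} Take the $q$-direction cohomology first. Since $C(G^{q+1},V)$ is a relatively injective $N$-module, we get
\[ C\bigl((G/N)^{p+1},C(G^{q+1},V)\bigr)^G \cong C\bigl((G/N)^{p+1},\,C(G^{q+1},V)^N\bigr)^{G/N}. \]
Hence $E_1^{p,q}$ is the $q$-th cohomology of the complex
\[ C\bigl((G/N)^{p+1},\,C(G^{*+1},V)^N\bigr)^{G/N}. \]

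\textbf{Step 4: commuting cohomology with continuous functions.} This is the main obstacle. We must show that
\[ H^q\bigl(C(X, C^*)\bigr)\cong C\bigl(X,H^q(C^*)\bigr) \]
for $X=(G/N)^{p+1}$, where $C^*=C(G^{*+1},V)^N$ computes $H^*(N,V)$.

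Here the Hausdorff assumption is used. If $H^q(N,V)$ and $H^{q-1}(N,V)$ are Hausdorff, then $B^q$ and $B^{q-1}$ are closed. So $\delta\colon C^{q-1}/Z^{q-1}\to B^q$ is a continuous bijection of Fr\'echet spaces, hence open by the open mapping theorem. The Bartle--Graves/Michael selection theorem then gives continuous (nonlinear) sections of $C^{q-1}\to B^q$ and of $Z^q\to H^q$.

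With these sections one lifts continuous families of cocycles and coboundaries pointwise. This gives the isomorphism with $C(X,H^q(N,V))$, and it is equivariant, so taking $G/N$-invariants commutes with it. For $q\le n$ all the needed degrees $q'\le q$ are Hausdorff. Degree $-1$ is trivial, and degree $q-1$ arises only via closedness of $B^q$, which we have.

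\textbf{Step 5: the $E_2$-page.} It follows that
\[ E_1^{p,q}\cong C\bigl((G/N)^{p+1},H^q(N,V)\bigr)^{G/N}, \]
the homogeneous complex for $G/N$ with Fr\'echet coefficients $H^q(N,V)$. Taking the $p$-differential gives $E_2^{p,q}\cong H^p(G/N,H^q(N,V))$ for $0\le q\le n$.

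It remains to check that the $G/N$-action on $H^q(N,V)$, induced by $g\cdot c$ on $N$-cochains, is continuous. This follows from the continuity of the action on $C(G^{q+1},V)$ with the compact-open topology.
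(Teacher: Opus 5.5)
Your proof is correct in outline and has the same architecture as the paper's. There is a double complex whose total cohomology is $H^*(G,V)$ with no Hausdorff hypothesis, and the vertical cohomology is identified degree by degree, which in degree $q$ only needs $B^q$ to be closed.

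The difference lies in the implementation. The paper rebuilds nothing: it reuses Blanc's construction and observes that the Hausdorff hypothesis enters only in his \S9.6.2, through his Proposition~1.6. There the relevant coefficient functor $L^1_{\mathrm{loc}}(X,-)$ must commute with kernels, closed-range images and quotients by closed subspaces, and each of these needs closedness only in the degree at hand. You use continuous cochains throughout and replace Blanc's Proposition~1.6 by the Bartle--Graves--Michael selection theorem, which makes $C(X,-)$ commute with cohomology in degree $q$ as soon as $B^q$ is closed. Your route is more self-contained and lands directly in the continuous cochain complex of $G/N$ with coefficients in $H^q(N,V)$. The paper's is a short citation that inherits Blanc's $L^1_{\mathrm{loc}}$ machinery and his comparison with continuous cohomology.

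Two justifications need repair.

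In Step~2, $G\to G/N$ need not have a continuous cross-section (already $\mathbb{R}\to\mathbb{R}/\mathbb{Z}$ has none), and a Borel section yields no continuous contracting homotopy. The columns are acyclic for a different reason. Since $W=C(G^{q+1},V)$ is relatively injective for $G$, it is a $G$-equivariant direct summand of $C(G,W)$. Untwisting the action identifies $C(G,W)^N$ with $C(G/N,W)$ under left translation. Hence $W^N$ is a $G/N$-equivariant direct summand of a relatively injective $G/N$-module. Relatedly, in Step~3 the displayed isomorphism is automatic, since $N$ acts trivially on $(G/N)^{p+1}$. Relative injectivity over $N$ is what you actually need there, to know that $C(G^{*+1},V)^N$ computes $H^*(N,V)$ with its natural topology.

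In Step~4, equivariance of $H^q(C(X,C^*))\cong C(X,H^q(C^*))$ does not let you pass to $G/N$-invariants, because cohomology and invariants do not commute in general. Use instead the identification $C((G/N)^{p+1},W)^{G/N}\cong C((G/N)^{p},W)$, natural in the $G/N$-module $W$. Equivalently, twist the Michael section $\sigma\colon H^q\to Z^q$: for an equivariant $\phi$, the map $\tilde f(x_0,\dots,x_p)=x_0\cdot\sigma\bigl(x_0^{-1}\cdot\phi(x_0,\dots,x_p)\bigr)$ is a continuous equivariant lift. The same twist applied to a section $B^q\to C^{q-1}$ handles coboundaries.
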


\begin{proof}
We use the construction in the proof of \cite[Theorem~9.1]{Blanc:surla}. The construction of the double complex and the identification of its total cohomology with $H^*(G,V)$ in \cite[\S\S9.2--9.5]{Blanc:surla} do not require the cohomology of $N$ to be Hausdorff. Thus the associated first-quadrant spectral sequence exists and converges in all degrees.
The Hausdorff assumption enters only in \cite[\S9.6.2]{Blanc:surla}, where Proposition~1.6 of that paper is used to identify the vertical cohomology. In degree $q$, this requires only that the space of $q$-coboundaries be closed: kernels commute with $L^1_{\mathrm{loc}}$, as do images with closed range and quotients by closed subspaces. Hence the calculation in \cite[\S\S9.6.2--9.6.4]{Blanc:surla}  for each $q\leq n$ can be done the same way and yields the identification of $E_2^{p,q}$, without any Hausdorff assumption in higher degrees.
\end{proof}

\subsubsection{Polynomial group cohomology}

Let now $G$ be a compactly generated lcsc group with a word length function $l$. Let $V$ be as before. We denote by $C_\pol(G^{k+1},V)$ the space of continuous maps $f\colon G^{k+1}\to V$ satisfying
\[
    \norm{f(g_0,\dots,g_k)}\leq C\bigl(1+l(g_0)+\dots+l(g_k)\bigr)^d
\]
for all $g_0,\dots,g_k\in G$, for some constants $C>0$ and $d\in\mathbb{N}$ depending on $f$. The equivariant maps form a subcomplex 
\[C_\pol^\ast(G,V)=C_\pol\bigl(G^{\ast+1},V\bigr)^G\]
of $C^\ast(G,V)$. Its cohomology is the \emph{continuous polynomial cohomology} $H^*_{c,\pol}(G,V)$. The inclusion of cochain complexes induces the \emph{comparison map} $H^*_{c,\pol}(G,V)\to H^*_c(G,V)$. These definitions are independent of the choice of word length. For discrete $G$, we omit the subscript $c$. See also~\cite[\S~6.1]{badersauer:higherkazhdanproperty}.

\subsubsection{On separability and reducedness}

The following lemma is from~\cite[Lemma~34]{badersauer:higher:survey} and is based on an ultraproduct technique, which has a long history in the context of Property~(T). 

\begin{lemma} \label{lem:uptrick_34}
    Let $n\in\bbN$ and let $\Gamma$ be a discrete group with property $FP_n(\mathbb{R})$.
    Let $\mathcal{V}$ be a class of Banach $\Gamma$-modules that is closed under taking ultrapowers. 
    If 
    $\bar H^n(\Gamma,V)=0$ for every $V\in\mathcal{V}$, then $H^n(\Gamma,V)=0$ and $H^{n+1}(\Gamma,V)$ is Hausdorff for every $V\in \mathcal{V}$. 
\end{lemma}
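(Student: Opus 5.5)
We argue by contradiction, using the ultrapower trick. Fix $V\in\mathcal{V}$. Since $\Gamma$ is of type $FP_n(\mathbb{R})$, choose a projective resolution $P_\ast\to\mathbb{R}$ by $\mathbb{R}\Gamma$-modules with $P_k$ finitely generated free for $k\le n$. Computing cohomology through $\operatorname{Hom}_\Gamma(P_\ast,V)$, the spaces in degrees $\le n$ are finite direct sums $V^{m_k}$. The coboundary maps are then matrices over $\mathbb{R}\Gamma$, hence bounded operators. These spaces carry Banach norms, and the resulting topology on cohomology in degrees $\le n$ (and the question whether $B^{n+1}$ is closed, which involves only $\delta^{n+1}\colon V^{m_n}\to V^{m_{n+1}}$) agrees with that of the standard complex. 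This is the usual comparison, which we take for granted.

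We first show that $\delta^{n+1}\colon V^{m_n}\to V^{m_{n+1}}$ has closed image, so that $H^{n+1}$ is Hausdorff. Suppose the image is not closed. By the open mapping theorem there are $x_j\in V^{m_n}$ with
\[
\operatorname{dist}(x_j,\ker\delta^{n+1})=1
\quad\text{and}\quad
\|\delta^{n+1}x_j\|\to 0 .
\]
Pass to the ultrapower $V_\omega\in\mathcal{V}$. The matrix $\delta^{n+1}$ acts coordinatewise, so $x=[x_j]$ lies in $\ker\delta^{n+1}_\omega$, that is, $x$ is an $n$-cocycle in $V_\omega$. The hypothesis $\bar H^n(\Gamma,V_\omega)=0$ puts $x$ in the closure of $\operatorname{im}\delta^n_\omega$.

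The key step is to turn this into a contradiction with $\operatorname{dist}(x_j,\ker\delta^{n+1})=1$. Since $\ker\delta^{n+1}\supseteq\operatorname{im}\delta^n$, it suffices to show that $x$ is close to $(\operatorname{im}\delta^n)_\omega$. Indeed, $x$ lies in the closure of $\operatorname{im}\delta^n_\omega$, and every element of $\operatorname{im}\delta^n_\omega$ is represented by a sequence $(\delta^n y_j)$ with $y_j$ bounded. So there is a bounded $y=[y_j]$ with $\|x-\delta^n_\omega y\|<1/2$. This gives, for $\omega$-almost every $j$,
\[
\operatorname{dist}(x_j,\operatorname{im}\delta^n)<1/2,
\]
and hence $\operatorname{dist}(x_j,\ker\delta^{n+1})<1/2$, a contradiction. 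Therefore $\delta^{n+1}$ has closed range and $H^{n+1}(\Gamma,V)$ is Hausdorff.

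We next show that $\delta^n$ has closed range on $V$. The same argument applies one degree down. The bad sequence now lies in $V^{m_{n-1}}$, and its ultrapower class $x$ satisfies $\delta^n_\omega x=0$. We need $x$ to be trivial modulo $\ker\delta^n_\omega$, which is automatic. So the correct formulation is different: take $x_j$ with $\operatorname{dist}(x_j,\ker\delta^n)=1$ and $\|\delta^n x_j\|\to0$. Then $[\delta^n x_j]=0$ in $V_\omega$, while $[x_j]$ has distance $1$ from $(\ker\delta^n)_\omega$. This is the main obstacle: we must show that $(\ker\delta^n)_\omega=\ker\delta^n_\omega$. That identity holds if and only if $\delta^n$ has closed range, so the argument is circular.

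To break the circularity we use the hypothesis in degree $n$ applied to the ultrapower more cleverly. Assume $\operatorname{im}\delta^n$ is not closed and choose $z_j=\delta^n x_j$ with $\|z_j\|=1$ and $\operatorname{dist}(z_j,\delta^n(\text{ball of radius } j))$ small in a suitably normalized sense. By a standard rescaling argument (the Baire or open-mapping quantitative form), choose cocycles $z_j\in\overline{\operatorname{im}\delta^n}$ of norm $1$ with
\[
\inf\{\|y\| : \|\delta^n y-z_j\|<1/2\}\ \ge\ j .
\]
Then $z=[z_j]$ is an $n$-cocycle in $V_\omega$, and by hypothesis $z$ is approximated within $1/2$ by some $\delta^n_\omega y$ with $y$ bounded, say $\|y\|\le R$. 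Passing to coordinates gives $\|\delta^n y_j-z_j\|<1/2$ with $\|y_j\|\le R$ for $\omega$-almost every $j$, contradicting the lower bound $\ge j$. Hence $\operatorname{im}\delta^n$ is closed, so $\bar H^n(\Gamma,V)=H^n(\Gamma,V)=0$.
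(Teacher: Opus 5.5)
The paper gives no proof of this lemma. It quotes it from \cite[Lemma~34]{badersauer:higher:survey} as an instance of the ultrapower technique, and that technique is what you carry out. Your closed-range argument for $\delta^n$, which gives $H^n(\Gamma,V)=0$, is correct. The iteration lemma for operators that are approximately onto the closure of their range produces the $z_j$, the class $[z_j]$ is an $n$-cocycle of $V_\omega$, and the hypothesis on $V_\omega$ contradicts $\|y_j\|\ge j$. The paragraph about circularity is a false start and can be dropped.

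The Hausdorffness part, however, uses more than the hypothesis. Type $FP_n(\mathbb{R})$ gives finitely generated $P_k$ only for $k\le n$. The module $P_{n+1}$ may be free of infinite rank, and then $\hom_{\mathbb{R}\Gamma}(P_{n+1},V)\cong V^{I}$ carries the product topology, which is what defines the topology of $H^{n+1}(\Gamma,V)$. So there is no Banach space $V^{m_{n+1}}$ and no norm $\|\delta^{n+1}x_j\|$, and the Banach open mapping step is unavailable. As written, you have proved the statement only under $FP_{n+1}(\mathbb{R})$. This is not cosmetic: the paper uses the lemma at exactly this threshold, e.g.\ in \cref{cor:products_of_discrete}.

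The repair is short.
\begin{enumerate}
\item For $n\ge 1$ the group $\Gamma$ is finitely generated, so $I$ may be taken countable and $V^{I}$ is Fr\'echet.
\item The induced continuous injection $V^{m_n}/\ker\delta^{n+1}\to V^{I}$ has non-closed image, so it is not a topological isomorphism onto its image; otherwise the image would be complete, hence closed.
\item By metrizability you get $x_j$ with $\operatorname{dist}(x_j,\ker\delta^{n+1})=1$ and $(\delta^{n+1}x_j)(e)\to 0$ for every basis element $e$ of $P_{n+1}$.
\item Choose representatives with $\|x_j\|\le 2$. You need this normalization, which you also omitted, for $[x_j]$ to exist in $V_\omega^{m_n}$.
\item For $x=[x_j]$, the entry $(\delta^{n+1}_\omega x)(e)$ equals $x(\partial e)$ up to sign. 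It involves only finitely many entries and is computed coordinatewise, so $x$ is still an $n$-cocycle of $V_\omega$.
\end{enumerate}
The remainder of your argument then goes through verbatim. When $n=0$ and $\Gamma$ is uncountable, the same works with nets and an ultrapower over a suitable ultrafilter on the index set.
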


For each $1\leq p<\infty$, this applies to the class of $L^p$-spaces with linear isometric $\Gamma$-actions by \cref{lem:Lp_closed_under_ultraproducts}. By \cref{lem:no_almost_invariants_ultrapowers}, it also applies when the restriction to each subgroup in a fixed family is required to have no almost invariant vectors.

Ultraproducts of Banach spaces, which is a necessary tool for the results above, are not separable. However, it is sufficient to show the cohomological vanishing in the separable case. The following is a generalization of~\cite[Lemma~3.6]{badersauer:higherkazhdanproperty}.

\begin{lemma} \label{lem:separable reduction}
Let $G$ be an lcsc group acting continuously by linear isometries on an $L^p$-space $V$, where $1\leq p<\infty$. Let $n\in\N$. 
Then $H^n(G,V)=0$ if and only  $H^n(G,V_0)=0$ for every separable $G$-invariant $L^p$-subspace $V_0\subset V$.
Further, $H^n(G,V)$ is Hausdorff if and only if $H^n(G,V_0)$ is Hausdorff for every such $V_0$.
\end{lemma}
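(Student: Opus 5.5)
We will reduce everything to the following exhaustion principle: every cochain, and every countable family of cochains, takes values in some separable $G$-invariant $L^p$-subspace, and such subspaces sit inside $V$ as complemented pieces. The key tool is \cref{lem:Lpcofinal}, combined with the equivariant contractive projections of \cref{lem:Lp_equivariant_projection}.

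First observe that for any separable $G$-invariant $L^p$-subspace $V_0\subset V$, \cref{lem:Lp_equivariant_projection} gives a $G$-equivariant contractive projection $P\colon V\to V_0$. The inclusion $\iota\colon V_0\to V$ and $P$ are morphisms of $G$-modules with $P\iota=\id$. Hence $H^n(G,V_0)$ is a retract of $H^n(G,V)$, as a topological vector space: $\iota_*$ is a topological embedding and $P_*$ a continuous left inverse. Therefore:
\begin{itemize}
\item if $H^n(G,V)=0$, then $H^n(G,V_0)=0$;
\item if $H^n(G,V)$ is Hausdorff, then $H^n(G,V_0)$ is Hausdorff.
\end{itemize}
For the Hausdorff statement, note that $B^n(G,V_0)=P_*\bigl(B^n(G,V)\cap C^n(G,V_0)\bigr)$ is the preimage under $\iota_*$ of the closed set $B^n(G,V)$, intersected with the cochains. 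So it is closed. This settles the ``only if'' directions.

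For the converse of the vanishing statement, take a cocycle $c\in Z^n(G,V)$. Since $G$ is lcsc and $c$ is continuous, the image $c(G^{n+1})$ is separable, being the continuous image of a second countable space. By \cref{lem:Lpcofinal} it lies in a separable $G$-invariant $L^p$-subspace $V_0$. Then $c$ is a cocycle in $C^n(G,V_0)$, so by hypothesis $c=\delta b$ with $b\in C^{n-1}(G,V_0)\subset C^{n-1}(G,V)$. Hence $H^n(G,V)=0$.

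For the Hausdorff converse, let $c\in\overline{B^n(G,V)}$. The compact-open topology on $C(G^{n+1},V)$ is metrizable, since $G^{n+1}$ is hemicompact, so $c=\lim\delta b_k$ for a sequence $b_k\in C^{n-1}(G,V)$. The countable union of the images of $c$ and of all the $b_k$ is separable. So \cref{lem:Lpcofinal} again yields a separable $G$-invariant $L^p$-subspace $V_0$ containing all of them. Then $c\in\overline{B^n(G,V_0)}=B^n(G,V_0)\subset B^n(G,V)$, by the Hausdorff hypothesis on $V_0$.

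The main obstacle is this sequential approximation step. It requires metrizability of the cochain spaces, and the observation that closure inside $C^n(G,V_0)$ agrees with closure in $C^n(G,V)$ for sequences valued in $V_0$. The latter holds because $V_0$ is closed and carries the subspace norm.
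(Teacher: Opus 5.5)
Your proof is correct and follows the paper's argument: separable ranges together with \cref{lem:Lpcofinal} give the vanishing converse, and the equivariant projection of \cref{lem:Lp_equivariant_projection} gives a retraction for the ``only if'' directions. The one difference is the Hausdorff converse, where the paper applies $P$ to $c$ itself (so $c=Pc\in\overline{B^n(G,V_0)}$ by continuity of $P$ on cochains) and thereby avoids your metrizability and sequential-approximation step; that step is nonetheless valid, since $G^{n+1}$ is hemicompact.
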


\begin{proof}
As in the proof of \cite[Lemma~3.6]{badersauer:higherkazhdanproperty}, every continuous cochain has separable range. By \cref{lem:Lpcofinal}, it therefore takes values in a separable $G$-invariant $L^p$-subspace of $V$. If cohomology vanishes for all such subspaces, every cocycle is consequently a coboundary.

Let $c\in\overline{B^n(G,V)}$ and choose such a subspace $V_0$ containing its range. By \cref{lem:Lp_equivariant_projection}, there is a continuous $G$-equivariant projection $P\colon V\to V_0$. It induces a continuous map on cochains commuting with $\delta$, so $c=Pc\in\overline{B^n(G,V_0)}$. If $H^n(G,V_0)$ is Hausdorff, then $c\in B^n(G,V_0)\subset B^n(G,V)$.

Conversely, for every such $V_0$, the projection induces a continuous retraction of the cohomology groups. Thus vanishing and Hausdorffness pass from $H^n(G,V)$ to $H^n(G,V_0)$.
\end{proof}

\subsection{Cohomology of products} \label{subsec:cohomproduct}
The next proposition is a variation of \cite[Theorem 2]{Baderrosendalsauer:onthe} for the case of isometric representations on $L$-embedded spaces rather than weakly almost periodic representations. This will be needed in \cref{sec: vanishing}.
Afterward we record a few corollaries to it regarding vanishing of cohomology with $L^p$-coefficients for products.

\begin{proposition} \label{action of C on cohomology of N}
    Let $G$ be an lcsc group with a linear isometric representation on an $L$-embedded space $V$. Let $N,C<G$ be commuting subgroups such that $V$ does not have non-zero $C$-invariant vectors. Let $n\in\N$. 
    
    If $N$ is discrete with property $FP_n(\bbR)$ and the cohomology $H^n(N,V)$ is Hausdorff, then the action of $C$ on $H^n(N,V)$ induced by the $C$-action on $V$ does not have non-zero $C$-invariant vectors.
\end{proposition}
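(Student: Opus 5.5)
The plan is to replace the bar complex by a finite-type cochain complex, on which the cochain spaces are finite $\ell^1$-sums of $V$. The fixed point theorem \cref{BGM} can then be applied inside the space of coboundaries, which is $L$-embedded by \cref{lem:Limage}.

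\emph{Finite model.} Since $N$ has property $FP_n(\bbR)$, there is a resolution $P_\ast\to\bbR$ by free $\bbR N$-modules that are finitely generated in degrees $\le n$, say $P_i\cong \bbR N^{k_i}$. Truncating above degree $n+1$ is harmless for computing $H^n$. Then:
\begin{itemize}
\item $\mathrm{Hom}_{\bbR N}(P_i,V)\cong V^{k_i}$, and we give it the $\ell^1$-direct sum norm.
\item Each differential $V^{k_{i-1}}\to V^{k_i}$ is given by a matrix with entries in $\bbR N$. These entries act on $V$ through finite combinations of isometries, so the matrix lies in $M_{k_i\times k_{i-1}}(A)$, with $A$ as in \cref{lem:algA}.
\item As in \cite{badersauer:higherkazhdanproperty}, a comparison of resolutions gives an isomorphism between the cohomology of this complex in degree $n$ and $H^n(N,V)$, as topological vector spaces. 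In particular $B^n=\delta(V^{k_{n-1}})$ is closed in $V^{k_n}$, because $H^n(N,V)$ is Hausdorff.
\item Since $C$ commutes with $N$, the coordinatewise action of $C$ on $V^{k_i}$ is by linear isometries and commutes with the differentials. It therefore preserves $Z^n$ and $B^n$.
\item The comparison maps are $N$-module maps between resolutions, and $C$ acts only on the coefficients $V$. So the isomorphism above is $C$-equivariant, and it suffices to prove the statement for the finite model.
\end{itemize}
By \cref{lem:Limage}, applied to the closed-range operator $\delta\in M_{k_n\times k_{n-1}}(A)$, the space $B^n$ is $L$-embedded.

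\emph{Fixed point argument.} Let $c\in Z^n\subset V^{k_n}$ be a cocycle whose class is $C$-invariant, so that $gc-c\in B^n$ for every $g\in C$.
\begin{itemize}
\item Define an affine isometric action of $C$ on $B^n$ by $g\ast b=g(b+c)-c=gb+(gc-c)$. This maps $B^n$ to itself and is isometric for the $\ell^1$-norm, because $g$ acts linearly and isometrically.
\item The set $\mathcal{B}=\{gc-c : g\in C\}\subset B^n$ is non-empty and bounded, by $2\|c\|$. It is preserved by the affine action, since $g\ast(hc-c)=ghc-c$.
\item By \cref{BGM}, applied in the $L$-embedded space $B^n$ to the affine isometries $g\ast(\cdot)$, there is a common fixed point $b_0\in B^n$.
\end{itemize}
Put $c'=b_0+c$. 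Then $gc'=c'$ for all $g\in C$, and $c'$ is cohomologous to $c$.

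\emph{Conclusion.} The cochain $c'\in V^{k_n}$ is $C$-invariant coordinatewise. Each coordinate is therefore a $C$-invariant vector of $V$, hence zero. So $c'=0$ and $[c]=0$, which proves the claim.

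\emph{Main obstacle.} The delicate step is setting up the finite model compatibly with both the topology and the $C$-action. One must identify $H^n(N,V)$, together with its topology and Hausdorffness, with the cohomology of the finite complex, and check that the identification is $C$-equivariant. Only with this in hand do we know that $B^n$ is closed, so that \cref{lem:Limage} applies and makes $B^n$ $L$-embedded. I would first handle it by the standard comparison with the bar resolution used in \cite{badersauer:higherkazhdanproperty}, noting that all comparison maps are $N$-linear on the resolution side and so commute with the $C$-action on coefficients.

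A second point to verify is that \cref{BGM} covers affine (not only linear) isometries preserving a bounded set. This is the form of \cite[Theorem A]{badergelandermonod:BGM:fixedpoint}.
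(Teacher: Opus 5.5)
Your proposal is correct and follows essentially the same route as the paper. You pass to a finite-type free resolution so that the $n$-cochains form $V^{k_n}$, note that the coboundaries $\image\delta$ form an $L$-embedded space by \cref{lem:Limage}, and apply \cref{BGM} to the $C$-action on $c+\image\delta$; its fixed point is a $C$-invariant cocycle, hence zero. Your explicit derivation of the closedness of $B^n$ from the Hausdorffness of $H^n(N,V)$, via the comparison maps, fills in a step the paper leaves implicit.
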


\begin{proof}
    Let $F_\ast$ be a free $\bbR[N]$-resolution of $\bbR$ such that $F_i\cong \bbR N^{k_i}$ is finitely generated for $i\in\{0,\dots, n\}$. The cochain complex $\hom_{\bbR[N]}(F_\ast,V)$ computes the cohomology of $N$ with coefficients in $V$. 
    The $C$-action on $V$ acts by chain isomorphisms and descends to an action on cohomology.    
    The cochain group in degree $i\le n$ is isomorphic to $V^{k_i}$. Furthermore, the $C$-action on the cochain group corresponds to the diagonal action on $V^{k_i}$. 
    
    Let $[z]\in H^n(N,V)$ be a $C$-invariant vector. We will show that $[z]=0$. The $C$-action on the invariant affine subspace 
    \[z+\image(\delta_{n-1})\subseteq V^{k_n-1}\cong \hom_{\bbR[N]}(F_n, V)\]
    has bounded orbits since the action of $C$ on $V$ is linear and isometric. 
    The $(n-1)$-coboundaries, $\image(\delta_{n-1})$, is $L$-embedded by \cref{lem:Limage}. By \cref{BGM}, the $C$-action on $z+\image(\delta_{n-1})$ has a fixed point, which is necessarily zero because of $V^C=0$. Thus, $[z]=0$. 
\end{proof}

\begin{corollary}\label{product of two groups}
    Let $G$ be an lcsc group with a linear isometric representation on an $L$-embedded space $V$. We assume that $G=NC$ is the product of two commuting subgroups $N$ and $C$ such that $N$ is discrete and has property $FP_n(\bbR)$ and $C<G$ is closed and $V^C=0$. 

    If $H^k(N,V)=0$ for all $k\in\{0,\dots, n-1\}$ and $H^n(N,V)$ is Hausdorff, then 
    $H^k(G,V)=0$ for all $k\in\{0,\dots, n\}$.
\end{corollary}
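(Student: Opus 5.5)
We plan to apply the Hochschild--Serre type spectral sequence of \cref{thm:partial_hochschild_serre} to the normal subgroup $N\lhd G$. First we check that $N$ is normal and closed. Since $G=NC$ and $N,C$ commute, conjugation by $nc$ acts on $N$ as conjugation by $n$, so $N$ is normal. Closedness of $N$ should come from it being discrete in the lcsc group $G$; implicitly $N$ is a discrete subgroup, hence closed. The group $G/N$ is a quotient of $C$; it is a continuous image of $C$ via $C\to G\to G/N$.

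The coefficient module $V$ is Banach, hence Fr\'echet. By hypothesis $H^q(N,V)=0$ for $q\le n-1$ and $H^n(N,V)$ is Hausdorff, so all $H^q(N,V)$ with $0\le q\le n$ are Hausdorff. We therefore get a spectral sequence converging to $H^{p+q}(G,V)$ with $E_2^{p,q}\cong H^p(G/N,H^q(N,V))$ for $q\le n$. The rows $q\le n-1$ vanish identically.

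For total degree $k\le n$, the only possibly nonzero terms are $E_2^{p,q}$ with $p+q=k$ and $q\le n$, hence $q=n$, $p=0$ (requiring $k=n$). Thus $H^k(G,V)=0$ for $k<n$, and $H^n(G,V)$ is a subquotient of $E_2^{0,n}=H^0(G/N,H^n(N,V))=H^n(N,V)^{G/N}$. Since it lies in the corner, it is in fact a subspace, $E_\infty^{0,n}\subseteq E_2^{0,n}$.

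It remains to see that $H^n(N,V)^{G/N}=0$. The $G/N$-action on $H^n(N,V)$ is induced by the $G$-action on $V$, with $N$ acting trivially on its own cohomology. A $G/N$-invariant class is therefore in particular invariant under the image of $C$, i.e.\ $C$-invariant for the action induced from $V$. By \cref{action of C on cohomology of N}, whose hypotheses ($V$ L-embedded, $N,C$ commuting, $V^C=0$, $N$ discrete $FP_n(\bbR)$, $H^n(N,V)$ Hausdorff) are exactly ours, this class is zero. Hence $H^n(G,V)=0$.

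The main point to verify carefully is that the $G/N$-module structure on $H^n(N,V)$ in \cref{thm:partial_hochschild_serre} restricts on $C$ to the action used in \cref{action of C on cohomology of N}. The action there is the diagonal action on cochains $\hom_{\bbR[N]}(F_\ast,V)$. This needs the standard fact that conjugation by $c$ is trivial on $N$, so the canonical $G$-action on $H^*(N,V)$ is just induced by $c$ acting on coefficients. The identification between the resolution-based and the continuous-cochain descriptions is natural in the coefficients, so the two actions agree. For $n=0$ the argument degenerates to $H^0(G,V)\subseteq V^C=0$ directly.
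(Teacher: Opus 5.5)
Your proposal is correct and follows the paper's own proof essentially step for step. You apply the partial Hochschild--Serre spectral sequence of \cref{thm:partial_hochschild_serre} to $N\lhd G$, observe that the rows $q<n$ vanish, and kill $E_2^{0,n}=H^n(N,V)^{G/N}$ via \cref{action of C on cohomology of N}, using that $C$ centralizes $N$ and hence acts on $H^n(N,V)$ through the coefficients only. Your extra checks (normality and closedness of $N$, $E_\infty^{0,n}\subseteq E_2^{0,n}$, and compatibility of the two descriptions of the $C$-action) are details the paper leaves implicit.
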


\begin{proof}
    We apply the version of the Hochschild-Serre spectral sequence in \cref{thm:partial_hochschild_serre} associated to $N\lhd G$ and converging 
    to $H^*(G,V)$. In the range $(p,q)\in\N\times\{0,\dots, n\}$ we have 
    $E_2^{pq}\cong H^p(G/N,H^q(N,V))$. By assumption, $E_2^{pq}=0$ for $0\le q<n$. Since $C$ centralizes $N$, the usual $G/N$-action on $H^n(N,V)$, pulled back to $C$, acts on the coefficients alone: the conjugation action on $N$ is trivial.
    By \cref{action of C on cohomology of N} $E_2^{0n}=H^0(G/N,H^n(N,V))=0$, which implies the claim. 
\end{proof}

The following corollary for discrete groups is almost straight forward from \cref{product of two groups}, and follows the slogan ``product of $n$ property $T^{L^p}$ groups is a property $T_n^{L^p}$ group'', and furthermore, this still holds in a non-trivial product if one of the groups does not have property $T^{L^p}$.


\begin{corollary} \label{cor:products_of_discrete}
    Let $p\in [1,\infty)$.
    Let $\Gamma=\Gamma_1\times\cdots\times \Gamma_{n}$ be a product of discrete groups of type $FP_{m+n-1}(\mathbb{R})$. We assume that $H^k(\Gamma_1,V)=0$ for every $k\in\{0,\dots m\}$ and for every linear isometric $\Gamma$-action on an $L^p$-space $V$ such that the restriction to any factor $\Gamma_i$ does not have almost invariant vectors. 
    
    If $V$ is such a representation, then $H^k(\Gamma,V)=0$ for all $k\in\{0,\dots, m+n-1\}$ and $H^{m+n}(\Gamma,V)$ is Hausdorff. 
\end{corollary}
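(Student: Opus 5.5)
The plan is induction on the number of factors, using \cref{product of two groups} to raise the vanishing degree by one at each step and \cref{lem:uptrick_34} to obtain the Hausdorff statement in the next degree. Let $\mathcal{V}$ be the class of $L^p$-spaces with a linear isometric $\Gamma$-action whose restriction to each factor $\Gamma_i$ has no almost invariant vectors. By \cref{lem:Lp_closed_under_ultraproducts} and \cref{lem:no_almost_invariants_ultrapowers} (applied to each $\Gamma_i$), $\mathcal{V}$ is closed under ultrapowers. For $1\le j\le n$ set $\Gamma^{(j)}=\Gamma_1\times\cdots\times\Gamma_j$ and regard every $V\in\mathcal{V}$ as a $\Gamma^{(j)}$-module by restriction. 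I will prove by induction on $j$ the following claim: for every $V\in\mathcal{V}$, $H^k(\Gamma^{(j)},V)=0$ for $0\le k\le m+j-1$, and $H^{m+j}(\Gamma^{(j)},V)$ is Hausdorff. The case $j=n$ is the statement.

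\emph{Base case $j=1$.} Vanishing for $k\le m$ is exactly the hypothesis. For Hausdorffness, apply \cref{lem:uptrick_34} to the discrete group $\Gamma_1$ in degree $m$. Its hypotheses hold: $\Gamma_1$ is of type $FP_{m+n-1}(\mathbb{R})$, hence of type $FP_m(\mathbb{R})$; the class is $\mathcal{V}$, now viewed as $\Gamma_1$-modules, and it is closed under ultrapowers; and $\bar H^m(\Gamma_1,V)=0$ on it. The lemma then gives that $H^{m+1}(\Gamma_1,V)$ is Hausdorff.

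\emph{Inductive step $j-1\to j$.} Apply \cref{product of two groups} inside $G=\Gamma^{(j)}=NC$ with $N=\Gamma^{(j-1)}$ and $C=\Gamma_j$, which commute. Each hypothesis is checked as follows.
\begin{enumerate}
\item $V$ is an $L^p$-space, hence $L$-embedded.
\item $N$ is a finite product of groups of type $FP_{m+n-1}(\mathbb{R})$, hence of that type itself, in particular of type $FP_{m+j-1}(\mathbb{R})$. This uses the standard fact that finiteness properties $FP_k$ pass to finite products, via tensor products of resolutions.
\item $V^{C}=0$, since the $\Gamma_j$-action has no almost invariant vectors and a fortiori no nonzero invariant ones.
\item By the inductive hypothesis, $H^k(N,V)=0$ for $k\le m+j-2$ and $H^{m+j-1}(N,V)$ is Hausdorff.
\end{enumerate}
The corollary then yields $H^k(\Gamma^{(j)},V)=0$ for all $k\le m+j-1$. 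Finally, apply \cref{lem:uptrick_34} to $\Gamma^{(j)}$, which is of type $FP_{m+j-1}(\mathbb{R})$, in degree $m+j-1$ with the ultrapower-closed class $\mathcal{V}$. This gives that $H^{m+j}(\Gamma^{(j)},V)$ is Hausdorff, completing the induction.

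The main point to handle carefully is bookkeeping rather than a new idea. The inductive hypothesis must be formulated for $\Gamma$-modules restricted to the subproduct, not for arbitrary $\Gamma^{(j)}$-modules, so that the hypothesis on $\Gamma_1$, which concerns $\Gamma$-actions, remains usable. Correspondingly, the ultrapower class must retain the no-almost-invariant-vectors condition for every factor, including those not yet used, so that the condition $V^{\Gamma_j}=0$ is still available at later stages after passing to ultrapowers. The only external input is that $FP_k(\mathbb{R})$ is preserved under finite products, which I expect to be routine.
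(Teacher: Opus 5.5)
Your proposal is correct and follows the paper's proof. Both argue by induction on the number of factors, apply \cref{product of two groups} to $N=\Gamma_1\times\cdots\times\Gamma_{j-1}$ and $C=\Gamma_j$, and get the Hausdorff statement at each stage from \cref{lem:uptrick_34} using closure of the class under ultrapowers. Your choice to run the induction over subproducts of the fixed $\Gamma$, acting on restrictions of $\Gamma$-modules, is a slightly more careful version of the paper's bookkeeping, since the hypothesis on $\Gamma_1$ is only available for $\Gamma$-actions.
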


\begin{proof}
    It is enough to prove the vanishing part of the statement since the Hausdorff property follows from  \cref{lem:uptrick_34} and the fact that the class of $L^p$-spaces with no almost invariant vectors for the factors is closed under taking ultrapowers by \cref{lem:no_almost_invariants_ultrapowers}.
    The case $n=1$ is then tautological. 
    
    Assume the statement for $n$ factors. We prove it for $n+1$ factors. Let $N=\Gamma_1\times \cdots \Gamma_n$.
    By the induction hypothesis $H^k(N,V)=0$ for all $k\in\{0,\dots, m+n-1\}$ and $H^{m+n}(N,V)$ is Hausdorff. By the assumption on $\Gamma_1,\dots,\Gamma_n$, we have that $N$ has type $FP_{m+n}(\mathbb{R})$ and $\Gamma_{n+1}$ has no non-zero invariants. The claim follows from \cref{product of two groups} applied to $N$ and $\Gamma_{n+1}$.  
\end{proof}

\begin{example}
    If $\Gamma_1$ has Kazhdan's property (T) and $\Gamma_1, \dots, \Gamma_n$ are of type $FP_{n-1}(\mathbb{R})$, then they satisfy the assumptions of \cref{cor:products_of_discrete} for $m=0$ when $p>2$ by \cite[Theorem A(1)]{baderfurmangelandermonod:BFGM:property} and if $\Gamma_1,\dots,\Gamma_n$ and of type $FP_n(\mathbb{R})$, they satisfy the assumptions for $m=1$ and $1\leq p\leq 2$ by \cite[Theorem 1.3(2)]{baderfurmangelandermonod:BFGM:property} and \cite{badergelandermonod:BGM:fixedpoint} for $p=1$.

    For Kazhdan property (T) groups, for every $1\leq p<\infty$, the assumption of having no almost invariant vectors is equivalent to the assumption of having no non-trivial invariant vectors~\cite[\S~4.c. and Theorem A]{baderfurmangelandermonod:BFGM:property}.
\end{example}

For $p=2$, the following is known for the reduced $\ell^p$-cohomology for arbitrary products of infinite groups by the $L^2$-Künneth formula; see~\cite[Theorem~1.35(4)]{Lueck:L2}.
It is also known for $p\neq 2$ and $n=2$ in a more general setting \cite[Theorem(iii) and Corollary 3.3]{martinvalette:on}.

\begin{example}
    If $\Gamma=\Gamma_1\times \cdots \times \Gamma_n$ and $\Gamma_1,\dots,\Gamma_n$ are non-amenable and of type $FP_n(\mathbb{R})$, then they do not have almost invariant vectors in the regular action on $\ell^p(\Gamma)$. Thus, by \cref{cor:products_of_discrete}, $H^k(\Gamma,\ell^p(\Gamma))=0$ for $0\leq k\leq n-1$ and $1\leq p<\infty$.
\end{example}

We cannot expect vanishing of the $n$-th cohomology groups, since $H^n(F_2^n,\ell^p(F_2^n))\neq 0$ for all $1\leq p<\infty$.

\section{Passing from the ambient group to a lattice} \label{sec:shapiro}

In this section, we show how to transfer vanishing results from the ambient semisimple group to a lattice, which is not necessarily uniform. This is essentially contained in~\cite[\S\S~6.4--6.6]{badersauer:higherkazhdanproperty}, which is largely agnostic about the difference between unitary and $L^p$-representations. In the sequel, we revisit the results and the discussion in~\cite[\S\S~6.4--6.6]{badersauer:higherkazhdanproperty} and make the transfer to the setting of $L^p$-Banach representations explicit. Further, the notion of universally integrable lattice~\cite[Definition~6.18]{badersauer:higherkazhdanproperty} is replaced by the more general, natural (and cocycle-free) notion of \emph{polynomial lattice}. The latter is a lattice that is at most polynomially distorted in the ambient group and admits a fundamental domain on which the word length function is $L^p$-integrable for every $p>0$. See \cref{def: polynomial lattice}.

\subsection{\texorpdfstring{$L^p$}{}-induction of Banach representations}

Let $G$ be an lcsc group. Let $V$ be a Banach space. 
If $(X,\mu)$ is a measurable space we denote by $L^p(X, V)$ the space of 
classes of Bochner-measurable functions $X\to V$ such that $\int_X \Vert f(x)\Vert^p d\mu(x)<\infty$. 

The following definition is the specialization of~\cite[Proposition~1.2 and Section~8.2]{Blanc:surla} to the setting of Banach spaces.

\begin{definition}\label{def: lp induction}
    Let $H<G$ be a closed subgroup, let $V$ be a Banach space with a continuous linear isometric $H$-action, let $p\in[1,\infty)$, and let $dx$ be a left Haar measure on~$G$. The \emph{(local) $L^p$-induction} $I^p_{\loc,H}(V)$ of $V$ is the space of classes  of Bochner-measurable functions $f\colon G\to V$ such that
    \[
        \int_K\norm{f(x)}_V^p\,dx<\infty
    \]
    for every compact subset $K\subseteq G$, and satisfying the equivariance condition
    \[
        f(xh^{-1})=h\cdot f(x)
    \]
    for a.e.~$x\in G$ and every $h\in H$. It is a Fr\'echet $G$-module for the left $G$-action
    \begin{equation}\label{eq: induction formula}
        (g\cdot f)(x)=f(g^{-1}x).
    \end{equation}

    Suppose in addition that $G/H$ carries a non-zero finite $G$-invariant Radon measure~$\mu$. The \emph{$L^p$-induction} $I^p_H(V)$ is the subspace of $I^p_{\loc,H}(V)$ consisting of the functions with finite norm
    \[
        \norm{f}_p
        \defq
        \Bigl(\int_{G/H}\norm{f(x)}_V^p\,d\mu(xH)\Bigr)^{\frac1p}.
    \]
    This is a Banach space, and the action in~\eqref{eq: induction formula} restricts to a linear isometric $G$-action on it.

    We may abbreviate $I^p_H(V)$ and $I^p_{\loc,H}(V)$ to $I^p(V)$ and $I^p_{\loc}(V)$ whenever the group $H$ is clear from the context.
\end{definition}

\begin{lemma} \label{lem:cocoIp}
    Let $H<G$ be a closed cocompact subgroup such that $G/H$ carries a non-zero $G$-invariant Radon measure, let $1\leq p<\infty$, and let $V$ be a Banach space with a continuous linear isometric $H$-action. Then the inclusion $I^p_H(V)\to I_{\loc,H}^p(V)$ is an isomorphism of Fr\'echet $G$-modules.
\end{lemma}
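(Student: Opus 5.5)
The plan is to reduce the Fréchet seminorms of $I^p_{\loc,H}(V)$ to the single norm of $I^p_H(V)$ by means of a compact set $K\subseteq G$ with $KH=G$. Such a set exists because $G/H$ is compact and $G$ is locally compact: cover $G/H$ by the images of finitely many relatively compact open sets. By $H$-equivariance, $x\mapsto\norm{f(x)}_V$ is right $H$-invariant a.e. (since $\norm{f(xh^{-1})}=\norm{h f(x)}=\norm{f(x)}$). So it descends to a measurable function $\phi_f$ on $G/H$, and $\norm{f}_p^p=\int_{G/H}\phi_f^p\,d\mu$.

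The key step is to compare $\int_K\phi_f(xH)^p\,dx$ with $\int_{G/H}\phi_f^p\,d\mu$, using the Weil-type disintegration of Haar measure over $G/H$. The pushforward of $\mathbf 1_K\,dx$ under $G\to G/H$ is a finite Radon measure $\nu_K$. I would show that $\nu_K$ is equivalent to $\mu$ with Radon–Nikodym derivative bounded above and bounded below by positive constants. Concretely, choose a continuous compactly supported $\beta\ge0$ on $G$ with $\int_H\beta(xh)\,dh>0$ for all $x$; this is possible since $KH=G$, by taking $\beta$ positive on a neighbourhood of $K$.

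The existence of a $G$-invariant measure on $G/H$ gives $\Delta_G|_H=\Delta_H$. Hence the quotient integral formula
\[
\int_G F(x)\,dx=c\int_{G/H}\int_H F(xh)\,dh\,d\mu(xH)
\]
holds for $F\in C_c(G)$, and by monotone convergence for nonnegative measurable $F$. Applying it to $F=\mathbf 1_{K'}\,\phi_f^p$ for compact $K'$ gives
\[
\int_{K'}\norm{f}^p\,dx=c\int_{G/H}\phi_f^p(xH)\,m_{K'}(xH)\,d\mu,
\]
where $m_{K'}(xH)=\lambda_H(x^{-1}K'\cap H)$. This quantity is well defined up to the modular factor, which is the same for $G$ and $H$, and it is bounded above because $K'$ is compact. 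This yields $\int_{K'}\norm f^p\le C_{K'}\norm f_p^p$, so the inclusion is continuous and the $I^p$-norm dominates every seminorm.

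Conversely, taking $K'$ to be a compact neighbourhood of $K$, we have $m_{K'}\ge\epsilon>0$ on all of $G/H$. Indeed, for $x\in kH$ the set $x^{-1}K'\cap H$ contains an identity neighbourhood in $H$, and compactness of $K$ makes the bound uniform. This gives $\norm f_p^p\le\epsilon^{-1}c^{-1}\int_{K'}\norm f^p\,dx<\infty$ for every $f\in I^p_{\loc,H}(V)$, which proves surjectivity. The inverse is bounded by a single seminorm, so it is continuous. Both spaces carry the same action $(g\cdot f)(x)=f(g^{-1}x)$, so the inclusion is a $G$-equivariant topological isomorphism. Alternatively, continuity of the inverse follows from the open mapping theorem for Fréchet spaces, once bijectivity is known.

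I expect the main obstacle to be the bookkeeping with the quotient integral formula when $G$ or $H$ is non-unimodular. The point to check is that $m_{K'}$ is genuinely a function on $G/H$ and not just on $G$, which needs $\Delta_G|_H=\Delta_H$. The lower bound $m_{K'}\ge\epsilon$ also requires care. If this becomes messy, I would instead use a Bruhat function: a continuous $\beta\ge0$ on $G$ with $\int_H\beta(xh)\,dh=1$, whose restriction to compact sets is compactly supported. This choice replaces $m_{K'}$ by an explicit bounded and positively bounded density.
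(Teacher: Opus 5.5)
Your proof is correct, but it is more self-contained than the paper's and takes a somewhat different route. The paper's proof has three steps:
\begin{enumerate}
\item it cites Guichardet (Ch.~III, \S4, no.~4.5) for the fact that local and global $L^p$-induction coincide as sets when $G/H$ is compact;
\item it asserts without proof that the inclusion is continuous;
\item it gets continuity of the inverse from the open mapping theorem for Fr\'echet spaces.
\end{enumerate}
You instead prove the coincidence directly from Weil's quotient integral formula, using the explicit density $m_{K'}(xH)=\lambda_H(x^{-1}K'\cap H)$. The upper bound on $m_{K'}$ gives the continuity of the inclusion that the paper only asserts. The lower bound, for a compact neighbourhood $K'$ of $K$, bounds $\norm{f}_p$ by a constant times a single seminorm. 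So you obtain the topological isomorphism with explicit constants, and the open mapping theorem is not needed. Your approach also shows where cocompactness enters: only in the lower bound. Indeed, the upper bound $m_{K'}\le\lambda_H(K'^{-1}K'\cap H)$ holds for every closed $H$: if $h_1=x^{-1}k_1\in x^{-1}K'\cap H$, then $h_1^{-1}(x^{-1}K'\cap H)=k_1^{-1}K'\cap H$.

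A few bookkeeping points.
\begin{itemize}
\item $m_{K'}$ descends to $G/H$ by left invariance of $dh$ alone. The condition $\Delta_G|_H=\Delta_H$ enters only through the existence of $\mu$ and the validity of Weil's formula, so the worry you flag there does not arise.
\item The uniform lower bound is cleanest as follows. Choose an identity neighbourhood $U$ in $G$ with $KU\subseteq K'$ and evaluate $m_{K'}$ at representatives $x=k\in K$. There $x^{-1}K'\cap H\supseteq U\cap H$, which has positive Haar measure. For $x=kh$ with $h\neq 1$ the set is a translate of this one, not a neighbourhood of the identity.
\item Your preliminary claim that $\nu_K$ has density bounded below fails for an arbitrary compact $K$ with $KH=G$; for example, $K$ could be Haar-null. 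You correct this by passing to $K'$.
\end{itemize}
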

\begin{proof}
For compact $G/H$, global and local $L^p$-induction coincide by~\cite[Chapter~III, \S4, n\textsuperscript{o}~4.5,
p.~211, before Proposition~4.6]{guichardet:cohomologie}.
The inclusion is continuous and bijective, and so a topological
isomorphism by the open mapping theorem.
\end{proof}

\begin{lemma} \label{lem:cocolatticeVsplits}
    Let $H<G$ be a closed subgroup such that $G/H$ carries a non-zero finite $G$-invariant Radon measure~$\mu$. Let $V$ be a separable Banach space with a continuous linear isometric $G$-action, and regard $V$ as an $H$-module by restriction. Then there is a $G$-equivariant isometric isomorphism
    \[
        I^p_H(V)\cong L^p(G/H,V),
    \]
    where $G$ acts on the right-hand side by
    $(g\cdot f)(xH)=g\cdot f(g^{-1}xH)$.
    In particular, $V$ embeds as a $G$-equivariantly complemented subspace of $I^p_H(V)$.
\end{lemma}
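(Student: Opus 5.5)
The isomorphism is the standard untwisting of an induced module when the coefficient module already carries an action of the whole group $G$. For $f\in I^p_H(V)$ I will set
\[
\Phi(f)(x)\defq x\cdot f(x),\qquad x\in G .
\]
\emph{Invariance under $H$.} Using the equivariance condition $f(xh^{-1})=h\cdot f(x)$ we get
\[
\Phi(f)(xh^{-1})=xh^{-1}h\cdot f(x)=\Phi(f)(x)
\]
for a.e.~$x$ and every $h$. So $\Phi(f)$ is right $H$-invariant and descends to a function on $G/H$.

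\emph{Isometry.} Since $G$ acts isometrically, $\norm{\Phi(f)(x)}=\norm{f(x)}$. Hence $\Phi$ is norm preserving from $I^p_H(V)$ into $L^p(G/H,V)$.

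\emph{Equivariance.} With $(g\cdot f)(x)=f(g^{-1}x)$ we compute
\[
\Phi(g\cdot f)(x)=x\cdot f(g^{-1}x)=g\cdot\bigl(g^{-1}x\cdot f(g^{-1}x)\bigr)=g\cdot\Phi(f)(g^{-1}x),
\]
which is exactly the action on $L^p(G/H,V)$ stated in the lemma.

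\emph{Inverse.} The inverse is $\Psi(F)(x)=x^{-1}\cdot F(xH)$. It visibly satisfies the equivariance condition and is locally $L^p$, so $\Phi$ is a surjective isometry.

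The main obstacle is measurability. I must check that $x\mapsto x\cdot f(x)$ is Bochner measurable, and that functions on $G$ which are a.e.\ right $H$-invariant (for each $h$ separately) correspond to measurable functions on $G/H$.

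For the first point, I use that $V$ is separable and the action is continuous. The map $G\times V\to V$ is jointly continuous, because the action is isometric and strongly continuous. Approximating $f$ locally by countably-valued measurable functions then gives measurability of $\Phi(f)$; by Pettis's theorem, weak measurability plus separability suffices.

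For the second point, I choose a Borel section $s\colon G/H\to G$, which exists since $G$ is lcsc. I define the function on $G/H$ as $F(xH)=\Phi(f)(s(xH))$. I then argue via Fubini on $G\cong G/H\times H$ (locally, using the quotient integral formula) that the a.e.\ invariance for every $h$ upgrades to $F$ being a well-defined class with $\Phi(f)(x)=F(xH)$ for a.e.~$x$. The norm identity uses the Weil-type formula relating Haar measure to $\mu$, which is already implicit in the definition of $\norm{\cdot}_p$ on $I^p_H(V)$.

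Finally, for the complemented embedding, $V$ sits in $L^p(G/H,V)$ as the constant functions. The map $v\mapsto$ (constant $v$) is $G$-equivariant, since $g\cdot v$ is again constant. Normalizing $\mu$ to be a probability measure, the averaging map
\[
F\mapsto\int_{G/H}F\,d\mu
\]
is a contractive linear projection onto the constants. It is $G$-equivariant because $\mu$ is $G$-invariant and $g$ acts linearly and continuously, so it commutes with the Bochner integral:
\[
\int g\cdot F(g^{-1}y)\,d\mu=g\cdot\int F\,d\mu .
\]
Transporting this through $\Phi$ gives the claimed $G$-equivariantly complemented copy of $V$ in $I^p_H(V)$.
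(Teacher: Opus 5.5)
Your proposal is the paper's proof: the same untwisting map $f\mapsto\bigl(x\mapsto x\cdot f(x)\bigr)$ with inverse $F\mapsto\bigl(x\mapsto x^{-1}\cdot F(xH)\bigr)$, isometry because the action is isometric, and the complemented copy of $V$ given by the constant functions, split $G$-equivariantly by integration against the normalized invariant measure. Your measurability discussion goes beyond what the paper writes. One small correction there: evaluating the class $\Phi(f)$ along a Borel section only makes sense after Fubini has supplied a genuinely right-$H$-invariant representative, because the image of the section may be Haar-null.
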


\begin{proof}
    After rescaling, we may assume that $\mu$ is a probability measure. The map 
    \[
        T\colon I^p_H(V)\longrightarrow L^p(G/H,V),
        \qquad (Tf)(xH)=x\cdot f(x)
    \]
    is well defined by the equivariance condition of $f$. 
    Since the $G$-action on $V$ is isometric, $T$ is an isometry. Its inverse is the
    $G$-equivariant linear map
    \[
        T^{-1}\colon L^p(G/H,V)\longrightarrow I^p_H(V),
        \qquad (T^{-1}F)(x)=x^{-1}\cdot F(xH).
    \]
    Under this identification, the desired embedding of $V$ is the embedding into $L^p(G/H,V)$ as constant functions, which is $G$-equivariantly split by integration. 
\end{proof}

\begin{lemma}\label{lem: Lp modules preserved under induction}
Let $H<G$ be a closed subgroup such that $G/H$ carries a non-zero finite $G$-invariant Radon measure, and let $1\leq p<\infty$. Let $V$ be an $L^p$-space with a continuous linear isometric $H$-action. Then $I^p_H(V)$ is an $L^p$-space with a continuous linear isometric $G$-action.
\end{lemma}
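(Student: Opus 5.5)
We need to show that $I^p_H(V)$ is an $L^p$-space, and that $G$ acts on it continuously by linear isometries.

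Reduce first to the case where $V$ is realized concretely: by \cref{thm:kak}, $V\cong L^p(Y)$ for some measure space $Y$, and we fix such an identification. The plan is to choose a measurable section of $G\to G/H$ and use it to identify $I^p_H(V)$ isometrically with $L^p(G/H,V)=L^p(G/H\times Y)$. This identification will not be equivariant in general. That does not matter: being an $L^p$-space is a property of the Banach space alone, while isometry and continuity of the $G$-action are checked directly on $I^p_H(V)$.

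For the identification, let $s\colon G/H\to G$ be a Borel section, which exists because $G$ is lcsc and $H$ is closed. Define
\[
S\colon I^p_H(V)\to L^p(G/H,V),\qquad (Sf)(xH)=f(s(xH)).
\]
The equivariance condition $f(xh^{-1})=h\cdot f(x)$, together with the fact that $H$ acts isometrically on $V$, shows that $\|f(x)\|_V$ depends only on $xH$. Hence $\|Sf\|_p=\|f\|_p$. For the inverse, given $F\in L^p(G/H,V)$ set
\[
f(x)=h(x)^{-1}\cdot F(xH),\qquad\text{where } x=s(xH)\,h(x).
\]
The cocycle $x\mapsto h(x)$ is Borel, and this $f$ satisfies the equivariance condition and is local-$L^p$.

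The step needing most care is Bochner measurability of $f$, i.e.\ of $x\mapsto h(x)^{-1}F(xH)$. Since the $H$-action on $V$ is continuous, the map $(h,v)\mapsto h^{-1}v$ is jointly continuous on $H\times V$, because the action is isometric. So $f$ is the composition of a jointly continuous map with the Borel map $x\mapsto (h(x),F(xH))$. When $V$ is separable this is Bochner measurable directly. In general, $F$ is essentially separably valued, so by \cref{lem:Lpcofinal} (applied with $G$ replaced by $H$) it takes values in a separable $H$-invariant subspace, and the same argument applies there. We also need that $S$ is well defined on classes, i.e.\ that a.e.-defined $f$ restricts meaningfully along $s$. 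This follows from equivariance together with Fubini-type arguments through the local product structure of $G\to G/H$; it is the standard Mackey-type argument, and is the technical point I expect to be hardest.

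Consequently $I^p_H(V)\cong L^p(G/H,L^p(Y))\cong L^p(G/H\times Y)$ by Fubini, so $I^p_H(V)$ is an $L^p$-space. The action $(g\cdot f)(x)=f(g^{-1}x)$ is isometric because $\mu$ is $G$-invariant, and it is clearly linear. For continuity, use density of compactly supported continuous functions (modulo $H$) in $I^p_H(V)$, obtained from the Bruhat function construction. Continuity on that dense set holds by uniform continuity, and it extends to all of $I^p_H(V)$ because the operators are isometries.
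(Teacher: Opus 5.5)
Your proposal is correct and follows the same route as the paper. The paper cites Guichardet for the (non-equivariant) isometric identification $I^p_H(V)\cong L^p(G/H,V)$ and then uses that $L^p(G/H,V)$ is an $L^p$-space whenever $V$ is; you unpack that citation with a Borel section and its cocycle, and you also check directly that the $G$-action is isometric and continuous.

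For the point you flag as hardest, do not restrict $f$ to $s(G/H)$, which is Haar-null. Instead, with $h(x)=s(xH)^{-1}x$, the map $x\mapsto h(x)\cdot f(x)$ is a.e.\ right-$H$-invariant by the equivariance of $f$. The Borel isomorphism $G\cong G/H\times H$ carries Haar measure to $\mu$ times a Haar measure on $H$ (Weil's formula), so Fubini shows this map agrees a.e.\ with a function of $xH$. That function is $Sf$.
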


\begin{proof}
By \cite[Chapter~III, \S4, n\textsuperscript{o}~4.5,
pp.~210--211]{guichardet:cohomologie}, the induction
$I_H^p(V)$ is isometrically isomorphic as a Banach space to
$L^p(G/H,V)$. Since $V$ is an $L^p$-space, so is $L^p(G/H,V)$.
This realization carries the continuous linear isometric induced $G$-action.
\end{proof}

\subsection{Review of the classical Shapiro isomorphism}

The following topological Shapiro isomorphism is given by \cite[Theorem~8.7, p.~163]{Blanc:surla}.

\begin{theorem} \label{thm:shapiro}
Let $G$ be an lcsc group, let $H<G$ be a closed subgroup, and let $1\leq p<\infty$.
Let $V$ be a Banach space endowed with a continuous linear isometric $H$-action.
Then there is a natural topological isomorphism, called the \emph{(local) Shapiro isomorphism}, 
\[
    H^\ast\bigl(G,I_{\loc,H}^p(V)\bigr)\cong H^\ast\bigl(H,V\bigr)
\]
In particular, it induces an isomorphism of the corresponding reduced cohomology groups. 
\end{theorem}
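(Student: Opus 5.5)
The plan is to write down the classical Eckmann–Shapiro comparison on the level of cochain complexes. We will not try to guess a formula inside the homogeneous bar complex of $G$. Instead, we use the fact that continuous cohomology of an lcsc group with Fréchet coefficients can be computed from any strong (relatively injective) resolution of the coefficient module, following Blanc and Guichardet.

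\textbf{Step 1: an $H$-resolution from $G$-cochains.} Consider the complex $L^p_{\loc}(G^{\ast+1},V)$ of locally $L^p$ (Bochner) functions, with $H$ acting diagonally on $G^{\ast+1}$ by right multiplication and on $V$ by the given action. We will check three facts:
\begin{enumerate}
\item the augmented complex $V\to L^p_{\loc}(G,V)\to L^p_{\loc}(G^2,V)\to\cdots$ is a strong resolution of $V$ as a Fréchet $H$-module, with the contracting homotopy given by integrating the first variable against a compactly supported bump function on $G$;
\item each term is relatively injective for $H$, since $L^p_{\loc}(G\times Y,V)$ is induced from the trivial group up to the isomorphism $G\cong H\times(H\backslash G)$ via a Borel section;
\item hence its $H$-invariants compute $H^\ast(H,V)$; this is Blanc's comparison theorem, and it should be invoked rather than reproved.
\end{enumerate}

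\textbf{Step 2: the same complex as a $G$-resolution.} Via the exponential-law isomorphism $L^p_{\loc}(G^{k+1},I^p_{\loc,H}(V))\cong L^p_{\loc}(G^{k+1}\times G,V)^{H}$, given by $f\mapsto\bigl((g_0,\dots,g_k,x)\mapsto f(g_0,\dots,g_k)(x)\bigr)$, the $G$-invariants of the standard $L^p_{\loc}$-resolution computing $H^\ast(G,I^p_{\loc,H}(V))$ become functions on $G^{k+2}$ that are invariant under $G$ on the left and equivariant under $H$ on the right. Taking $G$-invariants first, i.e.\ restricting to $g$ with $x=e$ after a change of variables $g_i\mapsto x^{-1}g_i$, identifies this complex with $L^p_{\loc}(G^{k+1},V)^H$ from Step 1, compatibly with differentials. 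Combining both steps gives the algebraic isomorphism, and it is natural in $V$.

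\textbf{Step 3: topology.} Every identification above is a continuous linear bijection between Fréchet spaces, so it is a topological isomorphism by the open mapping theorem. The comparison maps from Blanc's theorem are homotopy equivalences given by continuous maps, so they are topological isomorphisms on cohomology. Since a topological isomorphism of cochain complexes carries closures of coboundaries to closures of coboundaries, the reduced cohomology groups also match.

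The main obstacle is measure-theoretic, in Steps 1 and 2. One must make sense of the change of variables and of restricting to $x=e$ for classes of functions defined only almost everywhere, and one must show that measurable and continuous cochains give the same cohomology. Following Blanc, we handle this by working throughout with $L^p_{\loc}$ resolutions and Fubini, avoiding evaluation at points, and we cite Blanc's equivalence between measurable and continuous cohomology to bridge back to $H^\ast$ as defined in \cref{sub: group cohomology}. Separability of $G$ (lcsc) is what guarantees a Borel section $H\backslash G\to G$ and Bochner measurability of all the functions involved.
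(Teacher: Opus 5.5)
Your proposal is correct and is essentially the argument behind Blanc's Th\'eor\`eme~8.7 (via his Proposition~8.6), which the paper cites without reproving; the paper later records that this isomorphism is implemented on the $L^p_{\loc}$-resolutions by exactly your Step~2 formula $f\mapsto\bigl((g_0,\dots,g_k)\mapsto(x\mapsto f(x^{-1}g_0,\dots,x^{-1}g_k))\bigr)$. Two small slips to fix: that identification lands in $L^p_{\loc}(G^{k+1},V)^H$ with $H$ acting by \emph{left} multiplication (the restriction of the $G$-action), so that is the action to use in Step~1; and the untwisting $G\cong H\times(H\backslash G)$ used for relative injectivity needs a locally bounded Borel section (images of compact sets relatively compact), because an arbitrary Borel section need not preserve local $L^p$-integrability.
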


For the (global as opposed to local) $L^p$-induction, finite covolume yields a comparison map, and cocompactness ensures that it is an isomorphism.

\begin{theorem} \label{thm:cocoshapiro}
Let $G$ be an lcsc group, let $H<G$ be a closed subgroup such that $G/H$ carries a non-zero finite $G$-invariant Radon measure, and let $1\leq p<\infty$.
Let $V$ be a Banach space endowed with a continuous linear isometric $H$-action.
The inclusion $I_H^p(V)\hookrightarrow I_{\loc,H}^p(V)$ and the local Shapiro isomorphism induce a natural continuous linear map
\[
    H^\ast(G,I_H^p(V))\longrightarrow H^\ast(H,V).
\]
If, in addition, $H$ is cocompact in $G$, this map is a topological isomorphism, as is the induced map on the corresponding reduced cohomology groups. 
\end{theorem}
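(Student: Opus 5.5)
The map is the composite of the map in cohomology induced by the inclusion of $G$-modules $\iota\colon I^p_H(V)\hookrightarrow I^p_{\loc,H}(V)$ with the local Shapiro isomorphism of \cref{thm:shapiro}. First I check that $\iota$ is a continuous $G$-equivariant linear map of Fréchet $G$-modules. Equivariance is immediate, since both spaces carry the same left-translation action \eqref{eq: induction formula}. For continuity, I need that for every compact $K\subseteq G$ the seminorm $f\mapsto(\int_K\norm{f(x)}^p\,dx)^{1/p}$ is bounded by a constant times $\norm{f}_p$ on $I^p_H(V)$. This follows from Weil's integration formula. Choose a Bruhat function $\beta\geq0$ on $G$ with $\int_H\beta(xh)\,dh=1$ and bounded below on $K$; then $\int_K\norm{f}^p\,dx\leq C_K\int_G\beta\norm{f}^p\,dx$. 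Because $\norm{f(xh)}$ depends only on $xH$ (the $H$-action on $V$ is isometric), the right-hand side equals $C_K\int_{G/H}\norm{f(x)}^p\,d\mu(xH)$, up to normalizing $\mu$ and accounting for the modular function. Note that $G/H$ carrying a $G$-invariant measure forces $\Delta_G|_H=\Delta_H$, so the quasi-invariant formula reduces to the invariant one. Hence $\iota$ induces a continuous cochain map $C^*(G,I^p_H(V))\to C^*(G,I^p_{\loc,H}(V))$ for the compact-open topologies. Therefore it induces continuous linear maps on $H^*$ and on the reduced groups $\bar H^*$. Composing with \cref{thm:shapiro} gives the natural continuous map; naturality in $V$ is clear since each step is functorial.

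For the cocompact case, I invoke \cref{lem:cocoIp}: $\iota$ is then an isomorphism of Fréchet $G$-modules. A topological isomorphism of coefficient modules induces a topological isomorphism of the cochain complexes with the compact-open topology, and hence topological isomorphisms on $H^*$ (with the quotient topology) and on $\bar H^*$. This is because its inverse is also continuous and equivariant, so the induced map on cohomology has a continuous inverse. Composing with the topological isomorphism of \cref{thm:shapiro}, which also induces an isomorphism on reduced cohomology, gives the claim.

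The only step needing care is the continuity estimate in the first paragraph, namely the Weil-formula bound relating local $L^p$-seminorms to the global norm. I would present it via a Bruhat function, citing the standard treatment in \cite{guichardet:cohomologie} or \cite{Blanc:surla}. Everything else is formal functoriality of continuous cohomology in the coefficient module.
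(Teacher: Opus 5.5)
Your proposal is correct and follows the paper's proof. The map is the composite of the inclusion-induced map with Blanc's local Shapiro isomorphism. In the cocompact case, \cref{lem:cocoIp} makes the first map a topological isomorphism, and this passes to the Hausdorff quotients, giving the statement for reduced cohomology.

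Your Weil-formula and Bruhat-function estimate for continuity of $I^p_H(V)\hookrightarrow I^p_{\loc,H}(V)$ is a detail the paper leaves implicit, and it is sound.
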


\begin{proof}
The comparison map is the composition
\[
    H^i(G,I_H^p(V))\longrightarrow H^i(G,I_{\loc,H}^p(V))
    \xrightarrow{\cong} H^i(H,V).
\]
By \cite[Theorem~8.7, p.~163]{Blanc:surla}, the second map is a topological isomorphism.
If $H$ is cocompact, \cref{lem:cocoIp} shows that the first map is a topological isomorphism as well.
Passing to the Hausdorff quotients yields the corresponding topological isomorphism on reduced cohomology.
\end{proof}

Combining this with the split map $V\to I_H^p(V)$ from \cref{lem:cocolatticeVsplits} gives the following restriction statement.

\begin{corollary} \label{cor:restriction_is_injective_cocolattice}
Let $G$ be an lcsc group and let $H<G$ be a closed cocompact subgroup such that $G/H$ carries a non-zero $G$-invariant Radon measure.
Let $V$ be a Banach space endowed with a continuous linear isometric $G$-action, and regard it as an $H$-module by restriction.
Then the restriction maps $H^\ast(G,V)\longrightarrow H^\ast(H,V)$ are split injective. Similarly for the corresponding maps in reduced cohomology. 
\end{corollary}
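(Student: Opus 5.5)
The plan is to exhibit an explicit left inverse of the restriction map by composing two maps that are already available. The first is the map on cohomology induced by the $G$-equivariant embedding $\iota\colon V\to I^p_H(V)$ from \cref{lem:cocolatticeVsplits}. The second is the comparison map $S\colon H^\ast(G,I^p_H(V))\to H^\ast(H,V)$ of \cref{thm:cocoshapiro}, which is a topological isomorphism because $H$ is cocompact. I will fix $p=1$ (any $p$ works) and normalize the invariant measure $\mu$ on $G/H$ to be a probability measure. This is possible since $G/H$ is compact and $\mu$ is Radon, so $\mu$ is finite. The key claim is that $S\circ\iota_\ast$ equals the restriction map $\res\colon H^\ast(G,V)\to H^\ast(H,V)$.

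Once the claim is established, injectivity follows quickly. By \cref{lem:cocolatticeVsplits}, $\iota$ has a $G$-equivariant continuous left inverse $\pi$, given by integration over $G/H$ in the model $L^p(G/H,V)$. Hence $\iota_\ast$ has left inverse $\pi_\ast$. The composition $\pi_\ast\circ S^{-1}$ is then a continuous left inverse of $\res$, so $\res$ is split injective. All the maps involved are continuous and come from continuous cochain maps, so they descend to the reduced cohomology groups $\bar H^\ast$ and the identity $\pi_\ast S^{-1}\res=\id$ persists there. This gives the reduced statement. One point needs care: \cref{lem:cocolatticeVsplits} assumes $V$ is separable. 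I will either pass to separable $G$-invariant subspaces as in \cref{lem:separable reduction}, using that continuous cochains have separable range, or observe that the identification $I^p_H(V)\cong L^p(G/H,V)$ and integration do not actually need separability. Bochner measurability of $x\mapsto x^{-1}v$ is automatic for a continuous orbit map.

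The main work, and the step I expect to be the main obstacle, is identifying $S\circ\iota_\ast$ with restriction. Under $T^{-1}$, the embedding $\iota$ sends $v$ to the function $f_v(x)=x^{-1}v$. This is exactly the canonical $G$-map $V\to I^p_{\loc,H}(V)$ that is adjoint, in Frobenius reciprocity, to the identity $V\to V$ of $H$-modules. For Blanc's Shapiro isomorphism, the composite of $H^\ast(G,V)\to H^\ast(G,I_{\loc,H}(V))$ with the isomorphism to $H^\ast(H,V)$ is restriction; this is the standard naturality property of Shapiro's lemma. I would verify it at the cochain level. The Shapiro map is induced by the evaluation $I_{\loc,H}(V)\to V$, $f\mapsto f(e)$, combined with restriction of cochains from $G$ to $H$. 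Since point evaluation is not defined on $L^p_{\loc}$ classes, I would replace it by a local average $f\mapsto\int\phi(x)\,x f(x)\,dx$ against a bump $\phi$, or by a Bruhat function. On $f_v$ this averaging returns exactly $v$, because $x f_v(x)=v$ and $\int\phi=1$. The composite therefore agrees with restriction on cochains, independent of the choice of $\phi$. The remaining subtlety is to check that this averaged evaluation realizes Blanc's isomorphism from \cref{thm:shapiro}. I expect to settle this by a naturality argument: both maps are natural transformations of $\delta$-functors agreeing in degree $0$, where $H^0(G,I_{\loc}(V))=I_{\loc}(V)^G\cong V^H$ through the same formula.
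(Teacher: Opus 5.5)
Your overall argument is the paper's own: compose $\iota_\ast$ with the cocompact Shapiro isomorphism $S$ of \cref{thm:cocoshapiro}, and split $\iota$ by the $G$-equivariant integration $\pi$ of \cref{lem:cocolatticeVsplits}. Everything you deduce from $S\circ\iota_\ast=\res$ is fine: the continuous left inverse $\pi_\ast S^{-1}$, the passage to reduced cohomology, and your remark that $\iota$ and $\pi$ do not need separability.

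The gap is in the step you flagged, the proof of $S\circ\iota_\ast=\res$. Your smoothed evaluation $E_\phi(f)=\int_G\phi(x)\,x\cdot f(x)\,dx$ does not realize Blanc's isomorphism, for three reasons.

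\emph{It is in general not $H$-equivariant.} Substituting $x=hy$ gives $E_\phi(h\cdot f)=h\cdot\int_G\phi(hy)\,y\cdot f(y)\,dy$. So equivariance forces $xH\mapsto\int_H\phi(xk)\,dk$ to be left $H$-invariant on $G/H$. For a bump function this fails, e.g.\ whenever $H$ acts ergodically on $G/H$, as for lattices in simple groups.

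\emph{When it is equivariant, it is the wrong map.} For a Bruhat function, $E_\phi$ is just $\pi$. Then ``restrict to $H$, then apply $\pi_\ast$'' equals $\res\circ\pi_\ast$, which factors through $H^\ast(G,V)$ and so cannot be an isomorphism in general.

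\emph{Your degree-$0$ check is false.} An element of $I^p_{\loc,H}(V)^G$ is a constant function $f\equiv v$ with $v\in V^H$, and $E_\phi(f)=\int\phi(x)\,x\cdot v\,dx$, not $v$. The identity $x\cdot f(x)=v$ holds for $f=\iota(v)$, not for $G$-invariant $f$. Concretely, take $G=\bbR$, $H=\bbZ$, $V=\mathbb{C}$ with $t\cdot z=e^{2\pi i t}z$. Then $H^0(H,V)=\mathbb{C}$, but $E_\phi$ acts on $I(V)^G\cong\mathbb{C}$ as multiplication by $\int\phi(x)e^{2\pi i x}\,dx$. This depends on $\phi$ and vanishes, for instance, for $\phi=1_{[0,1)}$.

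The $\delta$-functor uniqueness argument is also unavailable: $E_\phi$ uses the $G$-action on $V$, so it is not a transformation defined on $H$-modules.

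The identity you need is true; it is the standard compatibility the paper uses without comment. The clean proof goes in the opposite direction, using the cochain formula the paper quotes from Blanc in the proof of \cref{thm: surjective Shapiro}.

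Compute both $H^\ast(G,V)$ and $H^\ast(H,V)$ on the common strong relatively injective resolution $L^p_{\loc}(G^{\ast+1},V)$. Then $\res$ is induced by the inclusion $L^p_{\loc}(G^{\ast+1},V)^G\subset L^p_{\loc}(G^{\ast+1},V)^H$. The inverse of the local Shapiro isomorphism is induced by $c\mapsto\tilde c$, where $\tilde c(g_0,\dots,g_n)(x)=c(x^{-1}g_0,\dots,x^{-1}g_n)$.

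If $c$ is $G$-equivariant, then
\[
\tilde c(g_0,\dots,g_n)(x)=x^{-1}\cdot c(g_0,\dots,g_n)=\iota\bigl(c(g_0,\dots,g_n)\bigr)(x).
\]
So already on cochains the inverse Shapiro map composed with $\res$ equals $\iota_\ast$, followed by the inclusion $I^p_H(V)\subset I^p_{\loc,H}(V)$, which is an isomorphism by \cref{lem:cocoIp}. This gives $S\circ\iota_\ast=\res$ without any evaluation map, and the rest of your argument then goes through.
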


\subsection{Comparison of the cohomology of the lattice and the ambient group}

Let $\Gamma$ be a finitely generated lattice in a compactly generated lcsc group~$G$. Let $K$ be a compact symmetric generating set of~$G$. Let $S$ be a finite generating set of~$\Gamma$. Let $d_G$ be the word metric of $G$ associated with $K\cup S$. Let $d_\Gamma$ be the word metric of~$\Gamma$. Obviously, we have the estimate 
\[ d_G(\gamma, \gamma')\le d_\Gamma(\gamma, \gamma') \]
for all $\gamma, \gamma'\in\Gamma$. In the next definition, which is a streamlined version of~\cite[Definition~6.18]{badersauer:higherkazhdanproperty}, we ask that the inclusion $\Gamma\hookrightarrow G$ is at most polynomially distorted. 

\begin{definition}\label{def: polynomial lattice}
	 Let $\Gamma$ be a finitely generated lattice in a compactly generated lcsc group~$G$. Let $\mu$ be a Haar measure on~$G$. Let $d_G$ be a word metric of~$G$, and let $d_\Gamma$ be a word metric of~$\Gamma$. 
	 We say that $\Gamma$ is a \emph{polynomial lattice} if two properties are satisfied. 
	 \begin{enumerate}
	 	\item There is $C\ge 1$ and $k\in\bbN$ such that 
	 		 \[ d_\Gamma(\gamma, \gamma')\le C\cdot d_G(\gamma, \gamma')^k. \]
	 	\item There is a measurable fundamental domain $X\subset G$ of $\Gamma$ such that for every $p>0$ we have 
	          \[ \int_X d_G(x,1)^pd\mu(x)<\infty.\]
	 \end{enumerate}
\end{definition}

\begin{remark}\label{rem: indifference of polynomial complex to metric}
A consequence of the polynomial distortion of a polynomial lattice $\Gamma<G$ is that it does not matter whether we define the cochain complex of polynomial cohomology $C_\pol^\ast(\Gamma^{\ast+1}, V)$ with respect to a word metric of~$\Gamma$ or with respect to the restriction of a word metric of~$G$. The different word metrics yield the same cochain complex. 
\end{remark}

\begin{remark}\label{rem: Dirichlet domain also polynomial}
Following ideas from~\cite[\S~2]{Shalom:rigidity}, we show that one can replace the witnessing fundamental domain~$X$ in \cref{def: polynomial lattice} by a Dirichlet fundamental domain. 

Let $l(\_)=d_G(\_, 1)$ be a word length function associated with a compact symmetric generating neighborhood of $1$. One should keep \cref{rem: indifference of polynomial complex to metric} in mind. 
Let $X$ be a measurable $\Gamma$-fundamental domain witnessing the polynomiality of $\Gamma$. We consider the measurable subset
\[Y_0=\bigl\{g\in G\mid l(g\gamma)\ge l(g) \ \ \forall\gamma\in \Gamma\bigr\}. \] 
The right $\Gamma$-translates of $Y_0$ cover~$G$: For every $g\in G$ 
    pick an element $\gamma_g\in\Gamma$ that minimizes $d(g^{-1},\cdot)$, then  $l(g\gamma_g)=d(g^{-1}, \gamma_g)\le d(g^{-1}, \lambda)=l(g\lambda)$ for all $\lambda\in \Gamma$, so $g\gamma_g\in Y_0$. Thus, there is a measurable subset $Y\subset Y_0$ that is a $\Gamma$-fundamental domain. Since $X$ and $Y$ are fundamental domains there is a countable partition 
        $Y=\bigcup_{\gamma\in\Gamma} Y_\gamma$ such that $\phi\colon Y\to X$ with $\phi(y)=y\gamma^{-1}$ for $y\in Y_\gamma$ is a measure preserving isomorphism. Hence 
        \begin{equation*}  
        \begin{split}
        \int_Y l(y)^pd\mu(y)&=\sum_\gamma \int_{Y_\gamma} l(y)^pd\mu(y)\le \sum_{\gamma	\in\Gamma}\int_{Y_\gamma} l(y\gamma^{-1})^pd\mu(y)\\ &
            = \sum_{\gamma\in\Gamma}\int_{\phi(Y_\gamma)} l(x)^pd\mu(x)
            = \int_X l(x)^pd\mu(x)<\infty 
            \end{split}
        \end{equation*} 
 for every $p>0$. So $Y$ enjoys the same integrability condition as~$X$.

Furthermore, we may assume that $Y$ contains a neighborhood of $1$: A sufficiently small neighborhood $U$ contained in the generating set satisfies $U^{-1}U\cap\Gamma=\{1\}$ and $U\subset Y_0$, so we may retain its points when choosing representatives in $Y_0$.

\end{remark}

The following theorem is a consequence of the reduction theory of Borel-Harish-Chandra-Behr-Harder. The witnessing fundamental domain can be taken to be the union of Siegel domains. See~\cite[\S~3]{Shalom:rigidity} for a discussion and references. 

\begin{theorem}\label{thm: reduction theorem}
An irreducible lattice in a semisimple group of higher-rank is polynomial. 
\end{theorem}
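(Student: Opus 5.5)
The plan is to check the two conditions of \cref{def: polynomial lattice} separately, using reduction theory for the fundamental domain and known distortion results for the word metric. First I reduce to the arithmetic setting. By Margulis arithmeticity, an irreducible lattice $\Gamma$ in a higher-rank semisimple group $G$ (characteristic $0$, as our standing assumptions give) is $S$-arithmetic. Concretely, after passing to finite-index subgroups and modding out compact factors and finite centers, there is a number field $F$, a finite set of places $S$, and an $F$-group $\mathbf{H}$ with $G\cong\prod_{v\in S}\mathbf{H}(F_v)$ up to compact factors, such that $\Gamma$ is commensurable with $\mathbf{H}(\mathcal{O}_S)$. Both conditions of the definition are insensitive to these modifications. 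Passing to a finite-index subgroup multiplies the fundamental domain by finitely many translates and keeps the word metric bi-Lipschitz. Compact factors and finite kernels change $d_G$ only by bounded amounts. If $\Gamma$ is cocompact, both conditions are immediate: $\Gamma$ is quasi-isometrically embedded by Švarc–Milnor, and a relatively compact fundamental domain exists. So I assume $\Gamma$ is non-uniform.

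For condition (2), I would use Borel–Harish-Chandra reduction theory, together with Behr–Harder in the function-field-free $S$-arithmetic case. This gives a fundamental set $\Omega=\bigcup_i \omega K A_t^+ \xi_i$, a finite union of Siegel sets. Here $\omega$ is compact in the unipotent-times-compact part, $K$ is a maximal compact subgroup, $A_t^+=\{a\in A: \alpha(a)\geq t\ \forall \alpha\in\Delta\}$ is a translated cone in a maximal $F$-split torus, and the $\xi_i$ are finitely many elements. The Haar measure in these coordinates is bounded by $e^{-2\rho(\log a)}\,da$, and $A_t^+$ lies in a translate of the positive chamber, where $\rho$ is bounded below by a positive multiple of $\|\log a\|$. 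Meanwhile $d_G(a,1)$ grows at most linearly in $\|\log a\|$. So every polynomial moment $\int_\Omega d_G(x,1)^p\,d\mu$ is dominated by $\int_{\mathbb{R}_{\ge0}^r} (1+\|y\|)^p e^{-c\|y\|}\,dy<\infty$. A measurable fundamental domain $X\subset\Omega$ then inherits this bound. In the $p$-adic and $S$-arithmetic factors the same computation works with sums over the cone in the cocharacter lattice in place of integrals.

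Condition (1) is the main obstacle. For higher-rank irreducible lattices it is the Lubotzky–Mozes–Raghunathan theorem: $\Gamma\hookrightarrow G$ is a quasi-isometric embedding, which gives the required estimate with $k=1$. I would simply cite it. If a self-contained argument were wanted, the weaker polynomial bound is what one would aim for, since it suffices here. One would express elements of $\mathbf{H}(\mathcal{O}_S)$ through their Bruhat/Iwasawa decompositions, so that unipotent elements of $G$-size $R$ have matrix entries of size $e^{O(R)}$. Then one writes such unipotent elements as words of length polynomial in $R$, using conjugation by split torus elements of $\Gamma$ in rank $\ge 2$ (the exponential-contraction trick, as in LMR). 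This is where higher rank is essential: in rank one, cusps are exponentially distorted. I would not try to improve on LMR but invoke it, and note that it is stated for irreducible lattices in higher rank, which is exactly the hypothesis here.
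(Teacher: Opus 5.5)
Your proposal is correct and follows essentially the paper's route. The paper only invokes Borel--Harish-Chandra(--Behr--Harder) reduction theory, with a finite union of Siegel sets as the witnessing fundamental domain, and defers to Shalom's \S3, which combines exactly your two ingredients: exponential decay of the Haar density on Siegel sets against linear growth of $d_G$ for condition (2), and the Lubotzky--Mozes--Raghunathan theorem for the distortion condition (1), which reduction theory alone does not supply.

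One slip to fix: the Siegel sets must be written in Iwasawa order, e.g.\ $K A_t\omega\,\xi_i$ with $A_t=\{a : a^{\alpha}\le t\}$ and $\omega$ compact in $MN$, where the Haar density $a^{2\rho}$ decays. In the coordinates $nka$ the Haar density does not depend on $a$, and your set $\omega K A_t^+$ already has infinite Haar measure in $\mathrm{SL}_2(\mathbb{R})$.
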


The next lemma corresponds to~\cite[Proposition~6.20]{badersauer:higherkazhdanproperty} for universally integrable lattices in the sense of~\cite[Definition~6.18]{badersauer:higherkazhdanproperty}. The proof is similar except for the discussion about fundamental domains at the beginning.

\begin{lemma}[{\cite{badersauer:higherkazhdanproperty}, Proposition~6.20}]\label{lem: larger resolution for polynomial cohomology}
Let $\Gamma$ be a polynomial lattice in a compactly generated lcsc group~$G$. Let $V$ be a Banach space with a linear isometric $\Gamma$-action. Then 
the restriction homomorphism 
\[C_\pol(G^{\ast+1}, V)^\Gamma\to C_\pol(\Gamma^{\ast+1}, V)^\Gamma=C_\pol^\ast(\Gamma,V)\]
is a chain homotopy equivalence. 
\end{lemma}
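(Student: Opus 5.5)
The plan is to build an explicit homotopy inverse to the restriction map by ``projecting'' $G$ onto $\Gamma$ with the help of a fundamental domain. By \cref{rem: Dirichlet domain also polynomial} I fix a measurable fundamental domain $Y$ for the right $\Gamma$-action on $G$ satisfying two conditions: $\int_Y l(y)^p\,d\mu(y)<\infty$ for all $p>0$, and $Y$ contains a neighborhood of $1$. I take $\mu$ normalized so that $\mu(Y)=1$.

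The retraction is $r\colon G\to\Gamma$, defined by $r(g)=\gamma$ if $g\in Y\gamma$. It is $\Gamma$-equivariant for the right action. Since the complexes are built from left actions, I work with $g\mapsto r(g^{-1})^{-1}$, or equivalently use the left fundamental domain $Y^{-1}$; this is bookkeeping.

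The candidate inverse sends $c\in C_\pol(\Gamma^{k+1},V)^\Gamma$ to
\[
\Phi(c)(g_0,\dots,g_k)=c\bigl(r(g_0),\dots,r(g_k)\bigr).
\]
This map is $\Gamma$-equivariant and commutes with the differentials. However, it is only measurable, not continuous. I see two ways to handle this. The first is to note that the complexes with measurable, locally bounded polynomial cochains also compute the same cohomology. The second, which I would follow as in \cite[Proposition~6.20]{badersauer:higherkazhdanproperty}, is to average: replace $r(g_i)$ by integrating against a continuous bump $\psi$ supported near $1$, i.e.\ use $\int_G \psi(h)\,c(\dots,r(g_ih),\dots)\,dh$-type smoothings, or equivalently a continuous partition of unity subordinate to the translates of $Y$. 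This produces a continuous $\Gamma$-equivariant cochain map $\Phi$.

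Polynomial growth of $\Phi(c)$ needs two estimates. The first is $d_G(r(g),g)$ bounded polynomially in $l(g)$, which I would control via the Dirichlet property $l(g\gamma)\ge l(g)$. The second is the distortion bound, condition (1) of \cref{def: polynomial lattice}, which turns $d_\Gamma$-polynomial bounds into $d_G$-polynomial ones; \cref{rem: indifference of polynomial complex to metric} lets me use either metric on $\Gamma$.

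For the composites, the first check is restriction after $\Phi$. Since $1\in Y$, we get $r(\gamma)=\gamma$ on $\Gamma$, so this composite is the identity, at least after the averaging is arranged to be exact on $\Gamma$. If it is not exact, I would instead use the standard straight-line homotopy on $\Gamma$.

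For $\Phi$ after restriction, I would use the usual prism homotopy between the two maps $G^{k+1}\to G^{k+1}$, namely $(g_i)\mapsto (g_i)$ and $(g_i)\mapsto(r(g_i))$. Concretely,
\[
h(c)(g_0,\dots,g_{k-1})=\sum_j (-1)^j\, c\bigl(r(g_0),\dots,r(g_j),g_j,\dots,g_{k-1}\bigr).
\]
This is $\Gamma$-equivariant and satisfies $\delta h+h\delta=\id-\Phi\circ\res$.

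The main obstacle is the polynomial bound on $d_G(g,r(g))$. The integrability in \cref{def: polynomial lattice}(2) controls it only on average, not pointwise. The Dirichlet domain pointwise gives $l(y)\le l(y\gamma)$, but this does not bound $l(y)$ itself. My first attempt would be to avoid pointwise bounds altogether. I would replace $r$ by the averaged cochains $\int_Y c(\dots,g_iy\dots)\,d\mu(y)$ composed with the projection, so that the norms pick up terms like $\int_Y(1+l(g)+l(y))^d\,d\mu$. These are finite by the moment condition and polynomial in $l(g)$ by the triangle inequality. This integral version is exactly where condition (2) is used, and it would yield both continuity and polynomial growth of $\Phi$ and of $h$.
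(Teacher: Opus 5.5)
Your outline matches the paper's: a Dirichlet domain containing a neighbourhood of $1$, a $\Gamma$-equivariant retraction $\pi\colon G\to\Gamma$, local smoothing by a Bochner integral, a prism homotopy, and the distortion bound (1). The proposal fails at the step you single out as the main obstacle, and it fails because of a false claim. You say the Dirichlet inequality $l(y)\le l(y\gamma)$ does not bound $l(y)$ pointwise. It does, and this one-line estimate is essentially the whole content of the paper's proof.

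Define $\pi(g)\in\Gamma$ by $y=g^{-1}\pi(g)\in Y$, and apply the inequality with $\gamma=\pi(g)^{-1}$. This gives $l(y)\le l(y\pi(g)^{-1})=l(g^{-1})=l(g)$, hence $l(\pi(g))\le l(g)+l(y)\le 2l(g)$. In your right-action notation it is even more direct: if $g=y\gamma$ with $y\in Y$, then $l(y)\le l(y\gamma)=l(g)$. Geometrically, this is the nearest-point property of a Dirichlet domain: the selected lattice point is no farther from $g$ than $1\in\Gamma$ is. Together with (1), it gives $l_\Gamma(\pi(g))\le C(2l(g))^k$.

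Now smooth by averaging over a small symmetric neighbourhood $U\subset Y$ of $1$. Then $\pi(\gamma u)=\gamma$, so the composite on $\Gamma$ is exactly the identity, and $l(\pi(gu))\le 2(l(g)+l(u))$. Polynomial growth of $\Phi(c)$ and of the smoothed prism homotopy then follows immediately. The moment condition (2) plays no role in this estimate.

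The workaround you propose in its place does not close the gap. The expression $\int_Y c(\dots,g_iy,\dots)\,d\mu(y)$ is not defined, because $c$ is a cochain on $\Gamma^{k+1}$ and $g_iy\notin\Gamma$ in general. Suppose you insert the retraction to make sense of it. Then bounding the integrand by $(1+l(g)+l(y))^d$ requires controlling the length of the retracted lattice point $\pi(gy)$ by $l(gy)$. That is exactly the pointwise bound you declared unavailable, so the argument is circular. Condition (2) only controls integrals of $l$ over $Y$, not the length of the lattice point attached to a given element of $G$. In addition, averaging over all of $Y$ instead of over a neighbourhood $U$ with $U^{-1}\subset Y$ would in general lose $\operatorname{res}\circ\Phi=\mathrm{id}$, since $\pi(\gamma y)=\gamma\pi(y)$ and $\pi(y)\neq 1$ unless $y^{-1}\in Y$.

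As written, the proposal therefore does not establish that $\Phi(c)$ and the homotopy have polynomial growth. If you replace your last paragraph by the Dirichlet estimate above, your argument becomes the paper's.
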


\begin{proof}
Let $l(\_)=d_G(\_,1)$ be a word length function associated with a compact symmetric generating neighborhood of $1$. Choose a Dirichlet fundamental domain $Y$ containing a neighborhood of $1$ as in \cref{rem: Dirichlet domain also polynomial}.

Let $\pi\colon G\to\Gamma$ be the $\Gamma$-equivariant map defined by $g^{-1}\pi(g)\in Y$. For $y=g^{-1}\pi(g)\in Y$, the Dirichlet property gives
\[
l(y)\le l\bigl(y\pi(g)^{-1}\bigr)=l(g^{-1}),
\qquad l(\pi(g))\le l(g)+l(y)\le 2l(g).
\]
By \cref{rem: indifference of polynomial complex to metric}, we may use the restriction of $l$ to define polynomial cochains on $\Gamma$. 
The remainder of the argument in \cite[Proposition~6.20]{badersauer:higherkazhdanproperty}, which uses the Bochner integral on Banach spaces and polynomial norm estimates but not the Hilbert space structure of the coefficients, applies unchanged. \qedhere

\end{proof}

\begin{theorem}\label{thm: surjective Shapiro}
Let $\Gamma$ be a polynomial lattice in an lcsc group~$G$. Let $V$ be a Banach space with a linear isometric $\Gamma$-action. Let $p\in\bbR_{\ge 1}$. 
We have the following commutative diagram. 
\begin{equation}\label{eq: shapiro diagram}
\begin{tikzcd}
 H_\pol^\ast(\Gamma, V) \ar[r]\ar[dd] & H^\ast(\Gamma, V)\ar[d, "\cong"]\\
 & H^\ast(G, I^p_{\loc}(V))\\
 H_{c,\pol}^\ast(G, I^p(V))\ar[r]
 & H^\ast(G, I^p(V))\ar[u]
\end{tikzcd}
\end{equation}
The horizontal maps are the comparison maps between polynomial and ordinary cohomology. The left vertical map is induced by the chain map 
\[ \phi^\ast\colon C_\pol\bigl(G^{\ast+1}, V\bigr)^\Gamma\to  C_\pol\bigl(G^{\ast+1}, I^p(V)\bigr)^G,~\phi^n(f)(g_0,\dots, g_n)(x)=f(x^{-1}g_0,\dots, x^{-1}g_n).\]
The right upper vertical isomorphism is the Shapiro isomorphism in ordinary cohomology.
The right lower vertical map is induced by the inclusion $I^p(V)\subset I^p_{\loc}(V)$. 

In particular, the comparison map for~$\Gamma$ factors over a comparison map for~$G$. 
\end{theorem}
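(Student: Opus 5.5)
The diagram \eqref{eq: shapiro diagram} commutes once it commutes at the cochain level after suitable identifications, and that is what I will check. Two ingredients set this up. First, by \cref{lem: larger resolution for polynomial cohomology}, $H^\ast_\pol(\Gamma,V)$ may be computed by the complex $C_\pol(G^{\ast+1},V)^\Gamma$. Second, by the standard Blanc argument underlying \cref{thm:shapiro}, $H^\ast(\Gamma,V)$ may be computed by $C(G^{\ast+1},V)^\Gamma$, or better by $L^p_{\loc}(G^{\ast+1},V)^\Gamma$. The Shapiro isomorphism is then realized at the cochain level by the map $f\mapsto \bigl((g_0,\dots,g_n)\mapsto (x\mapsto f(x^{-1}g_0,\dots,x^{-1}g_n))\bigr)$, with values in $C(G^{\ast+1},I^p_{\loc}(V))^G$. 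So the plan is to write all four corners in terms of cochains on $G$ and compare formulas.

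The first step is to show that $\phi^\ast$ is well defined. Fix $f\in C_\pol(G^{n+1},V)^\Gamma$.
\begin{enumerate}
\item The $\Gamma$-equivariance $f(\gamma g_0,\dots)=\gamma f(g_0,\dots)$ gives $\phi(f)(g_\bullet)(x\gamma^{-1})=\gamma\cdot\phi(f)(g_\bullet)(x)$, which is the induction condition.
\item $G$-equivariance is immediate: $\phi(f)(gg_\bullet)(x)=\phi(f)(g_\bullet)(g^{-1}x)$.
\item For finiteness of the $I^p$-norm, evaluate on the fundamental domain $Y$ of \cref{rem: Dirichlet domain also polynomial}. The polynomial bound $\|f(x^{-1}g_i)\|\le C(1+\sum_i l(x^{-1}g_i))^d\le C'(1+l(x)+\sum_i l(g_i))^d$ combined with $\int_Y l(x)^{pd}\,d\mu<\infty$ shows that $\|\phi(f)(g_\bullet)\|_p$ is finite and polynomially bounded in the $l(g_i)$.
\item Continuity in $(g_0,\dots,g_n)$ in the $L^p$-norm follows from continuity of $f$ together with dominated convergence, using the same integrable polynomial majorant, uniform on compact sets of $g_\bullet$.
\item Since $\phi$ visibly commutes with the homogeneous differential, it is a chain map.
\end{enumerate}

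The second step is commutativity. Going down and then right sends $f$ to $\phi(f)$, viewed as an ordinary continuous cochain with values in $I^p(V)$; composing with the inclusion $I^p\subset I^p_{\loc}$ gives the same formula in $I^p_{\loc}(V)$. Going right and then down, the class of $f$ in $H^\ast(\Gamma,V)$ is first represented via the restriction homotopy equivalence. I would use $C(G^{\ast+1},V)^\Gamma$, or the polynomial version, as the intermediate complex computing $H^\ast(\Gamma,V)$, compatible with the restriction to $\Gamma^{\ast+1}$. The Shapiro cochain map applied to $f$ then gives precisely $\phi(f)$. Hence both composites agree at the cochain level on $C_\pol(G^{\ast+1},V)^\Gamma$, and the diagram commutes after passing to cohomology.

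The main obstacle is this identification of Blanc's Shapiro isomorphism with the explicit map $f\mapsto \phi(f)$ on $\Gamma$-invariant cochains on $G$, together with the compatibility of restriction to $\Gamma^{\ast+1}$ with the homotopy equivalence of \cref{lem: larger resolution for polynomial cohomology}. I would handle it by noting that $C(G^{\ast+1},V)$ (respectively $L^p_{\loc}$) is a relatively injective strong resolution for $\Gamma$ as well, so restriction to $\Gamma^{\ast+1}$ is the canonical comparison of resolutions. Blanc's isomorphism is by construction the composite of $C(G^{\ast+1},V)^\Gamma\cong C(G^{\ast+1},I^p_{\loc}(V))^G$ with this comparison, and uniqueness of comparison maps up to homotopy then gives the claim. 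The final assertion, that the comparison map for $\Gamma$ factors through the comparison map for $G$, is then read off from the diagram.
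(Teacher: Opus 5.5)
Your proposal is correct and follows essentially the same route as the paper. Well-definedness of $\phi^\ast$ uses the same polynomial estimate integrated over a polynomially integrable fundamental domain, plus dominated convergence for continuity. Commutativity comes from realizing the comparison map through the inclusions $C_\pol(G^{\ast+1},V)^\Gamma\subset C(G^{\ast+1},V)^\Gamma\subset L^p_{\loc}(G^{\ast+1},V)^\Gamma$ of strong relatively injective $\Gamma$-resolutions, together with Blanc's description of the Shapiro isomorphism by the same formula as $\phi^\ast$.

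One small caution: your identification of $\Gamma$-invariant cochains on $G$ with $G$-invariant cochains valued in $I^p_{\loc}(V)$ is not an isomorphism for continuous cochains. Already in degree $0$, $C(G,V)^\Gamma$ is strictly smaller than $I^p_{\loc}(V)$. The identification is an isomorphism only at the $L^p_{\loc}$ level (Blanc, Proposition~8.6), so that is the level at which to run the comparison, as the paper does.
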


\begin{definition}
    The left vertical map in the diagram of \cref{thm: surjective Shapiro} is referred to as the \emph{Shapiro homomorphism in polynomial cohomology}. We call the right vertical composition $H^\ast(G, I^p(V))\to H^\ast(\Gamma, V)$ the \emph{Shapiro restriction}. 
\end{definition}

\begin{proof}
The comparison map is induced by the inclusion 
\begin{equation}\label{eq: realization comparison map} 
C_\pol(G^{\ast+1}, V)^\Gamma\to C(G^{\ast+1}, V)^\Gamma.	
\end{equation}
This follows from \cref{lem: larger resolution for polynomial cohomology} and from the fact that
the restriction $C(G^{\ast+1},V)^\Gamma\to C(\Gamma^{\ast+1}, V)^\Gamma$ is a cohomology isomorphism since both complexes before taking invariants are strong relatively injective $\Gamma$-resolutions of~$V$. See~\cite[Lemme~4.1 on p.~200]{guichardet:cohomologie} and~\cite[Proposition~2.9]{Blanc:surla}. Furthermore, the 
inclusion 
\[ C(G^{\ast+1}, V)^\Gamma\to L^p_{\loc}(G^{\ast+1}, V)^\Gamma\]
also induces a cohomology isomorphism for a similar reason because each $L^p_{\loc}(G^{n+1}, V)$ is a relatively injective $\Gamma$-module by~\cite[Th\'eor\`eme~3.4 and Proposition~8.3]{Blanc:surla} and $L^p_{\loc}(G^{\ast+1}, V)$ is a strong $\Gamma$-resolution by~\cite[Proposition~3.2.1]{Blanc:surla}. 

Next we show that $\phi^\ast$ is well defined. Let $X$ be a fundamental domain witnessing the polynomiality of $\Gamma$, and let $f\in C_\pol(G^{n+1},V)^\Gamma$ satisfy $\norm{f(h_0,\ldots,h_n)}\le C(1+\sum_i l(h_i))^d$. Then
\begin{align*}
\norm{f(x^{-1}g_0,\ldots,x^{-1}g_n)}
&\le C\Bigl(1+(n+1)l(x)+\sum_i l(g_i)\Bigr)^d\\
&\le C(n+1)^d(1+l(x))^d\Bigl(1+\sum_i l(g_i)\Bigr)^d.
\end{align*}
For each $(g_0,\ldots,g_n)\in G^{n+1}$, the function $x\mapsto f(x^{-1}g_0,\ldots,x^{-1}g_n)$ is continuous and has separable range since $G$ is second countable. So it is Bochner measurable and locally $p$-integrable. Taking the $L^p$-norm over $X$ yields
\begin{equation*}
\norm{\phi^n(f)(g_0,\ldots,g_n)}_{I^p(V)}
\le C(n+1)^d
\Bigl(1+\sum_i l(g_i)\Bigr)^d\Bigl(\int_X(1+l(x))^{dp}\,d\mu(x)\Bigr)^{1/p}, 
\end{equation*}
which is finite by the choice of $X$. Moreover,
\[
\phi^n(f)(g_0,\ldots,g_n)(x\gamma)
=\gamma^{-1}\phi^n(f)(g_0,\ldots,g_n)(x).
\]
Thus $\phi^n(f)$ takes values in $I^p(V)$ and has polynomial growth. The underlying pointwise estimate provides an integrable majorant for the $p$-th powers of the norms, locally uniformly in $(g_0,\ldots,g_n)$, so continuity follows by dominated convergence. The $G$-equivariance of $\phi^n(f)$ is straightforward, and $\phi^\ast$ commutes with the differential.

Finally, by \cite[Proposition~8.6 and the proof of Theorem~8.7]{Blanc:surla}, the Shapiro isomorphism is implemented on the local $L^p$-resolutions by the same formula as $\phi^\ast$. Hence diagram commutes in cohomology.
\end{proof}

By \cref{thm: surjective Shapiro}, the comparison map for $\Gamma$ factors through the Shapiro restriction. Surjectivity of the composition implies surjectivity of the latter map. Hence we obtain: 

\begin{corollary} \label{thm:shapiro_for_polynomial_lattice}
Let $\Gamma$ be a polynomial lattice in an lcsc group~$G$. Let $V$ be a Banach space with an isometric linear $\Gamma$-action. Let $p\ge 1$. If the comparison map $H^i_\pol(\Gamma, V)\to H^i(\Gamma, V)$ is surjective for degrees $0\le i\le d$, then 
the Shapiro restriction 
\[ H^i\bigl(G, I^p(V)\bigr)\to H^i(\Gamma, V) \]
 is surjective for degrees $0\le i\le d$.
\end{corollary}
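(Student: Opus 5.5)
The statement follows from the commutative diagram \eqref{eq: shapiro diagram} of \cref{thm: surjective Shapiro} by a diagram chase. Fix a degree $0\le i\le d$. In the diagram, the top horizontal arrow is the comparison map $H^i_\pol(\Gamma,V)\to H^i(\Gamma,V)$, which is surjective by hypothesis. The right-hand column composes the inclusion-induced map $H^i(G,I^p(V))\to H^i(G,I^p_{\loc}(V))$ with the inverse of the local Shapiro isomorphism of \cref{thm:shapiro}; this composite is the Shapiro restriction $H^i(G,I^p(V))\to H^i(\Gamma,V)$.

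Commutativity says that the comparison map for $\Gamma$ equals the composite
\[
H^i_\pol(\Gamma,V)\longrightarrow H^i_{c,\pol}(G,I^p(V))\longrightarrow H^i(G,I^p(V))\longrightarrow H^i(\Gamma,V),
\]
namely the Shapiro homomorphism in polynomial cohomology, then the comparison map for $G$, then the Shapiro restriction. The surjectivity argument runs as follows.
\begin{enumerate}
\item Take any class $c\in H^i(\Gamma,V)$.
\item By hypothesis, $c$ is the image of some polynomial class $a\in H^i_\pol(\Gamma,V)$.
\item Going around the other side of the diagram, push $a$ through $\phi^\ast$ and the comparison map for $G$. This gives a class $b\in H^i(G,I^p(V))$.
\item Commutativity says the Shapiro restriction sends $b$ to $c$.
\end{enumerate}
Hence the Shapiro restriction is surjective in degree $i$, for every $0\le i\le d$.

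There is no real obstacle at this stage. The substantive inputs were already established in the proof of \cref{thm: surjective Shapiro}: that $\phi^\ast$ is well defined and lands in $I^p(V)$, which uses the integrability condition in \cref{def: polynomial lattice}, and that the diagram commutes. The only point to check is the identification just used: the Shapiro restriction is exactly the right-hand vertical composite (local Shapiro isomorphism inverted after the inclusion-induced map), so the diagram chase applies verbatim.
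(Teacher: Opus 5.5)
Your proposal is correct and matches the paper's argument: commutativity of the diagram in \cref{thm: surjective Shapiro} factors the comparison map for $\Gamma$ through the Shapiro restriction, so surjectivity of the composite forces surjectivity of the Shapiro restriction in each degree $0\le i\le d$. Your identification of the Shapiro restriction with the right-hand vertical composite (the inclusion-induced map followed by the inverse of the local Shapiro isomorphism) is exactly the paper's definition.
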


In~\cite[Proposition~6.14]{badersauer:higherkazhdanproperty} we showed that, if $\Gamma$ has polynomial higher filling functions in a certain range, then the comparison map is an isomorphism in that range. The proof is stated for unitary representations, but the Hilbert structure is never used, and the argument works verbatim for linear isometric representations on Banach spaces. As a consequence of the deep work of Leuzinger-Young on the polynomiality of filling functions of arithmetic lattices below the real rank, we obtain in~\cite[Corollary~6.16]{badersauer:higherkazhdanproperty} the following result. 

\begin{theorem}
Let $\Gamma$ be an irreducible lattice in a connected semisimple Lie group~$G$ of rank~$r\ge 2$ with a finite center and without compact factors. 
Let $V$ be a Banach space with a linear isometric $\Gamma$-action. 
Then the comparison map $H^i_\pol(\Gamma, V)\to H^i(\Gamma,V)$ is an isomorphism for every $0\le i<r$. 
\end{theorem}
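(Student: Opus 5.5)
The plan is to follow the route described just before the statement: combine a polynomial filling property for $\Gamma$ below the rank with the general comparison criterion of \cite[Proposition~6.14]{badersauer:higherkazhdanproperty}, checking that nothing in it uses a Hilbert structure.

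\emph{Filling input.} By the work of Leuzinger--Young, for an irreducible lattice $\Gamma$ in $G$ of rank $r\geq 2$, the $k$-dimensional filling functions of $\Gamma$ are polynomially bounded for $k<r$. Equivalently, $\Gamma$ acts properly and cocompactly on a contractible complex, for instance a thickened invariant subset of the symmetric space built from Siegel domains. In that complex every $k$-cycle of mass $m$ bounds a $(k+1)$-chain of mass polynomial in $m$, with support in a neighbourhood of polynomial radius, for $k\leq r-1$. Since $\Gamma$ is polynomial (\cref{thm: reduction theorem}), this statement is insensitive to whether one measures with $d_\Gamma$ or with $d_G$ (\cref{rem: indifference of polynomial complex to metric}).

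\emph{Comparison criterion.} From this filling property one builds a chain contraction of the bar resolution $\mathbb{R}[\Gamma^{\ast+1}]$, in degrees $\leq r-1$, which has polynomially bounded support and coefficients. Concretely, one constructs a $\Gamma$-equivariant chain map from the simplicial chains of the Rips complex to the cellular chains of the cocompact model, and a map back, together with homotopies. Each of these sends a simplex with vertices $g_0,\dots,g_k$ to a chain whose $\ell^1$-mass and support radius are polynomial in $\sum_i l(g_i)$. Dualizing against $V$ then gives the following. Any continuous (here, arbitrary) cocycle $c$ is cohomologous to a cocycle pulled back from the cocompact model, and that cocycle is automatically polynomially bounded, because cocompactness gives finitely many cell orbits and the action is isometric. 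This proves surjectivity. Injectivity goes the same way: if $c$ is polynomial and $c=\delta b$, the homotopy lets us replace $b$ by a polynomial primitive. Each estimate has the form $\|c(\sigma)\|\leq \text{mass}(\sigma)\cdot\sup\|c\|$ on the support, and the supremum is polynomial. So only the facts that $V$ is a Banach space and that $\Gamma$ acts by isometries are used. This is exactly the observation that the argument of \cite[Proposition~6.14]{badersauer:higherkazhdanproperty} works verbatim.

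\emph{Degree range and main obstacle.} Injectivity in degree $i$ needs fillings in dimension $i$, and surjectivity needs them in dimension $i-1$. Both are available for $i\leq r-1$, which gives isomorphisms for $0\leq i<r$; degree $0$ is trivial, since both sides equal $V^\Gamma$. The main obstacle is the geometric input: polynomial filling in dimensions below $r$ for non-uniform lattices. For cocompact lattices this is immediate. In general it is the deep theorem of Leuzinger--Young, which I would invoke as a black box, as \cite[Corollary~6.16]{badersauer:higherkazhdanproperty} does. The remaining work is bookkeeping to ensure that the constants stay polynomial through the equivariant chain maps.
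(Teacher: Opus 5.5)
Your overall route is the paper's. You invoke Leuzinger--Young as a black box, combine it with the criterion of \cite[Proposition~6.14]{badersauer:higherkazhdanproperty}, and note that only the Banach norm and the isometric action are used. The problem is the index bookkeeping, which is the one part you actually spell out, and it is off by one.

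\emph{The filling input is misindexed.} The $n$-dimensional filling function of Leuzinger--Young fills $(n-1)$-cycles by $n$-chains; $n=2$ is the Dehn function. So polynomiality for $n<r$ gives polynomial fillings of $k$-cycles only for $k\le r-2$, not for $k\le r-1$ as you claim. Filling $(r-1)$-cycles is the regime at the rank, where growth can be exponential; for $\SL_3(\mathbb{Z})$, with $r=2$, this is the classical exponential Dehn function.

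\emph{Your range check depends on that false case.} You say injectivity in degree $i$ needs fillings of $i$-cycles. For $i=r-1$ that input is not available.

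\emph{The fix is to redo the count.} Let $f$ be the chain map from the cellular chains of the cocompact model to the bar resolution, and $g$ the filling map back. Let $H$ satisfy $\partial H+H\partial=\mathrm{id}-fg$ on the bar resolution. You can build $H$ equivariantly by coning from the first vertex of each simplex, so it costs nothing beyond polynomial control of $fg$ in the same degrees.
\begin{itemize}
\item Surjectivity in degree $i$ uses $[c]=[g^*f^*c]$. This needs $g$ controlled in degree $i$, i.e.\ polynomial fillings of $(i-1)$-cycles.
\item For injectivity, take a polynomial cocycle $c=\delta b$. Then
\[
c=\delta\bigl(g^*f^*b+H^*c\bigr),
\]
and $f^*b$ is bounded because there are finitely many orbits of cells. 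This needs $g$ and $H$ controlled only in degree $i-1$, i.e.\ fillings of $(i-2)$-cycles.
\end{itemize}
So an isomorphism in degree $i$ requires polynomial fillings of cycles of dimension at most $i-1$. For $i\le r-1$ that is at most $r-2$, which is exactly the Leuzinger--Young range. With this correction your sketch agrees with the argument the paper cites.
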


\begin{corollary} \label{cor:shapiro_lemma_for_lattices}
	Let $\Gamma$ be an irreducible lattice in a connected semisimple Lie group~$G$ of rank~$r\ge 2$ with a finite center and without compact factors. 
Let $V$ be a Banach space with a linear isometric $\Gamma$-action. 
Then the Shapiro restriction 
\[ H^i(G, I^p(V))\to H^i(\Gamma, V) \]
is surjective for degrees $0\le i<r$. 
\end{corollary}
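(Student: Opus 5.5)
This is a direct combination of the previous theorem with \cref{thm:shapiro_for_polynomial_lattice}. I will check the two hypotheses of the latter and then apply it with $d=r-1$.

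First, $\Gamma$ has to be a polynomial lattice in $G$, in the sense of \cref{def: polynomial lattice}. Since $G$ is a connected semisimple Lie group with finite center, no compact factors and rank $r\ge 2$, and $\Gamma$ is irreducible, this is exactly the content of \cref{thm: reduction theorem}. Finite center and the absence of compact factors let us pass to the adjoint group, where the group is the identity component of the real points of a semisimple algebraic group, and then back. This passage is harmless: both conditions in \cref{def: polynomial lattice} are stable under quotients by finite central subgroups, because word metrics change only by bounded amounts and Haar measure pushes forward. Finite generation of $\Gamma$ and compact generation of $G$ are standard for lattices of this kind. Finite generation follows, for instance, from property (T) when some factor has rank at least two, or from reduction theory in general.

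Second, the comparison map $H^i_\pol(\Gamma,V)\to H^i(\Gamma,V)$ has to be surjective for $0\le i\le r-1$. The preceding theorem, which rests on the Leuzinger--Young filling estimates, states that this map is in fact an isomorphism for every $0\le i<r$, for every Banach space $V$ carrying a linear isometric $\Gamma$-action.

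With both hypotheses in hand, \cref{thm:shapiro_for_polynomial_lattice} applied with $d=r-1$ gives surjectivity of the Shapiro restriction $H^i(G,I^p(V))\to H^i(\Gamma,V)$ for $0\le i<r$. Unwinding, this surjectivity comes from the commutative diagram \eqref{eq: shapiro diagram}. The comparison map for $\Gamma$, composed with the Shapiro isomorphism, equals the Shapiro restriction precomposed with the map $H^\ast_\pol(\Gamma,V)\to H^\ast(G,I^p(V))$ that factors through $H^\ast_{c,\pol}(G,I^p(V))$. A surjective composite forces its last factor to be surjective.

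The only step needing care is the first: confirming that the hypotheses of \cref{thm: reduction theorem}, stated for "semisimple groups" in the algebraic sense of \cref{sec:semisimple_groups}, cover connected Lie groups with finite center, including non-linear covers. I would settle this by the quotient-by-finite-center argument above, noting that a non-linear finite cover changes neither the lattice's distortion nor the integrability of word length over a fundamental domain.
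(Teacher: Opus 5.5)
Your proposal is correct and matches the paper's route: the corollary is obtained by combining \cref{thm: reduction theorem} (irreducible higher-rank lattices are polynomial), the preceding Leuzinger--Young-based comparison theorem, and \cref{thm:shapiro_for_polynomial_lattice} with $d=r-1$. Your remark about passing through the quotient by the finite center is extra care that the paper leaves implicit.
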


\section{Measurable homological algebra}\label{section:Measurable homological algebra}

In this section we develop the theory of measurable homological algebra.
This is a stand-alone section that could be of independent interest.
Our only, yet crucial, application of it will be in \cref{subsec:5.4}, necessitated by a lack of a topological version of von Heydebreck's theorem on the sphericity of the opposition complex. 

We start with the very foundations in \cref{subsec:GT} and a
discussion of free objects in \cref{subsection:free abelian topological groups}. In \cref{subsec:MGC} we define and discuss a notion of measurable cohomology of measurable groups.

\subsection{General theory} \label{subsec:GT}
In this subsection we develop the language of homological algebra and derived functors over measurable groups and modules. Many of our claims are proved very similarly to the analogous claims in abelian categories and their proof will be omitted. We recall that a measurable space is a set endowed with a $\sigma$-algebra of subsets. 
\begin{definition}[Measurable algebraic structures]
 A \emph{measurable group} is a measurable space with a group structure such that the multiplication and inverse maps are measurable.
 A \emph{measurable ring} is a measurable space $R$ with a ring structure such that $(R,+)$ is a measurable group and the multiplication map $R\times R\to R$ is measurable.
 A \emph{measurable $R$-module} over a measurable ring $R$ is an $R$-module $M$ equipped with a $\sigma$-algebra such that $(M,+)$ is a measurable group and the multiplication map $R\times M\to M$ is measurable.

A $\sigma$-algebra on a group is a \emph{group $\sigma$-algebra} if it makes that group a measurable group. Similarly, we define a \emph{ring $\sigma$-algebra} and an \emph{$R$-module $\sigma$-algebra} on an $R$-module where $R$ is a measurable ring.
\end{definition}

\begin{lemma}\label{group sigma-algebras}
    Let $R$ be a measurable ring and let $A$ be an $R$-module.
    \begin{enumerate}
        \item If $f\colon A\to B$ is an $R$-module map where $(B,\Sigma)$ is a measurable $R$-module, then $(A,f^{-1}(\Sigma))$ is a measurable $R$-module.
        \item The $\sigma$-algebra generated by a collection of $R$-module $\sigma$-algebras on $A$ is an $R$-module $\sigma$-algebra.
    \end{enumerate}
\end{lemma}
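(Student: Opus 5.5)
Both parts reduce to the observation that a $\sigma$-algebra on $A$ is an $R$-module $\sigma$-algebra exactly when three maps are measurable: addition $A\times A\to A$, negation $A\to A$ (which gives the inverse map), and scalar multiplication $R\times A\to A$. Here $A\times A$ and $R\times A$ carry the product $\sigma$-algebras. For each part I will check measurability by testing preimages of generating sets. This suffices because the sets whose preimage under a fixed map is measurable always form a $\sigma$-algebra.

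For (1), let $\Sigma_A=f^{-1}(\Sigma)=\{f^{-1}(E)\mid E\in\Sigma\}$. This is a $\sigma$-algebra, since preimages commute with complements and countable unions. Take a generator $f^{-1}(E)$ with $E\in\Sigma$. Its preimage under addition is
\[
\{(a,a')\mid f(a)+f(a')\in E\}=(f\times f)^{-1}\bigl(+_B^{-1}(E)\bigr),
\]
using that $f$ is additive. The set $+_B^{-1}(E)$ lies in $\Sigma\otimes\Sigma$. It remains to see that $(f\times f)^{-1}$ carries $\Sigma\otimes\Sigma$ into $\Sigma_A\otimes\Sigma_A$. This holds on measurable rectangles, because $(f\times f)^{-1}(E_1\times E_2)=f^{-1}(E_1)\times f^{-1}(E_2)$, and the class of sets with this property is a $\sigma$-algebra, so it holds on all of $\Sigma\otimes\Sigma$. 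Negation is handled the same way using $f(-a)=-f(a)$. For scalar multiplication, $R$-linearity gives $f(ra)=rf(a)$, so the preimage of $f^{-1}(E)$ is $(\id_R\times f)^{-1}$ of the preimage of $E$ under $R\times B\to B$. The same rectangle argument applies.

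For (2), let $\{\Sigma_i\}_{i\in I}$ be $R$-module $\sigma$-algebras on $A$ and let $\Sigma=\sigma\bigl(\bigcup_i\Sigma_i\bigr)$. It is enough to check preimages of sets $E\in\Sigma_i$, since these generate $\Sigma$. Because $\Sigma_i$ is an $R$-module $\sigma$-algebra, the preimage of $E$ under addition lies in $\Sigma_i\otimes\Sigma_i$. Since $\Sigma_i\subseteq\Sigma$, we have $\Sigma_i\otimes\Sigma_i\subseteq\Sigma\otimes\Sigma$. The same monotonicity of the product $\sigma$-algebra handles negation, and for scalar multiplication it gives $\mathcal{B}_R\otimes\Sigma_i\subseteq\mathcal{B}_R\otimes\Sigma$, where $\mathcal{B}_R$ is the $\sigma$-algebra of $R$.

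There is no real obstacle here. The only point needing care is to state the measurability of product maps through the generating-rectangle argument. That argument gives both the pullback property used in (1) and the monotonicity of $\otimes$ used in (2).
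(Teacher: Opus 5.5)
Your proof is correct. Part (1) is the paper's argument, written out in detail. The paper observes that a map into $(A,f^{-1}(\Sigma))$ is measurable if and only if its composite with $f$ is, and that $f\circ +_A=+_B\circ(f\times f)$. Your rectangle argument is exactly what justifies the measurability of $f\times f$ there.

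Part (2) takes a different route. The paper realizes $\sigma\bigl(\bigcup_i\Sigma_i\bigr)$ as the $\sigma$-algebra that $A$ inherits from the diagonal embedding $A\to\prod_i(A,\Sigma_i)$. The product is a measurable $R$-module and the diagonal is a submodule, so (2) becomes an instance of pulling back along a module map, as in (1), together with closure of measurable modules under products. You instead check the three structure maps directly on the generators $E\in\Sigma_i$, using only the monotonicity $\Sigma_i\otimes\Sigma_i\subseteq\Sigma\otimes\Sigma$ and its analogue for $R\times A$. The paper's version is shorter and more structural. However, it takes for granted that an arbitrary product of measurable $R$-modules is again one, which needs the same kind of generating-set argument for product $\sigma$-algebras. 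Your version is fully self-contained.
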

\begin{proof}
    \begin{enumerate}
        \item Equip $A$ with the pullback $\sigma$-algebra $f^{-1}(\Sigma)$. We will show that the addition $+_A:A\times A\to A$ is measurable. The other properties are shown similarly. If $X$ is a measurable space then a map $X\to A$ is measurable if and only if its composition with $f$ is measurable. So $+_A$ is measurable if and only if $f\circ+_A$ is measurable, but since $f$ is additive, $f\circ+_A$ is the composition $A\times A\xrightarrow[]{f\times f}B\times B\xrightarrow[]{+_B}B$ which is measurable.
        \item Let $(\Sigma_i)_i$ be a collection of $R$-module $\sigma$-algebras on $A$. The product $\prod_i(A,\Sigma_i)$ is then a measurable $R$-module. The diagonal $\Delta A\subseteq \prod_i(A,\Sigma_i)$ equipped with the subspace $\sigma$-algebra is then a measurable $R$-module, which is isomorphic to $A$ equipped with the $\sigma$-algebra generated by all $\Sigma_i$.\qedhere
    \end{enumerate} 
\end{proof}
If $R$ is a ring and $X$ is a set, we denote by $RX$ or $R[X]$ the free $R$-module on $X$.
\begin{definition}[Free measurable modules]
    Let $X$ be a measurable space and $R$ a measurable ring. A \emph{free measurable $R$-module} on $X$ is a measurable $R$-module $A$ with a measurable map $X\to A$ such that for every measurable $R$-module $B$, every measurable map $X\to B$ factors through a unique measurable $R$-module map $A\to B$.
\end{definition}

\begin{proposition}\label{existence of free modules}
    Let $X$ be a measurable space and $R$ a measurable ring. There exists a free measurable $R$-module on $X$, which is unique up to a unique measurable isomorphism commuting with the structure maps from $X$. Its underlying $R$-module is $RX$.
\end{proposition}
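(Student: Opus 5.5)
We construct the free object on $RX$ by taking the smallest $\sigma$-algebra that makes every would-be factorization measurable. For each measurable $R$-module $B$ and each measurable map $f\colon X\to B$, let $\tilde f\colon RX\to B$ be the unique $R$-module homomorphism extending $f$ on basis elements. It is given by $\sum r_i x_i\mapsto\sum r_i f(x_i)$. By \cref{group sigma-algebras}(1), the pullback $\tilde f^{-1}(\Sigma_B)$ is an $R$-module $\sigma$-algebra on $RX$. We let $\Sigma$ be the $\sigma$-algebra generated by all these pullbacks; by \cref{group sigma-algebras}(2) it is again an $R$-module $\sigma$-algebra.

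There is a set-theoretic subtlety: the collection of all pairs $(B,f)$ is a proper class. However, the collection of $\sigma$-algebras on the fixed set $RX$ is a set, so the generated $\sigma$-algebra is well defined. Alternatively, one may restrict to targets $B$ of cardinality at most that of $RX$, by replacing $B$ with the image of $\tilde f$ carrying the subspace $\sigma$-algebra. The subspace $\sigma$-algebra on a submodule is a module $\sigma$-algebra, since the structure maps restrict.

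Next we verify the universal property. The structure map $\iota\colon X\to RX$, $x\mapsto x$, must be measurable with respect to $\Sigma$. A subset of the form $\tilde f^{-1}(E)$ has preimage $\iota^{-1}(\tilde f^{-1}(E))=f^{-1}(E)$, which is measurable. Since these sets generate $\Sigma$, $\iota$ is measurable.

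Given any measurable $f\colon X\to B$, the map $\tilde f$ is a measurable $R$-module map by construction of $\Sigma$. It is the unique module map extending $f$, because $X$ spans $RX$. Hence it is also the unique measurable one, which establishes that $(RX,\Sigma,\iota)$ is a free measurable $R$-module on $X$.

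Uniqueness is the standard Yoneda-type argument. Given two free objects $A$ and $A'$, the universal properties produce measurable module maps $A\to A'$ and $A'\to A$ compatible with the maps from $X$. Their composites are compatible with the structure maps, so by the uniqueness part of the universal property applied to the identity, each composite is the identity. This also shows that the isomorphism is unique.

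It remains to justify the final assertion that the underlying module of any free measurable $R$-module is $RX$, which follows from uniqueness. Any free object is isomorphic to the one constructed above, whose underlying module is $RX$, and the isomorphism commutes with the structure maps from $X$.

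The main obstacle is the set-theoretic size issue in forming $\Sigma$, which the cardinality bound above resolves. The other delicate point is that \cref{group sigma-algebras}(2) must apply to arbitrary families of $\sigma$-algebras. Its proof uses a product $\prod_i(A,\Sigma_i)$ over the family, which is fine once the family is indexed by a set.
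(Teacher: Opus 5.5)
Your proof is correct and is essentially the paper's argument. The paper takes $\Sigma$ to be the $\sigma$-algebra generated by all $R$-module $\sigma$-algebras on $RX$ that make $X\to RX$ measurable; this coincides with your join of the pullbacks $\tilde f^{-1}(\Sigma_B)$, since each such pullback lies in the paper's family and each member $\Sigma'$ of that family is the pullback along the identity $RX\to(RX,\Sigma')$. The paper then checks the universal property via \cref{group sigma-algebras} exactly as you do, and its indexing by $\sigma$-algebras on the fixed set $RX$ makes your set-theoretic detour unnecessary.
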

\begin{proof}
    Let $\Sigma$ be the $\sigma$-algebra generated by all $R$-module $\sigma$-algebras on $RX$ for which the inclusion $X\to RX$ is measurable. By \cref{group sigma-algebras}, $\Sigma$ is also an $R$-module $\sigma$-algebra, and $X\to (RX,\Sigma)$ is measurable. Let $f\colon X\to A$ be a measurable map where $A$ is a measurable $R$-module. It extends uniquely to an $R$-module map $\bar{f}\colon RX\to A$. Denote by $\Sigma_A$ the $\sigma$-algebra of measurable subsets of $A$. By \cref{group sigma-algebras}, $(RX,\bar{f}^{-1}(\Sigma_A))$ is a measurable $R$-module and $X\to (RX,\bar{f}^{-1}(\Sigma_A))$ is measurable. Thus $\bar{f}^{-1}(\Sigma_A)\subseteq\Sigma$, hence $(RX,\Sigma)\to A$ is measurable.
\end{proof}

\begin{lemma}\label{lem:disjoint_union}
    Let $R$ be a measurable ring and $X$ and $Y$ measurable spaces. There is a natural measurable $R$-module isomorphism $R[X\sqcup Y]\cong RX\times RY$.
\end{lemma}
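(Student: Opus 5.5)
We prove that $R[X\sqcup Y]$ and $RX\times RY$ are both free measurable $R$-modules on $X\sqcup Y$, and then invoke the uniqueness clause of \cref{existence of free modules}. The underlying algebraic isomorphism is the usual one:
\[
\Phi\colon R[X\sqcup Y]\to RX\times RY,
\]
the unique $R$-linear extension of the map sending $x\in X$ to $(x,0)$ and $y\in Y$ to $(0,y)$. Here $RX\times RY$ carries the product $\sigma$-algebra. By the same argument as in \cref{group sigma-algebras}, it is a measurable $R$-module, since addition and scalar multiplication are computed coordinatewise.

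The first step is to equip $RX\times RY$ with a structure map. We take
\[
\iota\colon X\sqcup Y\to RX\times RY,\qquad x\mapsto (x,0),\quad y\mapsto (0,y).
\]
A map out of a disjoint union is measurable exactly when its restrictions to $X$ and to $Y$ are. The restriction to $X$ has coordinates given by the structure map $X\to RX$, which is measurable, and the constant map $0$, which is measurable. The restriction to $Y$ is handled symmetrically. Hence $\iota$ is measurable.

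The second step is to verify the universal property. Let $B$ be a measurable $R$-module and $f\colon X\sqcup Y\to B$ a measurable map. The restrictions $f|_X$ and $f|_Y$ are measurable, so by freeness of $RX$ and $RY$ they factor through unique measurable $R$-module maps $\bar f_X\colon RX\to B$ and $\bar f_Y\colon RY\to B$. We then define
\[
F(a,b)=\bar f_X(a)+\bar f_Y(b).
\]
This is $R$-linear. It is measurable because it is the composition of $\bar f_X\times\bar f_Y$, which is measurable for the product $\sigma$-algebras, with addition in $B$, which is measurable. It satisfies $F\circ\iota=f$. Uniqueness follows algebraically, since an $R$-linear map on $RX\times RY$ is determined by its values on $\iota(X\sqcup Y)$, which generates the module.

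By the uniqueness statement in \cref{existence of free modules}, there is then a unique measurable isomorphism between $R[X\sqcup Y]$ and $(RX\times RY,\iota)$ commuting with the structure maps, and this isomorphism is $\Phi$. Naturality in $X$ and $Y$ follows from the uniqueness of the maps involved. The only point requiring care is that the product $\sigma$-algebra is indeed an $R$-module $\sigma$-algebra, and, correspondingly, that $\bar f_X\times\bar f_Y$ is measurable with respect to the product $\sigma$-algebras. Both are routine, because measurability into a product is checked coordinatewise.
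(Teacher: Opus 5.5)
Your proof is correct and is essentially the paper's argument, repackaged through the universal property. The paper shows directly that $i_X+i_Y\colon RX\times RY\to R[X\sqcup Y]$ is measurable as $(i_X,i_Y)$ followed by addition, which is your map $F$ with $B=R[X\sqcup Y]$. It then gets measurability of the inverse from freeness of $R[X\sqcup Y]$, checking coordinates separately on $X$ and $Y$, which is exactly your measurability of $\iota$ and the induced map $\Phi$.
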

\begin{proof}
    The measurable maps $X\to X\sqcup Y$ and $Y\to X\sqcup Y$ induce measurable $R$-module maps $RX\xrightarrow[]{i_X} R[X\sqcup Y]$ and $RY\xrightarrow[]{i_Y} R[X\sqcup Y]$. Their sum, $RX\times RY\xrightarrow[]{i_X+i_Y} R[X\sqcup Y]$ is an $R$-module isomorphism and it is measurable since it is the composition \[RX\times RY\xrightarrow[]{(i_X,i_Y)} R[X\sqcup Y]\times R[X\sqcup Y]\xrightarrow[]{+} R[X\sqcup Y].\] 
    We will show that the inverse map $R[X\sqcup Y]\to RX\times RY$ is measurable. It is enough to show that composition with the projection $RX\times RY\to RX$ is measurable (the second projection is dealt with similarly). This composition is the $R$-module map $R[X\sqcup Y]\to RX$ which on $X$ is the inclusion $X\to RX$ and on $Y$ is identically zero. Hence it is measurable on both $X$ and $Y$, so also on $X\sqcup Y$. Hence it is measurable by the definition of the free measurable $R$-module.
\end{proof}

\begin{definition}[Measurable tensor product]
    Let $R$ be a measurable ring, $A$ a measurable right $R$-module and $B$ a measurable left $R$-module. The \emph{measurable tensor product} of $A$ and $B$ over $R$ is a measurable abelian group $A\otimes_RB$ with a measurable $R$-bilinear map $A\times B\to A\otimes_R B$ which is universal for all such maps.
\end{definition}
\begin{proposition}
    The tensor product exists and is unique up to a unique isomorphism commuting with the structure map. Its underlying abelian group is the usual tensor product of modules $A\otimes_RB$.
\end{proposition}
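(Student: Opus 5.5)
We will mimic the construction of the free measurable module in \cref{existence of free modules}: take the algebraic tensor product $A\otimes_R B$ and put on it the largest admissible group $\sigma$-algebra. Concretely, let $\Sigma$ be the $\sigma$-algebra on the abelian group $A\otimes_R B$ generated by all group $\sigma$-algebras $\Sigma'$ for which the canonical bilinear map $\beta\colon A\times B\to (A\otimes_R B,\Sigma')$ is measurable (with the product $\sigma$-algebra on $A\times B$). The collection is non-empty, since the trivial $\sigma$-algebra $\{\emptyset, A\otimes_R B\}$ qualifies.

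First we check that $(A\otimes_R B,\Sigma)$ is a measurable abelian group and that $\beta$ is measurable into it. The group part follows from \cref{group sigma-algebras}(2), applied with the measurable ring $\bbZ$ (discrete $\sigma$-algebra), since $\bbZ$-module $\sigma$-algebras are exactly group $\sigma$-algebras. For measurability of $\beta$, the generating sets of $\Sigma$ are the union of the families $\Sigma'$, and $\beta^{-1}$ of each such set is measurable. Since preimages commute with $\sigma$-algebra generation, $\beta$ is measurable for $\Sigma$.

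Next comes the universal property. Let $C$ be a measurable abelian group and $f\colon A\times B\to C$ a measurable $R$-bilinear map. Algebraically, $f$ factors uniquely as $\bar f\circ\beta$ for a group homomorphism $\bar f\colon A\otimes_R B\to C$. By \cref{group sigma-algebras}(1), the pullback $\bar f^{-1}(\Sigma_C)$ is a group $\sigma$-algebra on $A\otimes_R B$. Moreover, $\beta$ is measurable for it, because a map into $(A\otimes_R B,\bar f^{-1}(\Sigma_C))$ is measurable exactly when its composite with $\bar f$ is, and $\bar f\circ\beta=f$ is measurable. Hence $\bar f^{-1}(\Sigma_C)\subseteq\Sigma$, which says precisely that $\bar f$ is measurable from $(A\otimes_R B,\Sigma)$. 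Uniqueness of $\bar f$ holds already at the algebraic level, because the image of $\beta$ generates $A\otimes_R B$ as a group.

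Uniqueness up to unique isomorphism is the standard argument for universal objects. Given two such objects, the universal properties provide measurable homomorphisms in both directions commuting with the structure maps. Their composites are endomorphisms compatible with the structure map, so by the uniqueness clause they are the identity. This also shows that any tensor product has underlying group isomorphic to the usual one.

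No step here is a genuine obstacle. The only point needing care is the bookkeeping in the measurability of $\beta$ for the generated $\sigma$-algebra, namely the fact that a map is measurable into $\sigma(\bigcup\Sigma')$ once it is measurable into each $\Sigma'$.
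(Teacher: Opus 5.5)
Your proposal is correct and follows the paper's own route. You equip $A\otimes_R B$ with the $\sigma$-algebra generated by all group $\sigma$-algebras that make the canonical bilinear map measurable, then check the universal property with the pullback $\sigma$-algebra, exactly as in the proof of \cref{existence of free modules}. Your write-up supplies the details the paper leaves implicit, in particular applying \cref{group sigma-algebras} with the discrete ring $\bbZ$.
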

\begin{proof}
    Uniqueness is standard. The tensor product is $A\otimes_RB$ with the $\sigma$-algebra generated by all group $\sigma$-algebras for which the map $A\times B\to A\otimes_R B$ is measurable. Arguing similarly to the proof of \cref{existence of free modules}, we show that this is indeed a measurable tensor product.
\end{proof}

\begin{proposition}\label{properties of measurable tensor product}
    Let $R$ be a measurable ring.
    \begin{enumerate}
        \item\label{item:measurable_tensor_additive} If $B$ is a measurable left $R$-module, then the tensor product $A\mapsto A\otimes_RB$ is an additive functor.
        \item\label{item:measurable_tensor_free} If $R$ is commutative, then for all measurable spaces $X$ and $Y$ there is a natural bijective measurable $R$-module map $R[X\times Y]\to RX\otimes_RRY$.
    \end{enumerate}
\end{proposition}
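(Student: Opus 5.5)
For \ref{item:measurable_tensor_additive}, I would first make $A\mapsto A\otimes_RB$ into a functor using the universal property. A measurable $R$-module map $f\colon A\to A'$ gives a composite
\[
A\times B\xrightarrow{f\times\id}A'\times B\to A'\otimes_RB .
\]
This composite is measurable and $R$-bilinear, so it factors uniquely through a measurable homomorphism $f\otimes\id\colon A\otimes_RB\to A'\otimes_RB$. Uniqueness of the factorization then gives the functoriality identities $(gf)\otimes\id=(g\otimes\id)(f\otimes\id)$ and $\id\otimes\id=\id$.

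For additivity, take $f,f'\colon A\to A'$. The map $(f+f')\otimes\id$ is the unique measurable factorization of $(a,b)\mapsto (f(a)+f'(a))\otimes b$. The map $f\otimes\id+f'\otimes\id$ is measurable, since addition on the measurable group $A'\otimes_RB$ is measurable. It also factors the same bilinear map, so uniqueness gives $(f+f')\otimes\id=f\otimes\id+f'\otimes\id$. I would also check that the measurable structure on the morphism groups is not needed. Additivity here is only the statement about sums of maps, so this is routine.

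For \ref{item:measurable_tensor_free}, with $R$ commutative, I would build the map $R[X\times Y]\to RX\otimes_RRY$ from the free property in \cref{existence of free modules}. It suffices to produce a measurable map $X\times Y\to RX\otimes_RRY$, namely $(x,y)\mapsto x\otimes y$. This map is the composite
\[
X\times Y\to RX\times RY\to RX\otimes_RRY .
\]
The first arrow is a product of the measurable structure maps, and the second is the measurable structure bilinear map. Hence the composite is measurable, and it extends to a unique measurable $R$-module map $\Phi\colon R[X\times Y]\to RX\otimes_RRY$. Bijectivity is purely algebraic: on underlying modules, $\Phi$ is the classical isomorphism $R[X\times Y]\cong RX\otimes_RRY$ sending basis elements to pure tensors of basis elements. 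By the underlying-group statements in \cref{existence of free modules} and in the existence proposition for tensor products, the underlying objects are the usual free module and the usual tensor product. Naturality in $X$ and $Y$ follows from uniqueness in the universal property, since both composites agree on $X\times Y$.

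The main obstacle is that the statement claims only a bijective measurable map, not an isomorphism. The inverse direction would require a measurable bilinear map $RX\times RY\to R[X\times Y]$. Its measurability is unclear, because the $\sigma$-algebra on the free module is defined abstractly as the one generated by all admissible $\sigma$-algebras. So I would deliberately avoid proving measurability of the inverse and stop at bijectivity plus measurability. The one point to verify carefully is that the $R$-module structure on the tensor product used here is measurable; commutativity of $R$ is used exactly for this. Multiplication by $r$ on $RX\otimes_RRY$ can be induced from the measurable bilinear map $(a,b)\mapsto ra\otimes b$. Joint measurability in $(r,t)$ follows by applying the universal property to the measurable $R$-module $(R\times RX)\otimes\cdots$. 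Failing that, I would replace this argument by pulling back along the structure map, as in \cref{group sigma-algebras}.
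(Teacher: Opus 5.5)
Your proof is essentially the paper's: part (1) is the algebraic argument run through the universal property of the measurable tensor product, and part (2) comes from the measurability of $X\times Y\to RX\times RY\to RX\otimes_RRY$, the free property of $R[X\times Y]$, and algebraic bijectivity. On the point you flag, the paper simply asserts that commutativity makes $RX\otimes_RRY$ a measurable $R$-module. Your two suggested justifications do not establish this: the unparametrized universal property only gives measurability of $t\mapsto rt$ for each fixed $r$, and pulling back would change the $\sigma$-algebra on the tensor product. For the discrete ring $\mathbb{Z}$, the case used later, joint measurability is automatic.
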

\begin{proof}
    The proof of \cref{item:measurable_tensor_additive} is the same as that of the analogous algebraic fact. For \cref{item:measurable_tensor_free}, we first note that since $R$ is commutative, $RX$ is a measurable $R$-bimodule so $RX\otimes_RRY$ makes sense and it is a measurable $R$-module. Since $X\times Y\to RX\times RY\to RX\otimes_R RY$ is measurable, the natural bijection $R[X\times Y]\to RX\otimes_RRY$ is measurable.
\end{proof}

\begin{remark}
    There is no reason the map $R[X\times Y]\to RX\otimes_RRY$ will be a measurable isomorphism in general. In \cref{free abelian measurable group on a standard space}, we will prove that it is the case for $R=\bbZ$ and for standard Borel spaces.
\end{remark}
For the rest of this subsection, we will tacitly assume that every $R$-module is a measurable $R$-module and that every $R$-module map is measurable.

We now introduce projective measurable modules. We define them to have lifting property only for surjections that admit measurable sections to ensure that free modules are projective.

\begin{definition}[Strong maps]
    A measurable homomorphism between measurable abelian groups $\varphi\colon A\to B$ is called \emph{strong} if the surjection $A\twoheadrightarrow\image\varphi$ admits a measurable section (not necessarily homomorphic).
\end{definition}
\begin{definition}[Projective modules]
    An $R$-module $P$ is called \emph{projective} if for every strong surjective $R$-module map $A\twoheadrightarrow B$, any $R$-module map $P\to B$ can be lifted to $A$.
\end{definition}
\begin{proposition}\label{enough projectives}\hfill
    \begin{enumerate}
        \item\label{existence of enough projectives} Every $R$-module is a strong quotient of a free $R$-module.
        \item An $R$-module is projective if and only if it is a direct summand of a free $R$-module.
    \end{enumerate}
\end{proposition}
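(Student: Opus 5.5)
For (1), I take an $R$-module $A$ and let $X$ be its underlying measurable space, that is, the set $A$ with its given $\sigma$-algebra. By \cref{existence of free modules} the free measurable module $RX$ exists. The identity map $X\to A$ is measurable, so it induces a unique measurable $R$-module map $\varepsilon\colon RX\to A$. This map is surjective, and its image is all of $A$. A measurable section is given by the structure map $X\to RX$, namely $a\mapsto [a]$, which is measurable by construction and satisfies $\varepsilon([a])=a$. Hence $\varepsilon$ is strong, which proves (1).

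For (2), I first check that free modules are projective. Let $\pi\colon A\twoheadrightarrow B$ be a strong surjection with measurable section $s\colon B\to A$, and let $f\colon RX\to B$ be given. The composite $X\to RX\xrightarrow{f}B\xrightarrow{s}A$ is a measurable map of sets. By the universal property it extends to a measurable module map $\tilde f\colon RX\to A$, and $\pi\tilde f$ agrees with $f$ on $X$. By the uniqueness part of the universal property, $\pi\tilde f=f$. This step is the reason projectivity was defined only relative to strong surjections.

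Next I show that direct summands of projectives are projective. Suppose $P\oplus Q\cong F$ with measurable inclusion $\iota\colon P\to F$ and retraction $r\colon F\to P$. Given $f\colon P\to B$, I lift $f\circ r$ along $\pi$ to some $g\colon F\to A$ and take $g\circ\iota$ as the lift of $f$. Conversely, if $P$ is projective, I apply (1) to get a strong surjection $\varepsilon\colon RP\to P$. Lifting $\mathrm{id}_P$ along $\varepsilon$ gives a measurable module map $\sigma\colon P\to RP$ with $\varepsilon\sigma=\mathrm{id}$. Then $RP\cong P\times\ker\varepsilon$ via $x\mapsto(\varepsilon x,\,x-\sigma\varepsilon x)$, with inverse $(p,k)\mapsto\sigma p+k$. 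Here $\ker\varepsilon$ carries the subspace $\sigma$-algebra, and both maps are measurable because they are built from measurable operations. So $P$ is a direct summand of a free module.

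The only step I expect to need care is the meaning of ``direct summand'' in the measurable category. I will interpret it as a measurable isomorphism with a product, where subobjects carry the subspace $\sigma$-algebra. It then has to be checked that the product $\sigma$-algebra is a module $\sigma$-algebra and that the identifications above are measurable in both directions. Both facts follow from the product argument already used in \cref{group sigma-algebras}.
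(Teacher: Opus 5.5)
Your proof is correct and follows the same route as the paper. For (1) you use the augmentation $RA\to A$ with the structure map $A\to RA$ as the measurable section; you show free modules are projective by lifting through the section on generators; and for the converse you split a strong free cover. The only difference is that you spell out two points the paper leaves implicit: that direct summands of projectives are projective, and that the splitting $RP\cong P\times\ker\varepsilon$ is measurable in both directions.
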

\begin{proof}\noindent
    \begin{enumerate}
        \item Let $M$ be an $R$-module. Consider $RM$, the free $R$-module on the measurable space $M$. By definition, we have a measurable map $i\colon M\to RM$. Since $M$ is an $R$-module, the identity map $M\to M$ extends to an $R$-homomorphism $p\colon RM\to M$. It is clear that $i$ is a section of $p$.
        \item Let $P$ be projective. By the first part, there exists a strong surjection $F\twoheadrightarrow P$ from a free module $F$. By projectivity we have an $R$-splitting $P\to F$, hence $P$ is a direct summand of $F$. For the other direction, it is enough to show that every free module is projective. Let $X$ be a measurable space and consider the free module $RX$. By the universal property of $RX$, its projectivity is equivalent to the property that for every strong surjection $p\colon A\twoheadrightarrow B$ of $R$-modules, every measurable map from $X$ to $B$ can be lifted to $A$. This follows from the existence of a measurable section. \qedhere
    \end{enumerate}
\end{proof}

The proofs of \cref{prop:exact_projective_is_contractible}, \cref{prop:fund_lemma_of_homological_algebra} and \cref{prop:L_0F} are identical to the analogous standard homological algebra proofs, and we omit them.

\begin{proposition}\label{prop:exact_projective_is_contractible}
    Let $R$ be a measurable ring and let
    \[
    P_n\xrightarrow[]{\partial_n} P_{n-1}\xrightarrow[]{\partial_{n-1}}\dots\xrightarrow[]{\partial_1}P_0\to0
    \]
    be a strong exact sequence of projective $R$-modules. There are $R$-module maps $h_k\colon P_{k}\to P_{k+1}$ for $k<n$ such that $\partial_{k+1}h_k+h_{k-1}\partial_k=Id$. A similar statement holds for an infinite projective exact sequence
    \[\dots\to P_1\to P_0\to0.\]
\end{proposition}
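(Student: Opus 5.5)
The plan is to mimic the classical construction of a contracting homotopy by induction on $k$, using projectivity in the measurable sense: lifting only along strong surjections. First fix what "strong exact" means: each $\partial_k$ is a strong map, so $P_k\twoheadrightarrow\image\partial_k$ admits a measurable section. Exactness then gives $\image\partial_{k+1}=\ker\partial_k$, and we regard $\ker\partial_k$ with the subspace $\sigma$-algebra, which makes it a measurable $R$-module. By \cref{group sigma-algebras}(1) applied to the inclusion, $P_{k+1}\twoheadrightarrow\ker\partial_k$ is then a strong surjection of measurable $R$-modules.

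Set $h_{-1}=0$ (with $P_{-1}=0$). For $k=0$, the map $\partial_1\colon P_1\to P_0$ is a strong surjection onto $P_0=\ker\partial_0$. Projectivity of $P_0$ lifts $\mathrm{Id}_{P_0}$ to a measurable $R$-map $h_0\colon P_0\to P_1$ with $\partial_1h_0=\mathrm{Id}$. This is the required identity, since $h_{-1}\partial_0=0$.

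Inductively, suppose $h_0,\dots,h_{k-1}$ are constructed with $\partial_{j+1}h_j+h_{j-1}\partial_j=\mathrm{Id}$ for $j<k$, and consider $\psi_k=\mathrm{Id}_{P_k}-h_{k-1}\partial_k\colon P_k\to P_k$. This is a measurable $R$-module map, because differences and compositions of measurable module maps are measurable: addition is measurable on a measurable group. We compute
\[
\partial_k\psi_k=\partial_k-\partial_kh_{k-1}\partial_k=\partial_k-(\mathrm{Id}-h_{k-2}\partial_{k-1})\partial_k=0 .
\]
Hence $\psi_k$ takes values in $\ker\partial_k$, and it is measurable as a map into $\ker\partial_k$ with its subspace $\sigma$-algebra. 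Since $P_{k+1}\twoheadrightarrow\ker\partial_k$ is a strong surjection and $P_k$ is projective, we lift $\psi_k$ to $h_k\colon P_k\to P_{k+1}$ with $\partial_{k+1}h_k=\psi_k$, which is exactly the required identity. This runs for all $k<n$, and for an infinite resolution it runs for all $k$.

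The main obstacle is the bookkeeping of $\sigma$-algebras on kernels and images. We must check that "strong" refers to the surjection onto the image with its subspace $\sigma$-algebra, and that this image coincides, as a measurable module, with $\ker\partial_k\subseteq P_k$ with its subspace $\sigma$-algebra. Once the image is given the subspace structure from the target, this is automatic, because the image equals the kernel as a subset. With that convention, the section supplied by strongness is precisely what the definition of projectivity requires, and the rest is formal.
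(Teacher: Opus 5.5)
Your proposal is correct and is the standard inductive construction of a contracting homotopy, lifting $\mathrm{Id}-h_{k-1}\partial_k$ through the strong surjection $P_{k+1}\twoheadrightarrow\ker\partial_k$ using projectivity of $P_k$. This is exactly the argument the paper has in mind when it omits the proof as ``identical to the analogous standard homological algebra proof''. You also supply the measurable checks the paper leaves implicit: the subspace $\sigma$-algebra on $\ker\partial_k=\image\partial_{k+1}$, and measurability of $\psi_k$ into it.
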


\begin{proposition}\label{prop:fund_lemma_of_homological_algebra}
    Every $R$-module admits a strong projective resolution. If $M$ and $N$ are $R$-modules, and $P_\ast\to M$ and $Q_\ast\to N$ are strong projective resolutions, then any $R$-module map $M\to N$ extends to an $R$-chain map $P_\ast\to Q_\ast$ which is unique up to an $R$-chain homotopy. It follows that any two strong projective resolutions of an $R$-module are $R$-homotopy equivalent.
\end{proposition}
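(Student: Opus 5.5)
Three things have to be established: existence of strong projective resolutions, extension of module maps to chain maps unique up to chain homotopy, and homotopy uniqueness of resolutions. The plan follows the classical comparison-theorem argument. The one place needing care is that lifts are only available along \emph{strong} surjections, so every surjection we lift along must come with a measurable section.

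\textbf{Existence.} By \cref{enough projectives}\ref{existence of enough projectives}, pick a strong surjection $\varepsilon\colon P_0=RM\twoheadrightarrow M$; it is split by the measurable map $i\colon M\to RM$. Set $K_0=\ker\varepsilon$ with the subspace $\sigma$-algebra. This is a measurable $R$-module by \cref{group sigma-algebras}(1), applied to the inclusion. Again take the canonical strong surjection $RK_0\twoheadrightarrow K_0$ and compose with $K_0\hookrightarrow P_0$ to obtain $\partial_1$. Its image is $K_0$, and the surjection $P_1\to K_0$ has the measurable section $K_0\to RK_0$, so $\partial_1$ is strong. Iterating gives a resolution $P_\ast\to M$ which is exact, with every map strong and every $P_i$ free, hence projective.

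\textbf{Extension of maps.} Let $f\colon M\to N$, with $Q_\ast\to N$ a strong projective resolution. Define $f_0$ by lifting $f\varepsilon_P\colon P_0\to N$ along the strong surjection $\varepsilon_Q$. Inductively, $f_{k-1}\partial_k$ maps $P_k$ into $\ker\partial^Q_{k-1}=\image\partial^Q_k$. The corestriction $Q_k\twoheadrightarrow\image\partial^Q_k$ is a strong surjection by the definition of strongness, so projectivity of $P_k$ lifts the map to $f_k$ with $\partial^Q_kf_k=f_{k-1}\partial^P_k$. All maps produced are measurable $R$-module maps, since lifts of module maps are module maps by definition.

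\textbf{Uniqueness up to homotopy.} By linearity it suffices to show that a chain map $g_\ast$ lifting $0$ is nullhomotopic. Since $\varepsilon_Qg_0=0$, $g_0$ lands in $\ker\varepsilon_Q=\image\partial^Q_1$; lift along the strong map to get $h_0$ with $\partial_1h_0=g_0$. Inductively, $g_k-h_{k-1}\partial_k$ lands in $\ker\partial^Q_k=\image\partial^Q_{k+1}$. This is the usual computation, using $\partial_k(g_k-h_{k-1}\partial_k)=g_{k-1}\partial_k-(g_{k-1}-h_{k-2}\partial_{k-1})\partial_k=0$. Lifting again gives $h_k$. The last assertion is then formal: the composites of comparison maps $P\to Q\to P$ and $Q\to P\to Q$ both lift the identity, so by uniqueness each is homotopic to the identity.

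\textbf{Main obstacle.} The only point I expect to need checking is that the kernel/image identifications are compatible with the measurable structures. A lift into $\image\partial_{k+1}$ with the subspace $\sigma$-algebra must be a measurable map to $Q_{k+1}$, and the map into the image must be measurable for the subspace $\sigma$-algebra. Both hold because subspace $\sigma$-algebras make inclusions into measurable embeddings. Also, strongness of the resolution is exactly the hypothesis that each corestriction $Q_{k+1}\twoheadrightarrow\image\partial_{k+1}$ has a measurable section, and this is precisely what projectivity requires.
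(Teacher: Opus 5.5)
Your proposal is correct and is the classical comparison-theorem argument, which is what the paper intends; the paper omits this proof as identical to the standard one. Your measurable bookkeeping is the adaptation needed: you give kernels the subspace $\sigma$-algebra via \cref{group sigma-algebras}, and you use strongness of each differential so that the corestriction onto its image is a strong surjection along which projectives can be lifted.
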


\begin{definition}[Left derived functors]
    Let $F$ be an additive functor from the category of measurable $R$-modules to an abelian category $\mathcal{A}$. We define the \emph{left derived functors of} $F$, $(L_nF)_n$, as follows. Given an $R$-module $M$, choose a strong projective resolution $P_\ast\to M$. We define $L_nF(M)$ to be the $n$-th homology group of the complex $FP_\ast$.
\end{definition}

It follows from \cref{prop:fund_lemma_of_homological_algebra} that the derived functors $L_nF$ are well defined and it is easy to see that they are additive functors.
\begin{definition}[Right exact]
    An additive functor from the category of measurable $R$-modules to an abelian category $\mathcal{A}$ is called \emph{right exact} if it takes strong right exact sequences to right exact sequences.
\end{definition}

\begin{proposition} \label{prop:L_0F}
    Let $F$ be an additive functor from the category of measurable $R$-modules to an abelian category $\mathcal{A}$. If $F$ is right exact then $L_0F\cong F$.
\end{proposition}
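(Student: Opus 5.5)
I would follow the classical argument, adapted to the requirement that all resolutions be strong and all maps measurable. Let $M$ be a measurable $R$-module. By \cref{prop:fund_lemma_of_homological_algebra}, $M$ has a strong projective resolution
\[
\cdots\to P_1\xrightarrow{\partial_1}P_0\xrightarrow{\varepsilon}M\to0 .
\]
By definition $L_0F(M)=\operatorname{coker}\bigl(F\partial_1\colon FP_1\to FP_0\bigr)$. Strongness of the resolution means that $P_1\to P_0\to M\to 0$ is a strong right exact sequence. Right exactness of $F$ then gives that $FP_1\to FP_0\to FM\to 0$ is exact in $\mathcal{A}$, so $F\varepsilon$ induces an isomorphism
\[
\theta_M\colon L_0F(M)\to FM .
\]

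Next I would check that $\theta_M$ does not depend on the chosen resolution. If $P_\ast$ and $Q_\ast$ are two strong projective resolutions of $M$, then \cref{prop:fund_lemma_of_homological_algebra} supplies a chain map $P_\ast\to Q_\ast$ lifting $\mathrm{id}_M$, unique up to homotopy. This chain map induces the canonical identification of the two models of $L_0F(M)$, and it commutes with the augmentations, so $\theta_M$ is compatible with that identification.

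For naturality, take a morphism $f\colon M\to N$ and strong projective resolutions $P_\ast\to M$ and $Q_\ast\to N$. Lift $f$ to a chain map $f_\ast\colon P_\ast\to Q_\ast$, again by \cref{prop:fund_lemma_of_homological_algebra}. Then $L_0F(f)$ is induced by $Ff_0$, and the relation $\varepsilon_N f_0=f\varepsilon_M$ gives $\theta_N\circ L_0F(f)=Ff\circ\theta_M$. Additivity of $F$ guarantees that homotopic chain maps induce the same map on $H_0$, so this does not depend on the choice of lift.

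The only step that needs care is confirming that "strong right exact sequence" in the definition of right exactness covers exactly the sequence $P_1\to P_0\to M\to0$ coming from a strong resolution. Concretely, I need that $\varepsilon$ and the corestriction $P_1\twoheadrightarrow\image\partial_1=\ker\varepsilon$ admit measurable sections. This is precisely what the word "strong" in \cref{prop:fund_lemma_of_homological_algebra} provides, since there every map in the resolution is strong. With that checked, the argument is purely formal.
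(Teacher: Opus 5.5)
Your proposal is correct and is the same argument the paper relies on: the paper omits this proof, saying it is identical to the classical one. As you note, the only measurable-specific input is that the truncation $P_1\to P_0\to M\to 0$ of a strong projective resolution is a strong right exact sequence, so the right exactness of $F$ applies to it.
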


\begin{definition}[Measurable Ext functors]
    Let $M$ be an $R$-module. The hom functor $\hom_R(\,\cdot\,,M)$ is an additive functor from the category of $R$-modules to the opposite category of the category of abelian groups. Its left derived functors are denoted $\Ext^*_R(\,\cdot\,,M)$. 
\end{definition}
\begin{proposition}
    The hom functor $\hom_R(\,\cdot\,,M)$ is right exact. In particular, $\Ext^0_R\cong\hom_R$.
\end{proposition}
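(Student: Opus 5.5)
To prove that $\hom_R(\,\cdot\,,M)$ is right exact, take a strong right exact sequence $A\xrightarrow{f} B\xrightarrow{g} C\to 0$ of measurable $R$-modules. Here ``strong'' means that each map is strong: $g$ is a surjection with a measurable (set-theoretic) section, and $A\twoheadrightarrow\image f$ admits a measurable section. Since $\hom_R(\,\cdot\,,M)$ lands in the opposite category of abelian groups, we must show that
\[0\to\hom_R(C,M)\xrightarrow{g^*}\hom_R(B,M)\xrightarrow{f^*}\hom_R(A,M)\]
is exact. The plan follows the classical algebraic proof. The only new point is that every homomorphism produced along the way has to be checked to be measurable, and this is where the measurable section of $g$ will be used.

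Injectivity of $g^*$ and the inclusion $\image g^*\subseteq\ker f^*$ are purely algebraic. If $\psi\circ g=0$, then $\psi=0$ because $g$ is surjective. Also $f^*g^*=(g\circ f)^*=0$.

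The main step is exactness at $\hom_R(B,M)$. Take a measurable $R$-module map $\varphi\colon B\to M$ with $\varphi\circ f=0$. Algebraically, $\varphi$ vanishes on $\image f=\ker g$, so it descends to a unique $R$-module map $\psi\colon C\to M$ with $\psi\circ g=\varphi$. It remains to see that $\psi$ is measurable. Let $s\colon C\to B$ be a measurable section of $g$, so that $g\circ s=\mathrm{id}_C$. Then
\[\psi=\psi\circ g\circ s=\varphi\circ s,\]
which is a composition of measurable maps and hence measurable. Therefore $\psi\in\hom_R(C,M)$ and $\varphi=g^*\psi$. I expect this measurability check to be the only potential obstacle, and the strong hypothesis on $g$ resolves it directly. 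The strongness of $f$ is not even needed for this part.

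Finally, \cref{prop:L_0F} applied to $F=\hom_R(\,\cdot\,,M)$ gives $\Ext^0_R(\,\cdot\,,M)=L_0F\cong F=\hom_R(\,\cdot\,,M)$. This requires checking that the standard argument behind \cref{prop:L_0F} only feeds $F$ strong right exact sequences. For a strong projective resolution $P_1\to P_0\to N\to0$, the sequence is strong right exact by definition, so right exactness as shown above yields $H_0(FP_\ast)\cong F(N)$.
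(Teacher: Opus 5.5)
Your proof is correct and follows the paper's argument. The only non-algebraic point is that the descended map $\psi$ is measurable, and you get this as $\psi=\varphi\circ s$ from a measurable section $s$ of the strong surjection $B\to C$, just as the paper does; $\Ext^0_R\cong\hom_R$ then follows from \cref{prop:L_0F}.
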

\begin{proof}
    Let $A\to B\to C\to0$ be a strong right exact sequence of $R$-modules. We need to show the exactness of
    \[
    0\to\hom_R(C,M)\to\hom_R(B,M)\to \hom_R(A,M).
    \]
    Exactness at $\hom_R(C,M)$ is clear. In order to show the exactness at $\hom_R(B,M)$, we need to show that an $R$-module map $C\to M$ is measurable if its composition with $B\to C$ is measurable. This follows from the existence of measurable sections. 
\end{proof}
\begin{lemma}[Horseshoe lemma]\label{horseshoe lemma}
    Let $0\to A\to B\to C\to 0$ be a strong short exact sequence of $R$-modules. There are strong projective resolutions $P_\ast\to A$, $Q_\ast\to B$, $J_\ast\to C$ and (augmented) chain maps $0\to P_\ast\to Q_\ast\to J_\ast\to0$ that are strong short exact sequence in every degree.
\end{lemma}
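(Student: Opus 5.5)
We follow the classical horseshoe construction, making sure every map we build is measurable and every surjection admits a measurable section. Choose strong projective resolutions $P_\ast\to A$ and $J_\ast\to C$; these exist by \cref{prop:fund_lemma_of_homological_algebra}, and we may even take all terms free, using \cref{enough projectives}. Set $Q_n=P_n\times J_n$, which is projective as a direct summand of a free module; by \cref{lem:disjoint_union}, a product of free modules $RX\times RY$ is again free, namely $R[X\sqcup Y]$. In each degree the sequence $0\to P_n\to Q_n\to J_n\to0$ is the split sequence. It is strong because the inclusion $j\mapsto(0,j)$ is a measurable section of the projection. The inclusion $P_n\to Q_n$ also has measurable image with a measurable section onto it, namely $(x,0)\mapsto x$.

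\textbf{Degree zero.} Denote by $\varepsilon_A\colon P_0\to A$ and $\varepsilon_C\colon J_0\to C$ the augmentations, by $i\colon A\to B$ and $\pi\colon B\to C$ the maps of the given sequence, and by $s\colon C\to B$ a measurable (not necessarily additive) section of $\pi$, which exists by strongness. Since $\pi$ is strong surjective and $J_0$ is projective, $\varepsilon_C$ lifts to an $R$-map $\lambda\colon J_0\to B$. Define
\[\varepsilon_B(x,j)=i\varepsilon_A(x)+\lambda(j).\]
This map is measurable, the diagram commutes, and $\varepsilon_B$ is surjective by the usual diagram chase. For strongness we need a measurable section of $\varepsilon_B$. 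Given $b\in B$, take $c=\pi(b)$ and $j=\sigma_C(c)$, where $\sigma_C$ is a measurable section of $\varepsilon_C$. Then $b-\lambda(j)$ lies in $\ker\pi=\image i$. We need a measurable inverse $i^{-1}$ on $\image i$, composed with a measurable section $\sigma_A$ of $\varepsilon_A$. The inverse exists because the given sequence is strong at $A$: with the paper's definition, this should be read as asking that $A\to \image i$ has a measurable section, i.e.\ that $i$ is a measurable isomorphism onto its image. The section of $\varepsilon_B$ is then
\[b\mapsto\bigl(\sigma_A i^{-1}(b-\lambda\sigma_C\pi b),\ \sigma_C\pi b\bigr),\]
which is measurable as a composition of measurable maps.

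\textbf{Induction.} Take kernels $K_A=\ker\varepsilon_A$, $K_B=\ker\varepsilon_B$ and $K_C=\ker\varepsilon_C$, each with the subspace $\sigma$-algebra. The snake lemma gives a short exact sequence $0\to K_A\to K_B\to K_C\to0$. We must check it is strong, i.e.\ that $K_B\to K_C$ has a measurable section and that $K_A\to K_B$ is a measurable isomorphism onto its image. The latter is clear from the product structure. For the former, take $k\in K_C$ and map it to $(x,k)\in K_B$, where the correction $x=\sigma_A\bigl(i^{-1}(-\lambda(k))\bigr)$ is forced by $\varepsilon_B(x,k)=0$; the expression is well defined since $\pi\lambda(k)=\varepsilon_C(k)=0$. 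Strictly, we also need the correction to lie in $P_0$ with $\varepsilon_A(x)=-i^{-1}\lambda(k)$, which holds because $\sigma_A$ is a section. The truncated resolutions $P_{\ge1}\to K_A$ and $J_{\ge1}\to K_C$ remain strong projective resolutions, so we can repeat the degree-zero step with $K_A\to K_B\to K_C$ in place of $A\to B\to C$. This produces $\partial_1$ on $Q_1$ and, inductively, all the differentials. Exactness of $Q_\ast$ and the chain map property follow as in the algebraic proof.

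\textbf{Main obstacle.} The hard part is the measurability bookkeeping: that kernels with subspace $\sigma$-algebras behave well, and that ``strong'' supplies both sections and measurable inverses on images. The step I expect to be delicate is the section of $K_B\to K_C$. I would handle it by the explicit formula above, using only compositions of measurable maps.
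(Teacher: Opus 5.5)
Your proposal is correct and follows the same route as the paper, which simply invokes the horseshoe construction of \cite[Lemma 2.2.8]{weibel:introduction} with $Q_n=P_n\times J_n$ and states that measurability and strongness of all maps are easily verified. Your explicit measurable sections of $\varepsilon_B$ and of $K_B\to K_C$, built from $\sigma_A$, $\sigma_C$, $\lambda$ and the measurable inverse of $i$ on $\image i$, are exactly the verification the paper leaves implicit. (Your aside that $P_n\times\{0\}$ is a measurable subset of $Q_n$ is not needed, and is not guaranteed in this generality, since strongness only asks for a section over the image with its subspace $\sigma$-algebra.)
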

\begin{proof}
    The proof follows as in \cite[Lemma 2.2.8]{weibel:introduction}. It is easily verifiable that all maps are measurable and strong.
\end{proof}
\begin{proposition}\label{LES}
    Let $F$ be an additive functor from the category of measurable $R$-modules to an abelian category $\mathcal{A}$. If $0\to A\to B\to C\to 0$ is a strong short exact sequence of $R$-modules, then there exists a natural long exact sequence
    \[
    \dots\to L_2F(C)\to L_1F(A)\to L_1F(B)\to L_1F(C)\to L_0F(A)\to L_0F(B)\to L_0F(C)\to 0.
    \]
\end{proposition}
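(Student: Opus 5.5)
The plan is to follow the standard proof in abelian categories (e.g.\ \cite[Theorem 2.4.6]{weibel:introduction}), using the Horseshoe lemma as the main input. First, I apply \cref{horseshoe lemma} to the strong short exact sequence $0\to A\to B\to C\to 0$. This gives strong projective resolutions $P_\ast\to A$, $Q_\ast\to B$, $J_\ast\to C$, together with chain maps $0\to P_\ast\to Q_\ast\to J_\ast\to 0$ that are strong short exact in each degree.

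Next I apply $F$ degreewise. The key point is that each sequence $0\to P_n\to Q_n\to J_n\to 0$ is a strong short exact sequence ending in the projective module $J_n$, so it splits measurably. Indeed, the identity $J_n\to J_n$ lifts along the strong surjection $Q_n\twoheadrightarrow J_n$ by projectivity, which gives an $R$-module section $s\colon J_n\to Q_n$. The map $Q_n\to P_n$, $q\mapsto q-s(\pi q)$, lands in $\ker(\pi)=\image(P_n)$. Since $P_n\to Q_n$ is injective and strong, it is a measurable isomorphism onto its image: the measurable section of $P_n\twoheadrightarrow\image$ is just the inverse map. So this gives a measurable retraction $r\colon Q_n\to P_n$, and hence $Q_n\cong P_n\oplus J_n$ as measurable modules. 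Since $F$ is additive, it preserves split short exact sequences. Therefore $0\to FP_\ast\to FQ_\ast\to FJ_\ast\to 0$ is a short exact sequence of chain complexes in the abelian category $\mathcal{A}$.

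The usual snake-lemma long exact homology sequence in $\mathcal{A}$ then gives the long exact sequence of the $L_nF$. It terminates with $L_0F(C)\to 0$ because $FJ_1\to FJ_0$ is the last map and $H_0(FJ_\ast)$ is a cokernel that receives a surjection from $H_0(FQ_\ast)$.

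For naturality, I take a morphism of strong short exact sequences. I would construct compatible horseshoe resolutions and lift the maps using \cref{prop:fund_lemma_of_homological_algebra}, as in the classical argument. Independence of the choice of resolutions also follows from \cref{prop:fund_lemma_of_homological_algebra}, since the comparison chain maps are unique up to $R$-chain homotopy.

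I expect the main obstacle to be the degreewise splitting. Specifically, one must check that the relevant maps are genuinely strong and measurable, that injective strong maps are measurable embeddings, and that the horseshoe construction yields degreewise split sequences in the measurable category. Naturality needs some care with measurability of the lifted maps, but it follows from the same lifting properties.
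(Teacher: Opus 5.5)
Your proposal is correct and is essentially the paper's proof, which applies the Horseshoe lemma (\cref{horseshoe lemma}) and then follows \cite[Theorem 2.4.6]{weibel:introduction}. Your explicit check that each $0\to P_n\to Q_n\to J_n\to 0$ splits measurably is exactly the detail the paper leaves implicit: you lift $\id_{J_n}$ by projectivity and use that a strong injection has a measurable inverse on its image (in the Weibel-style construction one even has $Q_n=P_n\oplus J_n$ by definition).
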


\begin{proof}
    This follows from \cref{horseshoe lemma}, as in \cite[Theorem 2.4.6]{weibel:introduction}.
\end{proof}

The following is a refined version of the fact that derived functors can be calculated using any acyclic resolution. This is a standard fact in the algebraic context.

\begin{proposition}\label{acyclic resolution}
    Let $R$ be a measurable ring and let $F$ be a right exact additive functor from the category of measurable $R$-modules to an abelian category $\mathcal{A}$. Let $P_n\to\dots P_0\to M\to 0$ be a strong exact sequence with $L_mF(P_k)=0$ for $0<m\leq n-k$. For every $k<n$, $L_kF(M)$ is isomorphic to the $k$-th homology of $FP_\ast$ and $L_nF(M)$ is a subquotient of $F(P_n)$.
\end{proposition}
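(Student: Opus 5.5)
The argument is the standard dimension-shifting proof that derived functors may be computed from acyclic resolutions, run by induction on $n$. Every sequence involved will be strong, so that \cref{LES} and \cref{prop:L_0F} apply.

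\emph{Splitting into short exact sequences.} Set $K_{-1}=M$ and $K_j=\ker(P_j\to P_{j-1})$ for $0\le j\le n$, with $P_{-1}=M$. Since the given sequence is strong exact, each $P_j\twoheadrightarrow \image(P_j\to P_{j-1})=K_{j-1}$ has a measurable section. Give $K_j$ the subspace $\sigma$-algebra; it is a measurable $R$-module by \cref{group sigma-algebras}. Then each
\[
0\to K_j\to P_j\to K_{j-1}\to 0
\]
is a strong short exact sequence. The inclusion is strong because it is injective and a homeomorphism onto its image for the subspace structure. \cref{LES} gives long exact sequences
\[
\cdots\to L_{m+1}F(P_j)\to L_{m+1}F(K_{j-1})\to L_mF(K_j)\to L_mF(P_j)\to\cdots .
\]

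\emph{Dimension shifting.} Using the hypothesis $L_mF(P_j)=0$ for $0<m\le n-j$, we get $L_{m+1}F(K_{j-1})\cong L_mF(K_j)$ for $1\le m$ with $m+1\le n-j$. For $m=0$ we get an exact sequence
\[
0\to L_1F(K_{j-1})\to F(K_j)\to F(P_j)\to F(K_{j-1})\to 0,
\]
valid when $1\le n-j$, after identifying $L_0F=F$ by \cref{prop:L_0F}. Iterating the isomorphisms gives, for $1\le k\le n$,
\[
L_kF(M)\cong L_1F(K_{k-2})\cong \ker\bigl(F(K_{k-1})\to F(P_{k-1})\bigr).
\]
We also need the index bookkeeping $m+1\le n-j$ along the chain to hold, which it does since $k\le n$.

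\emph{Comparison with $FP_\ast$.} Right exactness applied to the strong right exact sequence $P_k\to P_{k-1}\to K_{k-2}\to 0$ shows that $F(P_k)\to F(K_{k-1})$ is surjective. Indeed $P_k\to K_{k-1}$ is a strong surjection, and $0\to$ gives right exactness of $P_k\to K_{k-1}\to 0$. Hence the image of $F(P_k)\to F(P_{k-1})$ equals the image of $F(K_{k-1})\to F(P_{k-1})$. Also $\ker(F(P_{k-1})\to F(P_{k-2}))=\ker(F(P_{k-1})\to F(K_{k-2}))$, because $F(K_{k-2})\to F(P_{k-2})$ is injective when $k-2<n-1$, by the $m=0$ sequence. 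Note that $F(P_{k-1})\to F(K_{k-2})$ is surjective, and its kernel is the image of $F(K_{k-1})$.

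Putting this together: for $k<n$ the $k$-th homology of $FP_\ast$ is $\ker(FP_k\to FP_{k-1})/\image(FP_{k+1}\to FP_k)$. This equals $\image(F(K_k)\to F(P_k))/\image(F(P_{k+1}))$. Since $F(P_{k+1})\to F(K_k)$ is surjective, this quotient is zero when it is computed inside $F(K_k)$ rather than as images. More carefully, $H_k(FP_\ast)\cong \ker(F(K_{k-1})\to F(P_{k-1}))\cong L_kF(M)$: the kernel of $F(P_k)\to F(P_{k-1})$ maps onto $\ker(F(K_{k-1})\to F(P_{k-1}))$, with kernel equal to the image of $F(P_{k+1})$. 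The case $k=0$ is handled directly by right exactness and \cref{prop:L_0F}.

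For $k=n$ the surjection $F(P_n)\to F(K_{n-1})$ is still available, but we no longer have the next term. So $L_nF(M)\cong\ker(F(K_{n-1})\to F(P_{n-1}))$ is a subgroup of a quotient of $F(P_n)$, which is the claimed subquotient.

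The main obstacle is the measurable bookkeeping in the first step. One must verify that each kernel $K_j$ with the induced $\sigma$-algebra makes both maps in $0\to K_j\to P_j\to K_{j-1}\to0$ strong. In particular, the corestriction $P_j\to K_{j-1}$ must be measurable for the subspace structure and must admit a measurable section into $P_j$. This should follow from the definition of strong exactness, but it is the one point where the measurable setting differs from the abelian one.
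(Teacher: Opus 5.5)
Your approach is the paper's. You split the sequence into the strong short exact sequences $0\to K_j\to P_j\to K_{j-1}\to 0$, apply \cref{LES}, and dimension-shift. Your index bookkeeping giving $L_kF(M)\cong L_1F(K_{k-2})\cong\ker\bigl(F(K_{k-1})\to F(P_{k-1})\bigr)$ for $1\le k\le n$ is correct. The measurable point you flag is immediate, and the paper states it without comment. With the subspace $\sigma$-algebra on $K_{j-1}$, the section witnessing strongness of $P_j\to P_{j-1}$ is by definition a measurable section of $P_j\twoheadrightarrow K_{j-1}$, and the inclusion $K_j\hookrightarrow P_j$ is trivially strong.

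There is, however, a false step in your comparison paragraph that has to be deleted. You assert that $F(K_{k-2})\to F(P_{k-2})$ is injective ``by the $m=0$ sequence''. In ``Putting this together'' you use it, shifted by one, as $\ker\bigl(F(P_k)\to F(P_{k-1})\bigr)=\ker\bigl(F(P_k)\to F(K_{k-1})\bigr)$. That would need $F(K_{k-1})\to F(P_{k-1})$ to be injective. But the $m=0$ sequence identifies its kernel with $L_1F(K_{k-2})$, which your dimension shifting shows is $\cong L_kF(M)$, the very object being computed. So the conclusion that ``this quotient is zero'' is wrong whenever $L_kF(M)\neq 0$.

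The argument you give next (``more carefully'') is the correct one and suffices on its own. Write $F(P_k)\to F(P_{k-1})$ as $F(P_k)\xrightarrow{\alpha}F(K_{k-1})\xrightarrow{\beta}F(P_{k-1})$ with $\alpha$ onto. Then $\alpha$ maps $\ker(\beta\alpha)$ onto $\ker\beta$, with kernel $\ker\alpha=\image\bigl(F(K_k)\to F(P_k)\bigr)$. This equals the image of $F(P_{k+1})$, because $F(P_{k+1})\to F(K_k)$ is onto for $k<n$. This identification of $H_k(FP_\ast)$ uses only right exactness; the acyclicity hypothesis enters only through $L_kF(M)\cong\ker\beta$. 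With that paragraph cleaned up, your proof is complete. Also note the trivial case $n=0$, where $F(M)$ is a quotient of $F(P_0)$.
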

\begin{proof}
    
    Let $I_k=\ker(P_k\to P_{k-1})$ and $I_{-1}=M$. For every $k\leq n$, $0\to I_k\to P_k\to I_{k-1}\to0$ is a strong short exact sequence. Thus, it has the associated long exact sequence given by \cref{LES}. From here the statement follows using standard dimension shifting arguments (as in \cite[Exercise 2.4.3]{weibel:introduction}).
\end{proof}

\subsection{Free abelian topological groups}\label{subsection:free abelian topological groups}
For a set $X$ we denote by $\bbZ X$ the free abelian group on $X$. We consider $X$ as a subset of $\bbZ X$. For a topological space $X$, Markov defined the free abelian topological group on $X$ \cite{markov:free}. This is a group topology on $\bbZ X$ which makes the inclusion $X\to\bbZ X$ continuous and which is universal among all pairs of an abelian topological group $A$ together with a continuous map $X\to A$. The topology on $\bbZ X$ can be defined as the topology generated by all group topologies on $\bbZ X$ that make the inclusion $X\to\bbZ X$ continuous.

We will focus on the class of $k_\omega$-spaces. We recall that a $k_\omega$-space is a space $X$ admitting an increasing sequence of compact Hausdorff subsets $K_1\subseteq K_2\subseteq\dots$ such that $X=\bigcup_nK_n$ and it has the direct limit topology. If we write ``$X=\bigcup_nK_n$ is a $k_\omega$-space'', we mean that $(K_n)$ satisfies the above properties. Every $k_\omega$-space is Hausdorff. For a survey of $k_\omega$-spaces see \cite{franklin:surveyk_w}.
\begin{proposition}[\cite{mack:free}]\label{free abelian topological group}
    If $X=\bigcup_nK_n$ is a $k_\omega$-space, then $\bbZ X$ is a $k_\omega$-space and the topology on $\bbZ X$ is the strongest topology on $\bbZ X$ making the maps
\begin{equation}\label{defining maps for ZX}
\begin{split}
    K_n^{m+k} &\to\bbZ X,  \\
(x_1,\dots,x_m,y_1,\dots,y_k)&\mapsto \sum_{i=1}^m x_i-\sum_{j=1}^k y_j
\end{split}
\end{equation}
continuous, for all natural numbers $k,m,n$.
\end{proposition}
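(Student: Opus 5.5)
The statement is Mack's theorem: for a $k_\omega$-space $X=\bigcup_n K_n$, the free abelian topological group $\bbZ X$ is a $k_\omega$-space, and its topology is the final topology of the maps \eqref{defining maps for ZX}. The plan is to build a $k_\omega$ group topology $\tau$ on $\bbZ X$ directly and then show, by the universal property, that it coincides with the Markov topology.

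\textbf{Step 1: the compact pieces.} For each $n$, let $A_n\subseteq\bbZ X$ be the set of words $\sum_{i\le m}x_i-\sum_{j\le k}y_j$ with $x_i,y_j\in K_n$ and $m+k\le n$. It is the image of the compact space
\[
\textstyle\bigsqcup_{m+k\le n}K_n^{m+k}
\]
under the maps \eqref{defining maps for ZX}. To topologize $A_n$, first take the Markov topology $\tau_M$ on $\bbZ X$. Since $X$ is Tychonoff (every $k_\omega$-space is normal), continuous characters $X\to\mathbb{R}/\bbZ$, or continuous maps into $\mathbb{R}$, separate points of $\bbZ X$. Hence $\tau_M$ is Hausdorff. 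Each map in \eqref{defining maps for ZX} is continuous into $(\bbZ X,\tau_M)$, so $A_n$ is a continuous image of a compact space in a Hausdorff space. It is therefore compact Hausdorff, and its quotient topology from $\bigsqcup K_n^{m+k}$ agrees with the subspace topology from $\tau_M$. In particular, the quotient topologies on $A_n$ and $A_{n+1}$ are compatible.

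\textbf{Step 2: the $k_\omega$ topology.} Let $\tau$ be the direct limit topology on $\bbZ X=\bigcup_n A_n$. By construction, $(\bbZ X,\tau)$ is $k_\omega$, and $\tau$ is exactly the strongest topology making all maps \eqref{defining maps for ZX} continuous. Since these maps are $\tau_M$-continuous, $\tau\supseteq\tau_M$.

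\textbf{Step 3: $\tau$ is a group topology.} Inversion maps $A_n$ onto $A_n$ continuously, since it lifts to permuting coordinates. For addition, the key fact is that a product of two $k_\omega$-spaces is again $k_\omega$, with the product topology equal to the direct limit of the $A_n\times A_n$. This uses that products of quotient maps of compact Hausdorff, or locally compact, spaces remain quotient maps. Given this, it suffices to check that addition $A_n\times A_n\to A_{2n}$ is continuous. That holds because it lifts to concatenation of coordinates on the compact domains, and the projections to $A_n\times A_n$ are quotient maps.

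\textbf{Step 4: conclusion.} Now $\tau$ is a group topology, and $X\to\bbZ X$ is $\tau$-continuous because $X$ carries the direct limit topology and each $K_n\to A_n$ is continuous. Since $\tau_M$ is the strongest group topology making $X\to\bbZ X$ continuous, we get $\tau\subseteq\tau_M$. Combined with Step 2, $\tau=\tau_M$, which proves both claims of the statement.

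The main obstacle is Step 3, and specifically the fact that the product topology on $(\bbZ X,\tau)^2$ agrees with the direct limit of the compacta $A_n\times A_n$. This fails for general direct limits and is precisely where the countability of $k_\omega$ is used. I would prove it by quoting the standard result that products of $k_\omega$-spaces are $k_\omega$ (see \cite{franklin:surveyk_w}). That result in turn rests on Whitehead's lemma that the product of a quotient map with the identity on a locally compact space is again a quotient map.
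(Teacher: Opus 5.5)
Your proposal is correct and is essentially the argument of \cite{mack:free} (Theorem~1, its proof, and Corollary~1), which the paper cites in place of a proof: compact pieces $A_n$ in the Hausdorff Markov topology, the $k_\omega$ topology they generate, continuity of addition because a product of two $k_\omega$-spaces is $k_\omega$, and equality with the Markov topology via its universal property. The only inaccuracy is your side remark justifying that product fact: a product of two quotient maps with locally compact domains need not be a quotient map, since Whitehead's lemma only handles a quotient map times the identity of a locally compact space; the $k_\omega$ product theorem is instead proved by a countable tube-lemma induction over the compacta, which is where the $k_\omega$ hypothesis is used --- quoting it, as you propose, is fine.
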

\begin{proof}
    See Theorem 1, its proof and Corollary 1 of \cite{mack:free}.
\end{proof}

\begin{lemma}\label{metrizable quotient}
    A Hausdorff quotient of a compact metrizable space is compact metrizable.
\end{lemma}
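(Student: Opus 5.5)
Let $K$ be a compact metrizable space, $Y$ a Hausdorff space and $q\colon K\to Y$ a continuous surjection inducing the quotient topology on $Y$. The plan is to show that $Y$ is compact and then that it is metrizable, the latter via second countability and Urysohn's metrization theorem.

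\emph{Compactness.} $Y=q(K)$ is the continuous image of a compact space, hence compact. Since $Y$ is also Hausdorff, it is normal, hence regular. So by Urysohn's metrization theorem it only remains to show that $Y$ is second countable.

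\emph{Second countability.} This is the main step. The idea is that $q$ is a closed map: a closed subset of $K$ is compact, its image is compact, and a compact subset of the Hausdorff space $Y$ is closed. Fix a countable base $\mathcal{B}$ of $K$ and let $\mathcal{F}$ be the countable family of finite unions of members of $\mathcal{B}$. For $F\in\mathcal{F}$ set
\[
W_F = Y\setminus q(K\setminus F),
\]
which is open because $q$ is closed. Note that $W_F$ is the set of points $y$ whose whole fiber $q^{-1}(y)$ lies in $F$.

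We claim the countable family $\{W_F\}_{F\in\mathcal{F}}$ is a base of $Y$. Let $y\in U$ with $U\subseteq Y$ open. The fiber $q^{-1}(y)$ is closed in $K$, hence compact, and it is contained in the open set $q^{-1}(U)$. Cover the fiber by basic sets contained in $q^{-1}(U)$ and extract a finite subcover, whose union is some $F\in\mathcal{F}$ with $q^{-1}(y)\subseteq F\subseteq q^{-1}(U)$. Then $y\in W_F$. Moreover, every $y'\in W_F$ has $q^{-1}(y')\subseteq F\subseteq q^{-1}(U)$, and the fiber is nonempty by surjectivity, so $y'\in U$. Hence $W_F\subseteq U$, proving the claim.

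Therefore $Y$ is compact, Hausdorff and second countable, and Urysohn's metrization theorem shows that it is metrizable. The only delicate point is the closedness of $q$, which is exactly where the Hausdorff assumption on the quotient is used.
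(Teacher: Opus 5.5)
Your proof is correct, but it takes a different route from the paper's. The paper argues functional-analytically. It uses the criterion that a compact Hausdorff space $X$ is metrizable if and only if the Banach space $C(X)$ is separable. It then notes that composition with the quotient map $K\to Y$ embeds $C(Y)$ isometrically into $C(K)$, and concludes because a subspace of a separable metric space is separable.

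You argue purely topologically. The map $q$ is closed (compact domain, Hausdorff target), so the sets $W_F=Y\setminus q(K\setminus F)$, with $F$ ranging over finite unions of basic open sets of $K$, are open. Compactness of the fibers shows that these sets form a countable base, and Urysohn's metrization theorem finishes the argument.

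The paper's proof is a two-line reduction, but it relies on the $C(X)$-separability criterion as a black box. Proving that criterion needs a Stone--Weierstrass or partition-of-unity argument in one direction and an embedding into a cube in the other. Your argument is self-contained apart from the metrization theorem, and it proves somewhat more. It is the standard proof that closed continuous images with compact fibers of second countable spaces are second countable. It also uses only that $q$ is a continuous surjection onto a Hausdorff space, never that $Y$ carries the quotient topology (which is automatic in this setting anyway).
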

\begin{proof}
    We recall that a compact Hausdorff space $X$ is metrizable if and only if the Banach space $C(X)$ is separable.
    Let $X\to Y$ be a quotient map when $X$ is compact and metrizable and $Y$ is Hausdorff. In this case, $Y$ is also compact and $C(Y)$ embeds isometrically in $C(X)$. The separability of $C(X)$ implies the separability of $C(Y)$, which implies the metrizability of $Y$.
\end{proof}

\begin{proposition}\label{ZX is sigma-(compact metrizable)}
    Let $X=\bigcup_nK_n$ be a $k_\omega$-space. If each $K_n$ is metrizable, then $\bbZ X$ is a union of countably many compact metrizable spaces.
\end{proposition}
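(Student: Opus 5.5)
By \cref{free abelian topological group}, $\bbZ X$ is the union of the images of the maps \eqref{defining maps for ZX}. I will index these images by triples $(m,k,n)\in\bbN^3$ and write
\[
L_{m,k,n}\defq \Bigl\{\sum_{i=1}^m x_i-\sum_{j=1}^k y_j \;\Bigm|\; x_i,y_j\in K_n\Bigr\}\subseteq \bbZ X .
\]
There are countably many such sets, and they cover $\bbZ X$, because every element of $\bbZ X$ is a finite formal difference of points of $X=\bigcup_n K_n$. So it suffices to show that each $L_{m,k,n}$ is compact metrizable.

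Fix $(m,k,n)$. The domain $K_n^{m+k}$ is compact and metrizable, since it is a finite product of compact metrizable spaces. The map $\sigma\colon K_n^{m+k}\to\bbZ X$ in \eqref{defining maps for ZX} is continuous by \cref{free abelian topological group}. Hence $L_{m,k,n}=\sigma(K_n^{m+k})$ is compact. Since $\bbZ X$ is a $k_\omega$-space, it is Hausdorff, so $L_{m,k,n}$ is Hausdorff with the subspace topology.

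Next, a continuous surjection from a compact space onto a Hausdorff space is a closed map, hence a quotient map. Therefore $\sigma\colon K_n^{m+k}\to L_{m,k,n}$ exhibits $L_{m,k,n}$ as a Hausdorff quotient of a compact metrizable space. By \cref{metrizable quotient}, $L_{m,k,n}$ is compact metrizable.

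The only point needing any care is that metrizability is asserted for the subspace topology inherited from $\bbZ X$. This is exactly why I route the argument through the closed-map property: it identifies that subspace topology with the quotient topology coming from $\sigma$, which is what \cref{metrizable quotient} requires. Everything else is routine.
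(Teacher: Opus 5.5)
Your proposal is correct and follows the paper's proof: cover $\bbZ X$ by the images of the maps $K_n^{m+k}\to\bbZ X$, use that $\bbZ X$ is $k_\omega$ and hence Hausdorff to see each such map is a quotient map onto its compact image, and conclude with \cref{metrizable quotient}. Your closed-map justification of the quotient property, and your remark that the points involved lie in a single $K_n$ because the $K_n$ increase, fill in details the paper leaves implicit.
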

\begin{proof}
    For every $n,m,k$, consider the map $K_n^{m+k}\to\bbZ X$ defined in \cref{free abelian topological group}. Since $\bbZ X$ is a $k_\omega$-space, it is Hausdorff. Hence this map is a quotient map onto its image, so by \cref{metrizable quotient}, its image is compact metrizable. These images cover $\bbZ X$.
\end{proof}
The above property is useful since it implies the existence of Borel sections for continuous maps. 
    A topological space is called \emph{$\sigma$-(compact metrizable)} if it is a countable union of compact metrizable subspaces.

We will use the following, which is a corollary of the Kuratowski-Ryll-Nardzewski measurable selection theorem and appears as \cite[Corollary 2.3]{graf:selected}.
\begin{lemma}\label{lem:measurable_selection_compact}
    Let $X$ and $Y$ be compact metrizable spaces and $p\colon  X \to Y$ a continuous
surjective map. There exists a Borel-measurable section for $p$.
\end{lemma}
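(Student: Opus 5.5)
The plan is to derive the lemma directly from the Kuratowski--Ryll-Nardzewski selection theorem, applied to the fiber map of $p$. Recall that this theorem says the following. Let $Z$ be a Polish space and let $Y$ be a measurable space. Let $F$ assign to each $y\in Y$ a non-empty closed subset $F(y)\subseteq Z$. Assume $F$ is weakly measurable, meaning that for every open $U\subseteq Z$ the set $\{y\in Y\mid F(y)\cap U\neq\emptyset\}$ is measurable. Then $F$ admits a measurable selector $s\colon Y\to Z$, that is, $s(y)\in F(y)$ for all $y$.

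We take $Z=X$, which is compact metrizable and hence Polish, and we equip $Y$ with its Borel $\sigma$-algebra. We set $F(y)=p^{-1}(y)$.

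\textbf{Values of $F$.} Each $F(y)$ is non-empty because $p$ is surjective. It is closed because $p$ is continuous and points of the metrizable space $Y$ are closed.

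\textbf{Weak measurability.} For an open $U\subseteq X$ we have
\[
\{y\mid p^{-1}(y)\cap U\neq\emptyset\}=p(U).
\]
Since $X$ is compact metrizable, $U$ is an $F_\sigma$ set, so $U=\bigcup_n C_n$ with each $C_n$ closed in $X$ and therefore compact. Then $p(U)=\bigcup_n p(C_n)$ is a countable union of compact subsets of the Hausdorff space $Y$. It is thus $F_\sigma$ and in particular Borel.

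Applying the theorem gives a Borel map $s\colon Y\to X$ with $s(y)\in p^{-1}(y)$, i.e.\ $p\circ s=\id_Y$. This is the desired Borel section.

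The only step that requires any care is the weak measurability, specifically that images of open sets are Borel. In general, continuous images of Borel sets need not be Borel. The argument works only because compactness of $X$ lets us write open sets as countable unions of compact sets.

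As a fallback that avoids quoting the selection theorem, I would use a direct argument. Embed $X$ in the Hilbert cube, or take a continuous surjection from the Cantor set $2^{\bbN}$ onto $X$ and compose it with $p$. This reduces to the case $X\subseteq 2^{\bbN}$ closed. Define $s(y)$ to be the lexicographically least point of the compact set $p^{-1}(y)$. The value $s(y)$ is built coordinate by coordinate. The $k$-th coordinate is determined by whether $y$ lies in $p(X\cap N_w)$ for cylinder sets $N_w$. These sets are compact because $X\cap N_w$ is closed in $X$. Hence each coordinate of $s$ is a Borel function of $y$.
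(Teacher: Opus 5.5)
Your proposal is correct and follows the paper's route: the paper gives no argument of its own, only citing the lemma as a corollary of the Kuratowski--Ryll-Nardzewski selection theorem (Graf, Corollary~2.3). You make that derivation explicit, in particular the weak-measurability check that $p(U)$ is $F_\sigma$ because open sets in $X$ are countable unions of compact sets.
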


\begin{corollary}\label{lem:measurable_selection_sigma_compact}
    Let $X$ be a $\sigma$-(compact metrizable) space and let $Y$ be a Hausdorff space. If $p\colon  X \to Y$ is a continuous surjective map, then there exists a Borel-measurable section for $p$.
\end{corollary}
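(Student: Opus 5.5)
We write $X=\bigcup_n L_n$ with each $L_n$ compact metrizable; replacing $L_n$ by $L_1\cup\dots\cup L_n$, we may assume the sequence is increasing. The idea is to build the section piecewise on a countable Borel partition of $Y$, using \cref{lem:measurable_selection_compact} on each piece. Put $C_n=p(L_n)$. Each $C_n$ is compact, being a continuous image of a compact set, and Hausdorff as a subspace of $Y$. Since $p|_{L_n}\colon L_n\to C_n$ is a closed continuous surjection from a compact space onto a Hausdorff space, it is a quotient map. Hence \cref{metrizable quotient} shows that $C_n$ is compact metrizable. By surjectivity of $p$ we have $Y=\bigcup_n C_n$, and the $C_n$ increase.

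Next we apply \cref{lem:measurable_selection_compact} to the surjection $p|_{L_n}\colon L_n\to C_n$. This gives a Borel map $s_n\colon C_n\to L_n$ with $p\circ s_n=\id_{C_n}$. We then define the section $s\colon Y\to X$ by $s(y)=s_n(y)$ for $y\in D_n:=C_n\setminus C_{n-1}$, with the convention $C_0=\emptyset$. The sets $D_n$ are pairwise disjoint, cover $Y$, and are Borel in $Y$, since each $C_n$ is compact in a Hausdorff space and hence closed. The identity $p\circ s=\id_Y$ holds by construction.

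It remains to check that $s$ is Borel. Let $U\subseteq X$ be open. Then
\[ s^{-1}(U)=\bigcup_n \bigl(D_n\cap s_n^{-1}(U\cap L_n)\bigr). \]
The set $U\cap L_n$ is relatively open in $L_n$, so $s_n^{-1}(U\cap L_n)$ is Borel in $C_n$. A Borel subset of the closed subspace $C_n$ is Borel in $Y$, because the Borel $\sigma$-algebra of $C_n$ is the trace of the Borel $\sigma$-algebra of $Y$ and $C_n$ itself is Borel in $Y$. So $s^{-1}(U)$ is a countable union of Borel subsets of $Y$, hence Borel. The composite of $s_n$ with the inclusion $L_n\hookrightarrow X$ is Borel, so no issue arises on the side of $X$.

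The main point requiring care is the metrizability of $C_n$, which is needed to invoke \cref{lem:measurable_selection_compact}. This point depends on $Y$ being Hausdorff, since that is what makes $p|_{L_n}$ a quotient map onto a Hausdorff image so that \cref{metrizable quotient} applies. Apart from this, the only subtlety is the bookkeeping between Borel sets of subspaces and Borel sets of $Y$, and it is resolved because the $C_n$ are closed.
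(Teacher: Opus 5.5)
Your argument is the paper's argument. You cover $X$ by compact metrizable pieces, observe that their images are closed and compact metrizable by \cref{metrizable quotient}, apply \cref{lem:measurable_selection_compact} on each image, and patch the sections along the Borel partition obtained by disjointifying the images. Your measurability check is correct and more detailed than the paper's.

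The one step to fix is the preliminary replacement of $L_n$ by $L_1\cup\dots\cup L_n$. To apply \cref{lem:measurable_selection_compact} to $p$ restricted to this set, the finite union must be metrizable. That holds when $X$ is Hausdorff, since a compact Hausdorff space with a countable network is metrizable. However, $X$ is not assumed Hausdorff, and the step fails in general. For example, the interval $[-1,1]$ with its origin doubled is a union of two compact metrizable subspaces but is not Hausdorff, with $p$ the projection onto $[-1,1]$.

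The normalization is unnecessary anyway. Keep the original $L_n$ and set $D_n=C_n\setminus\bigcup_{m<n}C_m$, exactly as the paper does. The rest of your proof then goes through verbatim.
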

\begin{proof}
    Since $X$ is $\sigma$-(compact metrizable), we can write $X$ as a union of compact metrizable subsets $X=\bigcup K_n$. Since $Y$ is Hausdorff, for every $n$, $p(K_n)$ is compact and metrizable. 
    By \cref{lem:measurable_selection_compact}, there exists $s_n\colon p(K_n)\to K_n$ a section of $p|_{K_n}$ for every $n$. 
    For every $y\in Y$, define $s(y)=s_n(y)$ for $n$ minimal such that $y\in p(K_n)$. Clearly, $s$ is a section. It is measurable since it is measurable on each $p(K_n)-\bigcup_{m=1}^{n-1}p(K_{m})$ and $Y$ is the union of the closed sets $p(K_n)$.
\end{proof}
The following is an immediate corollary.
\begin{corollary}\label{section for defining maps for ZX}
    Let $X=\bigcup_nK_n$ be a $k_\omega$-space with each $K_n$ metrizable. Consider the map $p\colon \bigsqcup_{n,k,m}K_n^{k+m}\to\bbZ X$ defined on each $K_n^{k+m}$ by \cref{defining maps for ZX}. There exists a Borel section for $p$.
\end{corollary}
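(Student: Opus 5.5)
The statement to prove is \cref{section for defining maps for ZX}: the map $p\colon \bigsqcup_{n,k,m}K_n^{k+m}\to\bbZ X$ admits a Borel section. I would deduce it directly from \cref{lem:measurable_selection_sigma_compact}, applied with domain $\bigsqcup_{n,k,m}K_n^{k+m}$ and target $\bbZ X$. This requires three checks: the domain is $\sigma$-(compact metrizable), the target is Hausdorff, and $p$ is continuous and surjective.

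First, the domain. Each $K_n$ is compact metrizable by assumption. Hence each finite power $K_n^{k+m}$ is compact metrizable, including the one-point space $K_n^0$, which maps to $0$. The index set of triples $(n,k,m)$ is countable. So the disjoint union is a countable union of compact metrizable subspaces, each of which is clopen in the disjoint union. Thus the domain is $\sigma$-(compact metrizable).

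Second, the target and the map. By \cref{free abelian topological group}, $\bbZ X$ is a $k_\omega$-space, hence Hausdorff. The same proposition says the topology on $\bbZ X$ makes each map \cref{defining maps for ZX} continuous. Continuity on a disjoint union is checked summand by summand, so $p$ is continuous.

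For surjectivity, take any element of $\bbZ X$ and write it as $\sum_{i=1}^m x_i-\sum_{j=1}^k y_j$, with the $x_i,y_j\in X$ listed with repetition. Since $X=\bigcup_n K_n$ is an increasing union and only finitely many points occur, all of them lie in a single $K_n$. The tuple $(x_1,\dots,x_m,y_1,\dots,y_k)\in K_n^{m+k}$ then maps to the given element.

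With these three checks, \cref{lem:measurable_selection_sigma_compact} yields a Borel section of $p$. I expect no real obstacle here. The only point needing care is that the Hausdorff property of $\bbZ X$ comes from \cref{free abelian topological group}, and that Hausdorffness is what makes the images $p(K_n^{k+m})$ compact metrizable and closed inside the proof of the selection corollary.
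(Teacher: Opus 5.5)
Your proposal is correct and is the same argument the paper uses: the paper calls this an immediate corollary of \cref{lem:measurable_selection_sigma_compact}. Your three checks are exactly the hypotheses the paper leaves implicit there: the domain is a countable disjoint union of compact metrizable spaces; $\bbZ X$ is Hausdorff and $p$ is continuous by \cref{free abelian topological group}; and $p$ is surjective because finitely many points lie in a single $K_n$ of the increasing exhaustion.
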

Using \cref{section for defining maps for ZX}, we can understand the free measurable abelian group on a standard Borel space. We will use the following terminology: if $X$ is a topological space then the \emph{Borel space of $X$} is the measurable space $X$ equipped with the $\sigma$-algebra of its Borel subsets.

\begin{theorem}\label{free abelian measurable group on a standard space}\hfill

    \begin{enumerate}
        \item\label{free abelian measurable group on a standard space 1} Let $X=\bigcup_nK_n$ be a $k_\omega$-space with each $K_n$ metrizable. The free measurable abelian group on the Borel space of $X$ is the Borel space of $\bbZ X$.
        \item\label{free abelian measurable group on a standard space 2} Let $X$ be a standard Borel space. The free abelian measurable group on $X$ is $\bbZ X$ endowed with the largest $\sigma$-algebra on $\bbZ X$ making the maps
        \begin{equation*}
            \begin{split}
                X^{k+m}&\to\bbZ X \\
                (x_1,\dots,x_m,y_1,\dots,y_k)&\mapsto \sum_{i=1}^m x_i-\sum_{j=1}^k y_j
            \end{split}
        \end{equation*}
        for $m,k\in\bbN$, measurable. Moreover, the associated map $\bigsqcup_{m,k}X^{m+k}\to\bbZ X$ has a measurable section and $\bbZ X$ is a standard Borel space.
        \item\label{free abelian measurable group on a standard space 3} Let $X$ be a standard Borel space, let $Y$ be any measurable space and let $A$ be a measurable abelian group. A map $\bbZ X\times Y\to A$ which is linear on each fiber $\bbZ X\times\{y\}$, $y\in Y$, is measurable if and only if its restriction to $X\times Y$ is measurable. 
        \item If $X$ and $Y$ are standard Borel spaces, then the natural map $\bbZ [X\times Y]\to\bbZ X\otimes\bbZ Y$ is an isomorphism. \label{free abelian measurable group on a standard space 4}
    \end{enumerate}
\end{theorem}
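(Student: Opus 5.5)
The four parts will be proved in order, each feeding the next. The idea throughout is to transfer the topological picture of \cref{free abelian topological group} to the measurable setting by means of the Borel sections of \cref{section for defining maps for ZX}.

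For \cref{free abelian measurable group on a standard space 1}, let $\Sigma$ be the $\sigma$-algebra of the free measurable abelian group on the Borel space of $X$ (given by \cref{existence of free modules}), and let $\mathcal{B}$ be the Borel $\sigma$-algebra of the topological group $\bbZ X$.

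First we show $\mathcal{B}\subseteq\Sigma$. The space $\bbZ X$ is $\sigma$-(compact metrizable) by \cref{ZX is sigma-(compact metrizable)}. Hence each compact image $p(K_n^{m+k})$ is second countable, and the addition and inversion maps are Borel on $\bbZ X\times\bbZ X$; on a countable union of compact metrizable pieces the product Borel structure agrees with the Borel structure of the product. So $(\bbZ X,\mathcal{B})$ is a measurable abelian group, and $X\to\bbZ X$ is continuous, hence Borel. By universality the identity $(\bbZ X,\Sigma)\to(\bbZ X,\mathcal{B})$ is measurable, which gives $\mathcal{B}\subseteq\Sigma$.

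Next we show $\Sigma\subseteq\mathcal{B}$. Each map $X^{m+k}\to(\bbZ X,\Sigma)$ of the form \cref{defining maps for ZX} is measurable, since it is built from the inclusion and the group operations. Composing with the Borel section $s$ of \cref{section for defining maps for ZX}, the identity $(\bbZ X,\mathcal{B})\to(\bbZ X,\Sigma)$ equals $p\circ s$, which is measurable.

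For \cref{free abelian measurable group on a standard space 2}, realise the standard Borel space $X$ as a compact metrizable space; for instance, choose a Polish topology and refine it to a compact one via a Borel isomorphism with a Borel subset of the Cantor set. Take $K_n=X$ in \cref{free abelian measurable group on a standard space 1}. Then the Borel structure of $\bbZ X$ is the free one. The section from \cref{section for defining maps for ZX} shows that this $\sigma$-algebra is exactly the largest one making all the maps $X^{m+k}\to\bbZ X$ measurable: a set is measurable for that largest $\sigma$-algebra if and only if it is of the form $s^{-1}$ of its preimage under $p$, and so it lies in $\mathcal{B}$. Finally, $\bbZ X$ is a $\sigma$-compact metrizable-pieces Hausdorff space, and it is a Borel image of a standard space admitting a Borel section. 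Hence it is Borel isomorphic to a Borel subset $s(\bbZ X)$ of the standard space $\bigsqcup X^{m+k}$. This subset is Borel because it equals $\{z : s(p(z))=z\}$. Therefore $\bbZ X$ is standard.

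For \cref{free abelian measurable group on a standard space 3}, take $f\colon\bbZ X\times Y\to A$ that is fibrewise linear and measurable on $X\times Y$. On each $X^{m+k}\times Y$ the composite $f\circ(p\times\id)$ is a signed sum of the measurable maps $f(x_i,y)$, so it is measurable. Then $f=(f\circ(p\times\id))\circ(s\times\id)$ is measurable. The converse direction is trivial.

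Finally, for \cref{free abelian measurable group on a standard space 4}, the natural map $\bbZ[X\times Y]\to\bbZ X\otimes\bbZ Y$ is already a measurable bijection by \cref{properties of measurable tensor product}, so it remains to show the inverse is measurable. By the universal property of the tensor product, it suffices to check that the bilinear map $\bbZ X\times\bbZ Y\to\bbZ[X\times Y]$ is measurable. We apply \cref{free abelian measurable group on a standard space 3} twice. First, fixing the second variable in $Y$, the map $\bbZ X\times Y\to\bbZ[X\times Y]$ is linear in the first variable and its restriction to $X\times Y$ is the inclusion, so it is measurable. Second, we use the symmetric version with $\bbZ Y$ as the linear variable and $\bbZ X$ as the parameter space.

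The main obstacle is the Borel-compatibility of the group operations in \cref{free abelian measurable group on a standard space 1}, together with the standardness claim. The first attempt is to use the fact that $\bbZ X$ is a countable union of compact metrizable sets; if needed, we would then check that the product $\sigma$-algebra coincides with the Borel $\sigma$-algebra of $\bbZ X\times\bbZ X$, using that the latter is again $k_\omega$ with metrizable pieces.
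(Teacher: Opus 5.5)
Your proposal is correct and follows the paper's proof essentially step by step. The Borel section $s$ of $p$ from \cref{section for defining maps for ZX} does all the work in parts (1)--(3); your two inclusions $\mathcal{B}\subseteq\Sigma$ and $\Sigma\subseteq\mathcal{B}$ in (1) are just a repackaging of the paper's direct check of the universal property. Part (4) is two applications of (3), as in the paper. Your standardness argument via the Borel set $\{w : s(p(w))=w\}$ is a clean explicit version of what the paper leaves to $\sigma$-(compact metrizability).

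One small correction: a Borel subset of the Cantor set need not be compact, so that step does not realize $X$ as a compact metrizable space. The right justification is Kuratowski's theorem: an uncountable standard Borel space is Borel isomorphic to the Cantor set itself, and a countable one to a finite set or to a convergent sequence together with its limit.
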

\begin{proof}
   \noindent \begin{enumerate}
        \item The inclusion $X\to\bbZ X$ is continuous and thus Borel measurable. Since $\bbZ X$ is $\sigma$-(compact metrizable), the Borel $\sigma$-algebra of $\bbZ X\times\bbZ X$ is the product of the Borel $\sigma$-algebras of the factors. Since $\bbZ X$ is a topological group, the addition $\bbZ X\times \bbZ X\to\bbZ X$ is continuous, hence Borel measurable, therefore $\bbZ X$ with its Borel $\sigma$-algebra is a measurable group.
        
        Let $f\colon X\to A$ be Borel measurable, where $A$ is a measurable abelian group. We need to show that the induced homomorphism $\bar{f}\colon\bbZ X\to A$ is Borel measurable. By \cref{section for defining maps for ZX}, the map $p\colon \bigsqcup_{n,k,m}K_n^{k+m}\to\bbZ X$ has a Borel section. Therefore, the measurability of $\bar{f}\circ p$ implies the measurability of $\bar{f}$. For every $n,k,m\in\bbN$, the restriction $\bar{f}\circ p|_{K_n^{k+m}}$ is the map $(x_1,\dots,x_m,y_1,\dots,y_k)\mapsto \sum_{i=1}^m f(x_i)-\sum_{j=1}^k f(y_j)$ which is measurable. Hence, $\bar{f}\circ p$ is measurable, as we wanted.
        \item A standard Borel space is the Borel space of a compact metrizable topological space. Let $X$ be such a topological space. By \cref{free abelian measurable group on a standard space 1}, the free abelian measurable group on the Borel space of $X$ is the Borel space of the free abelian topological group $\bbZ X$. That $\bbZ X$ is standard now follows from \cref{ZX is sigma-(compact metrizable)}. It follows from \cref{section for defining maps for ZX} that the map $\bigsqcup_{m,k}X^{m+k}\to\bbZ X$ has a measurable section, and this implies the first statement in \cref{free abelian measurable group on a standard space 2}.
        \item Let $f\colon \bbZ X\times Y\to A$ be linear on every fiber. Clearly, if $f$ is measurable then its restriction to $X\times Y$ is measurable. Assume that the restriction is measurable. Consider the map $p\colon \bigsqcup_{m,k}X^{m+k}\to\bbZ X$. Since $p$ has a measurable section, in order to show that $f$ is measurable it is enough to show that the composition \[(\bigsqcup_{m,k}X^{m+k})\times Y\xrightarrow[]{p\times I}\bbZ X\times Y\xrightarrow[]{f} A\] is measurable, and this can be proved exactly like the proof of \cref{free abelian measurable group on a standard space 1}.
        
        \item In order to show that $\bbZ [X\times Y]\to\bbZ X\otimes\bbZ Y$ is an isomorphism it is enough to show that every measurable map $X\times Y\to A$, where $A$ is a measurable abelian group, extends to a measurable bilinear map $\bbZ X\times \bbZ Y$. This follows from two applications of \cref{free abelian measurable group on a standard space 3}. \qedhere
    \end{enumerate}
\end{proof}

\subsubsection{Semi-simplicial spaces}
We recall the following standard definitions.

A \emph{semi-simplicial topological space} $X$ is a sequence $(X_n)_{n\in \mathbb{N}}$ of topological spaces with continuous \emph{face maps} $d^{n+1}_i\colon X_{n+1}\to X_n$ for $i=0\dots, n+1$ satisfying $d^n_id^{n+1}_j=d^n_{j-1}d^{n+1}_i$ whenever $i<j$. 
The space $X_n$ is called the space of \emph{$n$--simplices} of $X$.

    The \emph{simplicial chain complex} of a semi-simplicial topological space consist of the groups $\bbZ X_n$ with differential $\partial_n=\sum_i(-1)^id^n_i$. The \emph{simplicial homology}, $H_\ast(X)$, of the semi-simplicial space $X_\ast$ is the homology of this chain complex. Similarly, we define the augmented simplicial chain complex whose homology is the \emph{reduced homology} $\tilde{H}_\ast(X)$. 
    
    One should not confuse the notion of reduced homology here with another notion of reduced cohomology in the continuous group cohomology. 

\begin{proposition}\label{strong exactness of simlpicial spaces}
    Let $X=X_\ast$ be a semi-simplicial space such that for every $n$, $X_n=\bigcup_mK_{n,m}$ is a $k_\omega$-space and every $K_{n,m}$ metrizable. Let $n\in\N$ such that the reduced homology $\tilde{H}_k(X)$ vanishes for every $k< n$. For every $n$ consider $\bbZ X_n$ as the free measurable abelian group on $X_n$. The sequence of measurable abelian groups
    \begin{equation}\label{eq:res}
    \bbZ X_n\to\bbZ X_{n-1}\to\dots\bbZ X_0\to\bbZ\to0
    \end{equation}
    is strongly exact.
\end{proposition}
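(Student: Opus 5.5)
Exactness of \eqref{eq:res} at each spot $\bbZ X_k$, $k<n$, and at $\bbZ$ is the algebraic hypothesis $\tilde H_k(X)=0$, since the underlying groups of the free measurable abelian groups are the ordinary free abelian groups $\bbZ X_k$ by \cref{existence of free modules}. So the real task is strongness. For each $k\le n$ we need a Borel section of the surjection $\partial_k\colon \bbZ X_k\twoheadrightarrow \image\partial_k$, and likewise for the augmentation $\bbZ X_0\to\bbZ$. We also have to check that all maps are measurable.

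\emph{Step 1: identify the measurable structures with Borel structures of topological groups.} By \cref{free abelian measurable group on a standard space}\ref{free abelian measurable group on a standard space 1}, the free measurable abelian group on the Borel space of $X_k$ is the Borel space of Markov's free abelian topological group $\bbZ X_k$. By \cref{free abelian topological group}, this is a $k_\omega$-space. By \cref{ZX is sigma-(compact metrizable)}, it is $\sigma$-(compact metrizable). The face maps $d_i$ are continuous, so they induce continuous homomorphisms $\bbZ X_k\to\bbZ X_{k-1}$ by the universal property of the free topological group. Hence $\partial_k$ is continuous, in particular Borel, and it is a morphism of measurable groups. The augmentation $\bbZ X_0\to\bbZ$ is continuous for the discrete topology on $\bbZ$. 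It has a section $1\mapsto x_0$ for any point $x_0$ (assuming $X_0\neq\emptyset$; otherwise the statement is degenerate), and this section is trivially measurable.

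\emph{Step 2: measurable sections.} Give $\image\partial_k\subseteq \bbZ X_{k-1}$ the subspace topology. It is Hausdorff because $\bbZ X_{k-1}$ is. Then $\partial_k\colon \bbZ X_k\to \image\partial_k$ is a continuous surjection from a $\sigma$-(compact metrizable) space onto a Hausdorff space. \cref{lem:measurable_selection_sigma_compact} therefore gives a Borel section $s_k\colon \image\partial_k\to\bbZ X_k$.

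\emph{Step 3: check that the section is measurable for the right $\sigma$-algebra.} This is the point I expect to need care. Strongness asks for measurability with respect to the $\sigma$-algebra that $\image\partial_k$ inherits as a subset of the measurable group $\bbZ X_{k-1}$. So we must compare the Borel $\sigma$-algebra of the subspace topology with the trace of the Borel $\sigma$-algebra of $\bbZ X_{k-1}$. For any subspace these coincide: Borel sets of a subspace are exactly the traces of ambient Borel sets. Moreover, $\image\partial_k$ is a countable union of the compact sets $\partial_k(C)$, where $C$ ranges over the compact metrizable pieces covering $\bbZ X_k$. It is therefore itself a Borel subset, which is useful if one prefers to extend $s_k$ to all of $\bbZ X_{k-1}$.

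Putting the steps together, each map in \eqref{eq:res} is measurable, the sequence is exact in the stated range, and each surjection onto its image has a measurable section. So \eqref{eq:res} is strongly exact.
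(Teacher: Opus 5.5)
Your proposal is correct and takes the same approach as the paper. You identify the free measurable abelian groups with the Borel structures of Markov's free abelian topological groups. You then note that $\partial_k$ is a continuous map from a $\sigma$-(compact metrizable) space into a Hausdorff one, and apply the measurable selection corollary to obtain Borel sections. Your Step~3 checks that the Borel $\sigma$-algebra of the subspace topology on $\image\partial_k$ is the trace $\sigma$-algebra, and you treat the augmentation explicitly; both are details the paper leaves implicit, and you handle them correctly.
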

\begin{proof}
The vanishing of reduced homology is equivalent to the exactness of \cref{eq:res}. We only need to show the existence of measurable sections. Consider the free abelian topological groups $\bbZ X_k$. By \cref{free abelian measurable group on a standard space}, \cref{free abelian measurable group on a standard space 1}, the Borel space of $\bbZ X_k$ is the free abelian measurable group on the Borel space of $X_k$, therefore, we need to show that for every $k<n$, there exists a Borel section $\image\partial_{k+1}\to\bbZ X_{k+1}$. Note that the boundary map $\partial_{k+1}\colon \bbZ X_{k+1}\to\bbZ X_k$ is continuous. By \cref{ZX is sigma-(compact metrizable)}, $\bbZ X_{k+1}$ is $\sigma$-(compact metrizable) and by \cref{free abelian topological group}, $\bbZ X_k$ is a $k_\omega$-space hence Hausdorff. Therefore, the existence of Borel sections follows by \cref{lem:measurable_selection_sigma_compact}.
\end{proof}

An interesting corollary is the vanishing of measurable cohomology of such simplicial spaces. This corollary will not be used, but is of independent interest. If $X$ and $Y$ are measurable spaces, denote by $M(X,Y)$ the set of measurable maps $X\to Y$. If $Y$ is a measurable abelian group, then $M(X,Y)$ is a group by taking pointwise addition.

\begin{corollary}\label{cor: measurable maps complex}
    In the setting of \cref{strong exactness of simlpicial spaces}, for every measurable abelian group $A$, the measurable reduced cohomology of $X$ with coefficients in $A$, which is defined to be the cohomology of the complex
    \[0\to A\to M(X_0,A)\to M(X_1,A)\to\dots,\]
    with the usual coboundary maps, vanishes for $k<n$.
\end{corollary}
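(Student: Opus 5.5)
By \cref{strong exactness of simlpicial spaces}, the augmented sequence
\[
\bbZ X_n\to\bbZ X_{n-1}\to\dots\to\bbZ X_0\to\bbZ\to0
\]
is a strong exact sequence of free measurable abelian groups. We view each $\bbZ X_k$ as the free measurable abelian group on the Borel space of $X_k$, using \cref{free abelian measurable group on a standard space}. Free modules are projective by \cref{enough projectives}. So \cref{prop:exact_projective_is_contractible} applies over the measurable ring $\bbZ$, viewing $\bbZ$ itself as a free module on a point and shifting indices so that $\bbZ$ sits in degree $-1$ and $\bbZ X_k$ in degree $k$. It yields measurable homomorphisms
\[
h_{-1}\colon\bbZ\to\bbZ X_0,\qquad h_k\colon\bbZ X_k\to\bbZ X_{k+1}\quad (0\le k<n),
\]
satisfying $\partial_{k+1}h_k+h_{k-1}\partial_k=\id$ in the range $-1\le k<n$.

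The plan is then to apply the contravariant additive functor $\hom_\bbZ(\,\cdot\,,A)$ of measurable homomorphisms. By the universal property of free measurable abelian groups, $\hom_\bbZ(\bbZ X_k,A)=M(X_k,A)$ and $\hom_\bbZ(\bbZ,A)=A$. Under these identifications the induced maps are the usual coboundaries $\delta=\sum_i(-1)^i (d_i)^\ast$, since $\partial$ restricted to the generators $X_{k+1}$ is $\sum_i(-1)^id_i$. Functoriality converts the contracting homotopy into maps
\[
h_k^\ast\colon M(X_{k+1},A)\to M(X_k,A)
\]
satisfying $h_k^\ast\delta+\delta h_{k-1}^\ast=\id$ on the cochain groups in degrees $-1\le k<n$.

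For $k<n$, every cocycle $c\in M(X_k,A)$ then satisfies
\[
c=\delta(h_{k-1}^\ast c)+h_k^\ast(\delta c)=\delta(h_{k-1}^\ast c),
\]
so $c$ is a coboundary. In degree $-1$, the same identity shows the augmentation $A\to M(X_0,A)$ is injective, so the augmented complex has no cohomology there. Hence the reduced measurable cohomology vanishes for $k<n$.

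The main point requiring care is the bookkeeping at the top of the range. The homotopy identity is only available for $k<n$, because the sequence is only known to be strongly exact up to $\bbZ X_n$. This is exactly enough to kill cohomology in degrees $k<n$: that uses $h_k$ with $k\le n-1$, which maps into $\bbZ X_n$, and $\delta c$ lies in $M(X_{k+1},A)$ with $k+1\le n$.

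A second check is that the proposition applies to the finite sequence ending at $P_n=\bbZ X_n$ with no exactness required at the top. This matches its statement, in which $h_k$ is defined only for $k<n$. Measurability of $h_k^\ast c=c\circ h_k$ is automatic, since the $h_k$ are measurable homomorphisms and composition with a measurable homomorphism into $A$, evaluated on generators, gives a measurable map.
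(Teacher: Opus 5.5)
Your proof is correct and follows the same route as the paper. Strong exactness from \cref{strong exactness of simlpicial spaces} gives a contracting homotopy via \cref{prop:exact_projective_is_contractible}; applying $\hom(\,\cdot\,,A)$ and identifying $\hom(\bbZ X_k,A)\cong M(X_k,A)$ then kills the cohomology. You also spell out the index shift, the degree $-1$ case, and why the homotopy reaches exactly the degrees $k<n$, which the paper leaves implicit.
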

\begin{proof}
By \cref{strong exactness of simlpicial spaces} and \cref{prop:exact_projective_is_contractible}, the identity map of the partial complex
\[\bbZ X_n\to\bbZ X_{n-1}\to\dots\bbZ X_0\to\bbZ\to0\]
is chain-homotopy equivalent to the zero map. Hence, the same holds after taking $\hom(\,\cdot\,,A)$. We have $\hom(\bbZ X_k,A)\cong M(X_k,A)$, which gives the desired conclusion.
\end{proof}

\subsection{Measurable group cohomology} \label{subsec:MGC}
Throughout the rest of this section, $G$ denotes a measurable group whose underlying measurable space is standard (e.g. a Polish group).
\begin{lemma}\label{lem:measurable G-actions}
    The multiplication $G\times G\to G$ extends to a measurable ring structure on the free measurable abelian group $\bbZ G$. A measurable abelian group $A$ which is a $\bbZ G$-module is a measurable $\bbZ G$-module if and only if the action map $G\times A\to A$ is measurable. If $G$ acts on a standard Borel space $X$ such that the action map $G\times X\to X$ is measurable, then $\bbZ X$ is a measurable $\bbZ G$-module.
\end{lemma}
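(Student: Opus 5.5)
The plan is to deduce everything from \cref{free abelian measurable group on a standard space}, especially items \ref{free abelian measurable group on a standard space 3} and \ref{free abelian measurable group on a standard space 4}, treating the three assertions in order.

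\emph{Ring structure.} Consider the multiplication map $m\colon G\times G\to G\subset \bbZ G$, which is measurable. We want a measurable biadditive map $\bbZ G\times\bbZ G\to\bbZ G$ extending it.
\begin{itemize}
\item Apply \cref{free abelian measurable group on a standard space}\ref{free abelian measurable group on a standard space 3} with $X=G$ and $Y=G$ (in the second variable). The map $\bbZ G\times G\to\bbZ G$, $(\sum n_i g_i,h)\mapsto\sum n_i g_ih$, is linear on fibers and its restriction to $G\times G$ is $m$, hence it is measurable.
\item Apply item \ref{free abelian measurable group on a standard space 3} again with $X=G$ and $Y=\bbZ G$, after swapping the factors. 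The map $\bbZ G\times\bbZ G\to\bbZ G$ is linear in the second variable for each fixed first argument, and its restriction to $\bbZ G\times G$ is the measurable map from the previous step. So the full multiplication is measurable.
\end{itemize}
Addition is measurable because $\bbZ G$ is a measurable abelian group. Associativity and distributivity hold algebraically. Hence $\bbZ G$ is a measurable ring.

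\emph{Module criterion.} Let $A$ be a measurable abelian group with a $\bbZ G$-module structure.
\begin{itemize}
\item If the action map $\bbZ G\times A\to A$ is measurable, then restricting it along the measurable inclusion $G\times A\to\bbZ G\times A$ shows that $G\times A\to A$ is measurable.
\item Conversely, suppose $G\times A\to A$ is measurable. The map $\bbZ G\times A\to A$ is linear on each fiber $\bbZ G\times\{a\}$. By item \ref{free abelian measurable group on a standard space 3} with $X=G$ and $Y=A$ (an arbitrary measurable space), it is measurable.
\end{itemize}
The other module axioms are algebraic, and $(A,+)$ is a measurable group by assumption.

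\emph{Permutation modules.} Let $X$ be a standard Borel space with a measurable action $G\times X\to X$. By \cref{free abelian measurable group on a standard space}\ref{free abelian measurable group on a standard space 2}, $\bbZ X$ is a measurable abelian group, namely the free one on $X$. The linear extension $G\times\bbZ X\to\bbZ X$ is linear in the $\bbZ X$ variable. After swapping factors, item \ref{free abelian measurable group on a standard space 3} with standard space $X$ and $Y=G$ reduces its measurability to that of $G\times X\to X\to\bbZ X$, which holds. The previous paragraph then shows that $\bbZ X$ is a measurable $\bbZ G$-module.

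\emph{Main obstacle.} The only delicate point is making sure item \ref{free abelian measurable group on a standard space 3} applies in each step. It requires the free variable to range over a standard Borel space while the parameter space $Y$ may be arbitrary. I will check at each use that the standard factor ($G$ or $X$) sits in the linear slot, swapping coordinates where needed.
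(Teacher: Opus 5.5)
Your proposal is correct and follows essentially the paper's route. The paper's proof simply cites items (3) and (4) of \cref{free abelian measurable group on a standard space}. Your two successive applications of item (3) for the multiplication are exactly how item (4) is itself proved, and the module criterion and the permutation-module claim are the same direct applications of item (3), with the standard factor placed in the linear slot as you describe.
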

\begin{proof}
    It follows from \cref{free abelian measurable group on a standard space}, \cref{free abelian measurable group on a standard space 3} and \cref{free abelian measurable group on a standard space 4}.
\end{proof}
\begin{example}
     If $G$ is a Polish group and $A$ is a topological Polish $G$-module then $A$, endowed with its Borel $\sigma$-algebra, is a measurable $\bbZ G$-module.
\end{example}
Because of the above lemma, from now on we will sometimes write measurable $G$-module instead of measurable $\bbZ G$-module, and also measurable $G$-map, $G$-projective and so on.
\begin{definition}\label{def:measurable cohomology}
    The \emph{measurable cohomology} of a $G$-module $A$ is defined by
    \[H^*(G,A)=\Ext^*_{\bbZ G}(\bbZ,A).\]
\end{definition}
We recall that for measurable spaces $X$ and $Y$ we denote by $M(X,Y)$ the set of all measurable maps $X\to Y$. If $X$ and $Y$ are $G$-spaces we denote by $M_G(X,Y)$ the subset of $M(X,Y)$ of all $G$-equivariant maps.
\begin{lemma}\label{free ZG-modules}
    Let $X$ be a standard Borel space. The free measurable $\bbZ G$-module on $X$ is $\bbZ[G\times X]$.
\end{lemma}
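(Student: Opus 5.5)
The plan is to verify the universal property directly. Here $G$ is a standard measurable group and $X$ is a standard Borel space. The measurable $\bbZ G$-module structure on $\bbZ[G\times X]$ comes from the left action of $G$ on $G\times X$, given by $g\cdot(h,x)=(gh,x)$. This action map $G\times(G\times X)\to G\times X$ is measurable, and $G\times X$ is standard Borel. So by \cref{lem:measurable G-actions}, $\bbZ[G\times X]$, with the $\sigma$-algebra of the free measurable abelian group from \cref{free abelian measurable group on a standard space}, is a measurable $\bbZ G$-module. The structure map $X\to\bbZ[G\times X]$ will be $x\mapsto(1,x)$. It is measurable because it is the composition of the measurable map $x\mapsto(1,x)$ into $G\times X$ with the measurable inclusion $G\times X\to\bbZ[G\times X]$.

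For the universal property, let $A$ be a measurable $\bbZ G$-module and $f\colon X\to A$ a measurable map. Algebraically, $\bbZ[G\times X]$ is the free $\bbZ G$-module on $X$, since the set $G\times X$ is the free $G$-set on $X$. So there is a unique $\bbZ G$-linear extension $\bar f$, namely the additive extension of $F(g,x)=g\cdot f(x)$. Uniqueness among measurable module maps is then automatic, and it remains to check that $\bar f$ is measurable. By the universal property of the free measurable abelian group on the standard Borel space $G\times X$ (\cref{existence of free modules} together with \cref{free abelian measurable group on a standard space}), it suffices that $F\colon G\times X\to A$ is measurable. This holds because $F$ is the composition of the measurable map $\mathrm{id}_G\times f\colon G\times X\to G\times A$ with the action map $G\times A\to A$, and the latter is measurable by \cref{lem:measurable G-actions}.

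The only subtle point is that "free measurable $\bbZ G$-module" must be taken relative to the measurable ring structure of $\bbZ G$ from \cref{lem:measurable G-actions}. One must also confirm that the module $\sigma$-algebra produced above is a legitimate candidate. That is, the scalar multiplication $\bbZ G\times\bbZ[G\times X]\to\bbZ[G\times X]$ must be measurable, which is exactly what \cref{lem:measurable G-actions} provides via \cref{free abelian measurable group on a standard space}\,\ref{free abelian measurable group on a standard space 3},\ref{free abelian measurable group on a standard space 4}. Since any free object is unique up to unique isomorphism, this identifies the free measurable $\bbZ G$-module on $X$ with $\bbZ[G\times X]$.
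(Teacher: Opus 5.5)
Your proposal is correct and follows essentially the same route as the paper. The module structure on $\bbZ[G\times X]$ comes from the measurable $G$-action on $G\times X$, and the structure map is $x\mapsto(1,x)$. Measurability of the equivariant extension $\bar f$ is then reduced, via the universal property of the free measurable abelian group on the standard space $G\times X$, to measurability of the composition $G\times X\xrightarrow{\mathrm{id}\times f}G\times A\to A$.
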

\begin{proof}
    It follows from \cref{lem:measurable G-actions} that $\bbZ[G\times X]$ is a measurable $\bbZ G$-module. The composition $X\xrightarrow[]{x\mapsto(1,x)} G\times X\to\bbZ[G\times X] $ is measurable. Let $A$ be a measurable $\bbZ G$-module and let $f:X\to A$ be a measurable map. We need to show that the $G$-equivariant linear extension $\bar{f}:\bbZ[G\times X]\to A$ is measurable. The restriction $\bar{f}|_{G\times X}$ is the composition $G\times X\xrightarrow[]{Id\times f}G\times A\to A$ which is measurable. Therefore $\bar{f}$ is measurable.
\end{proof}
\begin{proposition}[Bar resolution]\label{measurable bar resolution}
    Let $A$ be a measurable $G$-module. The cohomology of $A$ is naturally isomorphic to the cohomology of the complex
    \[
    0\to M_G(G,A)\to M_G(G^2,A)\to M_G(G^3,A)\to\dots
    \]
     with the usual coboundary maps.
\end{proposition}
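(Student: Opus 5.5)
We compute $\Ext^*_{\bbZ G}(\bbZ,A)$ with a specific strong projective resolution of $\bbZ$ by free measurable $\bbZ G$-modules, namely the homogeneous bar resolution
\[
\cdots\to\bbZ[G^3]\to\bbZ[G^2]\to\bbZ[G]\to\bbZ\to 0,
\]
where $G$ acts diagonally on $G^{n+1}$ by left multiplication, the differential is $\partial=\sum_i(-1)^i d_i$ with $d_i$ omitting the $i$-th coordinate, and the augmentation sends every $g\in G$ to $1$. Once this is shown to be a strong projective resolution, the definition of the left derived functors of $\hom_{\bbZ G}(\,\cdot\,,A)$, together with \cref{prop:fund_lemma_of_homological_algebra}, identifies $H^*(G,A)$ with the cohomology of $\hom_{\bbZ G}(\bbZ[G^{*+1}],A)$. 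Naturality in $A$ holds because the resolution does not depend on $A$.

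\emph{Step 1: freeness.} The map $G\times G^n\to G^{n+1}$, $(g,x)\mapsto(g,gx)$, is a measurable isomorphism that intertwines left multiplication on the first factor with the diagonal action. By \cref{free ZG-modules}, $\bbZ[G^{n+1}]\cong\bbZ[G\times G^n]$ is therefore the free measurable $\bbZ G$-module on the standard Borel space $G^n$, and hence projective by \cref{enough projectives}. The boundary maps are measurable $\bbZ G$-maps, since they are induced by the measurable equivariant face maps.

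\emph{Step 2: identifying the hom-groups.} By \cref{free abelian measurable group on a standard space}\,\cref{free abelian measurable group on a standard space 3} (with $Y$ a point), a homomorphism $\bbZ[G^{n+1}]\to A$ is measurable if and only if its restriction to $G^{n+1}$ is measurable, and it is $G$-linear if and only if this restriction is $G$-equivariant. Hence $\hom_{\bbZ G}(\bbZ[G^{n+1}],A)\cong M_G(G^{n+1},A)$, and under this identification the induced differential is the usual homogeneous coboundary.

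\emph{Step 3: strong exactness.} This is the main point. Algebraic exactness is standard, using the contracting homotopy $h(g_0,\dots,g_n)=(e,g_0,\dots,g_n)$, which satisfies $\partial h+h\partial=\id$ and $\varepsilon\circ s=\id$ for $s(1)=e$. The new issue is that each surjection $\bbZ[G^{n+1}]\twoheadrightarrow\image\partial$ must admit a measurable section. The map $h\colon\bbZ[G^{n+1}]\to\bbZ[G^{n+2}]$ is the $\bbZ$-linear extension of a measurable map $G^{n+1}\to G^{n+2}$, so it is a measurable homomorphism by the universal property of free measurable abelian groups. Its restriction to $\image\partial_{n+1}=\ker\partial_n$ satisfies $\partial_{n+1}h(z)=z-h\partial_n z=z$, so it is a measurable section of $\partial_{n+1}$ onto its image. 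In degree $0$, the map $1\mapsto e$ is a section of the augmentation. Thus the resolution is strong. No topology is needed here, in contrast with \cref{strong exactness of simlpicial spaces}, because the contraction is explicit. It remains only to check that the subspace $\sigma$-algebra on $\image\partial$ is the one intended in the definition of a strong map, which is the case since we are just restricting a measurable map.
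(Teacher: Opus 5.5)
Your proposal is correct and follows essentially the same route as the paper's proof. Both use the homogeneous bar complex, obtain freeness from the measurable $G$-isomorphism $G^{n+1}\cong G\times G^n$ via \cref{free ZG-modules}, let the (non-equivariant) contracting homotopy $h$ supply both exactness and the required measurable sections, and conclude with the identification $\hom_{\bbZ G}(\bbZ[G^{n+1}],A)\cong M_G(G^{n+1},A)$.
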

\begin{proof}
Consider the bar complex $\bbZ G^{\ast+1}$ with differential $\partial_\ast$ and its augmentation map $\partial_0\colon\mathbb{Z}G\to \bbZ$ (defined in the usual way). This is a complex of measurable $G$-modules. For $n\geq0$, the $G$-space $G^{n+1}$ is $G$-measurably isomorphic to $G\times G^n$ with a trivial action on the second factor. Hence, $\bbZ G^{n+1}$ is a free measurable $\bbZ G$-module. Define the measurable homomorphism $h_n\colon \bbZ G^n\to\bbZ G^{n+1}$ induced by the map $G^n\to G^{n+1}$ taking $(g_1,\dots,g_n)$ to $(1,g_1,\dots,g_n)$. We have that  $Id=h\partial+\partial h$ as chain complex maps. Thus, the augmented complex is exact and $h$ restricts to a measurable section $\image\partial_n\to\bbZ G^{n+1}$. Therefore, the bar complex is a strong projective resolution of the trivial $G$-module $\bbZ$. Hence, $H^*(G,A)$ is the cohomology of $\hom_{\bbZ G}(\bbZ G^{\ast+1},A)\cong M_G(G^{\ast+1},A)$.
\end{proof}

Using the bar resolution, we can compare measurable cohomology to continuous cohomology of locally compact groups. We denote by $H_c^*$ the continuous cohomology functor for locally compact groups. 

\begin{theorem}\label{comparison between continuous and measurable cohomology}
    Suppose $G$ is an lcsc group and the measurable structure on it is the Borel measurable structure.
    \begin{enumerate}
        \item\label{Moore's cohomology} If $A$ is a Polish $G$-module, then $H^*(G,A)$ is naturally isomorphic to Moore's measurable cohomology $H^*_\mathrm{m}(G,A)$.
        \item If $A$ is a separable Fréchet $G$-module, then there is a natural isomorphism $H^*(G,A)\cong H^*_c(G,A)$.
    \end{enumerate}
\end{theorem}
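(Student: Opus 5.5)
By \cref{measurable bar resolution}, both parts reduce to comparing the complex $M_G(G^{\ast+1},A)$ of measurable equivariant cochains with the relevant classical complexes. The strategy is the same in both cases: the measurable complex is a cochain complex of homogeneous equivariant cochains, and it suffices to show that the classical complex with values in a suitable (relatively) injective resolution computes the same thing.

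For part \ref{Moore's cohomology}, Moore's measurable cohomology $H^*_\mathrm{m}(G,A)$ is defined as the cohomology of the complex of Borel measurable (inhomogeneous) cochains $G^n\to A$, with no identification up to null sets. The standard homogeneous/inhomogeneous dictionary $f(g_0,\dots,g_n)\mapsto f(1,g_1,g_1g_2,\dots)$ is a Borel isomorphism $G^{n+1}\cong G\times G^n$. It therefore identifies $M_G(G^{n+1},A)$ with $M(G^n,A)$ and intertwines the differentials. This is a direct check, and it gives an isomorphism of complexes, hence naturality and the isomorphism in cohomology. I would also remark that Moore's theory is usually set up with Borel cochains and so matches exactly. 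If one insists on his original definition via classes modulo null sets, I would invoke Moore's own theorem that the two versions agree for Polish modules. This is citation work rather than a new argument.

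For part (2), I would use that for a separable Fréchet $G$-module, Moore's measurable cohomology agrees with continuous cohomology. This is Wigner's theorem, also in Austin--Moore, and I would cite it and compose with part \ref{Moore's cohomology}. If I want a self-contained argument instead, I would use Blanc's resolution $L^1_{\loc}(G^{\ast+1},A)$ (or $L^p_{\loc}$ as in \cref{thm: surjective Shapiro}). First, both the continuous complex and the measurable complex map into it: a continuous cochain is measurable, and a measurable cochain defines a locally integrable class. The latter requires local boundedness, which is the technical point and fails in general. Second, I would show that the map from the measurable complex to $L^p_{\loc}$ is a quasi-isomorphism. This would go by a double-complex argument using that $M(G^{n+1},A)$ is acyclic in the measurable sense (it is coinduced), so that one may compare resolutions.

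The main obstacle is exactly this passage between everywhere-defined Borel cochains and classes of locally integrable cochains: unbounded Borel cochains do not lie in $L^1_{\loc}$. I would therefore rely on the cited Wigner/Moore comparison, which handles this via a filtration by sets on which cochains are bounded (a Lusin-type argument), rather than reproving it.
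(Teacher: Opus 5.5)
Your proposal is correct and follows the paper: part (1) is the homogeneous/inhomogeneous identification of the measurable bar complex from \cref{measurable bar resolution}, combined with Moore's theorem that everywhere-defined Borel cochains and cochains modulo null sets give the same cohomology, and part (2) composes (1) with the Austin--Moore (Wigner) comparison $H^*_c\cong H^*_\mathrm{m}$. Note that Moore's $H^*_\mathrm{m}$ is originally defined via cochains modulo null sets, so your ``fallback'' citation of Moore's comparison theorem is the required step rather than an optional one, and the sketched self-contained $L^1_{\loc}$ route is unnecessary.
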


\begin{proof}
\begin{enumerate}
    \item Let $M_G(G^{\ast+1},A)$ be the measurable bar complex. For every $n$, let $L(G^{n+1},A)$ be 
    the group of measurable functions $G^{n+1}\to A$ modulo almost everywhere equality. 
    Denote by $L_G(G^{n+1},A)$ the $G$-equivariant functions. The usual coboundary maps give a complex $L_G(G^{\ast+1},A)$ and there is a natural map of complexes $M_G(G^{\ast+1},A)\to L_G(G^{\ast+1},A)$. Moore proved in \cite[Theorem 5]{moore:group} that this map induces an isomorphism in cohomology, and this cohomology is Moore's measurable cohomology $H^*_\mathrm{m}(G,A)$ (actually, Moore used the \emph{inhomogeneous} bar resolution, but it is easy to see that it is isomorphic to the homogeneous bar resolution which we use). By \cref{measurable bar resolution}, this gives a natural isomorphism $H^*(G,A)\cong H^*_\mathrm{m}(G,A)$.
    \item By \cite[Theorem A]{austinmoore:continuity} there is a natural isomorphism $H^*_c(G,A)\cong H^*_\mathrm{m}(G,A)$. Thus by \cref{Moore's cohomology}, $H^*_c(G,A)\cong H^*(G,A)$. \qedhere
\end{enumerate}
\end{proof}
If $H$ is a subgroup of $G$, we endow $G/H$ with the quotient $\sigma$-algebra. Our goal is to prove the following version of Shapiro lemma.
\begin{theorem}[Shapiro lemma]\label{measurable Shapiro lemma}
    Let $H$ be a subgroup of $G$. Assume that $H$ and $G/H$ are standard Borel spaces and that there exists a measurable section $G/H\to G$. For every $G$-module $A$, there is a natural isomorphism
    \[
    H^*(H,A)\cong\Ext_{\bbZ G}^*(\bbZ[G/H],A).
    \]
\end{theorem}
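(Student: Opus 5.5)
The strategy is the classical algebraic proof of Shapiro's lemma: take a strong projective $\bbZ G$-resolution of $\bbZ$ that is also a strong projective $\bbZ H$-resolution, and identify the two Hom complexes. We use the bar resolution $\bbZ G^{\ast+1}\to\bbZ$ from \cref{measurable bar resolution}. There it is a strong projective $\bbZ G$-resolution, and the contracting homotopy $h$ is measurable and independent of the group. So it is also strongly exact as a complex of measurable $\bbZ H$-modules.

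\textbf{Step 1: the bar modules are free over $\bbZ H$.} We must check that each $\bbZ G^{n+1}$ is a free measurable $\bbZ H$-module. Let $s\colon G/H\to G$ be the given measurable section. Then the map
\[
G\to H\times G/H,\qquad g\mapsto \bigl(s(gH)^{-1}g,\, gH\bigr)
\]
is a measurable bijection with measurable inverse $(h,x)\mapsto s(x)h$. It is equivariant for the right $H$-action; passing to left actions via $g\mapsto g^{-1}$ gives the left version. Hence $G^{n+1}\cong H\times (G/H\times G^n)$ as measurable $H$-spaces. Since $H$ and $G/H$ are standard, $G/H\times G^n$ is standard. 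By \cref{free ZG-modules} applied to $H$, $\bbZ G^{n+1}$ is then free over $\bbZ H$. (We also need $H$ to be a measurable group with standard underlying space; this is assumed.) Consequently $H^*(H,A)$ is the cohomology of $\hom_{\bbZ H}(\bbZ G^{\ast+1},A)=M_H(G^{\ast+1},A)$.

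\textbf{Step 2: a resolution of $\bbZ[G/H]$.} Consider the complex $\bbZ[G^{\ast+1}\times G/H]$ with diagonal $G$-action, augmented to $\bbZ[G/H]$. By \cref{free abelian measurable group on a standard space}\,\cref{free abelian measurable group on a standard space 4}, this equals $\bbZ G^{\ast+1}\otimes\bbZ[G/H]$. It is strongly exact, with contracting homotopy $h\otimes\mathrm{id}$. Using the untwisting isomorphism $(g_0,\dots,g_n,x)\mapsto(g_0,\dots,g_n,g_0^{-1}x)$, the $G$-space $G^{n+1}\times G/H$ is isomorphic to $G\times(G^n\times G/H)$ with $G$ acting only on the first factor. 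So each term is free over $\bbZ G$ by \cref{free ZG-modules}, and therefore
\[
\Ext^*_{\bbZ G}(\bbZ[G/H],A)=H^*\bigl(M_G(G^{\ast+1}\times G/H,A)\bigr).
\]

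\textbf{Step 3: comparing the two complexes.} Define
\[
M_H(G^{n+1},A)\to M_G(G^{n+1}\times G/H,A),\qquad f\mapsto F,\quad F(g_0,\dots,g_n,gH)=g\,f(g^{-1}g_0,\dots,g^{-1}g_n).
\]
To make this well defined and measurable, write $g=s(gH)$. Equivariance of $f$ under $H$ shows that the formula does not depend on the choice of representative. The inverse is $F\mapsto F(\,\cdot\,,eH)$. Both maps commute with the coboundaries, and measurability uses only that $s$ and the action map are measurable. This is a natural isomorphism of cochain complexes, which proves the theorem.

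The main obstacle is the measurability bookkeeping in Steps 1 and 2: the untwisting and trivialization isomorphisms must be isomorphisms of measurable spaces, and not just measurable bijections. This is where the standardness of $H$ and $G/H$ is used, via \cref{free abelian measurable group on a standard space}\,\cref{free abelian measurable group on a standard space 3}, together with the measurable section $s$.
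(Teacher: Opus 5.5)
Your proof is correct, and it is organized differently from the paper's.

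\textbf{The paper's route.} The paper starts from an arbitrary strong free $\bbZ H$-resolution $P_\ast\to\bbZ$ on standard Borel spaces (\cref{lem:standard Borel resolution}) and induces it to $\bbZ G\otimes_{\bbZ H}P_\ast\to\bbZ[G/H]$. It gets strong exactness by identifying this complex with $\bbZ[G/H]\otimes P_\ast$ as measurable abelian groups and using contractibility of $P_\ast$ over $\bbZ$. It gets freeness from \cref{induction preserves freeness}, and concludes with the adjunction \cref{induction as adjoint functor}.

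\textbf{Your route.} You fix one concrete resolution, the bar resolution of $G$ restricted to $H$, and this shifts where the work is. You have to show that restriction preserves freeness; your trivialization $G\cong H\times H\backslash G$ does this, and that is where you use the section. In exchange, exactness of your $G$-resolution of $\bbZ[G/H]$ is immediate from an explicit homotopy. The adjunction also becomes the explicit homogeneous cochain map $F(g_0,\dots,g_n,gH)=g\,f(g^{-1}g_0,\dots,g^{-1}g_n)$.

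\textbf{How the two relate.} Your complex $\bbZ[G^{\ast+1}\times G/H]$ is exactly the paper's $\bbZ G\otimes_{\bbZ H}P_\ast$ when $P_\ast$ is the restricted bar resolution. The identification is part (3) of \cref{prop:measurable induction of actions}, and your Step 3 map is that identification composed with the adjunction. What your version buys is an explicit cochain-level isomorphism, the classical Shapiro map, without \cref{lem:standard Borel resolution} or the general induction lemmas. What the paper's version buys is independence from the choice of $H$-resolution and an adjunction that can be reused.

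\textbf{Cosmetic fixes.}
\begin{itemize}
\item In Step 1, passing to left actions turns $G/H$ into $H\backslash G$. Identify the two by inversion, with the section $Hg\mapsto s(g^{-1}H)^{-1}$ and the trivialization $g\mapsto\bigl(g\,s(g^{-1}H),\,Hg\bigr)$.
\item In Step 2 the untwisting should be $(g_0,\dots,g_n,x)\mapsto(g_0,g_0^{-1}g_1,\dots,g_0^{-1}g_n,g_0^{-1}x)$. As written, $G$ still acts on the middle coordinates.
\item $h\otimes\mathrm{id}$ is just the homomorphism induced by the measurable map $(g_1,\dots,g_n,x)\mapsto(1,g_1,\dots,g_n,x)$. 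So you do not need the tensor identification from \cref{free abelian measurable group on a standard space} there.
\end{itemize}
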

For the rest of this section we keep the setting of the theorem: $H$ is a subgroup of $G$, the spaces $G$, $H$ and $G/H$ are standard and there exists a measurable section $G/H\to G$. 
\begin{definition}[Induction of measurable actions]
    If $X$ is an $H$-measurable space then $H$ acts on $G\times X$ by $h(g,x)=(gh^{-1},hx)$. This commutes with the $G$-action on the first coordinate by left multiplication. The quotient of $G\times X$ by $H$ is denoted $G\times_H X$. We equip it with the quotient $\sigma$-algebra and with the induced $G$-action. It is called the induction of $X$ from $H$ to $G$.
\end{definition}
\begin{proposition}\label{prop:measurable induction of actions}
    Let $X$ be an $H$-measurable space.
    \begin{enumerate}
        \item There is a natural measurable isomorphism $G\times_H X\cong G/H\times X$ and the quotient map $G\times X\to G\times_H X$ admits a measurable section.
        \item The $G$-action on the induction $G\times_HX$ is measurable.
        \item\label{prop:measurable induction of actions 3} If the $H$-action on $X$ is restricted from a measurable $G$-action then there is a $G$-equivariant measurable isomorphism $G\times_H X\cong G/H\times X$ where $G$ acts diagonally on the right hand side. 
    \end{enumerate}
\end{proposition}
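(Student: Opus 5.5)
The plan is to build everything from the fixed measurable section $s\colon G/H\to G$. Everything is taken with the standard structure on $G$, $H$ and $G/H$, and with quotient $\sigma$-algebras. Replacing $s$ by $g H\mapsto s(gH)s(H)^{-1}$ we may assume $s(H)=1$. We write $q\colon G\to G/H$ for the quotient map. Measurability of maps out of a quotient space is checked by precomposing with the quotient map.

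For (1), define
\[
\Phi\colon G\times X\to G/H\times X,\qquad \Phi(g,x)=\bigl(gH,\ (s(gH)^{-1}g)\cdot x\bigr).
\]
Three checks are needed.
\begin{itemize}
\item $\Phi$ is measurable. The factor $s(gH)^{-1}g$ lies in $H$ and is a measurable function of $g$, since it is built from $q$, $s$, inversion and multiplication. The $H$-action on $X$ is measurable.
\item $\Phi$ is constant on $H$-orbits. Indeed, $\Phi(gh^{-1},hx)=\bigl(gH,\ s(gH)^{-1}gh^{-1}hx\bigr)=\Phi(g,x)$.
\item $\Phi$ separates orbits. If $\Phi(g,x)=\Phi(g',x')$, then $g'=gh^{-1}$ with $h=s(gH)^{-1}g\cdot(s(gH)^{-1}g')^{-1}\in H$, and one checks $x'=hx$.
\end{itemize}
Hence $\Phi$ descends to a bijective map $\bar\Phi\colon G\times_HX\to G/H\times X$, which is measurable by the definition of the quotient $\sigma$-algebra. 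For the inverse, take $\sigma(gH,x)=(s(gH),x)\in G\times X$. This map is measurable, and it satisfies $\Phi\circ\sigma=\mathrm{id}$. So $\bar\Phi^{-1}=\pi\circ\sigma$, where $\pi\colon G\times X\to G\times_HX$ is the quotient map, and this is a composition of measurable maps. Thus $\bar\Phi$ is a measurable isomorphism. The map $\sigma\circ\bar\Phi$ is then a measurable section of $\pi$. The only subtle point is that $H$ must be a measurable subset, so that the quotient $\sigma$-algebra behaves. This is not actually needed here, because every map we test is defined through $s$ and $q$.

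For (2), the action map is $G\times(G\times_HX)\to G\times_HX$. Since $G\times X\to G\times_HX$ has a measurable section, it suffices to show that
\[
G\times G\times X\to G\times_HX,\qquad (g',g,x)\mapsto\pi(g'g,x),
\]
is measurable. This is clear. To pass down to the quotient, precompose with $\mathrm{id}_G\times\sigma'$, where $\sigma'$ is the section from (1). The resulting map is measurable and equals the action map.

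For (3), assume the $H$-action is restricted from a measurable $G$-action. Use instead
\[
\Psi(g,x)=(gH,\ g\cdot x).
\]
It is constant on orbits, since $(gh^{-1},hx)\mapsto(gH,gx)$. It is $G$-equivariant for the diagonal action. Its descent is a bijection, with inverse $(gH,y)\mapsto\pi\bigl(s(gH),\ s(gH)^{-1}y\bigr)$, which is measurable. Because of this explicit inverse, no selection theorem is needed at all.

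The only real concern is standardness. We do not need $G\times_HX$ to be standard, only that the quotient $\sigma$-algebra coincides with the one transported from $G/H\times X$. This follows from the explicit measurable inverses above.
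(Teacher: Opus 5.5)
Your proposal is correct and follows the paper's proof essentially step for step: the same map $(g,x)\mapsto\bigl(gH,\,s(gH)^{-1}g\cdot x\bigr)$ with the section $s\times\mathrm{id}$ for (1), the same lift-through-a-measurable-section argument for (2), and the same map $(g,x)\mapsto(gH,gx)$ with its inverse built from $s$ for (3). There is one small slip in the orbit-separation check in (1): the correct element is $h=g'^{-1}g=\bigl(s(gH)^{-1}g'\bigr)^{-1}\bigl(s(gH)^{-1}g\bigr)$, not $\bigl(s(gH)^{-1}g\bigr)\bigl(s(gH)^{-1}g'\bigr)^{-1}$, since the latter does not in general satisfy $g'=gh^{-1}$ when $H$ is non-abelian.
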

\begin{proof}
   \noindent \begin{enumerate}
        \item Let $\pi:G\to G/H$ be the quotient map and let $s:G/H\to G$ be a measurable section. Consider the map 
        \begin{align*}
            G\times X&\to G/H\times X\\ 
            (g,x)&\mapsto(\pi(g),[s(\pi(g))^{-1}g]\cdot x).
        \end{align*}
        It is measurable, and its fibers are the $H$-orbits of $G\times X$. This map admits a section $G/H\times X\xrightarrow[]{s\times Id}G\times X$. Hence, $G/H\times X$ has the quotient $\sigma$-algebra defined by this map, so it is isomorphic to $G\times_H X$, and the quotient $G\times X\to G\times_H X$ admits a section.
        \item Consider the commutative diagram
        \[
        \begin{tikzcd}
G\times G\times X \arrow[d] \arrow[r] & G\times X \arrow[d] \\
G\times(G\times_H X) \arrow[r]        & G\times_H X        
\end{tikzcd}
        \]
        where the horizontal maps are action maps and the vertical maps are quotient maps. We want to show the measurability of the lower horizontal map. Using a measurable section $G\times_H X\to G\times X$, it follows from the measurability of the other maps.
        \item Let $\pi:G\to G/H$ be the quotient map and let $s:G/H\to G$ be a measurable section.
        Consider the map 
        \begin{align*}
            G\times X&\to G/H\times X\\ 
            (g,x)&\mapsto(\pi(g),gx).
        \end{align*}
        It is measurable and $G$-equivariant, where $G$ acts only on the first coordinate of $G\times X$ and diagonally on $G/H\times X$, and its fibers are the $H$-orbits of $G\times X$. This map admits a section:
        \begin{align*}
            G/H\times X&\to G\times X\\ 
            (\pi(g),x)&\mapsto(s(\pi(g)),s(\pi(g))^{-1}x).
        \end{align*}
        It follows, as in the proof of the first point, that $G\times_H X$ is $G$-equivariantly isomorphic to $G/H\times X$.
        \qedhere
    \end{enumerate}
\end{proof}
\begin{proposition}\label{prop:measurable induction of modules}
    Let $X$ be an $H$-measurable space which is a standard Borel space. The $\bbZ G$-module $\bbZ G\otimes_{\bbZ H}\bbZ X$ is $G$-equivariantly and measurably isomorphic to $\bbZ [G\times_H X]$. In particular, $\bbZ G\otimes_{\bbZ H}\bbZ X$ is a measurable $\bbZ G$-module. Also, $\bbZ G\otimes_{\bbZ H}\bbZ X\cong\bbZ[G/H]\otimes\bbZ X$ as measurable abelian groups.
\end{proposition}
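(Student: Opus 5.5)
We construct mutually inverse measurable maps between $\bbZ G\otimes_{\bbZ H}\bbZ X$ and $\bbZ[G\times_H X]$. Both sides are defined by universal properties, and \cref{free abelian measurable group on a standard space} lets us test measurability on generators.

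\emph{Forward map.} The map $G\times X\to G\times_H X\to \bbZ[G\times_H X]$ is measurable. Note that $G\times_H X\cong G/H\times X$ by \cref{prop:measurable induction of actions}, and this is standard Borel. By \cref{free abelian measurable group on a standard space}\,\cref{free abelian measurable group on a standard space 3}, applied twice (first to the $\bbZ G$ variable with $Y=X$, then to the $\bbZ X$ variable), the map extends to a measurable biadditive map $\bbZ G\times\bbZ X\to\bbZ[G\times_H X]$, $(g,x)\mapsto[g,x]$. It is $H$-balanced, since $[gh,x]=[g,hx]$ by the definition of the $H$-action $h(g,x)=(gh^{-1},hx)$. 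By the universal property of the measurable tensor product (taken over the measurable ring $\bbZ H$, with $\bbZ G$ a right $\bbZ H$-module via \cref{lem:measurable G-actions}), it induces a measurable homomorphism
\[\Phi\colon \bbZ G\otimes_{\bbZ H}\bbZ X\to \bbZ[G\times_H X].\]
This map is $G$-equivariant, and algebraically it is the classical isomorphism. Hence $\Phi$ is bijective, because $\bbZ G\otimes_{\bbZ H}\bbZ X$ is free on the $H$-orbits of $G\times X$.

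\emph{Inverse map.} Consider $G\times X\to\bbZ G\otimes_{\bbZ H}\bbZ X$, $(g,x)\mapsto g\otimes x$. It is measurable, being the composite of the measurable inclusions into $\bbZ G\times\bbZ X$ with the measurable bilinear structure map. It is constant on $H$-orbits, so it factors through the quotient $G\times_H X$. Since the quotient map has a measurable section by \cref{prop:measurable induction of actions}, the factored map is measurable. Because $G\times_H X$ is standard, the universal property of the free measurable abelian group yields a measurable homomorphism $\Psi\colon\bbZ[G\times_H X]\to\bbZ G\otimes_{\bbZ H}\bbZ X$, and $\Psi$ inverts $\Phi$ on generators.

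\emph{Consequences.} The measurable $\bbZ G$-module structure on the tensor product is transported through $\Phi$ from \cref{lem:measurable G-actions}, which applies since $G\times_H X$ is a standard measurable $G$-space by \cref{prop:measurable induction of actions}. For the last claim, compose $\Phi$ with $\bbZ[G\times_H X]\cong\bbZ[G/H\times X]\cong\bbZ[G/H]\otimes\bbZ X$, using \cref{prop:measurable induction of actions} and \cref{free abelian measurable group on a standard space}\,\cref{free abelian measurable group on a standard space 4}.

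\emph{Main obstacle.} The delicate point is the measurability of the forward bilinear map. The tensor product over $\bbZ H$ is a quotient $\sigma$-algebra, so the checks are not automatic. The plan is to work throughout with the standardness of $G$, $X$ and $G/H$, so that \cref{free abelian measurable group on a standard space}\,\cref{free abelian measurable group on a standard space 3} and the measurable sections do all the work.
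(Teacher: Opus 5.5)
Your proposal is correct and follows the paper's proof essentially step for step. It uses the same two homomorphisms: $\bbZ G\otimes_{\bbZ H}\bbZ X\to\bbZ[G\times_H X]$ is made measurable by applying part (3) of the theorem on free measurable abelian groups twice, and the reverse map comes from factoring $(g,x)\mapsto g\otimes x$ through $G\times_H X$. The module structure and the identification with $\bbZ[G/H]\otimes\bbZ X$ are then deduced as in the paper, via the lemma on measurable $G$-actions and part (4). One minor point: you do not need the measurable section to factor through $G\times_H X$, since that space carries the quotient $\sigma$-algebra.
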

\begin{proof}
    The map $G\times X\to\bbZ G\otimes_{\bbZ H}\bbZ X$ taking $(g,x)$ to $g\otimes x$ is measurable and $H$-invariant, hence factors through a measurable map $G\times_H X\to\bbZ G\otimes_{\bbZ H}\bbZ X$, which defines a measurable homomorphism $\bbZ[G\times_H X]\to \bbZ G\otimes_{\bbZ H}\bbZ X$. This homomorphism is bijective and $G$-equivariant. We need to show that its inverse is measurable. 
    
    The composition $G\times X\to G\times_H X\to\bbZ[G\times_H X]$ is measurable, hence by \cref{free abelian measurable group on a standard space}, \cref{free abelian measurable group on a standard space 3}, the induced bilinear map $\bbZ G\times \bbZ X\to\bbZ[G\times_H X]$ is measurable. Therefore, the induced homomorphism $\bbZ G\otimes_{\bbZ H}\bbZ X\to\bbZ[G\times_H X]$ is measurable. This is easily seen to be the inverse of $\bbZ[G\times_H X]\to \bbZ G\otimes_{\bbZ H}\bbZ X$.
    
    Since $G\times_H X\cong G/H\times X$, it is a standard Borel space. Hence, by \cref{lem:measurable G-actions}, $\bbZ[G\times_H X]$ is a measurable $\bbZ G$-module. The last sentence of the claim follows using \cref{prop:measurable induction of actions} and \cref{free abelian measurable group on a standard space}, \cref{free abelian measurable group on a standard space 4}.
\end{proof}
\begin{proposition}\label{induction as adjoint functor}
    Let $X$ be an $H$-measurable space which is a standard Borel space. Let $A$ be a measurable $\bbZ G$-module. A $\bbZ H$-module map $\bbZ X\to A$ is measurable if and only if the induced $\bbZ G$-module map $\bbZ G\otimes_{\bbZ H}\bbZ X\to A$ is measurable.
\end{proposition}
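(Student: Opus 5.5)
We identify $\bbZ G\otimes_{\bbZ H}\bbZ X$ with $\bbZ[G\times_H X]$ by \cref{prop:measurable induction of modules}. Under this identification, the induced $\bbZ G$-map $\bar f$ extending a $\bbZ H$-map $f\colon \bbZ X\to A$ is the linear extension of the map $G\times_H X\to A$, $[g,x]\mapsto g\cdot f(x)$.

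One direction is immediate. Suppose $\bar f$ is measurable. Then $f$ is the composition $\bbZ X\to \bbZ G\otimes_{\bbZ H}\bbZ X\xrightarrow{\bar f} A$, with the first map $x\mapsto 1\otimes x$. This first map is measurable: it is the linear extension of the measurable map $X\to G\times X\to G\times_H X\to \bbZ[G\times_H X]$, so it is measurable by the universal property of the free measurable abelian group $\bbZ X$. Hence $f$ is measurable.

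For the converse, assume $f$ is measurable. By the universal property of $\bbZ[G\times_H X]$ as the free measurable abelian group on the standard Borel space $G\times_H X$ (\cref{free abelian measurable group on a standard space}), it suffices to show that the restriction $\phi\colon G\times_H X\to A$, $\phi([g,x])=g\cdot f(x)$, is measurable. We use the quotient map $q\colon G\times X\to G\times_H X$. By \cref{prop:measurable induction of actions}, $q$ admits a measurable section $\sigma$, and then $\phi=(\phi\circ q)\circ\sigma$. So it is enough to show that $\phi\circ q$, $(g,x)\mapsto g\cdot f(x)$, is measurable. This map is the composition
\[
G\times X\xrightarrow{\id\times f|_X} G\times A\to A.
\]
Here $f|_X$ is measurable because $X\to\bbZ X$ is measurable, and the action map $G\times A\to A$ is measurable by \cref{lem:measurable G-actions}, since $A$ is a measurable $\bbZ G$-module.

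Well-definedness on $H$-orbits is automatic from the $H$-equivariance of $f$, and $\phi$ agrees with $\bar f$ on generators. Linear extension then yields $\bar f$, which completes the argument.

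The only delicate point is that measurability must be checked on $G\times_H X$ equipped with the quotient $\sigma$-algebra. This is exactly why we need the measurable section of $q$ supplied by \cref{prop:measurable induction of actions}, together with standardness of $G\times_H X\cong G/H\times X$, so that \cref{free abelian measurable group on a standard space} applies.
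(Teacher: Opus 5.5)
Your proof is correct and follows the paper's argument: you identify $\bbZ G\otimes_{\bbZ H}\bbZ X$ with $\bbZ[G\times_H X]$, obtain one direction by composing $\bar f$ with $a\mapsto 1\otimes a$, and obtain the other by checking that $[(g,x)]\mapsto g f(x)$ is measurable on $G\times_H X$ and then extending linearly via the universal property of the free measurable abelian group. Your additional justifications are sound; the measurable section of $q$ is not actually needed, because $G\times_H X$ carries the quotient $\sigma$-algebra, so $\phi$ is measurable if and only if $\phi\circ q$ is.
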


\begin{proof}
    Let $f\colon \bbZ X\to A$ be a $\bbZ H$-module map and let $\bar{f}\colon \bbZ G\otimes_{\bbZ H}\bbZ X\to A$ be the induced $\bbZ G$-module map. The map $f$ is the composition of $\bar{f}$ with the measurable map $\bbZ X\to \bbZ G\otimes_{\bbZ H}\bbZ X$ taking $a$ to $1\otimes a$. Hence, if $\bar{f}$ is measurable, then so is $f$. Assume that $f$ is measurable. Conjugating with the isomorphism $\bbZ G\otimes_{\bbZ H}\bbZ X\cong\bbZ[G\times_H X]$ we obtain the map $\bbZ[G\times_H X]\to A$ which is defined on $G\times_H X$ by $[(g,x)]\mapsto gf(x)$, where $[(g,x)]$ is the image of $(g,x)\in G\times X$ in $G\times_H X$. Therefore, the restriction to $G\times_H X$ is measurable, and so it is measurable on $\bbZ[G\times_H X]$. Hence, $\bar{f}$ is measurable.
\end{proof}
\begin{corollary}\label{induction preserves freeness}
    If $X$ is a standard Borel space, then $\bbZ G\otimes_{\bbZ H}\bbZ [H\times X]\cong\bbZ [G\times X]$ as measurable $\bbZ G$-modules.
\end{corollary}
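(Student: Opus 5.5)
The plan is to obtain the isomorphism from \cref{prop:measurable induction of modules} applied to the $H$-space $H\times X$, and then to identify $G\times_H(H\times X)$ with $G\times X$ as measurable $G$-spaces.

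First, $H\times X$ is a standard Borel space, since $H$ and $X$ are. We let $H$ act on it by left multiplication on the first factor and trivially on $X$. This action is measurable, and by \cref{free ZG-modules} (with $H$ in place of $G$) the module $\bbZ[H\times X]$ is the free measurable $\bbZ H$-module on $X$. \cref{prop:measurable induction of modules} then gives a $G$-equivariant measurable isomorphism
\[
\bbZ G\otimes_{\bbZ H}\bbZ[H\times X]\cong\bbZ\bigl[G\times_H(H\times X)\bigr].
\]

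It therefore suffices to produce a $G$-equivariant measurable isomorphism of standard Borel spaces $G\times_H(H\times X)\cong G\times X$, where $G$ acts on the right-hand side by left multiplication on the first factor. Such an isomorphism induces an isomorphism of free measurable abelian groups, which is $\bbZ G$-linear, and the claim follows.

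Consider the map
\[
\Phi\colon G\times(H\times X)\to G\times X,\qquad (g,h,x)\mapsto (gh,x).
\]
It is measurable, $G$-equivariant, and constant on $H$-orbits, because $(gh'^{-1},h'h,x)\mapsto(gh,x)$. Hence it descends to a measurable map $\bar\Phi\colon G\times_H(H\times X)\to G\times X$, using that the quotient carries the quotient $\sigma$-algebra. The fibers of $\Phi$ are exactly the $H$-orbits: if $gh=g'h'$, then $(g',h',x)=h''\cdot(g,h,x)$ with $h''=h'h^{-1}$. So $\bar\Phi$ is a bijection. Its inverse is the composition of the measurable map $(g,x)\mapsto(g,1,x)$ with the quotient map, and is therefore measurable.

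The only point requiring care is measurability of the inverse. This uses only the explicit section $(g,x)\mapsto(g,1,x)$; no selection theorem is needed, and in particular we do not need \cref{prop:measurable induction of actions}. Standardness of $G\times_H(H\times X)$ then follows from the isomorphism with $G\times X$, so the hypotheses used to form $\bbZ[G\times_H(H\times X)]$ as a measurable $\bbZ G$-module are met.
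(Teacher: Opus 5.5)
Your proof is correct, but it takes a different route from the paper's.

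\textbf{The paper's route.} The paper argues purely through universal properties. By \cref{free ZG-modules}, $\bbZ[G\times X]$ is the free measurable $\bbZ G$-module on $X$. It then checks that $\bbZ G\otimes_{\bbZ H}\bbZ[H\times X]$ has the same universal property. A measurable map $X\to A$ extends to a measurable $\bbZ H$-map $\bbZ[H\times X]\to A$ by freeness over $H$. That map extends to a measurable $\bbZ G$-map on the induced module by the adjunction \cref{induction as adjoint functor}.

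\textbf{Your route.} You apply \cref{prop:measurable induction of modules} to the $H$-space $H\times X$. You then write down the $G$-equivariant isomorphism $G\times_H(H\times X)\to G\times X$, $[(g,h,x)]\mapsto(gh,x)$. Its inverse is measurable because of the explicit section $(g,x)\mapsto(g,1,x)$.

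\textbf{Comparison.} Both arguments ultimately rest on \cref{prop:measurable induction of modules}; the paper uses it inside the proof of \cref{induction as adjoint functor}.
\begin{itemize}
\item The paper's version is shorter and formal. It fits how the corollary is used in the Shapiro lemma, where only freeness of the induced modules matters.
\item Yours gives the isomorphism by the concrete formula $g\otimes(h,x)\mapsto(gh,x)$.
\item Yours makes measurability of the inverse transparent. In the paper this is left implicit in the universal-property bookkeeping: measurability of the structure map $X\to\bbZ G\otimes_{\bbZ H}\bbZ[H\times X]$, the measurable $\bbZ G$-module structure on the tensor product, and uniqueness of the extension.
\end{itemize}

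Your remark that standardness of $G\times_H(H\times X)$, and measurability of the $G$-action on it, follow directly from the explicit isomorphism is accurate. So the section $G/H\to G$ is not really needed in this special case. Strictly speaking, though, the general proof of \cref{prop:measurable induction of modules} that you cite still passes through \cref{prop:measurable induction of actions}. You can drop that dependency only by noting that your explicit identification supplies the conclusions it would otherwise provide.
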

\begin{proof}
    By \cref{free ZG-modules}, $\bbZ[G\times X]$ is the free measurable $\bbZ G$-module on $X$. Thus, we want to show that every measurable map $X\to A$, where $A$ is a measurable $\bbZ G$-module, induces a measurable $\bbZ G$-module map $\bbZ G\otimes_{\bbZ H}\bbZ [H\times X]\to A$. Every measurable map $X\to A$ induces a measurable $\bbZ H$-module map $\bbZ[H\times X]\to A$ by \cref{free ZG-modules}, and this induces a measurable $\bbZ G$-module map $\bbZ G\otimes_{\bbZ H}\bbZ [H\times X]\to A$, by \cref{induction as adjoint functor}.
\end{proof}

\begin{lemma}\label{lem:standard Borel resolution}
    Let $A$ be a $G$-module which is a standard Borel space. There exists a strong resolution $P_\ast\to A$, where each $P_n$ is a free $\bbZ G$-module on a standard Borel space.
\end{lemma}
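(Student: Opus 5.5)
We build the resolution inductively, using the construction of \cref{enough projectives}\,\cref{existence of enough projectives} adapted to $G$-modules, and at each stage check that the kernel is again a $G$-module whose underlying measurable space is standard Borel. Concretely, we take $P_0=\bbZ[G\times A]$. By \cref{free ZG-modules}, this is the free measurable $\bbZ G$-module on the standard Borel space $A$. By \cref{free abelian measurable group on a standard space}\,\cref{free abelian measurable group on a standard space 2}, its underlying measurable space is standard, since $G\times A$ is standard. The augmentation $\varepsilon\colon P_0\to A$ is the $\bbZ G$-linear extension of $\id_A$, given on generators by $(g,a)\mapsto ga$. It is measurable because its restriction to $G\times A$ is the action map, which is measurable by \cref{lem:measurable G-actions}. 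It is strong, since $a\mapsto (1,a)\in G\times A\subset P_0$ is a measurable (set-theoretic) section.

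Next we set $K_0=\ker\varepsilon$. Since $A$ is standard, points are measurable, so $\{0\}$ is measurable and $K_0=\varepsilon^{-1}(0)$ is a measurable subset of the standard Borel space $P_0$. A Borel subset of a standard Borel space is standard with the subspace $\sigma$-algebra. Moreover $K_0$ is a $\bbZ G$-submodule, and the subspace $\sigma$-algebra makes it a measurable $\bbZ G$-module: the addition and the action maps are restrictions of measurable maps, and the product $\sigma$-algebra on $K_0\times K_0$ is the restriction of that on $P_0\times P_0$. We now repeat the construction with $K_0$ in place of $A$, obtaining $P_1=\bbZ[G\times K_0]\to K_0\hookrightarrow P_0$. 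Inductively this gives $P_n=\bbZ[G\times K_{n-1}]$ with $K_n=\ker(P_n\to K_{n-1})$.

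It then remains to verify strong exactness. Exactness holds by construction, because $\image(P_{n}\to P_{n-1})=K_{n-1}$. For strength, the surjection $P_n\twoheadrightarrow K_{n-1}$ has the measurable section $k\mapsto (1,k)$. Since the image is $K_{n-1}$ equipped with the subspace $\sigma$-algebra, this is exactly the section required by the definition of a strong map. Each $P_n$ is free on the standard Borel space $K_{n-1}$, as the statement requires.

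The main subtle point is making sure that the $\sigma$-algebra on the image $K_{n-1}$ used in the definition of \emph{strong} agrees with the one it carries as a kernel. It also requires that standardness of $P_n$ really holds. That in turn relies on \cref{free abelian measurable group on a standard space}\,\cref{free abelian measurable group on a standard space 2}, applied to $G\times K_{n-1}$, identifying the free measurable structure with a standard Borel one. Once this is accepted, measurability of kernels comes from points being Borel, and the rest is formal.
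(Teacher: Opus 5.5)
Your proposal is correct and follows essentially the same route as the paper. Both iterate $P_{n+1}=\bbZ[G\times I_n]$ on the kernel $I_n$, using that a Borel subset of a standard Borel space is standard, that these free modules are standard by \cref{free ZG-modules} and \cref{free abelian measurable group on a standard space}, and that $x\mapsto(1,x)$ gives the measurable section; you also correctly fill in details the paper leaves implicit, namely the measurability of $\{0\}$ and that the subspace $\sigma$-algebra makes the kernel a measurable $\bbZ G$-module.
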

\begin{proof}
    Set $P_{-1}=I_{-1}=A$ and let $P_0=\bbZ G[I_{-1}]$. The canonical map $P_0\twoheadrightarrow A$ is a strong surjection. By \cref{free ZG-modules} and \cref{free abelian measurable group on a standard space}, $P_0\cong\bbZ[G\times A]$ is a standard Borel space.
    We continue by induction on $n$. Let $n\geq0$. Assume that we have a strong partial resolution 
    \[ P_n\to\cdots \to P_0\to A\to0,
    \]
    where $P_i$ is a free $\mathbb{Z}G$-module on a standard Borel space. 
    Let $I_{n}=\ker(P_{n}\to P_{n-1})$. Since $I_{n}$ is a measurable subset of the standard Borel space $P_n$, it is also a standard Borel space, by \cite[Corollary 13.4]{kechris:classical}.

    Let $P_{n+1}=\bbZ G[I_n]$. There is a strong $\bbZ G$-surjection $P_{n+1}\twoheadrightarrow I_n$. By \cref{free ZG-modules} and \cref{free abelian measurable group on a standard space}, $P_{n+1}\cong\bbZ[G\times I_n]$ is standard. Composing with the inclusion $I_n\hookrightarrow P_n$ gives the next differential. Thus, $P_{n+1}\to \cdots \to P_0\to A\to0$ is the required partial resolution.
\end{proof}
\begin{proof}[Proof of \cref{measurable Shapiro lemma}]
    Let $P_\ast\to\bbZ$ be a strong free resolution of $\bbZ$ as the trivial $\bbZ H$-module, where each $P_n$ is a free measurable $\bbZ H$-module on a standard Borel space, which exists by \cref{lem:standard Borel resolution}. Consider the augmented complex 
    \begin{equation}\label{induction resolution}
        \bbZ G\otimes_{\bbZ H} P_\ast\to \bbZ G\otimes_{\bbZ H} \bbZ.
    \end{equation}
    
    By \cref{prop:measurable induction of modules}, $\bbZ G\otimes_{\bbZ H} \bbZ\cong \bbZ[G\times_H\{*\}]$ as measurable $\bbZ G$-modules, and by \cref{prop:measurable induction of actions}, \cref{prop:measurable induction of actions 3}, $G\times_H\{*\}\cong G/H$ as $G$-measurable spaces. Therefore, $\bbZ G\otimes_{\bbZ H} \bbZ\cong \bbZ[G/H]$ as measurable $\bbZ G$-modules. 
    By \cref{prop:measurable induction of modules},  \cref{induction resolution} is isomorphic as an augmented complex of measurable abelian groups to $\bbZ G/H\otimes P_\ast\to\bbZ [G/H]$.
    By \cref{free ZG-modules}, every $P_n$ is a measurable free abelian group. Hence, \cref{prop:exact_projective_is_contractible} implies that the identity map of the complex $P_\ast\to\bbZ$ is chain-homotopy equivalent to the zero map. This property is preserved by additive functors, hence it also holds for $\bbZ [G/H]\otimes P_\ast\to\bbZ [G/H]$ which implies that \cref{induction resolution} is strong exact. 
    
    For every $n$, $P_n$ is a free $\bbZ H$-module hence by \cref{induction preserves freeness}, $\bbZ G\otimes_{\bbZ H} P_n$ is a free $\bbZ G$-module. We obtain that \cref{induction resolution} is a strong projective resolution of $\bbZ[G/H]$. Therefore $\Ext_{\bbZ G}^*(\bbZ[G/H],A)$ is the cohomology of $\hom_{\bbZ G}(\bbZ G\otimes_{\bbZ H} P_\ast,A)$. By \cref{induction as adjoint functor}, there is an isomorphism of complexes \[\hom_{\bbZ G}(\bbZ G\otimes_{\bbZ H} P_\ast,A)\cong\hom_{\bbZ H}(P_\ast,A).\] Thus, $\Ext_{\bbZ G}^*(\bbZ[G/H],A)$ is the cohomology of $\hom_{\bbZ H}(P_\ast,A)$ which is $\Ext^*_{\bbZ H}(\bbZ,A)$ that is, $H^*(H,A)$.
\end{proof}


\section{Vanishing of the cohomology of a semisimple group}
\label{sec: vanishing}
The main result of this section is \cref{thm:mainGsimpleLp}, which is a vanishing of cohomology result for simple group with respect to $L^p$-coefficients.
Note that this is a slightly more general version of \cref{thm:LpHi} from the introduction.


\begin{theorem} \label{thm:mainGsimpleLp}
    Let $G$ be a connected simply connected, simple group in characteristic 0 and assume $\rank(G)\geq 2$.
    Let $V$ be an $L^p$-space and assume that $G$ is acting on $V$ by linear isometries and $V^G=0$.
    Then for every $0<i<\rank(G)$, $H^i(G,V)=0$,
    and $H^{\rank(G)}(G,V)$ is Hausdorff.
\end{theorem}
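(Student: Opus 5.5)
We plan to argue by induction on the rank, following the strategy in \cite{badersauer:higherkazhdanproperty} for unitary coefficients, with the Hilbert-space inputs replaced by \cref{action of C on cohomology of N}, \cref{product of two groups} and the measurable homological algebra of \cref{section:Measurable homological algebra}.

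First we reduce to manageable coefficients. By \cref{lem:separable reduction} we may assume $V$ is separable. Since $G$ has property (T) (rank $\geq 2$), $V^G=0$ implies that $V$ has no almost invariant vectors, by \cite{baderfurmangelandermonod:BFGM:property}. By Howe--Moore type arguments transported through \cref{thm:BLM} and \cref{lem:noaiBL}, the same holds for the restriction to every non-compact closed subgroup we use, in particular centers of Levi subgroups. This class of representations is closed under ultrapowers by \cref{lem:no_almost_invariants_ultrapowers}. An ultraproduct argument in the style of \cref{lem:uptrick_34} therefore reduces the Hausdorffness of $H^{r}(G,V)$, where $r=\rank(G)$, to the vanishing of reduced cohomology in degree $r$ for this whole class. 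The degree-one case $H^1(G,V)=0$ is \cref{thm:H1Lp} for $G$, from \cite{baderfurmangelandermonod:BFGM:property,badergelandermonod:BGM:fixedpoint}; this is the base of the induction.

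The main mechanism is the opposition complex. For a minimal parabolic $P=MAN$, consider the semi-simplicial $G$-space whose simplices are pairs of opposite parabolics (equivalently, $G/L$ for standard Levi subgroups $L$). By von Heydebreck this complex is spherical of dimension $r-1$, so its reduced homology vanishes below $r-1$. Its simplex spaces are $k_\omega$ with metrizable compacta, so \cref{strong exactness of simlpicial spaces} gives a strong exact resolution
\[
\bbZ[G/L_{r-1}]\to\cdots\to\bbZ[G/L_0]\to\bbZ\to0 ,
\]
truncated at the dimension where it is exact. Applying $\Ext^*_{\bbZ G}(\,\cdot\,,V)$ via \cref{acyclic resolution}, combined with the measurable Shapiro lemma \cref{measurable Shapiro lemma} and the comparison \cref{comparison between continuous and measurable cohomology}, reduces $H^i(G,V)$ for $i<r$ to the groups $H^j(L,V)$ for Levi subgroups $L$ with $j\le i$, with the appropriate degree shifts.

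For each proper Levi subgroup $L=Z(L)\cdot[L,L]$, we use \cref{product of two groups}, or rather its continuous analogue obtained from \cref{thm:partial_hochschild_serre} together with \cref{action of C on cohomology of N}. We take $C$ to be the non-compact split center $A_L$, which has no invariant vectors by Howe--Moore, and $N$ to be the semisimple part, whose rank $\ssrank(L)=r-\dim A_L$ is smaller. The induction hypothesis, applied to each simple factor of $N$ and assembled by the product argument of \cref{cor:products_of_discrete}, gives vanishing and Hausdorffness for $N$ up to degree $\ssrank(L)$. The central factor then adds one degree for each split dimension, yielding $H^j(L,V)=0$ for $j\le r-1$, which is the vanishing needed in the spectral sequence.

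The main obstacle is this Levi step. \cref{action of C on cohomology of N} is stated for discrete $N$ of type $FP_n$, whereas here $N$ is a semisimple group. We must either prove its analogue for continuous cohomology, again using $L$-embeddedness of coboundaries via a version of \cref{lem:Limage} applied to a finitely generated resolution, or pass to cocompact lattices in $N$ and use \cref{cor:restriction_is_injective_cocolattice}. A second difficulty is tracking exactly which degree of the opposition complex feeds into $H^r$, so that we obtain only Hausdorffness there rather than vanishing.
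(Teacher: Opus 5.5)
Your overall architecture matches the paper's: separable reduction, the opposition complex with the measurable Shapiro lemma, the Levi step via \cref{action of C on cohomology of N} on a cocompact lattice of the semisimple part times the center, and ultrapowers for Hausdorffness. However, the induction as you set it up does not close. You induct over \emph{simple} groups, yet the semisimple part $N$ of a proper Levi is typically a product. For example, $\SL_6$ has a Levi with semisimple part $\SL_3\times\SL_3$, of semisimple rank $4$. The only product tools available are \cref{product of two groups} and \cref{cor:products_of_discrete}, and they gain exactly one degree per extra factor, because \cref{action of C on cohomology of N} only kills $E_2^{0,n}$. For $\SL_3\times\SL_3$ they give vanishing in degrees $\le 2$, not $\le 3$. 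Rank-additivity would need $H^p(S_2,H^{r_1}(S_1,V))=0$ for $p\ge 1$. But $H^{r_1}(S_1,V)$ is merely a Banach quotient, not an $L^p$-space, so your induction hypothesis cannot be fed to it.

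The paper sidesteps this by proving the stronger \cref{thm:mainGsemisimpleLp} for simply connected \emph{semisimple} $G$, under the hypothesis that no non-compact almost simple factor has almost invariant vectors. The opposition complex argument works verbatim for products (the building is the join, still spherical of dimension $\rank(G)-1$), and the induction hypothesis is applied to the whole semisimple part of each Levi. That hypothesis must then be inherited by the simple factors of Levi subgroups, including rank-one factors without (T). Plain Howe--Moore only gives absence of invariant vectors there. The paper needs Oh's uniform pointwise bounds for the root subgroups $H_\alpha$ (\cref{lem:levilegitunitary}), transported to $L^p$ by the Mazur map.

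Several intermediate claims are also false as stated, although what is really needed is weaker. Centers of Levi subgroups are amenable and in general \emph{do} have almost invariant vectors, for instance in $L^2(G)$. Only $V^{Z(L)}=0$ holds, and that is all \cref{action of C on cohomology of N} uses. Consequently the center adds one degree, not one per split dimension. For the diagonal Levi $L_0$ of $\SL_3(\bbR)$ and $V=L^2(G)$, $H^1(L_0,V)$ is non-Hausdorff, hence nonzero, so ``$H^j(L,V)=0$ for $j\le r-1$'' fails. What the opposition complex actually requires is only $H^i(L,V)=0$ for $i\le\ssrank(L)$. Indeed, the $n$-simplices form orbits $G/L$ with $\ssrank(L)=r-1-n$, and \cref{acyclic resolution} needs $\Ext^i_{\bbZ G}(\bbZ\mathcal O_n,V)=0$ for $i\le r-1-n$. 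This is exactly what one gets from vanishing below and Hausdorffness at $\ssrank(L)$ for $N$, plus one degree from the center (\cref{prop:mainLconditions}).

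Finally, Hausdorffness of $H^r(G,V)$ does not come out of the opposition complex at all. \cref{lem:uptrick_34} converts vanishing in degree $r-1$ (not $r$) into Hausdorffness in degree $r$, and it applies only to discrete groups of type $FP_n$. So you must pass to a product of cocompact lattices $\Gamma_i<G_i$. Then check with \cref{lem:legitinduction,lem:legitrestriction}, which rest on \cref{lem:spectral gap induction}, that the coefficient class survives induction and restriction. Come back to $G$ via \cref{cor:restriction_is_injective_cocolattice}.
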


Casually, we regard this result as \emph{vanishing below the rank} and \emph{Hausdorffness at the rank}.
Note that vanishing below the rank holds trivially for groups of rank 1, however these groups satisfy the Hausdorffness at the rank if and only if they satisfy property (T).
In our proof we will apply an induction argument to the Levi subgroups of~$G$ and their derived semisimple subgroup. The assumption on being simply connected carries over the semisimple part of a Levi subgroup by~\cite[Corollary~9.5.11]{conrad:reductive}. 
We will thus need a version of \cref{thm:mainGsimpleLp} which is applicable for general simply connected semisimple groups.


\begin{theorem} \label{thm:mainGsemisimpleLp}
    Let $G$ be a simply connected semisimple group in characteristic 0. If $V$ is an $L^p$-space such that $G$ acts on $V$ by linear isometries and every non-compact factor of $G$ does not admit almost invariant vectors, then  for every $0\le i<\rank(G)$, $H^i(G,V)=0$ and $H^{\rank(G)}(G,V)$ is Hausdorff.
\end{theorem}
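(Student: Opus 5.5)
We argue by induction on $\rank(G)$, following the strategy of \cite{badersauer:higherkazhdanproperty} for unitary coefficients, adapted to $L^p$-spaces. First we make two reductions. By \cref{lem:separable reduction} we may assume $V$ is separable. By \cref{lem:uptrick_34}, applied to a cocompact lattice or an open compact-free subgroup, we may trade vanishing for reduced vanishing; the relevant class of coefficients (no almost invariant vectors for each non-compact factor) is closed under ultrapowers by \cref{lem:no_almost_invariants_ultrapowers}. Concretely, we pass to a cocompact lattice $\Lambda<G$ of type $FP_\infty$, use \cref{cor:restriction_is_injective_cocolattice} together with \cref{lem: Lp modules preserved under induction} and \cref{lem:cocolatticeVsplits}, and so reduce to showing $\bar H^i=0$ for $i\le\rank(G)$ on the appropriate class. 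Alternatively we work with $G$ directly and use a continuous-group analogue of the ultrapower lemma. Compact factors are harmless: averaging over them shows their cohomology is trivial.

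If $G=G_1\times\cdots\times G_l$ has several non-compact almost simple factors, we reduce to the simple case. We peel off one factor $G_1$, writing $G=G_1\times N$ with $N$ the product of the remaining factors. By induction, or the simple case applied to $G_1$ and $N$, $N$ has vanishing below $\rank(N)$ and Hausdorffness at $\rank(N)$. We then run the argument of \cref{product of two groups}, with the roles of $N$ and $C=G_1$: the partial Hochschild--Serre sequence \cref{thm:partial_hochschild_serre} gives $E_2^{p,q}=0$ for $q<\rank(N)$. For $q=\rank(N)$ we need $H^p(G_1,H^{\rank N}(N,V))=0$ for $p<\rank(G_1)$, with Hausdorffness at $p=\rank(G_1)$. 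This requires knowing that $H^{\rank N}(N,V)$ again has no $G_1$-almost invariant vectors in a suitable $L$-embedded sense. This is the delicate point. We handle it by working with a cocompact lattice in $N$ via \cref{action of C on cohomology of N}, or by proving the statement only for $L$-embedded coefficients arising as closed images, using \cref{lem:Limage}.

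The core is the simple case. Let $P=LU$ range over the minimal parabolics and, more generally, over the standard parabolic subgroups, where $L$ is a Levi subgroup whose semisimple part $L'$ has rank less than $r=\rank(G)$ and is simply connected \cite[Corollary~9.5.11]{conrad:reductive}. We also use the central torus $A_L$ of $L$, which commutes with $L'$. By Howe--Moore (valid for $L^p$-representations through the Mazur map, \cref{thm:BLM}, together with property (T) arguments as in \cite{baderfurmangelandermonod:BFGM:property}), $V^G=0$ implies that the non-compact factors of $L'$ have no almost invariant vectors and that $V^{A_L}=0$. By the induction hypothesis, $H^i(L',V)=0$ for $i<\rank(L')$ and $H^{\rank L'}(L',V)$ is Hausdorff. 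Combining this with \cref{product of two groups}/\cref{action of C on cohomology of N} for the commuting pair $(L',A_L)$ gives $H^i(L,V)=0$ for $i\le\rank(L')=r-\dim A_L+\dots$. This is the expected range of vanishing for each Levi, namely up to degree $r-1$ minus the codimension of the corresponding simplex.

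We then assemble these Levi subgroups using the opposition complex. Its $k$-simplices are parametrized by $G/L$ for Levi subgroups $L$ of corank $k+1$, i.e., pairs of opposite parabolics. By von Heydebreck's theorem this complex is $(r-2)$-spherical. The space of simplices forms a semi-simplicial space of $k_\omega$-spaces, so \cref{strong exactness of simlpicial spaces} gives a strong exact augmented chain complex $\bbZ[X_{r-1}]\to\cdots\to\bbZ[X_0]\to\bbZ\to0$ of measurable $G$-modules. Applying $\Ext_{\bbZ G}^*(\,\cdot\,,V)$ and using the measurable Shapiro lemma \cref{measurable Shapiro lemma} identifies $\Ext^*_{\bbZ G}(\bbZ[G/L],V)\cong H^*(L,V)$, which by \cref{comparison between continuous and measurable cohomology} equals continuous cohomology. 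The Levi vanishing above supplies exactly the acyclicity hypothesis of \cref{acyclic resolution}. Hence $H^i(G,V)=\Ext^i_{\bbZ G}(\bbZ,V)$ is computed by the complex of $L$-invariants $\bigoplus_L V^{L}$, and each of these terms vanishes because $V^{A_L}=0$. This gives $H^i(G,V)=0$ for $i<r$ and exhibits $H^r(G,V)$ as a subquotient. We expect the main obstacle to be Hausdorffness at degree $r$, together with matching the index ranges precisely at the top of the acyclicity window. For this we plan to obtain reduced vanishing and then invoke the ultrapower trick \cref{lem:uptrick_34} on a cocompact lattice, transferring back via \cref{cor:restriction_is_injective_cocolattice}.
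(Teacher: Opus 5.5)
Your simple-case argument follows the paper's architecture: Levi vanishing from $V^{Z(L)}=0$ and \cref{product of two groups}, assembly through the opposition complex and measurable $\Ext$, and Hausdorffness via ultrapowers on a cocompact lattice. The step that fails is the reduction from semisimple to simple $G$. Your induction needs it, because semisimple parts of Levi subgroups of simple groups are typically not simple (e.g.\ $\SL_2\times\SL_2$ in $\SL_4$). With $G=G_1\times N$ and \cref{thm:partial_hochschild_serre}, reaching total degree $\rank(G_1)+\rank(N)-1$ requires $H^p\bigl(G_1,H^{\rank(N)}(N,V)\bigr)=0$ for all $p<\rank(G_1)$. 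But $H^{\rank(N)}(N,V)$ is a quotient Banach module, not an $L^p$-space, so no available vanishing statement applies to it. \cref{action of C on cohomology of N} only handles $p=0$, and only for discrete $N$ of type $FP_n(\bbR)$. Moreover, when $\rank(G_1)\ge 2$, rows $q>\rank(N)$ also contribute below $\rank(G)$ (e.g.\ $E_2^{0,\rank(N)+1}$). There the $E_2$-term is not even identified, since $H^q(N,V)$ is not known to be Hausdorff. In effect you are assuming a Künneth theorem for $L^p$-coefficients. Extending everything to $L$-embedded coefficients via \cref{lem:Limage} does not help, because the induction relies on $L^p$-specific tools (Mazur maps, closure of $L^p$-spaces under ultrapowers). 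The fix, which is what the paper does, is to skip the reduction entirely. Run the opposition-complex argument directly for semisimple $G$, using the join of the Tits buildings of the factors, a spherical building of dimension $\rank(G)-1$. Its proper Levi subgroups include products such as $L_1\times G_2$, whose semisimple parts have smaller rank, so the product structure is absorbed into the induction on rank.

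Two further steps need other inputs. First, Howe--Moore gives $V^{Z(L)}=0$. However, neither it nor property (T) of $G$ shows that the non-compact factors of $L'$ have no almost invariant vectors, and these factors can be rank-one groups without (T) (e.g.\ $\SL_2(\bbR)$ in $\SL_3(\bbR)$). The paper uses Oh's uniform pointwise bounds on the rank-one subgroups $H_\alpha$ (\cref{lem:levilegitunitary}) and transfers them to $L^p$ via \cref{lem:noaiBL} (\cref{lem:levilegit}). Second, \cref{product of two groups} requires $N$ discrete, so it does not apply to the pair $(L',A_L)$. Instead, take a cocompact lattice $\Gamma<L'$ and pass vanishing and Hausdorffness to $\Gamma$ through Shapiro with $I^p_\Gamma(V)\cong L^p(L'/\Gamma,V)$; the hypothesis survives by \cref{lem:spectral gap induction}. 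Then apply the corollary to $\Gamma\cdot Z(L)$ and return to $L$ by \cref{cor:restriction_is_injective_cocolattice}. Two smaller points:
\begin{itemize}
\item The opposition-complex resolution has length $\rank(G)-1$, so \cref{acyclic resolution} exhibits $H^{\rank(G)-1}$, not $H^{\rank(G)}$, as a subquotient of $\bigoplus V^{L}$ (which is zero).
\item For Hausdorffness at the rank you need reduced vanishing only in degree $\rank(G)-1$, not up to $\rank(G)$. You also need that restriction to and induction from the lattice preserve your class of coefficients (\cref{lem:legitrestriction}, \cref{lem:legitinduction}).
\end{itemize}
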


This theorem is a generalization of \cref{thm:mainGsimpleLp}, as the following proof shows.

\begin{proof}[Proof of \cref{thm:mainGsemisimpleLp} $\Rightarrow$ \cref{thm:mainGsimpleLp}]
By \cref{lem:separable reduction}, we may and will assume that $V$ is separable. We realize $V$ as $L^p(X,\mu)$ for a $\sigma$-finite measure on a standard Borel space. Since $G$ has property (T) and $V^G=0$, \cite[Theorem~A(i)]{baderfurmangelandermonod:BFGM:property} implies that the action on $V$ has no almost invariant vectors. Thus $V$ satisfies the assumptions of \cref{thm:mainGsemisimpleLp} and so the claim follows.
\end{proof}




We will prove \cref{thm:mainGsemisimpleLp} by induction on the rank using three main steps.
First, using the following proposition, we reduce to showing the vanishing part of the conclusion.

\begin{proposition} \label{prop:mainGhausdorff}
If $G$ is a simply connected semisimple group in characteristic 0 such that $H^i(G,V)=0$ for every $0\le i<\rank(G)$ and every $L^p$-space $V$ on which $G$ acts without almost invariants for non-compact factors, then $H^{\rank(G)}(G,V)$ is Hausdorff for every such space and action.
\end{proposition}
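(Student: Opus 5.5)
Let $r=\rank(G)$ and let $\mathcal{V}$ be the class of $L^p$-spaces with a linear isometric $G$-action in which no non-compact factor of $G$ has almost invariant vectors. The plan is to use the ultrapower trick of \cref{lem:uptrick_34} with $n=r-1$. That lemma is stated for discrete groups of type $FP_n(\mathbb{R})$, so we first pass to a cocompact lattice and then return to $G$.

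\emph{Step 1: reduce to a lattice.} By Borel--Harish-Chandra, $G$ contains a cocompact lattice $\Gamma$ (for products over several local fields of characteristic $0$ one uses an arithmetic construction; this is standard). A cocompact lattice acts properly and cocompactly on the Bruhat--Tits building times symmetric space, so it is of type $F_\infty$ after passing to a torsion-free finite-index subgroup. Over $\mathbb{R}$ this finite-index passage is harmless, so $\Gamma$ has property $FP_\infty(\mathbb{R})$. Given $W\in\mathcal{V}$ restricted to $\Gamma$, I form $I^p_\Gamma(W)$. This is an $L^p$-space by \cref{lem: Lp modules preserved under induction}, and by \cref{thm:cocoshapiro} there are topological isomorphisms
\[
H^i(G,I^p_\Gamma(W))\cong H^i(\Gamma,W),
\]
compatible with the maps to reduced cohomology.

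\emph{Step 2: the induced class is closed under the operations we need.} Let $\mathcal{W}$ be the class of $L^p$-spaces with isometric $\Gamma$-action $W$ such that, for each non-compact factor $G_j$, $G_j$ has no almost invariant vectors in $I^p_\Gamma(W)$. Two facts are needed.

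First, $\mathcal{W}$ contains the restrictions of members of $\mathcal{V}$. By \cref{lem:cocolatticeVsplits}, $I^p_\Gamma(V)\cong L^p(G/\Gamma,V)$ with the diagonal action, and its restriction to $G_j$ is of the form covered by \cref{lem:spectral gap induction}. That lemma gives no almost invariant vectors provided $G_j$ is simply connected simple, which it is, since simple connectedness passes to factors.

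Second, $\mathcal{W}$ is closed under ultrapowers. Here I would argue that $I^p_\Gamma(W^{\mathcal U})$ embeds $G$-equivariantly into $(I^p_\Gamma(W))^{\mathcal U}$, which is plausible because $G/\Gamma$ is compact. Absence of almost invariant vectors for $G_j$ passes to the ultrapower by the argument of \cref{lem:no_almost_invariants_ultrapowers}, and it also passes to subspaces.

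With these two facts, the hypothesis gives $H^{r-1}(\Gamma,W)=H^{r-1}(G,I^p_\Gamma(W))=0$ for all $W\in\mathcal{W}$, using only that $I^p_\Gamma(W)$ lies in the class to which the hypothesis applies. In particular $\bar H^{r-1}(\Gamma,W)=0$. \cref{lem:uptrick_34} then yields that $H^{r}(\Gamma,W)$ is Hausdorff for all $W\in\mathcal{W}$.

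\emph{Step 3: return to $G$.} For $V\in\mathcal{V}$, \cref{cor:restriction_is_injective_cocolattice} gives a split injection $H^r(G,V)\to H^r(\Gamma,V)$. More precisely, restriction is a topological embedding onto a complemented summand, realized by the split inclusion $V\to I^p_\Gamma(V)$ combined with Shapiro. A topological subspace (indeed a retract) of a Hausdorff space is Hausdorff, so $H^r(G,V)$ is Hausdorff. Concretely, $\overline{B^r(G,V)}$ maps into $\overline{B^r(\Gamma,V)}=B^r(\Gamma,V)$, and the retraction then pulls it back into $B^r(G,V)$.

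\emph{Main obstacle.} The delicate step is the closure of $\mathcal{W}$ under ultrapowers, that is, comparing $I^p_\Gamma(W^{\mathcal U})$ with $(I^p_\Gamma W)^{\mathcal U}$ and checking that the spectral gap for each factor survives. If this comparison is awkward, I would instead run \cref{lem:uptrick_34} directly for $G$. Its proof only needs finitely generated free modules in low degrees, and these can be supplied for $G$ by a cocompact proper action on a contractible complex (Bruhat--Tits building times symmetric space, via continuous cochains on compact chambers). The class $\mathcal{V}$ itself is closed under ultrapowers by \cref{lem:Lp_closed_under_ultraproducts} and the spectral gap argument of \cref{lem:no_almost_invariants_ultrapowers}.
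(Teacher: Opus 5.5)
Your outline (pass to a cocompact lattice, apply \cref{lem:uptrick_34} there, return to $G$ via \cref{cor:restriction_is_injective_cocolattice}) is the paper's. Steps 1 and 3, and the first half of Step 2, are fine, apart from the reduction to separable spaces that \cref{lem:spectral gap induction} needs and \cref{lem:separable reduction} supplies.

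The gap is the step you flag yourself: closure of $\mathcal W$ under ultrapowers. Because you define $\mathcal W$ through the $G_j$-action on $I^p_\Gamma(W)$, you need absence of almost invariant vectors for the non-discrete group $G_j$ to pass to $(I^p_\Gamma W)^{\mathcal U}$. The argument of \cref{lem:no_almost_invariants_ultrapowers} does not give this. It tests a \emph{finite} subset of a discrete group coordinatewise, whereas for $G_j$ the witnessing set is only compact, and the coordinatewise $G_j$-action on the ultrapower need not even be continuous. For general non-discrete groups the implication is in fact false. Let $\mathbb{R}$ act on $\ell^2$ by $t\cdot e_n=e^{int}e_n$ for $n\geq 1$; this action has no almost invariant vectors. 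By Dirichlet's simultaneous approximation theorem there is a non-principal ultrafilter $\mathcal U$ with $\lim_{\mathcal U}e^{int}=1$ for every $t$. Then $[e_n]$ is an invariant unit vector of the ultrapower. Your fallback of running the ultrapower trick directly for $G$ has the same defect. In addition, the ultrapower is then not a continuous $G$-module, so it is unclear which cohomology the trick would compute.

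The missing idea is to define the class using discrete groups only. The paper takes $\Gamma=\prod_i\Gamma_i$ with each $\Gamma_i<G_i$ cocompact. It works with the $\Gamma$-representations whose restriction to each $\Gamma_i$ (for $G_i$ non-compact) has no almost invariant vectors. This class is closed under ultrapowers by \cref{lem:no_almost_invariants_ultrapowers}. The effort then goes into two dictionary lemmas:
\begin{itemize}
\item \cref{lem:legitinduction}: $\Gamma_i$-spectral gap on $V$ holds if and only if $G_i$-spectral gap holds on $I^p_\Gamma(V)$. It is proved via Shapiro in degrees $0$ and $1$ plus amplification, and it makes the hypothesis on $G$ applicable.
\item \cref{lem:legitrestriction}: for $G$-representations, $G_i$-spectral gap holds if and only if $\Gamma_i$-spectral gap holds. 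It is proved via $V^{\Gamma_i}=V^{G_i}$, Shapiro, and \cref{lem:spectral gap induction}, and it places restrictions of members of your $\mathcal V$ in the class.
\end{itemize}

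Alternatively, you can keep your $\mathcal W$ and close the gap using \cref{lem:legitrestriction} for an auxiliary cocompact lattice $\Gamma_j<G_j$. If $I^p_\Gamma(W)$ has no $G_j$-almost invariant vectors, it has none for $\Gamma_j$. Then $(I^p_\Gamma W)^{\mathcal U}$ has none for $\Gamma_j$. So the equivariantly embedded subspace $I^p_\Gamma(W^{\mathcal U})$ has none for $\Gamma_j$, and a fortiori none for $G_j$; you would still need to verify this embedding. In either form, the ingredient your proposal lacks is the transfer of spectral gap between $G_j$ and a lattice in $G_j$.
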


Second, using the following proposition, we reduce to showing that for any $G$-representation as in \cref{thm:mainGsemisimpleLp}, the cohomology of all the proper Levi subgroups of $G$ vanishes up to the semisimple rank.

\begin{proposition} \label{prop:mainGinductionstep}
Let $G$ be a semisimple group and let $V$ be a separable Fréchet representation of $G$.
If for every proper Levi subgroup $L<G$ and for every $0\leq i\leq \ssrank(L)$ we have $H^i(L,V)=0$ then $H^i(G,V)=0$ for every $0\leq i<\rank(G)$.
\end{proposition}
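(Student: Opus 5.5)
We plan to prove \cref{prop:mainGinductionstep} by resolving the trivial module $\bbZ$ over $G$ through the opposition complex and then computing $H^*(G,V)$ in the measurable setting of \cref{section:Measurable homological algebra}.

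Let $r=\rank(G)$ and let $\Delta$ be the opposition complex of $G$. Its $k$-simplices are the pairs of opposite parabolic subgroups of type of cardinality $r-k$. Equivalently, a $k$-simplex is a Levi subgroup together with the choice of one of its two opposite parabolics. Each $k$-simplex space $X_k$ is a finite disjoint union of homogeneous spaces $G/L$, where $L$ runs over proper Levi subgroups. For $k\ge0$ these $L$ are proper, and the maximal simplices of dimension $r-1$ correspond to minimal Levi subgroups. By von Heydebreck's theorem, the abstract complex $\Delta$ is spherical of dimension $r-1$, so $\tilde H_k(\Delta)=0$ for $k<r-1$.

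\textbf{Step 1 (strong exactness).} Each $G/L$ is a $k_\omega$-space with metrizable compact pieces, because $G$ is lcsc and a countable union of compact metrizable sets. Then \cref{strong exactness of simlpicial spaces} shows that the sequence
\[
\bbZ X_{r-1}\to\cdots\to\bbZ X_0\to\bbZ\to0
\]
is strongly exact in degrees below $r-1$, as a sequence of measurable $G$-modules. Closed subgroups of lcsc groups admit Borel sections $G/L\to G$, so the hypotheses of \cref{measurable Shapiro lemma} hold. It gives
\[
\Ext^m_{\bbZ G}(\bbZ[G/L],V)\cong H^m(L,V),
\]
which by \cref{comparison between continuous and measurable cohomology} is the continuous cohomology $H^m_c(L,V)$; this uses that $V$ is separable Fréchet.

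\textbf{Step 2 (acyclic resolution).} We apply \cref{acyclic resolution} to the contravariant functor $F=\hom_{\bbZ G}(\cdot,V)$, that is, to the $\Ext$-computation, with $P_k=\bbZ X_{k-1}$ for $k\ge1$ placed over $M=\bbZ$. A degree shift and an orientation twist have to be tracked here; the twist is harmless because $\bbZ X_k$ is a sum of $\bbZ[G/L]$ with $L$ acting trivially on the orientation. The hypothesis gives $\Ext^m(\bbZ[G/L],V)=H^m(L,V)=0$ for $0\le m\le\ssrank(L)$. Here $\ssrank(L)=r-1-k$ when $L$ is the Levi subgroup of a $k$-simplex. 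This is exactly the needed range, since $F(P_k)$ already vanishes and the higher $\Ext$ of $P_k$ vanish for $m\le n-k$ with $n=r$. The conclusion is that $H^i(G,V)$ for $i<r$ is computed by the complex $F(P_\ast)$ in low degrees, and every term there is $\hom_G(\bbZ X_j,V)=\bigoplus H^0(L,V)=0$. Hence $H^i(G,V)=0$ for $i<r$. Degree $0$ is immediate anyway: $V^G\subset V^L=0$.

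The \textbf{main obstacle} is the index bookkeeping in Step 2. The strong exactness from spherical dimension $r-1$ must match the vanishing range $m\le\ssrank(L)$ so that the dimension shift reaches degree $r-1$. In particular, we must check that the top nonexactness at $\tilde H_{r-1}(\Delta)$ only contributes in degree $\ge r$. If it does not match, we would rerun the dimension-shift argument of \cref{acyclic resolution} directly via the short exact sequences of kernels, using \cref{LES}. Those kernels are standard Borel by \cref{lem:standard Borel resolution}-type arguments.
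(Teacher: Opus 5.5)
Your approach is the paper's own. The ingredients match: the opposition complex as a semi-simplicial $G$-space whose $k$-simplices form finitely many orbits $G/L$ with $\ssrank(L)=r-1-k$; strong exactness of $\bbZ X_{r-1}\to\cdots\to\bbZ X_0\to\bbZ\to0$ from \cref{sphericity of opposition complex} and \cref{strong exactness of simlpicial spaces}; $\Ext^m_{\bbZ G}(\bbZ[G/L],V)\cong H^m_c(L,V)$ from \cref{measurable Shapiro lemma} and \cref{comparison between continuous and measurable cohomology}; and finally \cref{acyclic resolution}.

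The one thing to repair is the bookkeeping you flagged in Step 2, which as written is off by one. With $P_k=\bbZ X_{k-1}$ for $k\ge1$ and $n=r$, there is no $P_0$ mapping onto $M=\bbZ$, so this is not an instance of \cref{acyclic resolution}. If it were, the last clause of that proposition would make $H^r(G,V)$ a subquotient of $\hom_G(\bbZ X_{r-1},V)=0$. That is false in general, since the sequence is not exact at $\bbZ X_{r-1}$. For instance, $G=\SL_2(\mathbb{Q}_p)\times\SL_2(\mathbb{Q}_p)$ with $V=\mathrm{St}\,\bar{\otimes}\,\mathrm{St}$ satisfies the hypotheses, yet a Hochschild--Serre computation gives $H^2(G,V)\cong H^1(\SL_2(\mathbb{Q}_p),\mathrm{St})^{\otimes 2}\neq0$.

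The correct choice is $P_k=\bbZ X_k$ for $0\le k\le r-1$ and $n=r-1$. The acyclicity condition $\Ext^m_{\bbZ G}(P_k,V)=0$ for $0<m\le r-1-k$ is then exactly your hypothesis, because the Levi subgroups occurring in $X_k$ have semisimple rank $r-1-k$. For $i<r-1$, $H^i(G,V)$ is the cohomology of $\hom_G(\bbZ X_\ast,V)=\bigoplus H^0(L,V)=0$. In degree $r-1$, $H^{r-1}(G,V)$ is a subquotient of $\hom_G(\bbZ X_{r-1},V)=0$. So $\tilde H_{r-1}$ of the opposition complex only obstructs degree $r$, as you hoped, and no separate dimension-shifting argument is needed.

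A few smaller slips do not affect the argument. A $k$-simplex has $k+1$ vertices, so the corresponding parabolic type has cardinality $r-1-k$, not $r-k$; you use the correct $\ssrank(L)=r-1-k$ later. A Levi subgroup is in general a Levi factor of more than two parabolics, but only the orbit structure $G/(P\cap P^-)$ matters. There is also no orientation twist to track, because vertices are ordered by type and $G$ preserves types.
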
 

Lastly, by the induction hypothesis, we know that the semisimple part of each proper Levi satisfies the conclusion of \cref{thm:mainGsemisimpleLp}. The following proposition shows that this upgrades to the desired conclusion about the proper Levi subgroups.

\begin{proposition} \label{prop:mainLconditions}
    Let $G$ be a simply connected semisimple group.
    Let $V$ be an $L^p$-space on which $G$ acts by linear isometries without almost invariants for non-compact factors.
    Let $L<G$ be a proper Levi subgroup and let $S<L$ be its semisimple part.
    If $S$ satisfies the conclusion of \cref{thm:mainGsemisimpleLp}, then $H^i(L,V)=0$ for every $i\le\rank(S)$. 
    
    That is, if for every $L^p$-space $W$ on which $S$ acts by linear isometries such that there are no almost invariants for non-compact factors, we have $H^i(S,W)=0$ for $i<\rank(S)$ and $H^{\rank(S)}(S,W)$ is Hausdorff, then $H^i(L,V)=0$ for every $i\leq \rank(S)$. 
\end{proposition}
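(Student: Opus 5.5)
The plan is to write the Levi subgroup as an almost direct product $L=S\cdot Z$, where $Z$ is the center of $L$. Since $L$ is a proper Levi subgroup of a semisimple $G$, the split part of $Z$ contains a noncompact torus $C$, isomorphic to $(k^\times)^m$ in each factor with $m\ge1$, which commutes with $S$. We then compute $H^*(L,V)$ through the Hochschild--Serre sequence of \cref{thm:partial_hochschild_serre} for the normal subgroup $S\lhd L$ (or $S\lhd SC$, with $SC$ cocompact in $L$). The argument mirrors \cref{product of two groups}, except that $N=S$ is no longer discrete.

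\textbf{Step 1: $S$ has no almost invariant vectors on each noncompact factor.} Let $S'$ be a noncompact almost simple factor of $S$. It lies in some almost simple factor $G_j$ of $G$, and $G_j$ has no almost invariant vectors on $V$ by hypothesis. To pass from $G_j$ to $S'$, I would apply the Howe--Moore/Mautner phenomenon. Its $L^p$ analogue is obtained by transporting to $L^2$ with the Mazur map via \cref{lem:noaiBL}: for the unitary representation $\Phi_2\circ\pi$, an almost invariant sequence for $S'$ has limit matrix coefficients that are invariant under $G_j$. Equivalently, one can use the fact that $G_j$ has property (T) relative to noncompact subgroups in the sense of spectral gap transfer. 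This step is delicate. The cleanest route is Cowling's theorem: a unitary representation of a simple group without invariant vectors has $L^{q}$ matrix coefficients on a finite power, which restricts to a strongly $L^q$ representation of $S'$. Hence $S'$ has no almost invariant vectors in $L^2$, and therefore none in $L^p$ by \cref{lem:noaiBL}. Separability of $V$ is used here, as in \cref{lem:separable reduction}. With this in hand, the hypothesis on $S$ gives $H^i(S,V)=0$ for $i<\rank(S)$ and $H^{\rank(S)}(S,V)$ Hausdorff.

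\textbf{Step 2: the spectral sequence.} By Step 1, $H^q(S,V)$ is Hausdorff for $q\le r_S:=\rank(S)$. \cref{thm:partial_hochschild_serre} then gives $E_2^{p,q}=H^p(L/S,H^q(S,V))$, which vanishes for $q<r_S$. For total degree $\le r_S$, the only possibly nonzero term is $E_2^{0,r_S}=H^{r_S}(S,V)^{L/S}$. It therefore remains to show that $C$ has no nonzero invariants on $H^{r_S}(S,V)$.

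\textbf{Step 3: killing invariants (main obstacle).} Here I would adapt \cref{action of C on cohomology of N}, replacing the discrete $FP_n$ group by $S$. Two ingredients are needed. The first is $V^C=0$: by Howe--Moore as in Step 1, $C$ is a noncompact subgroup of $G$, so a $C$-invariant vector is invariant under each factor in which $C$ is unbounded, and it then vanishes. Care is needed if $C$ is bounded in some factor, but $C$ is central in $L$ and noncompact in every factor that $L$ meets properly. The second ingredient is an L-embedded model of the coboundaries. I would take a cocompact lattice $\Lambda<S$, which exists in characteristic $0$ by Borel--Harish-Chandra, or else a compact-generation resolution. By \cref{cor:restriction_is_injective_cocolattice}, $H^{r_S}(S,V)$ embeds $C$-equivariantly into $H^{r_S}(\Lambda,V)$, since $C$ centralizes $\Lambda$. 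For $\Lambda$, which is of type $FP_\infty$, Shapiro's lemma (\cref{thm:cocoshapiro}) identifies $H^{r_S}(\Lambda,V)$ with $H^{r_S}(S,I^p_\Lambda V)$. Hausdorffness transfers, since the induced module is again an $L^p$-space without almost invariant vectors by \cref{lem: Lp modules preserved under induction} and \cref{lem:spectral gap induction}. Now \cref{action of C on cohomology of N} applies with $N=\Lambda$, because $V$ is an $L^p$-space and hence L-embedded. This shows that the $C$-invariants on $H^{r_S}(\Lambda,V)$ vanish, and hence so do those on $H^{r_S}(S,V)$. Consequently $H^i(L,V)=0$ for all $i\le r_S$.
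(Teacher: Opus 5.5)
Your proof is correct, and it uses the same ingredients as the paper:
\begin{itemize}
\item absence of almost invariant vectors for the factors of $S$;
\item $V^{Z(L)}=0$ via Howe--Moore;
\item a cocompact lattice in $S$ together with the topological Shapiro isomorphism;
\item \cref{action of C on cohomology of N};
\item injectivity of restriction to cocompact subgroups.
\end{itemize}
You assemble these differently from the paper.

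The paper transfers the hypothesis on $S$ entirely to a cocompact lattice $\Gamma<S$. It obtains vanishing below $r=\rank(S)$ and Hausdorffness at $r$ from the single module $I^p_\Gamma(V)$. It then applies \cref{product of two groups} to the \emph{discrete} normal subgroup $\Gamma\lhd\Gamma\cdot Z(L)$. Finally it returns to $L$ via \cref{cor:restriction_is_injective_cocolattice}. That last step needs $S\cdot Z(L)$ to have finite index in $L$, so that $\Gamma\cdot Z(L)$ is closed and cocompact in $L$.

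You instead run \cref{thm:partial_hochschild_serre} directly for the non-discrete normal subgroup $S\lhd L$, and apply the hypothesis on $S$ to $V$ itself. You use the lattice $\Lambda$ only to kill the one surviving term $E_2^{0,r}$. There, the $C$-equivariant injection $H^r(S,V)\hookrightarrow H^r(\Lambda,V)$ supplies the L-embedded coboundary model that the fixed point argument needs.

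What each route buys:
\begin{itemize}
\item Your route needs no information about $L/S$ or about the subgroup $\Gamma\cdot Z(L)$. The cost is that you invoke the hypothesis on $S$ twice, for $V$ and for $I^p_\Lambda(V)$, and you use the spectral sequence with a non-discrete kernel.
\item The paper's route keeps all spectral sequence work inside the discrete setting already packaged in \cref{product of two groups}.
\end{itemize}

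Step 1 needs two small corrections.
\begin{itemize}
\item An $L^q$ bound on matrix coefficients over $G_j$ does not by itself restrict to an $L^q$ bound on the null subgroup $S'$. You need the pointwise (Harish-Chandra type) form of the uniform decay, as in Oh's estimates used in \cref{lem:levilegitunitary}. Such uniform decay fails for rank-one factors without (T). You only need it when $S'\subsetneq G_j$, which forces $\rank(G_j)\ge 2$. If $S'=G_j$, the hypothesis applies directly.
\item Drop the remark on relative property (T). That notion says $G$-almost invariant vectors produce $H$-invariant ones, which is not the implication you need.
\end{itemize}
Finally, cocompact lattices in characteristic $0$ come from Borel--Harder, not Borel--Harish-Chandra.
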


\begin{proof}[Proof of \cref{prop:mainLconditions,prop:mainGinductionstep,prop:mainGhausdorff}  $\Rightarrow$ \cref{thm:mainGsemisimpleLp}]
The theorem holds vacuously for rank 0 groups and it follows in general by induction.
By the induction assumption applied to the semisimple part of any proper Levi subgroup of $G$, which is simply connected when $G$ is by~\cite[Corollary~9.5.11]{conrad:reductive}, and \cref{prop:mainLconditions}, we get that $H^i(L,V)=0$ for every $L<G$ proper Levi subgroup, $V$ an $L^p$-space on which $G$ acts without almost invariants for non-compact factors and $i\leq\ssrank(L)$.

By \cref{prop:mainGinductionstep}, this implies $H^i(G,V)=0$ for $i<\rank(G)$ and for every such space and action $V$, if $V$ is separable. Using \cref{lem:separable reduction} we can remove the separability assumption. \cref{prop:mainGhausdorff} concludes the proof.
\end{proof}

In conclusion, in order to prove \cref{thm:mainGsimpleLp}, we are left to prove \cref{prop:mainGhausdorff,prop:mainGinductionstep,prop:mainLconditions}.
We will do this in the forthcoming three subsections, correspondingly.

\subsection{From vanishing below the rank to Hausdorffness at the rank}
\label{subsec: from vanishing to hausdorff}
In this subsection, we prove \cref{prop:mainGhausdorff}.
We use induction and restriction to reduce the statement to a cocompact lattice in $G$, where \cref{lem:great_implies_excellent_for_lattice} applies.

The group $G$ in \cref{prop:mainGhausdorff} decomposes as 
$G=\prod_iG_i$ with $G_i=\bfG_i(k_i)$ and $\operatorname{char}(k_i)=0$, where each $\bfG_i$ is almost $k_i$-simple.
For each factor $G_i$ in $G$, let $\Gamma_i<G_i$ be a cocompact lattice, whose existence is guaranteed by \cite[Theorem A]{BorelHarder:ExistenceCocompact},
and let $\Gamma<G$ be the corresponding product lattice.
For the rest of \cref{subsec: from vanishing to hausdorff} we maintain this setup. 

\subsubsection{Reduction to a reducible uniform lattice}

\begin{lemma} \label{lem:legitrestriction}
  Let $V$ be a separable $L^p$-space on which $G$ acts by linear isometries.
  For each non-compact factor $G_i$, the restriction to $G_i$ has almost invariant vectors if and only if the restriction to $\Gamma_i$ does.
\end{lemma}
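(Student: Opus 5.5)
The plan is to treat the two directions separately and reduce the non-trivial one to \cref{lem:spectral gap induction}, using the induction from $\Gamma_i$ to $G_i$.

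\emph{Easy direction.} If $G_i$ has almost invariant vectors, then so does $\Gamma_i<G_i$. Indeed, every finite subset of $\Gamma_i$ is a compact subset of $G_i$, and $(K,\epsilon)$-invariant vectors for $G_i$ are in particular $(F,\epsilon)$-invariant for finite $F\subset\Gamma_i$.

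\emph{Hard direction.} Suppose the restriction to $\Gamma_i$ has almost invariant vectors. We want to show that $G_i$ does as well. Consider the $L^p$-induction $I^p_{\Gamma_i}(V)$ from $\Gamma_i$ to $G_i$. Since $V$ is a separable $G_i$-representation and $G_i/\Gamma_i$ carries a finite invariant measure, \cref{lem:cocolatticeVsplits} identifies
\[
I^p_{\Gamma_i}(V)\cong L^p(G_i/\Gamma_i,V)
\]
$G_i$-equivariantly, with $G_i$ acting diagonally on $G_i/\Gamma_i$ and on $V$. The space $X=G_i/\Gamma_i$ carries a probability measure preserving $G_i$-action. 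The group $G_i$ is simple, simply connected, non-compact and of characteristic $0$; here we use that $G$ is simply connected, so each almost simple factor $G_i$ is simply connected, and we pass to the actual simple factors if needed. Hence \cref{lem:spectral gap induction} applies: $V$ has $G_i$-almost invariant vectors if and only if $L^p(X,V)$ does. It therefore suffices to produce $G_i$-almost invariant vectors in $I^p_{\Gamma_i}(V)$.

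\emph{Construction of the almost invariant vectors.} This uses cocompactness of $\Gamma_i$. Choose a relatively compact Borel fundamental domain $D\subset G_i$ for $\Gamma_i$. For a $(F,\epsilon)$-invariant unit vector $v\in V$ for $\Gamma_i$, define $f_v\in I^p_{\Gamma_i}(V)$ by
\[
f_v(x\gamma^{-1})=\gamma\cdot v \qquad\text{for } x\in D,\ \gamma\in\Gamma_i.
\]
Now fix a compact set $K\subset G_i$. For $g\in K$ and $x\in D$, write $g^{-1}x=d\,\gamma(g,x)^{-1}$ with $d\in D$. Because $D$ is relatively compact, the cocycle values $\gamma(g,x)$ range over a finite set $F_K\subset\Gamma_i$. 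Then
\[
(gf_v-f_v)(x)=\gamma(g,x)\cdot v-v,
\]
which has norm less than $\epsilon$ when $v$ is $(F_K,\epsilon)$-invariant. Integrating over $D$ gives $\norm{gf_v-f_v}_p\le\epsilon\,\mu(D)^{1/p}$, while $\norm{f_v}_p=\mu(D)^{1/p}$. So the $f_v$ are $G_i$-almost invariant, and \cref{lem:spectral gap induction} transfers this to $V$.

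\emph{Main obstacle.} The step requiring the most care is the justification that the cocycle takes finitely many values on $K\times D$. This comes from $\overline{D}$ being compact and $\Gamma_i$ being discrete, since $K^{-1}\overline{D}\subset\bigcup_{\gamma\in F_K} D\gamma^{-1}$ for a finite set $F_K$. The remaining bookkeeping is to check that the hypotheses of \cref{lem:spectral gap induction} are met, namely that $G_i$ is simple and simply connected with separable $V$.
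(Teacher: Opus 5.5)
Your proof is correct, but it reaches the key step by a different route from the paper. Both arguments end the same way: they identify $I^p_{\Gamma_i}(V)\cong L^p(G_i/\Gamma_i,V)$ via \cref{lem:cocolatticeVsplits} and then invoke \cref{lem:spectral gap induction}. The difference is in how you obtain $G_i$-almost invariant vectors in the induced module.

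The paper argues cohomologically. It first handles the case $V^{\Gamma_i}\neq 0$ using the Bader--Gelander result $V^{\Gamma_i}=V^{G_i}$. Otherwise, almost invariant vectors without invariant vectors make the image of $\delta^1$ non-closed, so $H^1(\Gamma_i,V)$ is non-Hausdorff. The topological Shapiro isomorphism of \cref{thm:cocoshapiro} transfers this to $H^1\bigl(G_i,I^p_{\Gamma_i}(V)\bigr)$, which forces almost invariant vectors in the induced module.

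You instead construct the almost invariant vectors $f_v$ explicitly from a relatively compact fundamental domain. The only real input is that the cocycle takes finitely many values on $K\times D$, which follows from compactness of $\overline{D}^{-1}K^{-1}\overline{D}$ and discreteness of $\Gamma_i$.

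Your route is more elementary and self-contained. It avoids Bader--Gelander, Blanc's Shapiro isomorphism, and the case split on $V^{\Gamma_i}$, since a $\Gamma_i$-invariant $v$ simply yields a $G_i$-invariant $f_v$. The paper's route is shorter given the machinery already in place. Its degree-$0$ and degree-$1$ Shapiro argument also gives the converse implication (induced almost invariance implies $\Gamma_i$-almost invariance) at no extra cost, and that converse is what \cref{lem:legitinduction} uses. Your construction only yields the direction needed here. Both arguments use cocompactness: you for the finiteness of the cocycle values, the paper for \cref{thm:cocoshapiro}.
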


\begin{proof}
It suffices to consider a single non-compact almost simple factor, which we denote by $G$, with cocompact lattice $\Gamma$.
If the $G$-action has almost invariant vectors, then so does its restriction to $\Gamma$. Conversely, assume that the $\Gamma$-action has almost invariant vectors.
By \cite[Theorem 6.1]{badergelander:equicontinuous}, $V^\Gamma=V^G$, so if $V^\Gamma\neq 0$ the conclusion follows. Otherwise, the coboundary map $\delta^1$ (see \cref{subsub: group cohomology}), and the existence of almost invariant vectors imply that its image is not closed. So $H^1(\Gamma,V)$ is not Hausdorff.
According to \cref{thm:cocoshapiro} and \cref{lem:cocolatticeVsplits}, $H^1\bigl(G,I^p_\Gamma(V)\bigr)$ is not Hausdorff and $I^p_\Gamma(V)\cong L^p(G/\Gamma, V)$. Thus the $G$-action on $L^p(G/\Gamma,V)$ has almost invariant vectors. From \cref{lem:spectral gap induction} we deduce that the $G$-action on $V$ has almost invariant vectors.
\end{proof}

\begin{lemma} \label{lem:legitinduction}
Let $V$ be a separable $L^p$-space on which $\Gamma$ acts by linear isometries. For each non-compact factor~$G_i$, the $\Gamma_i$-action on $V$ has almost invariant vectors if and only if the $G_i$-action on the $G$-induction $I^p_\Gamma(V)$ does.
\end{lemma}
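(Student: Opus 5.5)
The plan is to reduce to a single factor, settle one direction by an explicit construction, and settle the other by transporting the problem to $p=2$ via the Mazur map.

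\emph{Step 1: reduction to a single factor.} Fix a non-compact factor $G_i$ and write $G=G_i\times G'$ and $\Gamma=\Gamma_i\times\Gamma'$, with $\Gamma'=\prod_{j\neq i}\Gamma_j$ cocompact in $G'$. Choose a measurable fundamental domain $D'\subset G'$ for $\Gamma'$. An element $f\in I^p_\Gamma(V)$ is a function on $G_i\times G'$. The map
\[
f\longmapsto \bigl(y\mapsto f(\cdot,y)\bigr),\qquad y\in D',
\]
is a Fubini-type identification. For fixed $y$, the function $f(\cdot,y)$ on $G_i$ satisfies $f(x\gamma^{-1},y)=\gamma f(x,y)$ for $\gamma\in\Gamma_i$, so it lies in $I^p_{\Gamma_i}(V)$. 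Hence this map is a $G_i$-equivariant isometric isomorphism
\[
I^p_\Gamma(V)\big|_{G_i}\cong L^p\bigl(D',I^p_{\Gamma_i}(V)\bigr),
\]
where $G_i$ acts only on the coefficients. After normalizing the measure on $D'$ to be a probability measure, \cref{lem:amplification} shows that $G_i$ has almost invariant vectors on $I^p_\Gamma(V)$ if and only if it has them on $I^p_{\Gamma_i}(V)$. Here $I^p_{\Gamma_i}(V)$ is an $L^p$-space by \cref{lem: Lp modules preserved under induction}, and it is separable. It therefore suffices to prove the following: for a cocompact lattice $\Gamma_i<G_i$, the $\Gamma_i$-action on $V$ has almost invariant vectors if and only if the $G_i$-action on $I^p_{\Gamma_i}(V)$ does.

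\emph{Step 2: the direction $\Gamma_i\Rightarrow G_i$, by direct construction.} Choose a relatively compact measurable fundamental domain $D\subset G_i$ for $\Gamma_i$. Given $v\in V$, define
\[
f_v(d\gamma^{-1})=\gamma v\qquad (d\in D,\ \gamma\in\Gamma_i).
\]
This is $\Gamma_i$-equivariant and satisfies $\|f_v\|_p=\mu(G_i/\Gamma_i)^{1/p}\|v\|$. Fix a compact set $K\subset G_i$. Since $D$ is relatively compact, there is a finite set $F\subset\Gamma_i$ such that $g^{-1}D\subset DF$ for all $g\in K$. A short computation then gives
\[
\|g f_v-f_v\|_p\le \mu(G_i/\Gamma_i)^{1/p}\max_{\gamma'\in F}\|\gamma'^{-1}v-v\|.
\]
Taking $v$ to be $(F^{-1},\varepsilon)$-almost invariant therefore produces $(K,\varepsilon')$-almost invariant vectors for $G_i$.

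\emph{Step 3: the direction $G_i\Rightarrow\Gamma_i$, via reduction to $p=2$.} This is the step I expect to be the main obstacle, because averaging an almost invariant $f$ back to $V$ is not directly controlled. By \cref{thm:kak} and \cref{thm:BLM}, write $V\cong L^p(Y)$ with the action given by a continuous homomorphism $\rho\colon\Gamma_i\to\BL(Y)$. Then
\[
I^p_{\Gamma_i}(V)\cong L^p(G_i/\Gamma_i\times Y),
\]
and under this identification the $G_i$-action comes from a continuous homomorphism $\tilde\rho\colon G_i\to\BL(G_i/\Gamma_i\times Y)$, built from a measurable section and the cocycle. This $\tilde\rho$ does not depend on $p$, and for $p=2$ it yields the unitary induction $\operatorname{Ind}_{\Gamma_i}^{G_i}$ of the unitary representation $\Phi_2\circ\rho$.

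By \cref{lem:noaiBL}, both the property of $\rho$ and the property of $\tilde\rho$ of having almost invariant vectors are independent of $p$. The question is thus reduced to the classical unitary statement: for a lattice $\Gamma_i<G_i$ (cocompact is certainly enough), $\operatorname{Ind}_{\Gamma_i}^{G_i}\pi$ weakly contains $1_{G_i}$ only if $\pi$ weakly contains $1_{\Gamma_i}$. This follows from continuity of induction with respect to weak containment together with the restriction–induction relation for lattices. If one prefers to avoid that citation, the fallback is the direct unitary argument: use $K$ with $KD\supset D\cdot S$ for a finite generating set $S$, and extract from an almost $G_i$-invariant $f$ a value $f(x)$ which is almost $S$-invariant, by an averaging/pigeonhole argument over $x\in D$.
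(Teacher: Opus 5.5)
Your Steps 1 and 2 are sound. Step 1 is exactly the paper's reduction: as a $G_i$-module, $I^p_\Gamma(V)\cong L^p(G'_i/\Gamma'_i,U)$ with $U=I^p_{\Gamma_i}(V)$, followed by \cref{lem:amplification}. Your construction of $f_v$ in Step 2 correctly shows that almost invariant vectors for $\Gamma_i$ give almost invariant vectors for $G_i$.

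The gap is in Step 3, which is the only nontrivial direction. Passing to $p=2$ via \cref{lem:noaiBL} is acceptable, but it only reduces the claim to its own unitary case, and your justification of that case does not work. Continuity of induction with respect to weak containment gives $1_{\Gamma_i}\prec\pi\Rightarrow 1_{G_i}\prec\operatorname{Ind}\pi$, which is again the direction of Step 2. Restricting $1_{G_i}\prec\operatorname{Ind}\pi$ to $\Gamma_i$ only gives $1_{\Gamma_i}\prec\mathrm{Res}_{\Gamma_i}\operatorname{Ind}\pi$. To get from there to $\pi$ you would need something like $\mathrm{Res}_{\Gamma_i}\operatorname{Ind}\pi\prec\pi$, and that is false: for $\pi=1_{\Gamma_i}$ it would force $\Gamma_i$ to act trivially on $L^2(G_i/\Gamma_i)$.

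Your fallback is not a routine pigeonhole either. For $f\in I^p_{\Gamma_i}(V)$ and $s\in S$ you have $s\cdot f(x)=f(xs^{-1})=f\bigl((xsx^{-1})^{-1}x\bigr)$, a left translate by an element that depends on $x$. The bounds $\|gf-f\|_p\le\varepsilon$, each for a fixed $g$, therefore give no direct control of $\int_D\|s\cdot f(x)-f(x)\|^p\,dx$. The condition $KD\supseteq DS$ alone does not close this.

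The paper handles both directions at once, for every $p$, with no Mazur map and no explicit vectors. For a compactly generated group acting continuously and isometrically on a Banach space, having no almost invariant vectors means exactly that $\delta^1$ is injective with closed range. Equivalently, $H^0=0$ and $H^1$ is Hausdorff. The topological Shapiro isomorphism \cref{thm:cocoshapiro} for the cocompact $\Gamma_i<G_i$, in degrees $0$ and $1$, carries these two conditions back and forth between $V$ and $I^p_{\Gamma_i}(V)$.

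If you want to keep your direct route, you can repair it by averaging the triangle inequality through an auxiliary point.
\begin{itemize}
\item Normalize $\|f\|_p=1$ and assume $\sup_{g\in K}\|gf-f\|_p\le\varepsilon$. Fix a compact identity neighbourhood $B$, and take $K$ compact with $K^{-1}\supseteq B\cup B\overline{D}S\overline{D}^{-1}$.
\item For $x\in D$, bound $\|s\cdot f(x)-f(x)\|^p\le 2^{p-1}\bigl(\|f(xs^{-1})-f(y)\|^p+\|f(y)-f(x)\|^p\bigr)$ and average over $y\in Bx$.
\item Integrate over $x\in D$, substituting $x'=xs^{-1}$ in the first term. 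Here $Ds^{-1}$ is again a fundamental domain, and $x'sx'^{-1}\in\overline{D}S\overline{D}^{-1}$.
\item Each term is then at most $|B|^{-1}\int_K\|gf-f\|_p^p\,dg\le |K|\,|B|^{-1}\varepsilon^p$.
\end{itemize}
This yields $\int_D\sum_{s\in S}\|s\cdot f(x)-f(x)\|^p\,dx\le C\varepsilon^p$ with $C=2^p|S|\,|K|\,|B|^{-1}$. Pigeonhole then gives a nonzero value $f(x)$ that is almost $S$-invariant. This version works for any $p$ and any Banach space, so you would not need the passage to $p=2$ at all.
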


\begin{proof}
    The absence of almost invariant vectors is equivalent to vanishing of $H^0$ and Hausdorffness of $H^1$. Therefore, for $G$ almost simple, the claim follows from the Shapiro isomorphisms in degrees $0$ and $1$ (\cref{thm:cocoshapiro}).
    We now consider a non-compact factor $G_i$ of $G$ and the corresponding lattice $\Gamma_i<G_i$. We consider $V$ as a $\Gamma_i$-representation and let $U$ be the corresponding $p$-induction to $G_i$.
    By the almost simple case, $V$ has $\Gamma_i$-almost invariant vectors if and only if $U$ has $G_i$-almost invariant vectors. We observe that as a $G_i$-representation, $I^p_\Gamma(V)\cong L^p(G'_i/\Gamma'_i,U)$, where $G'_i$ is the product of the factors other than $G_i$ and $\Gamma'_i<G'_i$ is the corresponding lattice.
    By \cref{lem:amplification}, this amplification has $G_i$-almost invariant vectors if and only if $U$ does, proving the claim.
\end{proof}

\subsubsection{The ultrapower argument and conclusion of the proof}

\begin{lemma} \label{lem:great_implies_excellent_for_lattice}
Assume that $H^n(\Gamma,V)=0$ for every linear isometric representation of $\Gamma$ on an $L^p$-space $V$ whose restriction to $\Gamma_i$ has no almost invariant vectors whenever $G_i$ is non-compact.
Then $H^{n+1}(\Gamma,V)$ is Hausdorff for every such representation.
\end{lemma}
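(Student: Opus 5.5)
This is a direct application of \cref{lem:uptrick_34}, with $\mathcal{V}$ the class of $L^p$-spaces carrying linear isometric $\Gamma$-actions whose restriction to each $\Gamma_i$ with $G_i$ non-compact has no almost invariant vectors. Two hypotheses of that lemma have to be checked, namely the finiteness property of $\Gamma$ and closure of $\mathcal{V}$ under ultrapowers. The vanishing hypothesis is immediate: by assumption $H^n(\Gamma,V)=0$ for every $V\in\mathcal{V}$, so in particular $\bar H^n(\Gamma,V)=0$.

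First, $\Gamma$ has type $FP_\infty(\bbR)$, hence $FP_n(\bbR)$ for every $n$. Each $\Gamma_i$ is a cocompact lattice in $G_i=\bfG_i(k_i)$ with $\operatorname{char}(k_i)=0$. In the archimedean case $\Gamma_i$ has a finite-index torsion-free subgroup, by Selberg's lemma, acting freely and cocompactly on the contractible symmetric space. In the non-archimedean case it acts properly and cocompactly on the Bruhat--Tits building. In either case the stabilizers are finite, so their rational cohomology is trivial and the cellular chain complex gives a resolution of $\bbR$ by finitely generated projective $\bbR[\Gamma_i]$-modules. Thus $\Gamma_i$ is of type $FP_\infty(\bbR)$. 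A finite product of groups of type $FP_\infty(\bbR)$ is again of this type, by tensoring the resolutions.

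Second, $\mathcal{V}$ is closed under ultrapowers. An ultrapower of an $L^p$-space is an $L^p$-space by \cref{lem:Lp_closed_under_ultraproducts}, and the diagonal $\Gamma$-action on it is linear and isometric. Absence of almost invariant vectors for each of the finitely many subgroups $\Gamma_i$ passes to the ultrapower by \cref{lem:no_almost_invariants_ultrapowers}, applied to each $\Gamma_i$ separately; this is exactly the remark made after \cref{lem:uptrick_34}. Note that it is essential to use ultrapowers of a single representation here, as the paper points out, and not arbitrary ultraproducts.

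With both hypotheses in place, \cref{lem:uptrick_34} yields $H^n(\Gamma,V)=0$ and that $H^{n+1}(\Gamma,V)$ is Hausdorff for every $V\in\mathcal{V}$, which is the claim. I do not expect a genuine obstacle. The only point needing care is the $FP$ property of cocompact lattices in $p$-adic factors when torsion is present, and this is handled by working with $\bbR$-coefficients, where finite stabilizers cause no trouble.
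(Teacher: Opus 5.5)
Your proposal is correct and matches the paper's proof: closure of the class under ultrapowers by \cref{lem:no_almost_invariants_ultrapowers}, a finiteness property of the cocompact lattice $\Gamma$, and then \cref{lem:uptrick_34}. The only difference is that the paper just cites that $\Gamma$ is of type $F_\infty$, whereas you verify $FP_\infty(\mathbb{R})$ directly. In that verification the relevant point is that the permutation modules $\mathbb{R}[\Gamma_i/\Gamma_\sigma]$ with finite $\Gamma_\sigma$ are projective by averaging, rather than that the stabilizers have trivial rational cohomology.
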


\begin{proof}
    By \cref{lem:no_almost_invariants_ultrapowers}, taking ultrapowers preserves the assumptions on the representation. Since $\Gamma$ is a cocompact lattice, it is of type $F_\infty$.
    \cref{lem:uptrick_34} concludes.
\end{proof}

\begin{proof}[Proof of \cref{prop:mainGhausdorff}]
Set $r=\rank(G)$. 
Let $V$ be an $L^p$-space on which $\Gamma$ acts by linear isometries such that the restriction to $\Gamma_i$ has no almost invariant vectors whenever $G_i$ is non-compact. To show that $H^{r-1}(\Gamma,V)=0$, we may assume that $V$ is separable by \cref{lem:separable reduction}. By \cref{lem:legitinduction}, the restriction of $I^p(V)$ to each non-compact factor $G_i$ has no almost invariant vectors. The vanishing assumption for $G$ and Shapiro (\cref{thm:cocoshapiro}) give
\[
H^{r-1}(\Gamma,V)\cong H^{r-1}(G,I^p(V))=0.
\]
Thus \cref{lem:great_implies_excellent_for_lattice} implies that $H^r(\Gamma,V)$ is Hausdorff for every such representation.

Now let $W$ be a $G$-representation as in the proposition. Again, by \cref{lem:separable reduction}, we may assume that $W$ is separable. By \cref{lem:legitrestriction}, its restriction to $\Gamma$ satisfies the condition above, so $H^r(\Gamma,W)$ is Hausdorff. Then \cref{cor:restriction_is_injective_cocolattice} implies that $H^r(G,W)$ is Hausdorff as well.
\end{proof}

\subsection{Cohomological vanishing via the opposition complex} \label{subsec:5.4}

In this subsection we prove \cref{prop:mainGinductionstep}, which we recall here.
\begin{proposition*} 
Let $G$ be a semisimple group and let $V$ be a separable Fréchet representation of~$G$.
If for every proper Levi subgroup $L<G$ and for every $0\leq i\leq \ssrank(L)$ we have $H^i(L,V)=0$, then $H^i(G,V)=0$ for every $0\leq i<\rank(G)$.
\end{proposition*}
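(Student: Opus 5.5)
We will use the opposition complex of $G$. Let $r=\rank(G)$ and, for $0\le k\le r-1$, let $\Delta_k$ be the space of pairs $(P,P^-)$ of opposite parabolic subgroups of corank $k+1$, i.e.\ with $P\cap P^-$ a Levi subgroup of semisimple rank $r-k-1$. Equivalently, $\Delta_k$ is the space of Levi subgroups of that type. Face maps enlarge the parabolics, which passes from a Levi to a larger Levi. This gives a semi-simplicial topological space $\Delta_\ast$ of dimension $r-1$. Each $\Delta_k$ is a finite disjoint union of homogeneous spaces $G/L$ with $L$ a proper Levi subgroup, and $G$ acts by conjugation. By von Heydebreck's theorem, the opposition complex is homotopy equivalent to a wedge of $(r-1)$-spheres, so $\tilde H_j(\Delta_\ast)=0$ for $j<r-1$. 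Each $G/L$ is $\sigma$-compact and metrizable, hence a $k_\omega$-space with metrizable pieces. So \cref{strong exactness of simlpicial spaces} shows that
\[
\bbZ\Delta_{r-1}\to\cdots\to\bbZ\Delta_0\to\bbZ\to0
\]
is a strong exact sequence of measurable $\bbZ G$-modules.

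Apply \cref{acyclic resolution} to the functor $F=\hom_{\bbZ G}(\,\cdot\,,V)$, dualised appropriately. Concretely, use the dimension-shifting argument of \cref{acyclic resolution} with the long exact sequences of \cref{LES} for $\Ext^\ast_{\bbZ G}(\,\cdot\,,V)$, with $P_k=\bbZ\Delta_k$ and $M=\bbZ$. The required acyclicity comes from the measurable Shapiro lemma, \cref{measurable Shapiro lemma}. For $\Delta_k=\bigsqcup G/L$ we get
\[
\Ext^m_{\bbZ G}(\bbZ\Delta_k,V)\cong\bigoplus H^m(L,V),
\]
using \cref{lem:disjoint_union} to split the disjoint union. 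The hypotheses of that lemma hold because $G/L$ is standard and a Borel section exists. By \cref{comparison between continuous and measurable cohomology}, $H^m(L,V)$ is the continuous cohomology, since $V$ is a separable Fréchet module. The hypothesis then gives $H^m(L,V)=0$ for $m\le\ssrank(L)=r-k-1$.

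Now we count degrees. Let $I_k=\ker(\bbZ\Delta_k\to\bbZ\Delta_{k-1})$, with $I_{-1}=\bbZ$. The short exact sequences $0\to I_k\to\bbZ\Delta_k\to I_{k-1}\to0$ are strong. Since $\Ext^m(\bbZ\Delta_k,V)=0$ for $m\le r-1-k$, the long exact sequence gives $\Ext^{m}(I_{k-1},V)\cong\Ext^{m-1}(I_k,V)$ for $m$ in the relevant range. Iterating, for $i\le r-1$ we obtain
\[
H^i(G,V)=\Ext^i(\bbZ,V)\hookrightarrow\Ext^{i-1}(I_0,V)\hookrightarrow\cdots\hookrightarrow\Ext^{0}(I_{i-1},V),
\]
where each arrow is an injection or isomorphism. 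The last group embeds into $\hom_G(\bbZ\Delta_i,V)=\Ext^0(\bbZ\Delta_i,V)$. That group vanishes because $0\le\ssrank(L)$ for the Levis appearing in $\Delta_i$, which exists for $i\le r-1$. The case $i=0$ is immediate: $V^G\subset V^L=0$. Hence $H^i(G,V)=0$ for $i<r$.

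The main obstacle is the index bookkeeping at the top of the complex, together with checking that the hypotheses of \cref{strong exactness of simlpicial spaces} and \cref{measurable Shapiro lemma} hold. For the bookkeeping, the top simplices $\Delta_{r-1}$ correspond to minimal Levis, of semisimple rank $0$. Only $\Ext^0$ vanishes there, which is exactly why the conclusion stops below $r$. For the hypotheses, we need the face maps to be continuous and $G$-equivariant, each $\Delta_k$ to be a finite union of closed orbits with measurable sections of $G\to G/L$, and von Heydebreck's algebraic sphericity to apply to the abstract simplicial complex underlying $\Delta_\ast$. The last point is harmless because \cref{strong exactness of simlpicial spaces} only requires vanishing of the algebraic homology of $\bbZ\Delta_\ast$.
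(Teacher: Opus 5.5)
Your proposal is correct and follows the paper's proof. It uses the typed opposition complex, von Heydebreck's sphericity together with \cref{strong exactness of simlpicial spaces} for strong exactness, and the measurable Shapiro lemma with \cref{comparison between continuous and measurable cohomology} to identify $\Ext^m_{\bbZ G}(\bbZ\Delta_k,V)$ with $\bigoplus H^m(L,V)$. It then applies the dimension shift of \cref{acyclic resolution}, which you unwind by hand.

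There are two cosmetic slips. First, the connecting maps run $\Ext^{m-1}(I_k,V)\to\Ext^m(I_{k-1},V)$, not the other way; in your range both neighbouring $\Ext(\bbZ\Delta_k,V)$ terms vanish, so these maps are isomorphisms and the direction does not matter. Second, the space of typed opposite pairs is $G/L$, not ``the space of Levi subgroups'' (that would be $G/N_G(L)$); this does not affect the argument, since you only use the $G/L$ description.
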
 

The claim is void if $\rank(G)=0$, so we assume $\rank(G)\ge 1$.
This task will be achieved using the measurable cohomology of the opposition complex, which we will now describe. We use ideas from Monod \cite[\S~2.B]{Monod:onthe}. For background on buildings we refer to \cite{abramenkobrown:buildings}. 

In the notation of \cref{sec:semisimple_groups}, let $G=\prod G_i$ where $G_i\cong \bfG_i(k_i)$ is a semisimple group. Every $G_i$ acts on its spherical Tits building $\mathcal{T}^i$ which is of type $(W_i,S_i)$ where $W_i$ is the relative Weyl group of $G_i$, and of dimension $\rank_{k_i}(\bfG_i)-1$. We define the Tits building $\mathcal{T}$ of $G$ to be the join of the Tits buildings of the $G_i$'s. This is a spherical building of type $(W,S)$ where $W=\prod_iW_i$ and $S=\bigsqcup_iS_i$, and of dimension $\rank(G)-1$. 

Every vertex of $\mathcal{T}$ has a type, which is an element of $S$, and the $G$-action on $\mathcal{T}$ preserves the types. The vertices of a simplex in $\mathcal{T}$ have distinct types. Let $\mathcal{O}$ be the opposition complex of $\mathcal{T}$ as defined in \cite{vonheydebreck:homotopy}. Every simplex in $\mathcal{O}$ is a pair of opposite simplices (with opposite types) of $\mathcal{T}$. We define the type of a vertex of $\mathcal{O}$ as the type of its first coordinate. The $G$-action on $\mathcal{O}$ is type-preserving as well. In $\mathcal{O}$ the vertices of a simplex also have distinct types. Choose a linear order on $S$. 
Using the types, we get an ordering of the vertices of each simplex in $\mathcal{O}$. With face maps given by deleting the corresponding vertices, this defines the structure of a semi-simplicial set on~$\mathcal O$. We denote by $\mathcal{O}_n$ the set of $n$-simplices of $\mathcal{O}$. Since $G$ preserves the types, the group $G$ acts naturally on this semi-simplicial set. The geometric realization of the semi-simplicial set $(\mathcal{O}_n)_n$ is homeomorphic to the geometric realization of the simplicial complex $\mathcal{O}$. The following was proved by von Heydebreck.

\begin{theorem}[{\cite[Theorem~3.1]{vonheydebreck:homotopy}}]\label{sphericity of opposition complex}
    The homotopy type of the simplicial complex $\mathcal{O}$ is that of a wedge of spheres of dimension $\dim\mathcal{O}=\rank(G)-1$.
\end{theorem}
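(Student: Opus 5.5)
Since the statement is exactly \cite[Theorem~3.1]{vonheydebreck:homotopy}, in the paper we would simply invoke it. If we had to reprove it, the plan would be as follows.

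\emph{Reduction to connectivity.} Write $d=\rank(G)-1$. The complex $\mathcal{O}$ is a $d$-dimensional simplicial complex: a maximal simplex is a pair of opposite chambers, and chambers have dimension $d$. By Hurewicz and Whitehead, a $d$-dimensional CW complex that is $(d-1)$-connected is homotopy equivalent to a wedge of $d$-spheres. For $d=0$ it suffices that $\mathcal{O}$ is non-empty, and for $d=1$ that it is a connected graph. So the statement reduces to showing that $\mathcal{O}$ is $(d-1)$-connected.

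\emph{Reduction to a single almost simple factor.} A simplex of the join $\mathcal{T}=\mathcal{T}^1*\cdots*\mathcal{T}^l$ is a union $\sigma=\sigma_1\sqcup\cdots\sqcup\sigma_l$ with $\sigma_i\in\mathcal{T}^i$. Two such simplices $\sigma$ and $\tau$ are opposite exactly when each $\sigma_i$ is opposite $\tau_i$ (possibly both empty). Hence $\mathcal{O}(\mathcal{T})\cong\mathcal{O}(\mathcal{T}^1)*\cdots*\mathcal{O}(\mathcal{T}^l)$. A join of wedges of spheres of dimensions $d_i$ is a wedge of spheres of dimension $\sum d_i+(l-1)$, which is $\rank(G)-1$. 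So it suffices to treat a single thick irreducible spherical building $\mathcal{T}$ of rank $r=d+1$.

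\emph{Connectivity for an irreducible building.} I would argue by induction on the rank, using links. The link of a vertex $(v,v')$ of $\mathcal{O}$, with $v$ opposite $v'$, consists of pairs $(\sigma,\tau)$ with $\sigma\in\operatorname{lk}(v)$, $\tau\in\operatorname{lk}(v')$ such that $\sigma\cup v$ is opposite $\tau\cup v'$. Projection between the opposite residues $\operatorname{lk}(v)$ and $\operatorname{lk}(v')$ is an isomorphism of buildings. Transporting through it, I expect this link to be identified with the opposition complex of the rank $r-1$ building $\operatorname{lk}(v)$, which is $(d-2)$-connected by induction.

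To pass from links to the whole complex, I would use a Quillen fiber lemma argument. Consider the poset map from the face poset of $\mathcal{O}$ to the face poset of $\mathcal{T}$ given by $(\sigma,\tau)\mapsto\sigma$. Its fibers over $\{\sigma'\ge\sigma\}$ are controlled by the set of simplices opposite a given simplex. By the Abramenko--Brown type results on thick buildings, such ``opposite'' complexes are highly connected. Alternatively, one can run a filtration of $\mathcal{O}$ by Weyl-distance from a fixed chamber, in the spirit of the proof that $\mathcal{T}$ itself is a wedge of spheres, adding pairs in a shelling-like order so that each step glues along a $(d-1)$-connected subcomplex.

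The main obstacle is this global connectivity step: controlling the fibers (equivalently, the complex of simplices opposite a fixed simplex), which genuinely uses thickness and the combinatorics of the Weyl group. That is precisely the content of von Heydebreck's argument, and in the paper I would cite it rather than reproduce it.
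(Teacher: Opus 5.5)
Your proposal matches the paper, which gives no proof of this statement and simply invokes von Heydebreck's Theorem~3.1. Your preliminary reductions, to $(\rank(G)-2)$-connectivity and via $\mathcal{O}(\mathcal{T}^1*\cdots*\mathcal{T}^l)\cong\mathcal{O}(\mathcal{T}^1)*\cdots*\mathcal{O}(\mathcal{T}^l)$ to irreducible buildings, are sound. As you acknowledge, however, they leave the essential global connectivity step to that citation.
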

Next we identify the $G$-sets $\mathcal{O}_n$. It follows from standard algebraic group theory that for every $n\leq\rank(G)-1$, $\mathcal{O}_n$ is $G$-isomorphic to a finite disjoint union of $G$-sets of the form $G/L$ where $L$ is a Levi subgroup of $G$ of semisimple rank $\rank(G)-1-n$. We now give each $G/L$ its quotient topology from $G$. That makes $\mathcal{O}_n$ a $G$-space. On each orbit, the face maps are induced by inclusions of simplex stabilizers and hence are continuous. Thus $(\mathcal{O}_n)_n$ is a $G$-semi-simplicial space.

\begin{proof}[Proof of \cref{prop:mainGinductionstep}]
    In this proof we write $H_c^*$ for the continuous cohomology functor and $H_\mathrm{m}^*$ for the measurable cohomology functor defined in \cref{def:measurable cohomology}. Since $G$ is lcsc, it is a measurable group and a standard Borel space. The same holds for every Levi subgroup~$L$ of~$G$. Moreover, $G/L$ is a standard Borel space and by \cref{lem:measurable_selection_sigma_compact} there exists a measurable section $G/L\to G$. Each $G/L$ is a $k_\omega$-space, since it is locally compact, $\sigma$-compact and metrizable. The same holds for every $\mathcal{O}_{n}$. Hence, by \cref{sphericity of opposition complex} and 
    \cref{strong exactness of simlpicial spaces}, the sequence of measurable $G$-modules
    \[
    \bbZ \mathcal{O}_{\rank(G)-1}\to\dots\to\bbZ \mathcal{O}_{0}\to\bbZ\to0
    \]
    is strongly exact.
     
     Let $L$ be a proper Levi subgroup of semisimple rank~$r$, and let $i\leq r$. 
     By \cref{comparison between continuous and measurable cohomology} and the measurable Shapiro lemma, \cref{measurable Shapiro lemma}, $H^*_c(L,V)\cong H^*_\mathrm{m}(L,V)\cong  \Ext_{\bbZ G}^*(\bbZ G/L,V))$.

     Since $\mathcal{O}_n$ is a finite disjoint union of $G/L$ over $L\in\mathcal{L}$, a finite set of Levi subgroups satisfying $\ssrank(L)=\rank(G)-1-n$, \cref{lem:disjoint_union} and the additivity of the functor $\Ext^i_{\bbZ}(\,\cdot\,,V)$ give
     \[\Ext_{\bbZ G}^i(\bbZ\mathcal{O}_{n},V)=\bigoplus_{L\in\mathcal{L}} \Ext_{\bbZ G}^i(\bbZ [G/L],V).\]
     Thus, by assumption, 
     \begin{equation} \label{eq:vanishing}
         \Ext_{\bbZ G}^i(\bbZ\mathcal{O}_{n},V)=0 \text{ for }  n\leq\rank(G)-1\text{ and } i\leq\rank(G)-1-n.
     \end{equation}
     
     By \cref{acyclic resolution}, in degrees smaller than $\rank(G)-1$, $H^*_\mathrm{m}(G,V)=\Ext^*_{\bbZ G}(\bbZ,V)$ is the cohomology of $\Ext_{\bbZ G}^0(\bbZ\mathcal{O}_{\ast},V)$, and $H^{\rank(G)-1}_\mathrm{m}(G,V)=\Ext^{\rank(G)-1}_{\bbZ G}(\bbZ,V)$ is a subquotient of $\Ext_{\bbZ G}^0(\mathcal{O}_{\rank(G)-1},V)$. Both vanish by \cref{eq:vanishing}.
     By \cref{comparison between continuous and measurable cohomology}, we obtain that $H^i_c(G,V)\cong H^i_\mathrm{m}(G,V)=0$ for $0\le i\le\rank(G)-1$.
\end{proof}

\subsection{Cohomological vanishing for Levi subgroups}\label{subsec: cohomological vanishing}

In this subsection, we prove \cref{prop:mainLconditions}, which we recall here.

\begin{proposition*} 
    Let $G$ be a simply connected semisimple group.
    Let $V$ be an $L^p$-space on which $G$ acts by linear isometries such that the restriction to every non-compact almost simple factor has no almost invariant vectors.
    Let $L<G$ be a proper Levi subgroup and let $S<L$ be its semisimple part.
    If $S$ satisfies the conclusion of \cref{thm:mainGsemisimpleLp}, then $H^i(L,V)=0$ for every $0\le i\leq \rank(S)$. 
\end{proposition*}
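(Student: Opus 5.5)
We would realize $L$ as an almost direct product of its semisimple part $S$ and its central torus part, and use \cref{product of two groups} in a version adapted to $S$ via a cocompact lattice. Concretely, $L=\mathbf L(k)$ contains $S$ and the $k$-points $C$ of the connected center of $\mathbf L$, which commute, and $SC$ has finite index (and is cocompact) in $L$. Since $L$ is a proper Levi subgroup, $C$ contains a non-trivial $k$-split torus $A$. Because $S$ and $C$ commute and $SC$ is closed and cocompact of finite index in $L$ (for a non-archimedean field $L/SC$ is finite abelian; in the real case the connected component argument), restriction $H^i(L,V)\to H^i(SC,V)$ is injective by \cref{cor:restriction_is_injective_cocolattice}. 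So it suffices to prove $H^i(SC,V)=0$ for $i\le \rank(S)$.

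The first input is $V^A=0$, indeed no almost invariant vectors for $A$: by Howe--Moore type results for $L^p$-representations (the Mautner phenomenon / \cite[Theorem 6.1]{badergelander:equicontinuous} as used in \cref{lem:legitrestriction}), an $A$-invariant vector is invariant under the unbounded subgroup generated by $A$ in each almost simple factor where $A$ projects non-trivially, hence invariant under that non-compact factor; since that factor has no almost invariant vectors, in particular no invariant vectors, we get $V^A=0$, and hence $V^C=0$. The second input is that $S$ itself has no almost invariant vectors on each of its non-compact almost simple factors $S_j$: each $S_j$ is a non-compact closed subgroup of some non-compact factor $G_i$, and again by Howe--Moore/Mautner for $L^p$ (transported from the unitary case via the Mazur map, \cref{lem:noaiBL}), almost invariant vectors for $S_j$ would produce almost invariant vectors for $G_i$. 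This is the step I expect to be most delicate: one needs a spectral-gap transfer from $G_i$ to a non-compact simple subgroup, which for unitary representations follows from decay of matrix coefficients (no $G_i$-invariant vectors implies $C_0$ coefficients, restriction to $S_j$ has no invariant vectors in every ultrapower, hence no almost invariant vectors), and the Mazur map reduces $L^p$ to $L^2$ via \cref{thm:BLM}.

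With these, the hypothesis on $S$ gives $H^i(S,V)=0$ for $i<\rank(S)$ and $H^{\rank(S)}(S,V)$ Hausdorff. To apply the product argument we need a discrete $FP$ group, so replace $S$ by a cocompact lattice $\Lambda<S$ (exists by \cite{BorelHarder:ExistenceCocompact}, and is of type $F_\infty$): by \cref{thm:cocoshapiro}, \cref{lem:cocolatticeVsplits} and \cref{lem:legitinduction}-style reasoning, one obtains the same vanishing/Hausdorff statements for $\Lambda$ acting on $V$ (apply the $S$-hypothesis to $I^p_\Lambda(V)\cong L^p(S/\Lambda,V)$, an $L^p$-space without almost invariants for non-compact factors by \cref{lem:spectral gap induction}). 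Then \cref{product of two groups} applied to $N=\Lambda$, the closed subgroup $C$, and the group $\Lambda C$ with $n=\rank(S)$ yields $H^i(\Lambda C,V)=0$ for $i\le\rank(S)$, using that $V$ is $L$-embedded and $V^C=0$. Finally $\Lambda C$ is cocompact in $SC$ with finite invariant measure on the quotient, so \cref{cor:restriction_is_injective_cocolattice} gives $H^i(SC,V)=0$, and hence $H^i(L,V)=0$ for $0\le i\le\rank(S)$.
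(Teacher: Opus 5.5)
Your overall architecture is the paper's. You pass to the finite-index cocompact subgroup generated by $S$ and the center, get $V^{Z(L)}=0$ from \cite[Theorem 6.1]{badergelander:equicontinuous}, and replace $S$ by a cocompact lattice $\Lambda$ via $I^p_\Lambda(V)\cong L^p(S/\Lambda,V)$ and \cref{lem:spectral gap induction}. You then apply \cref{product of two groups} and pull back with \cref{cor:restriction_is_injective_cocolattice"}; the paper does this last step in one go for $\Gamma\cdot Z(L)<L$.

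The problem is the step you flagged yourself: your argument that the non-compact almost simple factors $S_j$ of $S$ have no almost invariant vectors does not work.
\begin{itemize}
\item Howe--Moore only says that each individual matrix coefficient of the $G_i$-representation vanishes at infinity. This property is not preserved under ultralimits.
\item The ultrapower is not a continuous $G_i$-module. An ultralimit of $S_j$-almost invariant vectors need not be a $G_i$-continuous vector, so the Mautner argument cannot be run there.
\end{itemize}
You can see that the argument must be flawed because it never uses that $S_j$ is non-amenable. Run verbatim, it would show that a non-compact split torus $A<G_i$ has no almost invariant vectors in any representation without $G_i$-invariant vectors. That is false: $L^2(G_i)$ restricted to $A$ is a multiple of the regular representation of the amenable group $A$, so it has almost invariant vectors.

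What is needed is a decay estimate that is uniform over all representations without invariant vectors and that exploits non-amenability. The paper does this in \cref{lem:levilegitunitary} in three steps.
\begin{itemize}
\item A non-compact simple factor of a proper Levi subgroup can only occur inside a factor $G_i$ of rank at least $2$. In rank one the semisimple part of a proper Levi is anisotropic, hence compact.
\item By Oh's uniform pointwise bounds \cite{oh:uniform}, for every unitary $G_i$-representation without invariant vectors, the restriction to a rank-one root subgroup $H_\alpha<S_j$ is strongly $L^{4+\epsilon}$.
\item Such representations of $H_\alpha$ have no almost invariant vectors \cite{Gorfine:spectral}, hence neither does $S_j$.
\end{itemize}
Transferring this to $L^p$ via the Mazur map and \cref{lem:noaiBL}, as you suggest, gives \cref{lem:levilegit}. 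With this input in place of your Howe--Moore/ultrapower sketch, the rest of your proof goes through.
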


We first show that the absence of almost invariant vectors for every non-compact almost simple factor is preserved under restriction to a Levi subgroup. We prove this for unitary representations (\cref{lem:levilegitunitary}) and use the Mazur map to extend it to $L^p$-isometric representations (\cref{lem:levilegit}). This allows us to use the assumption on the semisimple part of the Levi subgroup. The Shapiro isomorphism transfers the required cohomological properties to a cocompact lattice of the semisimple part. We then apply \cref{product of two groups} to this lattice and the center of the Levi subgroup and use \cref{cor:restriction_is_injective_cocolattice} to obtain the vanishing for the Levi subgroup.

\subsubsection{Absence of almost invariant vectors passes to Levi subgroups}

\begin{lemma} \label{lem:levilegitunitary}
    Let $G$ be a simply connected semisimple group and let $L<G$ be a Levi subgroup. Consider a unitary representation of $G$ on a Hilbert space $V$ whose restriction to every non-compact almost simple factor of $G$ has no almost invariant vectors.
    Then its restriction to every non-compact almost simple factor of $L$ has no almost invariant vectors.
\end{lemma}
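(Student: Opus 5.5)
We reduce to a single non-compact almost simple factor $H$ of $L$ and use Howe--Moore type decay of matrix coefficients. Write $G=\prod_j G_j$ with the $G_j$ almost simple over $k_j$. Since $L$ is a Levi subgroup, it decomposes compatibly as $L=\prod_j L_j$ with $L_j<G_j$ a Levi subgroup. Hence every non-compact almost simple factor $H$ of $L$ lies in a single $L_j$, and thus in a single non-compact almost simple factor $G_j$ of $G$. Restricting to $G_j$, we may therefore assume that $G$ is almost simple and that $V|_{G}$ has no almost invariant vectors. We must show $V|_H$ has no almost invariant vectors.

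The key tool is that $G$ has a uniform spectral gap in the following quantitative form. First, if $V^G=0$ then, since $G$ is simply connected and almost simple, $V$ has no $H$-invariant vectors. Indeed, $H$ is non-compact, so the Howe--Moore theorem (a Mautner phenomenon) upgrades $H$-invariance to $G$-invariance. Our hypothesis gives $V^G=0$ because $V$ has no almost invariant vectors for $G$.

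To upgrade invariance to almost invariance, we use the tensor-power trick for decay of matrix coefficients. Since $V$ has no $G$-almost invariant vectors, by Cowling's theorem and its non-archimedean analogues there is $m$ such that $V^{\otimes m}$ is weakly contained in (a multiple of) the regular representation of $G$. This holds for rank at least $2$ (Oh's uniform bounds) or by property (T) quantitative decay. If $G$ has rank $1$ without property (T), we instead use the following argument directly: $V$ is weakly contained in $\lambda_G$ up to a tensor power, because the spectral gap of a $G$-representation with no invariant vectors is witnessed by strongly $L^{q}$ matrix coefficients. The fallback argument, which I would actually write, avoids this: a sequence of unit $H$-almost invariant vectors $v_n$ yields, via an ultrapower, a unitary $G$-representation $V_\omega$ with a non-zero $H$-invariant vector. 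By Howe--Moore that vector is $G$-invariant. The ultrapower of a representation without $G$-almost invariant vectors has no $G$-invariant vectors, by the analogue of \cref{lem:no_almost_invariants_ultrapowers} for unitary representations, cf.\ \cite[Lemma~35]{badersauer:higher:survey}. This is a contradiction. The subtle point is continuity of the ultrapower action. We handle it by restricting to the continuous part of $V_\omega$ for $G$. The Howe--Moore/Mautner argument requires only the closed subgroup generated by $H$ acting continuously on the vector, which holds after averaging $v_n$ against a fixed compactly supported approximate identity on $G$. This averaging preserves $H$-almost invariance uniformly and makes the limit vector $G$-continuous.

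The main obstacle is this continuity issue in the ultrapower, together with verifying that $H$ is non-compact and closed in the almost simple factor so that Mautner's phenomenon applies in the non-archimedean case. For the Mautner step we use that $H$ contains a non-trivial $k$-split torus, whose contracting horospherical subgroups generate, together with $H$, the group $G^+=G$; simple connectedness ensures $G=G^+$.
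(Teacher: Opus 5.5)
Your reduction to an almost simple $G$ agrees with the paper's. However, the argument you say you would actually write (ultrapower plus Howe--Moore/Mautner) has a genuine gap, and it cannot be closed.

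The averaging step fails. For $w_n=\pi(\phi)v_n$ one gets $\pi(h)w_n-w_n=\int_G\phi(g)\pi(g)\bigl(\pi(g^{-1}hg)v_n-v_n\bigr)\,dg$. Since $H$ is not normal in $G$, $g^{-1}hg\notin H$, so the $H$-almost invariance of $v_n$ gives no control here. You also never ensure $\liminf\|w_n\|>0$, so the ultralimit may be $0$.

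More decisively, the argument uses nothing about $H$ beyond its being closed and non-compact. It would therefore show that, for every such $H$, having no $G$-almost invariant vectors forces having no $H$-almost invariant vectors. That is false. Take $G=\SL_3(\bbR)$, $V=L^2(G)$, and $H$ a one-parameter unipotent subgroup or a split torus. Then $V$ has no $G$-almost invariant vectors because $G$ is non-amenable. Yet $V|_H$ is a multiple of $\lambda_H$, which has almost invariant vectors because $H$ is amenable.

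Qualitative decay of matrix coefficients cannot distinguish these cases. The $H$-invariant vector produced in the ultrapower is simply not $G$-continuous. The Mautner argument needs continuity along the contracted horospherical subgroups, which are not contained in $H$, contrary to what you assert.

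What is missing is quantitative decay combined with non-amenability, and this is what the paper uses. For $\rank(G)\geq 2$, Oh's uniform bounds show that the restriction of any unitary $G$-representation without invariant vectors to a rank-one subgroup $H_\alpha$ inside the factor of $L$ is strongly $L^{4+\epsilon}$ for every $\epsilon>0$. Gorfine's theorem then excludes $H_\alpha$-almost invariant vectors, and hence almost invariant vectors for the factor.

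Your first, abandoned route can be completed in the same spirit:
\begin{enumerate}
\item Start from $V^{\otimes m}\prec\lambda_G$ and restrict to $H$.
\item Use that $\lambda_G|_H$ is a multiple of $\lambda_H$.
\item If $v_n$ were $H$-almost invariant, then so would be $v_n^{\otimes m}$.
\item This gives $1_H\prec\lambda_H$, contradicting the non-amenability of the non-compact almost simple group $H$.
\end{enumerate}
Your proposal never states this restriction step.

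Finally, the rank-one case requires no decay estimates. A proper Levi subgroup of a rank-one almost simple group has anisotropic, hence compact, semisimple part, so there is nothing to prove. Your claims about strongly $L^q$ coefficients for rank-one groups without (T) are therefore unneeded.
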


\begin{proof}
Every non-compact almost simple factor of $L$ lies in an almost simple normal factor of~$G$. Restricting to this factor and its corresponding Levi subgroup, we may therefore assume that $G$ is almost simple over a single local field.
We also assume that $\rank(G)\geq 2$, otherwise either $L$ has no non-compact simple factors or $L=G$, and in any case the lemma follows trivially.
We let $S$ be a non-compact simple factor of $L$.
We argue to show that $S$ has no almost invariant vectors.
Being a non-compact simple factor of a Levi subgroup, $S$ contains a rank-one subgroup $H_\alpha$ associated with a root $\alpha$, as constructed in \cite[\S3]{oh:uniform}.
It is shown in \cite[Theorems 4.1 and 4.2]{oh:uniform} that for any unitary $G$-representation with no non-trivial $G^+$-invariants, the restriction to $H_\alpha$ is strongly $L^{4+\epsilon}$ for every $\epsilon>0$.
Note that $G=G^+$ because $G$ is simply connected.  

In particular, any unitary $G$-representation with no invariants has no $H_\alpha$-almost invariant vectors~\cite[Theorem~1.3]{Gorfine:spectral}.
It follows that the restriction to $S$ of any unitary $G$-representation with no invariants has no almost invariant vectors as well.
\end{proof}

\begin{lemma} \label{lem:levilegit}
    Let $G$ be a simply connected semisimple group acting by linear isometries on a separable $L^p$-space such that the restriction to every non-compact almost simple factor of $G$ has no almost invariant vectors. If $L<G$ is a Levi subgroup, then the restriction to every non-compact almost simple factor of $L$ has no almost invariant vectors.
\end{lemma}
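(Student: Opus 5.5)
The plan is to transfer \cref{lem:levilegitunitary} from the unitary setting to $L^p$ through the Banach--Lamperti group and the Mazur map, exactly as in the proof of \cref{lem:spectral gap induction}. By \cref{thm:kak} we realize the separable $L^p$-space as $L^p(Y)$ for a Lebesgue space $Y$. The case $p=2$ is precisely \cref{lem:levilegitunitary}, so we may assume $p\neq 2$.

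In that case \cref{thm:BLM} provides a continuous homomorphism $\rho\colon G\to \BL(Y)$ with $\Phi_p\circ\rho$ equal to the given action. For every closed subgroup $H<G$, \cref{lem:noaiBL} says that $\Phi_p\circ\rho|_H$ has almost invariant vectors if and only if $\Phi_2\circ\rho|_H$ does; this is \cref{def:noaiBL}, applied to the restricted homomorphism $\rho|_H$.

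First take $H$ to be any non-compact almost simple factor of $G$. By hypothesis $\Phi_p\circ\rho|_H$ has no almost invariant vectors, so the unitary representation $\Phi_2\circ\rho$ of $G$ on $L^2(Y)$ has no almost invariant vectors for any non-compact almost simple factor of $G$. Applying \cref{lem:levilegitunitary} to this unitary representation, its restriction to each non-compact almost simple factor $S$ of $L$ has no almost invariant vectors. Then \cref{lem:noaiBL}, now with $H=S$, transfers this back: $\Phi_p\circ\rho|_S$ has no almost invariant vectors.

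The only step needing care is the use of \cref{lem:noaiBL} for the restricted homomorphisms $\rho|_H$. This requires $\rho|_H$ to be continuous, which is automatic, and $H$ to be lcsc, which holds because almost simple factors of $G$ and of $L$ are closed subgroups. Apart from this, the argument reduces to the unitary lemma, whose content comes from Oh's uniform pointwise bounds.
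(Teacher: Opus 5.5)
Your proposal is correct and follows the paper's proof essentially verbatim. For $p\neq 2$ you lift the action to a continuous homomorphism $\rho\colon G\to\BL(Y)$ via \cref{thm:BLM}, apply \cref{lem:noaiBL} to each non-compact almost simple factor to pass to the unitary representation $\Phi_2\circ\rho$, invoke \cref{lem:levilegitunitary}, and transfer the conclusion back to $\Phi_p\circ\rho|_S$ with a second application of \cref{lem:noaiBL}; your remark that \cref{lem:noaiBL} is applied to the restricted homomorphisms $\rho|_H$ (still continuous, with $H$ closed and hence lcsc) spells out a step the paper leaves implicit.
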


\begin{proof}
For $p=2$ the lemma follows from \cref{lem:levilegitunitary}, so we assume $p\neq 2$.
By \cref{thm:BLM} the $G$-representation is given by $\Phi_p\circ \pi$ for  some continuous homomorphism $\pi\colon G\to \BL(X)$.
By \cref{lem:noaiBL}, the unitary representation $\Phi_2\circ \pi$ satisfies the same assumption on the non-compact almost simple factors of $G$. The case $p=2$ gives the corresponding conclusion for $L$, and another application of \cref{lem:noaiBL} transfers it back to $\Phi_p\circ \pi|_L$.
\end{proof}

\subsubsection{Conclusion of the proof}

\begin{proof}[Proof of \cref{prop:mainLconditions}]
Let $S$ be the semisimple part of $L$ and put $r=\rank(S)$. By assumption, $S$ satisfies the conclusion of \cref{thm:mainGsemisimpleLp}. See the remark on simple connectivity before \cref{thm:mainGsemisimpleLp}. 
Let $V$ be an $L^p$-space with a $G$-action as in the proposition.
By \cref{lem:Lpcofinal}, every continuous $L$-cochain takes values in a separable $G$-invariant $L^p$-subspace of $V$. Since the assumptions on the representation pass to such subspaces, we may assume that $V$ is separable.

By \cref{lem:levilegit}, the restriction to every non-compact almost simple factor of $S$ has no almost invariant vectors.
Since $L$ is a \emph{proper} Levi subgroup, $Z(L)$ is non-compact. By \cite[Theorem 6.1]{badergelander:equicontinuous}, $V^{Z(L)}=0$. 
The product $S\cdot Z(L)$, where $Z(L)$ is the center of $L$, is a subgroup of finite index in~$L$. 
Let $\Gamma<S$ be a cocompact lattice. By cocompactness, $\Gamma$ is of type $F_\infty$.
By \cref{lem:cocolatticeVsplits}, the ($S$-induced) representation $W=I_\Gamma^p(V)$ identifies with $L^p(S/\Gamma,V)$ with the diagonal $S$-action. By \cref{lem:spectral gap induction}, the restriction to every non-compact almost simple factor of $S$ still has no almost invariant vectors. By the assumption, $H^i(S,W)=0$ for $i<r$ and $H^r(S,W)$ is Hausdorff. The topological Shapiro isomorphism $H^i(\Gamma,V)\cong H^i(S,W)$ from \cref{thm:cocoshapiro} gives the same conclusions for $H^i(\Gamma,V)$.

By \cref{product of two groups}, we get that $H^i(\Gamma\cdot Z(L),V)=0$ for all $ i\leq r$.
Since $\Gamma\cdot Z(L)$ is a closed cocompact subgroup of $L$ such that $\sfrac{L}{\Gamma\cdot Z(L)}$ carries a non-zero $L$-invariant Radon measure, \cref{cor:restriction_is_injective_cocolattice} implies that $H^i(L,V)$ injects in $H^i(\Gamma\cdot Z(L),V)$, and so it vanishes for all $ i\leq r$.
This concludes the proof.
\end{proof}

\subsection{Cohomological applications}

From the simple and simply connected case, we can deduce a version of \cref{thm:mainGsimpleLp} for connected simple Lie groups (\cref{cor:vanishin_simple_connected}). We then deduce a similar statement for lattices (\cref{cor:main_simple_lattice}). We conclude the section with establishing Gromov's conjecture \cref{cor:Gromov}.

\begin{corollary}\label{cor:vanishin_simple_connected}
    Let $G$ be a connected simple Lie group with finite center and $r=\rank(G)\ge 2$. Let $V$ be an $L^p$-space on which $G$ acts by linear isometries without non-trivial invariant vectors. We have $H^i(G,V)=0$ for $0\leq i<r$, and $H^r(G,V)$ is Hausdorff.
\end{corollary}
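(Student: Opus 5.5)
Our plan is to reduce \cref{cor:vanishin_simple_connected} to \cref{thm:mainGsimpleLp} by passing to the simply connected cover. Write $G$ as a connected simple Lie group with finite center. Its universal cover need not be algebraic with finite center. Instead, we use the fact that $G/Z(G)$ is the identity component $\bfG(\bbR)^\circ$ of a real adjoint simple group. We then take the simply connected algebraic cover $\tilde\bfG\to\bfG$, and set $\tilde G=\tilde\bfG(\bbR)$, which is connected since $\tilde\bfG$ is simply connected (Cartan). The resulting map $\tilde G\to G/Z(G)$ has finite kernel and open image of finite index; by connectedness of the target, this map is surjective. Between $G$ and $\tilde G$ we therefore have a common quotient $\bar G=G/Z(G)$ by finite central subgroups. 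Both $G\to\bar G$ and $\tilde G\to \bar G$ are continuous surjections with finite kernels, and the rank is unchanged throughout.

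The first step handles the center of $G$. Let $Z=Z(G)$, which is finite. Since $Z$ is compact, averaging over $Z$ gives a $G$-equivariant contractive projection onto $V^Z$. We expect $H^i(Z,V)=0$ for $i>0$ and $H^0(Z,V)=V^Z$, so the Hochschild--Serre spectral sequence (\cref{thm:partial_hochschild_serre}) collapses to a topological isomorphism $H^i(G,V)\cong H^i(\bar G,V^Z)$. Alternatively, one can argue directly: the complementary summand $\ker$ of the averaging projection has vanishing cohomology by the standard averaging argument over the compact normal subgroup $Z$. By \cref{lem:Lp_equivariant_projection}, after the separable reduction \cref{lem:separable reduction}, $V^Z$ is again an $L^p$-space. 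The action on it factors through $\bar G$, and $(V^Z)^{\bar G}=V^G=0$. Vanishing and Hausdorffness for $V$ are thus equivalent to the corresponding statements for $\bar G$ acting on $V^Z$.

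The second step inflates to $\tilde G$. The $\bar G$-representation $W=V^Z$ pulls back to a $\tilde G$-representation with $W^{\tilde G}=W^{\bar G}=0$. The kernel $F$ of $\tilde G\to\bar G$ is finite central and acts trivially on $W$. The same spectral sequence argument with the finite normal subgroup $F$ gives $H^i(\tilde G,W)\cong H^i(\bar G,W)$ topologically. Since $\tilde G$ is simply connected and simple with $\rank\ge2$ in characteristic $0$, \cref{thm:mainGsimpleLp} yields $H^i(\tilde G,W)=0$ for $i<r$ and Hausdorffness of $H^r$. Degree $0$ is just $V^G=0$.

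The main obstacle is justifying the identification $H^*(G,V)\cong H^*(G/K,V^K)$ for a finite (compact) normal subgroup $K$, including the topological statement needed for Hausdorffness in degree $r$. We plan to handle this directly at the cochain level. Averaging the cochains over $K^{r+1}$ gives a continuous chain retraction of $C^*(G,V)$ onto cochains that are $K$-invariant in each variable and take values in $V^K$; this subcomplex is exactly $C^*(G/K,V^K)$. A standard homotopy, as in the proof that compact groups have trivial continuous cohomology, shows that this retraction is a homotopy equivalence with continuous maps. Closedness of the coboundaries then transfers in both directions, since they are retracts of each other.
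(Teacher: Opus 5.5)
Your proposal is correct and follows the paper's proof essentially step for step: separable reduction, passing to $W=V^Z$ via the averaging projection and \cref{lem:Lp_equivariant_projection}, lifting from the adjoint group $G/Z$ to the real points of its simply connected algebraic cover, identifying the cohomologies through Hochschild--Serre with finite kernels, and applying \cref{thm:mainGsimpleLp}. Your only addition is the explicit cochain-level averaging argument. It makes rigorous the topological (hence Hausdorffness-preserving) nature of these isomorphisms, which the paper simply asserts on the basis of \cref{thm:partial_hochschild_serre}.
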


\begin{proof}
    By \cref{lem:separable reduction}, we may and will assume that $V$ is separable. Let $Z=Z(G)$ be the finite center of~$G$ and $W=V^Z$. Averaging over $Z$ gives a $G$-equivariant contractive linear projection onto $W$, so $W$ is an $L^p$-space by \cref{lem:Lp_equivariant_projection}.

    As recalled in \cref{sec:semisimple_groups}, $G/Z$ is the identity component $\mathbf H(\mathbb R)^\circ$ for an adjoint simple real algebraic group $\mathbf H$. Let $\widetilde{\mathbf H}\to\mathbf H$ be its algebraically simply connected central cover and put $\widetilde G=\widetilde{\mathbf H}(\mathbb R)$. Then $\widetilde G$ is connected, and the projection $\widetilde G\to G/Z$ is surjective with finite central kernel. 
    We pull back $W$ to $\widetilde G$-representation. Since the kernels are finite, \cref{thm:partial_hochschild_serre} yields topological isomorphisms
    \[
    H^*(G,V)\cong H^*(G/Z,W)\cong H^*(\widetilde G,W).
    \]
     The $\widetilde G$-action on $W$ is isometric, $W^{\widetilde G}=V^G=0$, and $\rank(\widetilde G)=r$. Hence \cref{thm:mainGsimpleLp} implies both the desired vanishing and the Hausdorff property. 
\end{proof}

\begin{corollary} \label{cor:vanishin_simple_connected for semisimple}
    Let $G$ be a connected semisimple Lie group with finite center and $r=\rank G\ge 2$. Assume $G$ has property (T). Let $V$ be an $L^p$-space on which $G$ acts by linear isometries such that no non-compact factor of $G$ has non-trivial invariant vectors. Then we have $H^i(G,V)=0$ for $0\leq i<r$, and $H^r(G,V)$ is Hausdorff.
\end{corollary}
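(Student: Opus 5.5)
The plan is to follow the proof of \cref{cor:vanishin_simple_connected}: pass to a simply connected cover and then apply \cref{thm:mainGsemisimpleLp}. The new work is to turn ``no invariant vectors for non-compact factors'' into ``no almost invariant vectors for non-compact factors''.

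By \cref{lem:separable reduction} we may assume that $V$ is separable. Write $Z=Z(G)$, which is finite. Put $W=V^Z$. Averaging over $Z$ gives a $G$-equivariant contractive projection onto $W$, so $W$ is an $L^p$-space by \cref{lem:Lp_equivariant_projection}. As in the proof of \cref{cor:vanishin_simple_connected}, \cref{thm:partial_hochschild_serre} applied to the finite normal subgroup $Z$ gives topological isomorphisms $H^*(G,V)\cong H^*(G/Z,W)$. Both vanishing and Hausdorffness transfer along these isomorphisms.

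Next, $G/Z=\mathbf H(\mathbb R)^\circ$ for an adjoint semisimple real algebraic group $\mathbf H=\prod_j\mathbf H_j$. Let $\widetilde{\mathbf H}$ be its simply connected cover and set $\widetilde G=\widetilde{\mathbf H}(\mathbb R)$. Then $\widetilde G$ is a simply connected semisimple group in the sense of \cref{sec:semisimple_groups}, and $\widetilde G\to G/Z$ is surjective with finite central kernel. Pulling $W$ back to $\widetilde G$ and applying \cref{thm:partial_hochschild_serre} once more gives $H^*(G/Z,W)\cong H^*(\widetilde G,W)$, with $\rank(\widetilde G)=r$. Compact factors of $\widetilde G$ correspond to compact factors of $G$. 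For each non-compact almost simple factor $\widetilde G_j$ with image $G_j$ in $G/Z$, we have $W^{\widetilde G_j}=V^{G_j'}\cap W$, where $G_j'$ is the corresponding factor of $G$. By hypothesis this space is $0$.

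The key step is to upgrade $W^{\widetilde G_j}=0$ to the absence of $\widetilde G_j$-almost invariant vectors, so that \cref{thm:mainGsemisimpleLp} applies. Since $G$ has property (T), each non-compact simple factor has property (T). This property is inherited by the quotient $G_j$, and then by $\widetilde G_j$, because $\widetilde G_j$ has finite center and is a simple real algebraic group with the same Lie algebra. Now use \cite[Theorem~A(i)]{baderfurmangelandermonod:BFGM:property}, exactly as in the proof that \cref{thm:mainGsemisimpleLp} implies \cref{thm:mainGsimpleLp}. For a group with property (T), an isometric action on an $L^p$-space without non-zero invariant vectors has no almost invariant vectors; the same statement is recorded in the second Example after \cref{cor:products_of_discrete}. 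Applying this to the $\widetilde G_j$-action on $W$ shows there are no almost invariant vectors.

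We then apply \cref{thm:mainGsemisimpleLp} to $\widetilde G$ acting on $W$. This gives $H^i(\widetilde G,W)=0$ for $i<r$ and shows that $H^r(\widetilde G,W)$ is Hausdorff. Transporting back along the isomorphisms above yields the corollary.

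I expect the main obstacle to be the transfer of invariants and of property (T) through the factor decomposition of $G\to G/Z\leftarrow\widetilde G$. Factors of $G$ are only defined up to finite central intersections, so one must check that $V^{G_j'}=0$ really implies $W^{\widetilde G_j}=0$. My first approach would be to note that the image of $\widetilde G_j$ in $G/Z$ equals the image of $G_j'$, since both are the connected normal subgroup with a given simple Lie algebra summand.
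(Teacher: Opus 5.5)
Your proposal is correct and follows the paper's proof. You reduce to separable $V$ and to the simply connected cover as in \cref{cor:vanishin_simple_connected}, then use that each non-compact factor has property (T) and no invariant vectors, hence no almost invariant vectors by \cite[Theorem~A]{baderfurmangelandermonod:BFGM:property}, and apply \cref{thm:mainGsemisimpleLp}; your explicit checks that $W^{\widetilde G_j}=V^{G_j'}\cap V^{Z}=0$ and that property (T) passes to $\widetilde G_j$ fill in details the paper leaves implicit.
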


\begin{proof}
The reduction to separable $V$ and to the case where $G$ is the real points of simply connected algebraic group work exactly as in the previous proof. Then we apply \cref{thm:mainGsemisimpleLp} and use the fact that no non-compact factor has almost invariant vectors because each non-compact factor has property $(T)$ and has no invariant vectors. 
\end{proof}

The following contains \cref{thm:LpHi}.

\begin{corollary} \label{cor:main_simple_lattice}
    Let $G$ be a simply connected and simple group of characteristic 0 or a simple connected Lie group. Assume that $G$ has a finite center and $\rank(G)\geq 2$. Let $\Gamma<G$ be a lattice. If $V$ is an $L^p$-space on which $\Gamma$ is acting by linear isometries such that $V^\Gamma=0$, then $H^i(\Gamma,V)=0$ for every $i<\rank(G)$, and $H^{\rank(G)}(\Gamma,V)$ is Hausdorff.
\end{corollary}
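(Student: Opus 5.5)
We would prove \cref{cor:main_simple_lattice} by transferring the vanishing from $G$ to $\Gamma$ via $L^p$-induction, using the Shapiro restriction of \cref{sec:shapiro} for the vanishing part and the ultrapower argument of \cref{lem:uptrick_34} for the Hausdorff part. Write $r=\rank(G)$. By \cref{lem:separable reduction} (applied to the discrete group $\Gamma$), we may assume $V$ is separable.

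\emph{Step 1: the induced representation.} Consider $W=I^p_\Gamma(V)$. It is an $L^p$-space with a linear isometric $G$-action by \cref{lem: Lp modules preserved under induction}. By the Shapiro isomorphism in degree $0$ we have $W^G=H^0(G,W)$. This maps isomorphically onto $H^0(G,I^p_{\loc}(V))\cong H^0(\Gamma,V)=V^\Gamma=0$; the first map is injective since it is induced by an inclusion of modules. Hence $W^G=0$.

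\emph{Step 2: vanishing below the rank.} We apply \cref{cor:vanishin_simple_connected} in the Lie case, or \cref{thm:mainGsimpleLp} in the simply connected case. Either gives $H^i(G,W)=0$ for $i<r$. Next we want the Shapiro restriction $H^i(G,I^p(V))\to H^i(\Gamma,V)$ to be surjective for $i<r$.
\begin{itemize}
\item For nonuniform $\Gamma$ in the Lie case, this is \cref{cor:shapiro_lemma_for_lattices}.
\item For cocompact $\Gamma$, it is even an isomorphism by \cref{thm:cocoshapiro}.
\item For nonuniform lattices in the $p$-adic or general characteristic-$0$ setting, we would invoke \cref{thm: reduction theorem} ($\Gamma$ is polynomial) together with \cref{thm:shapiro_for_polynomial_lattice}. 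This needs surjectivity of the polynomial comparison map for $\Gamma$ below the rank. We would take that from the filling-function results cited before \cref{cor:shapiro_lemma_for_lattices} (Leuzinger--Young and the analogous $S$-arithmetic statements). We expect this to be the main obstacle: checking that the comparison-map input is available in exactly the generality stated.
\end{itemize}
Surjectivity onto $H^i(\Gamma,V)$ from the zero group then gives $H^i(\Gamma,V)=0$ for $i<r$.

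\emph{Step 3: Hausdorffness at the rank.} We use \cref{lem:uptrick_34} with $n=r-1$. For this, $\Gamma$ must be of type $FP_r(\mathbb R)$. Cocompact lattices are $F_\infty$. In the nonuniform Lie case, Borel--Serre gives $F_\infty$. In the $p$-adic case, Borel--Serre for $S$-arithmetic groups gives type $F_{r-1}$ with finiteness in the needed degree; here we would check the precise range and, if necessary, pass through the vanishing of $H^{r-1}$ more carefully.

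The class of $L^p$-spaces with $\Gamma$-action and $V^\Gamma=0$ is not obviously closed under ultrapowers, so we instead use the class without almost invariant vectors. These classes coincide by property (T) of $\Gamma$ (\cite{baderfurmangelandermonod:BFGM:property}, as in the Example after \cref{cor:products_of_discrete}). The no-almost-invariant-vectors class is closed under ultrapowers by \cref{lem:no_almost_invariants_ultrapowers}. Step 2 gives vanishing of $H^{r-1}$ on this class, so \cref{lem:uptrick_34} yields that $H^r(\Gamma,V)$ is Hausdorff.
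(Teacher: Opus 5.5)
Your overall route is exactly the paper's: induce to $G$, get $H^i(G,I^p_\Gamma(V))=0$ from \cref{thm:mainGsimpleLp} or \cref{cor:vanishin_simple_connected}, pull the vanishing back along the Shapiro restriction, and obtain Hausdorffness in degree $\rank(G)$ from \cref{lem:uptrick_34}, using property (T) of $\Gamma$ to replace ``no invariants'' by ``no almost invariants''.

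The one real gap is the case you flag as the main obstacle: nonuniform lattices in the non-Archimedean setting. As written, that bullet does not close. \cref{thm:shapiro_for_polynomial_lattice} needs surjectivity of the polynomial comparison map for $\Gamma$, and the paper supplies that input (via Leuzinger--Young) only for lattices in Lie groups. The missing idea is that this case is empty. Over a non-Archimedean local field of characteristic $0$, every lattice in a semisimple group is cocompact (Tamagawa's theorem).

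This is precisely what the paper uses. For non-Archimedean $G$, \cref{thm:cocoshapiro} gives a \emph{topological} isomorphism $H^k(G,I^p_\Gamma(V))\cong H^k(\Gamma,V)$. That isomorphism yields both the vanishing below the rank and the Hausdorffness at the rank directly from the statement for $G$, with no ultrapower argument and no finiteness input. Your $FP$ worry in Step 3 disappears for the same reason. Separately, your remark that $S$-arithmetic groups are only of type $F_{r-1}$ describes a positive-characteristic phenomenon; in characteristic $0$ they are of type $FP_\infty$ by Borel--Serre.

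In the Archimedean case your argument is complete. A simply connected simple group over $\mathbb{R}$ or $\mathbb{C}$ is a connected Lie group with finite center, so \cref{cor:shapiro_lemma_for_lattices} applies. One small addition: for nonuniform $\Gamma$, Borel--Serre applies because $\Gamma$ is arithmetic by Margulis' arithmeticity theorem. This is how the paper obtains $FP_\infty(\mathbb{R})$ for \cref{lem:uptrick_34}.
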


\begin{proof}
    Let $V$ be an $L^p$-space with a $\Gamma$-action by linear isometries such that $V^\Gamma=0$. 
    Let $I^p(V)$ be the induced representation to $G$.
    By \cref{thm:shapiro},  $I_{\loc, \Gamma}^p(V)^G=0$ and so $I^p_\Gamma (V)^G=0$.

    By \cref{thm:mainGsimpleLp} in the simply connected case and \cref{cor:vanishin_simple_connected} in the connected Lie group case, $H^k\bigl(G,I^p_{\Gamma}(V)\bigr)=0$ for $0\leq k<\rank~G$.
    If $G$ is non-Archimedean, the lattice $\Gamma$ is cocompact, and by \cref{thm:cocoshapiro}, $H^k\bigl(G,I^p_{\Gamma}(V)\bigr)\cong H^k(\Gamma,V)$. The statement follows.

    Otherwise, by \cref{cor:shapiro_lemma_for_lattices}, $H^k(G,I^p_{\Gamma}V)$ surjects onto $H^k(\Gamma,V)$ for $k<\rank~G$, so the latter vanishes as well in that range. 
    The Hausdorffness in the rank is deduced from \cref{lem:uptrick_34} as follows: The lattice $\Gamma$ has $FP_\infty(\mathbb{R})$ by \cite{borelserre:corners} and the fact that $\Gamma$ is arithmetic, by \cite[Theorem~IX.1.11]{margulis:discrete}, and the class of $L^p$-spaces on which $\Gamma$ acts by linear isometries without invariants is closed under ultrapowers by \cref{lem:no_almost_invariants_ultrapowers}, since $\Gamma$ has property (T).
\end{proof}

We conclude with a proof of Gromov's conjecture \cref{cor:Gromov}.

\begin{corollary}[Gromov's conjecture] \label{cor:Lp_cohomology_alg}
If $G$ is a connected semisimple group with finite center of rank $r$, then  $H^i(G,L^p(G))=0$ for every $1\leq p<\infty$ and for every $1\leq i<r$. Furthermore, $H^r(G,L^p(G))$ is Hausdorff.
\end{corollary}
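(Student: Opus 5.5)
We reduce to \cref{cor:vanishin_simple_connected for semisimple}, or more precisely to \cref{thm:mainGsemisimpleLp}, applied to the left regular representation. Since the statement is about $G$ itself and not a lattice, no induction from a lattice is needed. The representation in question is $V=L^p(G)$ with $G$ acting by left translation $(g\cdot f)(x)=f(g^{-1}x)$. This action is isometric and strongly continuous for $1\le p<\infty$, and $L^p(G)$ is separable because $G$ is second countable.

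First we pass to the simply connected case. Write $G=\prod G_i$ as in \cref{sec:semisimple_groups}. Any compact factors $K$ can be absorbed: $H^*(G,L^p(G))\cong H^*(G/K,L^p(G)^K)$ by \cref{thm:partial_hochschild_serre}, since compact groups have vanishing higher cohomology. Here $L^p(G)^K\cong L^p(G/K)$ is again an $L^p$-space by \cref{lem:Lp_equivariant_projection}, using averaging over $K$. Similarly, as in the proof of \cref{cor:vanishin_simple_connected}, we replace $G$ by $\widetilde G$, the real points of the simply connected cover of the adjoint group. Finite central kernels do not change continuous cohomology, and pulling back gives $L^p(G/Z)$ as a $\widetilde G$-module. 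Since $\widetilde G\to G/Z$ has finite kernel $F$, we have $L^p(G/Z)\cong L^p(\widetilde G)^F$. This is a complemented $\widetilde G$-invariant subspace of $L^p(\widetilde G)$, so $H^*(\widetilde G,L^p(G/Z))$ is a retract of $H^*(\widetilde G,L^p(\widetilde G))$. We are thus reduced to proving vanishing below the rank and Hausdorffness at the rank for $L^p(\widetilde G)$. (If $G$ is meant in the general sense of \cref{sec:semisimple_groups}, the same reduction works factor by factor.)

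The key step is to check the hypothesis of \cref{thm:mainGsemisimpleLp}: no non-compact almost simple factor $G_i$ has almost invariant vectors in $L^p(G)$. As a $G_i$-representation, $L^p(G)\cong L^p(G_i',L^p(G_i))$, where $G_i'$ is the product of the other factors. This is an amplification, but over an infinite measure space, so \cref{lem:amplification} does not directly apply. Instead we argue as in \cref{lem:spectral gap induction}: the representation comes from $\pi\colon G_i\to\Aut(G)<\BL(G)$, so by \cref{lem:noaiBL} it suffices to treat $p=2$. There, $L^2(G)|_{G_i}$ is a multiple of the regular representation $L^2(G_i)$. A non-compact simple group is non-amenable, so $L^2(G_i)$ has no almost invariant vectors (Hulanicki–Reiter), and a multiple of it has none either. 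With this hypothesis verified, \cref{thm:mainGsemisimpleLp} gives $H^i=0$ for $0\le i<r$ and Hausdorffness in degree $r$.

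The main obstacle is the functional-analytic bookkeeping of the reductions: checking that fixed vectors under compact or finite central subgroups form $L^p$-spaces carrying the right actions, and that the relevant cohomology isomorphisms are topological, so that Hausdorffness transfers along them. Also, $H^0=0$ holds trivially: since $G$ is non-compact, $L^p(G)$ contains no non-zero invariant functions, which is why the statement starts at $i\ge1$ but is consistent with \cref{thm:mainGsemisimpleLp}. If $G$ has no non-compact factors the statement is vacuous, because then $r=0$.
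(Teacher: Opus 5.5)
Your proposal is correct and follows essentially the same route as the paper: you invoke \cref{thm:mainGsemisimpleLp}, reduce the almost-invariant-vector check to $p=2$ via \cref{lem:noaiBL}, and observe that $L^2(G)$ restricted to a non-compact factor $H$ is a multiple of the regular representation $L^2(H)$, which has no almost invariant vectors because $H$ is non-amenable. Your explicit reduction to the simply connected cover and removal of compact factors, including the retraction argument via $L^p(G/Z)\cong L^p(\widetilde G)^F$, is a useful addition, since the paper's short proof leaves the simple-connectivity hypothesis of \cref{thm:mainGsemisimpleLp} implicit.
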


\begin{proof}
By \cref{thm:mainGsemisimpleLp}, we only need to show that there are no almost invariant vectors for any non-compact factor of $G$. 

Let $H$ be a non-compact factor of $G$. By \cref{lem:noaiBL} it is enough to consider the case $p=2$. As a $H$-representation we have that $L^2(G)=L^2(G/H)\bar\otimes L^2(H)$, where the tensor is the Hilbertian tensor product and $H$ acts trivially on $G/H$. Since $H$ is not amenable, there are no $H$-almost invariant vectors in $L^2(H)$ and so there are none in $L^2(G)$.
\end{proof}

\section{Actions on simplicial complexes} \label{sec:actions}

The first half of this section, \cref{subsec:6setup} and \cref{subse:6.2},
is devoted to the proof of \cref{thm:2ndversion}.
In its second half, \cref{subse:applic}, we will discuss various applications of this Theorem, coupled with vanishing of cohomology results that were proved in earlier sections.

\subsection{Setup} \label{subsec:6setup}

Let $X$ be a simplicial complex. 
Let $\Delta_\ast(X)$ denote the simplicial chain complex of $X$ with real coefficients. That is, $\Delta_k(X)$ is the real vector space spanned by the oriented $k$-simplices of $X$ subject to the relation $\sigma+\overline{\sigma}=0$ where $\sigma$ is any oriented $k$-simplex and $\overline{\sigma}$ is the same simplex with the opposite orientation. 
A choice of an orientation for each $k$-simplex gives a basis for $\Delta_k(X)$.
We consider the $\ell^1$-norm defined by this basis, which we denote by $\|\cdot\|$. 
This norm does not depend on the choice of the orientations.
From now on, we equip $\Delta_k(X)$ with this norm. 
We denote $\Delta_{-1}(X)=\mathbb{R}$, which we think of as the free vector space over the unique $(-1)$-simplex of $X$. The map $\Delta_0(X)\to \Delta_{-1}(X)$ is called the \emph{augmentation map} and is often denoted by $\epsilon$; we prefer to denote it by $\partial$, for homogeneity. We call the chain complex $\Delta_\ast(X)$ starting in degree $-1$ the \emph{augmented} simplicial chain complex of $X$. 

Let $\overline{\Delta}_k$ be the completion of $\Delta_k$ with respect to the $\ell^1$-norm.

\begin{lemma}\label{continuity of boundary map}
    The boundary maps of the augmented simplicial chain complex $\Delta_\ast(X)$ are bounded. In particular, their completions turn $\bar\Delta_\ast(X)$ into a chain complex of $\ell^1$-spaces. 
\end{lemma}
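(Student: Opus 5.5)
The plan is to estimate the operator norm of each boundary map on the standard basis and then extend by continuity. Fix an orientation for each $k$-simplex, so that the oriented $k$-simplices $\sigma$ form a basis of $\Delta_k(X)$ and $\|\sum_\sigma a_\sigma\sigma\|=\sum_\sigma|a_\sigma|$. For $k\geq 1$ the boundary of an oriented $k$-simplex $\sigma=[v_0,\dots,v_k]$ is
\[
\partial\sigma=\sum_{i=0}^k(-1)^i[v_0,\dots,\widehat{v_i},\dots,v_k].
\]
Each summand is, up to sign, a basis element of $\Delta_{k-1}(X)$, and these $k+1$ faces are distinct. Hence $\|\partial\sigma\|=k+1$. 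For $k=0$ the augmentation sends each vertex to $1\in\mathbb{R}=\Delta_{-1}(X)$, so $\|\partial v\|=1$.

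For a general chain $c=\sum_\sigma a_\sigma\sigma$, only finitely many coefficients are nonzero, and the triangle inequality gives
\[
\|\partial c\|\le\sum_\sigma|a_\sigma|\,\|\partial\sigma\|\le (k+1)\|c\|.
\]
So $\partial_k\colon\Delta_k(X)\to\Delta_{k-1}(X)$ is bounded with norm at most $k+1$. This bound is uniform in $X$ and needs no local finiteness, since we only bound the images of basis vectors. It is also independent of the choice of orientations, because changing an orientation only changes the sign of a basis vector.

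A bounded linear map between normed spaces extends uniquely to a bounded map $\bar\partial_k\colon\bar\Delta_k(X)\to\bar\Delta_{k-1}(X)$ of the same norm. The composite $\bar\partial_{k-1}\bar\partial_k$ is continuous and vanishes on the dense subspace $\Delta_k(X)$, because $\partial\partial=0$ there. Hence it vanishes identically, and $\bar\Delta_\ast(X)$ is a chain complex.

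Finally, $\bar\Delta_k(X)$ is the $\ell^1$-completion of the span of a basis indexed by the (unoriented) $k$-simplices. It is therefore isometric to $\ell^1(X_k)$, where $X_k$ is the set of $k$-simplices, and $\bar\Delta_{-1}(X)=\mathbb{R}$. Thus these are $\ell^1$-spaces. There is no real obstacle in this argument; the only care needed is the bookkeeping of orientations and the check that the bound does not depend on the cardinality of vertex links.
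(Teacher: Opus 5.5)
Your proposal is correct and follows essentially the same argument as the paper. Both bound $\|\partial\sigma\|$ by $k+1$ on basis simplices and then use the triangle inequality to get $\|\partial c\|\le(k+1)\|c\|$. You additionally write out what the paper leaves implicit: the augmentation case, the extension to completions, $\bar\partial\bar\partial=0$ by density, and the identification of $\bar\Delta_k(X)$ with $\ell^1$ of the set of $k$-simplices.
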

\begin{proof}
    If $\sigma$ is an oriented $k$-simplex then $\partial\sigma$ is a sum of $k+1$ simplices thus $\|\partial\sigma\|=k+1$. If $c=\sum_{i=1}^na_i\sigma_i\in\Delta_k(X)$ is chain, where each $\sigma_i$ is an oriented $k$-simplex with coefficient $a_i\in\mathbb{R}$, then $\|\partial c\|\leq\sum_{i=1}^n|\alpha_i|\|\partial \sigma_i\|=(k+1)\|v$.
\end{proof}

Every (simplicial) automorphism of $X$ acts as an isometry on $\Delta_k(X)$ and thus extends to an isometry of $\overline{\Delta}_k$. The boundary maps are equivariant with respect to the action of $\Aut(X)$. 

We say that a simplicial action of a topological group $G$ on $X$ is \textit{continuous} if the stabilizer of each vertex -- equivalently, each simplex -- is open in~$G$. In this case, the chain groups $\Delta_k(X)$ are \emph{discrete $G$-modules}, that is, they carry a linear $G$-action with open stabilizers. 
The following fact is immediate.

\begin{lemma} 
    If $G$ acts continuously and simplicially on a simplicial complex $X$, then the  induced action on each completed chain group $\overline{\Delta}_k(X)$ is a continuous,  linear and isometric. 
\end{lemma}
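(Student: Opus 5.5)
The statement to prove is the final lemma: if $G$ acts continuously (open stabilizers) and simplicially on $X$, then the induced action on each $\overline{\Delta}_k(X)$ is continuous, linear and isometric.

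The plan is to handle the three properties in turn, dealing with the finite-support chains $\Delta_k(X)$ first and passing to the completion last.

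\textbf{Linearity and isometry.} Every element $g\in G$ acts by a simplicial automorphism of $X$. It therefore permutes the $k$-simplices and sends each oriented simplex to an oriented simplex, up to the relation $\sigma+\overline{\sigma}=0$. In the basis given by a choice of orientations, $g$ acts on $\Delta_k(X)$ by a signed permutation matrix. So it is linear and preserves the $\ell^1$-norm, which, as noted before the lemma, does not depend on the choice of orientations. By uniform continuity, each such isometry extends uniquely to a linear isometry of $\overline{\Delta}_k(X)$. Uniqueness of the extension shows the extensions still satisfy $(gh)\cdot c=g\cdot(h\cdot c)$, so we obtain a linear isometric action.

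\textbf{Continuity.} Since the action is by isometries, it suffices to prove that for each fixed $c\in\overline{\Delta}_k(X)$ the orbit map $g\mapsto g\cdot c$ is continuous at the identity. Indeed, joint continuity then follows from the estimate
\[
\|g'c'-gc\|\le\|c'-c\|+\|g^{-1}g'c-c\|,
\]
which is the usual argument for isometric actions.

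Fix $c$ and $\varepsilon>0$. By density, choose a finitely supported chain $c_0=\sum_{j=1}^n a_j\sigma_j\in\Delta_k(X)$ with $\|c-c_0\|<\varepsilon/2$. The stabilizer in $G$ of each oriented simplex $\sigma_j$ is open. It is contained in the simplex stabilizer, which is open by hypothesis. Because the action preserves types only up to a finite permutation of the vertices, the orientation-preserving stabilizer is a finite-index subgroup of an open subgroup. Such a subgroup contains the open pointwise stabilizer of the finitely many vertices, and is therefore open.

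Let $U$ be the intersection of these finitely many open subgroups. Then $U$ is an open neighbourhood of $1$ fixing $c_0$. For $g\in U$,
\[
\|gc-c\|\le\|g(c-c_0)\|+\|gc_0-c_0\|+\|c_0-c\|<\varepsilon.
\]

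None of these steps is a real obstacle. The only point needing a moment's care is that orientation stabilizers are open, and this follows because the pointwise stabilizer of the vertices of a simplex is an intersection of finitely many open vertex stabilizers.
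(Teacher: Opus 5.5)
Your argument is correct and is the routine verification the paper intends; the paper calls this fact immediate and gives no proof. Simplicial automorphisms act by signed permutations of the oriented basis, hence by $\ell^1$-isometries that extend to $\overline{\Delta}_k(X)$, and continuity follows from density of finitely supported chains, openness of the pointwise stabilizer of finitely many vertices, and the standard estimate for isometric actions. The aside about types and finite index is superfluous: a general simplicial complex carries no types, and a finite-index subgroup of an open subgroup need not be open on its own. What actually gives openness is that the orientation stabilizer contains the open pointwise vertex stabilizer, which you do state.
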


The goal of this section is to prove the following
generalization of \cref{thm:2ndversion}.

\begin{theorem}\label{thm:finite_orbit_main}
    Let $G$ be a topological group and $X$ an $\mathbb{R}$-acyclic and finite-dimensional simplicial complex equipped with a continuous and simplical action of~$G$.  
    If $H^k\bigl(G,\overline\Delta_k(X))=0$ for all $1\leq k\leq \dim X$, then there exists a finite $G$-orbit in $X$.
\end{theorem}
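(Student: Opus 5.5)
The plan is a dimension-shifting argument. We build an equivariant chain map from the homogeneous bar resolution of $G$ into the augmented chain complex $\Delta_\ast(X)$, and then use the vanishing hypotheses to straighten it, from the top degree downwards, into a $G$-invariant element of $\overline\Delta_0(X)$ with augmentation $1$. Throughout, let $d=\dim X$ and write $\partial$ for the (bounded, by \cref{continuity of boundary map}) boundary maps of $\overline\Delta_\ast(X)$, including the augmentation $\partial\colon\overline\Delta_0\to\mathbb{R}$.

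\emph{Step 1: construction of cochains.} Fix a vertex $v$ and set $c_0(g_0)=g_0v\in\Delta_0(X)$. Since the vertex stabilizer is open, $c_0$ is locally constant, hence continuous, and $\partial c_0\equiv 1$. Inductively, for $1\le k\le d$ we construct continuous equivariant $c_k\in C^k(G,\Delta_k(X))$ with values in finite chains and $\partial c_k=\delta c_{k-1}$. The right-hand side is a cycle, since $\partial\delta c_{k-1}=\delta\partial c_{k-1}=\delta\delta c_{k-2}=0$ (for $k=1$, $\delta$ of the constant $1$ vanishes). As $X$ is $\mathbb{R}$-acyclic, this cycle is a boundary of a finite chain. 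We choose the lift on the slice $\{(1,g_1,\dots,g_k)\}$ and extend by equivariance. To keep $c_k$ continuous, we choose it to be constant on the (open) level sets of the locally constant function $\delta c_{k-1}(1,\cdot)$. Local constancy is inherited inductively because every value is a finite chain and simplex stabilizers are open.

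\emph{Step 2: the top degree.} Since $\Delta_{d+1}=0$ and $H_d(X;\mathbb{R})=0$, we have $Z_d=0$. The cochain $\delta c_d$ takes values in $d$-cycles, because $\partial\delta c_d=\delta\partial c_d=\delta\delta c_{d-1}=0$. Hence $\delta c_d=0$, and $c_d$ is a cocycle in $C^d(G,\overline\Delta_d(X))$.

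\emph{Step 3: descending straightening.} By the hypothesis $H^d(G,\overline\Delta_d)=0$, write $c_d=\delta b_d$ with $b_d\in C^{d-1}(G,\overline\Delta_d)$. Set $c'_{d-1}=c_{d-1}-\partial b_d$. Then
\[
\delta c'_{d-1}=\partial c_d-\partial\delta b_d=0,
\]
so $c'_{d-1}$ is a cocycle in $C^{d-1}(G,\overline\Delta_{d-1})$, and moreover $\partial c'_{d-1}=\partial c_{d-1}=\delta c_{d-2}$ since $\partial\partial=0$. Repeating this with $H^k(G,\overline\Delta_k)=0$ for $k=d-1,\dots,1$, we obtain successively cocycles $c'_k$ with $\partial c'_k=\delta c_{k-1}$. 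Here continuity of post-composition with the bounded map $\partial$ keeps everything in the continuous cochain complex. The last step yields $c'_0=c_0-\partial b_1\in C^0(G,\overline\Delta_0)$ with $\delta c'_0=0$ and $\partial c'_0=\partial c_0=1$.

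\emph{Step 4: conclusion.} The cocycle $c'_0$ corresponds to a $G$-invariant $f\in\overline\Delta_0(X)=\ell^1(X^{(0)})$ whose coefficients sum to $1$, so $f\ne 0$. Choose $0<\varepsilon<\max|f|$. The set $\{x:|f(x)|\ge\varepsilon\}$ is then finite (because $f\in\ell^1$), nonempty, and $G$-invariant, so it contains a finite $G$-orbit of vertices.

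I expect the main obstacle to be Step 1: producing the lifts $c_k$ so that they are simultaneously equivariant and continuous for an arbitrary topological group. I would handle it as described, by exploiting openness of stabilizers to make all cochains locally constant. The algebraic bookkeeping in Step 3 (signs, and that $\partial$ commutes with $\delta$) is routine.
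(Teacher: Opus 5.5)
Your proposal is correct and follows essentially the same route as the paper, which proves the statement by combining \cref{finite orbit} with \cref{non existence of invariants}. The paper likewise builds locally constant lifts $c_k$ with $\partial c_k=\delta c_{k-1}$, uses $\dim X=n$ to get $\delta c_n=0$, and corrects downward via $\delta a_k=c_k-\partial a_{k+1}$ (your $b_k$ are its $a_k$) to reach an invariant $v\in\overline\Delta_0(X)$ with $\partial v=1$, and then extracts a finite orbit from the $\ell^1$-condition exactly as you do.
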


\subsection{Proof of \cref{thm:finite_orbit_main}} \label{subse:6.2}

The theorem will follow immediately from \cref{finite orbit}, which shows that under the same assumptions there exists a $G$-invariant vector in $\overline{\Delta}_0(X)$, and \cref{non existence of invariants}, which shows that this implies the existence of a finite orbit.


\begin{theorem} \label{finite orbit}
Let $X$ be an $n$-dimensional $\mathbb{R}$-acyclic simplicial complex and let $G$ be a topological group acting on $X$ simplicially and continuously. There exists $0\leq k\leq n$ such that  $H^k\bigl(G,\overline\Delta_k(X)\bigr)\neq 0$.
\end{theorem}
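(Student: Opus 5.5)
The plan is to argue by contradiction. Assume $H^k\bigl(G,\overline\Delta_k(X)\bigr)=0$ for every $0\le k\le n$. I will first build a ``zig-zag'' of equivariant cochains from the augmentation class $1\in\Delta_{-1}(X)=\mathbb{R}$ up to degree $n$. I will then walk back down using the vanishing hypotheses, degree by degree. At the bottom this produces a non-zero $G$-invariant vector in $\overline\Delta_0(X)$, which contradicts $H^0\bigl(G,\overline\Delta_0(X)\bigr)=\overline\Delta_0(X)^G=0$. Throughout, $\partial$ denotes the boundary maps, which are bounded by \cref{continuity of boundary map}, and $\delta$ the homogeneous coboundary of \cref{subsub: group cohomology}.

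\emph{Step 1 (going up).} Fix a vertex $x_0$ and set $f_0(g_0)=g_0x_0\in\Delta_0(X)$. This is $G$-equivariant, and it is locally constant because stabilizers are open, hence continuous. It satisfies $\partial f_0=1$. Inductively, suppose $f_{k-1}\in C^{k-1}(G,\Delta_{k-1}(X))$ has finite-chain values and is locally constant, with $\partial f_{k-1}=\delta f_{k-2}$ (with the appropriate sign convention). Then $\partial(\delta f_{k-1})=\delta\partial f_{k-1}=\delta\delta f_{k-2}=0$, so $\delta f_{k-1}$ takes values in the finite $(k-1)$-cycles. Since $X$ is $\mathbb{R}$-acyclic, these are boundaries. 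Fix an $\mathbb{R}$-linear section $s$ of $\partial\colon\Delta_k\to\partial\Delta_k$ on finite chains, and define
\[ f_k(g_0,\dots,g_k)=g_0\cdot s\bigl(g_0^{-1}\,\delta f_{k-1}(g_0,\dots,g_k)\bigr). \]
This is equivariant and locally constant, hence continuous, and it satisfies $\partial f_k=\delta f_{k-1}$. Getting the fillings to be simultaneously equivariant and continuous is the point I expect to require the most care. The trick of conjugating by $g_0$ handles equivariance, and openness of stabilizers handles continuity, since every cochain involved is locally constant.

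\emph{Step 2 (top degree).} Since $\dim X=n$, we have $\Delta_{n+1}(X)=0$, and $\mathbb{R}$-acyclicity gives that $\partial_n$ is injective on finite chains. Now $\partial(\delta f_n)=\delta\partial f_n=\delta\delta f_{n-1}=0$, so $\delta f_n=0$. Thus $f_n$ is a cocycle in $C^n(G,\overline\Delta_n(X))$.

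\emph{Step 3 (going down).} By the hypothesis $H^n=0$, write $f_n=\delta h_{n-1}$ with $h_{n-1}\in C^{n-1}(G,\overline\Delta_n(X))$. Put $u_{n-1}=f_{n-1}-\partial h_{n-1}$, a continuous equivariant cochain because $\partial$ is bounded. Then
\[ \delta u_{n-1}=\partial f_n-\partial\delta h_{n-1}=0, \]
so $u_{n-1}$ is a cocycle in $C^{n-1}(G,\overline\Delta_{n-1}(X))$. Moreover $\partial u_{n-1}=\partial f_{n-1}=\delta f_{n-2}$. Iterating with $H^{k}=0$ for $k=n-1,\dots,1$, I get cochains $u_k=f_k-\partial h_k$ with $\delta u_k=0$ and $\partial u_k=\delta f_{k-1}$. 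At the last stage this yields $u_0\in C^0(G,\overline\Delta_0(X))$ with $\delta u_0=0$ and $\partial u_0=\partial f_0=1$. A $0$-cocycle is a constant map whose value is a $G$-invariant vector $w\in\overline\Delta_0(X)^G$, and $\partial w=1$ forces $w\ne0$. This contradicts $H^0=0$, proving the theorem.

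The only analytic input is that the extended augmentation and boundary maps on the $\ell^1$-completions are bounded, so that the identities $\partial\delta=\delta\partial$ and $\partial\partial=0$ persist for cochains with values in the completed chain groups. I do not expect completeness issues, because all the constructions in Step 1 stay in finite chains.
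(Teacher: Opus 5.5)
Your proof is correct and is essentially the paper's argument. Your $f_k$ are the paper's locally constant cochains $c_k$ with $\partial c_0=1$ and $\partial c_k=\delta c_{k-1}$, and your $h_{k-1}$ and $u_0$ are its $a_k$ and $v=c_0-\partial a_1$ from the descending induction; the only difference is that you make the equivariant, locally constant lift explicit through the formula $g_0\cdot s(g_0^{-1}\,\cdot\,)$, which the paper handles by choosing a preimage for each value.
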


\begin{figure}[hbt!]
\centering
\begin{subfigure}[b]{0.51\textwidth}
\centering
\begin{tikzcd}[column sep=1.1em, row sep=1.2em]
	{C^n_n} & {C^{n-1}_n} & {\cdots} & {C^0_n} \\
	{C^n_{n-1}} & {C^{n-1}_{n-1}} & \cdots & {C^0_{n-1}} \\
	\vdots & \vdots && \vdots \\
	{C^n_0} & {C^{n-1}_0} & \cdots & {C^0_0} \\
	{C^n_{-1}} & {C^{n-1}_{-1}} & \cdots & {\mathbb{R}}
	\arrow["\partial"', from=1-1, to=2-1]
	\arrow["\delta"', from=1-2, to=1-1]
	\arrow["\partial", from=1-2, to=2-2]
	\arrow["\delta"', from=1-3, to=1-2]
	\arrow["\delta"', from=1-4, to=1-3]
	\arrow["\partial", from=1-4, to=2-4]
	\arrow[from=2-1, to=3-1]
	\arrow["\delta"', from=2-2, to=2-1]
	\arrow[from=2-2, to=3-2]
	\arrow["\delta"', from=2-3, to=2-2]
	\arrow["\delta"', from=2-4, to=2-3]
	\arrow[from=2-4, to=3-4]
	\arrow[from=3-1, to=4-1]
	\arrow[from=3-2, to=4-2]
	\arrow[from=3-4, to=4-4]
	\arrow["{\partial }", from=4-1, to=5-1]
	\arrow[from=4-2, to=4-1]
	\arrow["\partial", from=4-2, to=5-2]
	\arrow[from=4-3, to=4-2]
	\arrow[from=4-4, to=4-3]
	\arrow["\partial", from=4-4, to=5-4]
	\arrow[from=5-2, to=5-1]
	\arrow[from=5-3, to=5-2]
	\arrow["0"', from=5-4, to=5-3]
\end{tikzcd}
\caption{The double complex.}
\label{fig: total complex diagram}
\end{subfigure}%
\hfill
\begin{subfigure}[b]{0.45\textwidth}
    \centering
    \begin{tikzcd}[column sep=0.8em, row sep=1.15em]
	0 & {c_n} \\
	& {\partial c_n} & {c_{n-1}} \\
	{} &&& {c_2} \\
	&&& {\partial c_2} & {c_1} \\
	&&& 0 & {\partial c_1} & {c_0} \\
	&&&& 0 & 1
	\arrow[from=1-2, to=1-1]
	\arrow["\partial"', from=1-2, to=2-2]
	\arrow["\delta"', from=2-3, to=2-2]
	\arrow["\cdots"{marking, allow upside down}, draw=none, from=3-4, to=2-3]
	\arrow["\partial"', from=3-4, to=4-4]
	\arrow[dashed, from=4-4, to=5-4]
	\arrow["\delta"', from=4-5, to=4-4]
	\arrow["\partial"', from=4-5, to=5-5]
	\arrow[dashed, from=5-5, to=5-4]
	\arrow[dashed, from=5-5, to=6-5]
	\arrow["\delta"', from=5-6, to=5-5]
	\arrow["\epsilon"', from=5-6, to=6-6]
	\arrow[dashed, from=6-6, to=6-5]
\end{tikzcd}

    \caption{The choice of the $c_k$.}
    \label{fig:def_of_c_k}
\end{subfigure}
\caption{On the left, $C_j^k=C^k\bigl(G,\bar\Delta_j(X)\bigr)$, with $\bar\Delta(X)_{-1}=\mathbb{R}$; $\partial$ is induced by the simplicial boundary and $\delta$ is the coboundary map. The right hand side illustrates how the cochain $c_k$ arise.  }
\label{fig:cochains-and-lifts}
\end{figure}




\begin{proof}
We assume that $H^k\bigl(G,\overline\Delta_k(X)\bigr)=0$ for $1\leq k\leq n$ and we will show that $H^0\bigl(G,\overline\Delta_0(X)\bigr)\neq0$. As in \cref{sec:contcohom} we denote the differentials in $C^\ast\bigl(G,\Delta_k(X)\bigr)$ by~$\delta$. The differentials $\partial$ in $\Delta_\ast(X)$ turn $C^k\bigl(G,\Delta_\ast(X)\bigr)$ into a chain complex whose differentials we also denote by~$\partial$. So $\delta$ raises and $\partial$ lowers the degree. 
We have $\partial\delta=\delta\partial$ and $\delta(1)=0$, where $1$ means the constant $0$-cocycle $G\to \Delta_{-1}(X)=\bbR$.

Since $X$ is $\mathbb{R}$-acyclic, the augmented chain complex $\Delta_\ast(X)$ is exact. For each $k$, the subcomplex of locally constant cochains in $C^k\bigl(G,\Delta_\ast(X)\bigr)$, which can be identified with locally constant maps $G^k\to \Delta_\ast(X)$, is also exact: A locally constant map into the image of $\partial$ lifts via $\partial$ to a locally constant map by choosing a preimage for each value.
By using this exactness and an increasing induction (cf.~\cref{fig:def_of_c_k}), we construct locally constant cochains $c_k\in C^k\bigl(G,\Delta_k(X)\bigr)$ such that 
\begin{equation*}
\partial c_0=1,~~
\partial c_k=\delta c_{k-1}.
\end{equation*}
for $k\ge 1$. We find $c_0$ with $\partial c_0=1$ by exactness. Given $c_{k-1}$ for $k\ge 1$, we have $\partial\delta c_{k-1}=\delta\partial c_{k-1}=0$, using $\delta(1)=0$ for $k=1$ and $\delta^2=0$ for $k>1$. So $c_k$ with $\partial c_k=\delta c_{k-1}$ exists by exactness.
Because of $\dim X=n$, we have $c_k=0$ for $k\ge n+1$ and thus $\delta c_n=0$.

Next we regard the $c_k$ as cochains with values in $\overline\Delta_k(X)$. Set $a_{n+1}=0$. By descending induction, the assumption $H^k\bigl(G,\overline\Delta_k(X)\bigr)=0$ allows us to construct cochains $a_k\in C^{k-1}\bigl(G,\overline\Delta_k(X)\bigr)$ such that 
\[
\delta a_k=c_k-\partial a_{k+1}~~\text{for $k=n,\dots,1$}
\]
because in each inductive step the right-hand side is a cocycle: for $k=n$ this follows from $\delta c_n=0$, and for $k<n$ we have
\[
\delta(c_k-\partial a_{k+1})
=\delta c_k-\partial\delta a_{k+1}=\partial c_{k+1}-\partial(c_{k+1}-\partial a_{k+2})=0.
\]
The same computation for $v=c_0-\partial a_1$ shows $\delta v=0$. As $\ker\delta=H^0(G,\bar{\Delta}_0(X))$ and $v$ is non-zero because of $\partial v=1$, the result follows.  
\end{proof}

\begin{proposition}\label{non existence of invariants}
    Assume $G$ acts simplicially on a simplicial complex $X$. If for some $k\geq0$ there is a non-zero $G$-invariant vector in $\overline{\Delta}_k(X)$, then $G$ has a finite orbit.
\end{proposition}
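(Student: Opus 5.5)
Let $v\in\overline{\Delta}_k(X)$ be a non-zero $G$-invariant vector. The plan is to show that a coefficient of maximal absolute value is attained on a finite, $G$-invariant set of simplices. Such a set is a union of finitely many $k$-simplices, so its vertices give a finite $G$-invariant set of vertices, and hence a finite orbit.

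Fix an orientation of every $k$-simplex. Then $\overline{\Delta}_k(X)$ is identified with $\ell^1$ of the set $X_k$ of $k$-simplices, and $v=\sum_{\sigma\in X_k}a_\sigma\sigma$ with $\sum|a_\sigma|<\infty$. The coefficients depend on the orientation choice only up to sign. Hence the function $\sigma\mapsto|a_\sigma|$ on unoriented $k$-simplices is well defined.

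This function is $G$-invariant. For $g\in G$, the element $gv$ has coefficient $\pm a_{\sigma}$ at $g\sigma$, the sign recording whether $g$ preserves the chosen orientations. So $gv=v$ gives $|a_{g\sigma}|=|a_\sigma|$ for all $\sigma$ and $g$.

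Since $v\neq0$, the number $M=\max_\sigma|a_\sigma|$ is positive. The maximum is attained because $\ell^1$-summability leaves only finitely many $\sigma$ with $|a_\sigma|\geq M/2$. The set $F=\{\sigma\in X_k : |a_\sigma|=M\}$ is therefore non-empty and finite, and it is $G$-invariant by the previous paragraph.

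Now pick any $\sigma\in F$. Its orbit $G\sigma$ lies in $F$ and is therefore a finite orbit of simplices. If one wants a finite orbit of points of $X$, take the barycenter of $\sigma$. Alternatively, take a vertex $x$ of $\sigma$: its orbit lies in the finite set of vertices of simplices in $F$.

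No step is a real obstacle. The only point needing care is that the action may reverse orientations, which is why I work with absolute values of the coefficients. The argument does not use continuity of the action.
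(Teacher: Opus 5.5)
Your proof is correct and is essentially the paper's argument. The paper uses a non-empty super-level set $\{\sigma : |a_\sigma|\geq M\}$ for some $M>0$, which is finite by $\ell^1$-summability and $G$-invariant, in place of your maximal level set; your use of absolute values to handle orientation-reversing elements makes explicit a point the paper leaves implicit.
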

\begin{proof}

    Let $k\geq0$. Choosing an orientation for each simplex gives an isometric isomorphism between $\overline{\Delta}_k$ and the $\ell^1$-space over the set of $k$-simplices. 
    Let $v=\sum a_i\sigma_i\neq 0$ be a $G$-invariant vector in $\overline{\Delta}_k$ under this identification. There exists some $M>0$ for which $\{\sigma_i\mid \abs{a_i}\geq M\}$ is not empty. This set is $G$-invariant and finite, since the $\ell^1$-norm of $v$ is finite. Thus, the action of $G$ on the set of $k$-simplices has a finite orbit, so the action of $G$ on $X$ has a finite orbit as well.
\end{proof}

\begin{proof}[Proof of \cref{thm:finite_orbit_main}]
    \cref{finite orbit} gives us a non-trivial $G$-invariant vector in $\overline\Delta_0$ and \cref{non existence of invariants} applied for $k=0$ gives us a finite orbit.
\end{proof}

\subsection{Geometric applications} \label{subse:applic}

We are finally in a position to discuss
applications of the various vanishing of cohomology results that were proved in earlier sections, mostly \cref{sec: vanishing}.

Using \cref{cor:main_simple_lattice} and \cref{thm:finite_orbit_main} we obtain (a generalization of) Farb's conjecture, \cref{mcor:farb}, for simple groups.

\begin{theorem} \label{thm:farb_simple}
    Let $G$ be a simple group of characteristic 0 and let $\Gamma<G$ be a lattice. Suppose $X$ is an $\mathbb{R}$-acyclic simplicial complex satisfying $\dim X<\rank(G)$.
    Any simplicial $\Gamma$-action on $X$ has a finite orbit.
    In particular, if $X$ is in addition a CAT(0) space, then there exists a $\Gamma$-fixed point.
\end{theorem}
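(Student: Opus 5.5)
The plan is to combine \cref{thm:finite_orbit_main} with the vanishing in \cref{cor:main_simple_lattice}, applied to the $L^1$-spaces $\overline\Delta_k(X)$. Put $n=\dim X<\rank(G)$. If $\rank(G)\le 1$, then $\dim X=0$, and an $\mathbb{R}$-acyclic $0$-dimensional complex is a single point, which is fixed. So assume $\rank(G)\ge 2$. Give $\Gamma$ the discrete topology; the action is then automatically continuous. Each completed chain group $\overline\Delta_k(X)$ is isometrically an $\ell^1$-space over the set of $k$-simplices, hence an $L^1$-space with a linear isometric $\Gamma$-action (orientation flips only contribute signs, i.e.\ elements of $L(X,T)$).

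Argue by contradiction and suppose every $\Gamma$-orbit in $X$ is infinite. Then \cref{non existence of invariants} (read contrapositively) gives $\overline\Delta_k(X)^\Gamma=0$ for every $k$. We are in a setting to which \cref{cor:main_simple_lattice} applies, taking $G$ simply connected (see below). It yields $H^i(\Gamma,\overline\Delta_k(X))=0$ for all $i<\rank(G)$. In particular $H^k(\Gamma,\overline\Delta_k(X))=0$ for $1\le k\le n$, since $n<\rank(G)$. Now \cref{thm:finite_orbit_main} produces a finite orbit, which contradicts the assumption. So a finite orbit exists.

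The main obstacle is matching the hypotheses of \cref{cor:main_simple_lattice}, which asks that $G$ be simply connected of characteristic $0$, or a connected simple Lie group with finite center. For a general simple group of characteristic $0$, I would pass to the simply connected cover $\widetilde G\to G$, whose kernel is finite and central and whose image has finite index and is cocompact (it contains $G^+$). The preimage $\widetilde\Gamma$ of $\Gamma\cap\operatorname{im}$ is then a lattice in $\widetilde G$, and $\widetilde\Gamma$ acts on $X$ through $\Gamma$. A finite $\widetilde\Gamma$-orbit gives a finite orbit of a finite-index subgroup of $\Gamma$, and hence a finite $\Gamma$-orbit, since a finite-index subgroup has at most finitely many translates of that orbit. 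The same lifting trick handles the vanishing of invariants: a nonzero $\widetilde\Gamma$-invariant vector would again give a finite orbit. For the final CAT(0) claim, take the circumcenter of a finite orbit. It is $\Gamma$-fixed by uniqueness of circumcenters in complete CAT(0) spaces, and finite-dimensional simplicial complexes with finitely many shapes are complete.
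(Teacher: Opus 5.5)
Your proposal is correct and is essentially the paper's proof: pass to the simply connected cover, whose image has finite index; use \cref{non existence of invariants} to handle nonzero invariant vectors in the $\overline\Delta_k(X)$; and otherwise combine the vanishing from \cref{cor:main_simple_lattice} with \cref{thm:finite_orbit_main}. The differences are cosmetic: you treat each $\overline\Delta_k(X)$ separately rather than their direct sum, and you spell out the $\rank(G)\le 1$ case and the circumcenter step. One small point: containing $G^+$ only gives cocompactness of the image, whereas finiteness of its index comes from the finiteness of $H^1(k,\ker\pi)$, as the paper uses.
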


\begin{proof}
    By \cite[Proposition~2.10]{platonovrapinchuk:algebraic}, there exists a map $\pi\colon \tilde{G}\to G$ with finite kernel, where $\tilde G$ is a simply connected simple group of characteristic 0. By the exact sequence \cite[I.5.5~Proposition 38]{serre:cohomologie} and the finiteness of $H^1(k,\ker\pi)$ \cite[Th\'eor\`eme~6.1]{BorelSerre:theoreme}, the image has finite index in $G$.
    Thus, it is enough to prove the statement under the assumption that $G$ is simply connected.
    
    Consider the action of $\Gamma$ on the $L^1$-space $V=\bigoplus_{k=0}^{\dim X}\overline \Delta_k(X)$. 
    If $V^\Gamma\neq 0$, then, in particular,  $(\overline\Delta_k)^\Gamma\neq 0$ for some $k$. By \cref{non existence of invariants}, $\Gamma$ has a finite orbit in $X$.
    Otherwise, by \cref{cor:main_simple_lattice}, $H^k(\Gamma,V)=0$ for all $0\leq k\leq \dim X$, and since cohomology commutes with direct sums, $H^k(\Gamma,\overline{\Delta}_k)=0$. By \cref{thm:finite_orbit_main}, $\Gamma$ has a finite orbit in $X$.
\end{proof}

We now work toward a semisimple version of the above, which will imply \cref{mcor:farb}. It will apply to irreducible lattices in connected semisimple Lie groups with property (T). We will need to consider the case in which the induced action of the ambient group on $I^1\overline{\Delta}_k$ satisfies that there is a non-compact factor that has invariant vectors. The other case will follow from \cref{thm:mainGsemisimpleLp}, as in \cref{thm:farb_simple}.

\cref{prop:continuous_vector_invariant} relates invariant vectors and \emph{continuous vectors}, it essentially shows that if a factor has invariant vectors, then the other factors have continuous vectors. So we will study that.

\begin{definition}\label{def:continuous_vectors}
    Let $G$ be an lcsc group and $\iota:\Gamma\to G$ with a dense image. Let $V$ be an $L^p$-space on which $\Gamma$ acts linearly and isometrically.     
    We say $v\in V$ is \emph{$G$-continuous} if $\lim_{n\to\infty}||\gamma_nv-v||_p=0$, whenever $\iota(\gamma_n)\to 1$ in $G$.

    The space of $G$-continuous vectors is a closed and $\Gamma$-invariant subspace.
\end{definition}

The following is \cite[Proposition~5.1]{baderboutonnethoudayerpeterson:charmenability}, where it was stated and proved for Hilbert spaces, but the proof works as is for $L^p$-spaces using the $L^p$-induction defined in \cref{sec:shapiro}.

\begin{proposition}    \label{prop:continuous_vector_invariant}
Let $G$ be an lcsc group such that \[1\to G_2\to G\xrightarrow[]{\iota}G_1\to  1\]
    is an exact sequence. Let $\Gamma<G$ be a lattice such that $\iota(\Gamma)$ is dense in $G_1$.
    Let $V$ be an $L^p$-space on which $\Gamma$ acts linearly and isometrically. Let $V_\iota$ denote the set of $G_1$-continuous vectors with respect to $\iota(\Gamma)<G_1$.
    
    There is a $G$-equivariant surjective isometry $\kappa:V_\iota\to I^p(V)^{G_2}$.
\end{proposition}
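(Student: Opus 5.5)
We construct $\kappa$ explicitly. Given a $G_1$-continuous vector $v\in V_\iota$, define a function $f_v\colon G\to V$ as follows. For $g\in G$ choose a sequence $\gamma_n\in\Gamma$ with $\iota(\gamma_n)\to\iota(g)$, which is possible since $\iota(\Gamma)$ is dense in $G_1$, and set
\[
f_v(g)=\lim_{n\to\infty}\gamma_n^{-1}v .
\]
The limit exists and is independent of the sequence. If $\iota(\gamma_n),\iota(\gamma'_m)\to\iota(g)$, then $\iota(\gamma_n^{-1}\gamma'_m)\to1$. Hence $\|\gamma_n^{-1}v-\gamma_m'^{-1}v\|=\|\gamma'_m\gamma_n^{-1}v-v\|$, up to the appropriate convention on the order of the product, tends to $0$ by $G_1$-continuity.

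The same estimate shows that $f_v$ is continuous. It depends only on $\iota(g)$, so it is $G_2$-invariant for the left action $(h\cdot f)(x)=f(h^{-1}x)$, since $G_2$ is normal and $\iota(h^{-1}x)=\iota(x)$. It also satisfies $f_v(x\gamma^{-1})=\gamma f_v(x)$, using $\gamma_n\gamma^{-1}$ as the approximating sequence. Moreover $\|f_v(x)\|=\|v\|$ for every $x$, because each $\gamma_n^{-1}$ is an isometry. With the finite invariant measure on $G/\Gamma$ normalized to a probability measure, we get $f_v\in I^p(V)^{G_2}$ with $\|f_v\|_p=\|v\|$.

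Set $\kappa(v)=f_v$. This map is linear, isometric, and injective, since $f_v(1)=v$. For equivariance, the $\Gamma$-action on $V_\iota$ should correspond to the $G$-action on $I^p(V)^{G_2}$. Here $G$ acts through $G/G_2\cong G_1$, and $\Gamma$-invariance of $V_\iota$ extends by continuity to an action of $G_1$ on $V_\iota$ via $g\cdot v=f_v(g^{-1})$. Then $\kappa(g\cdot v)(x)=f_v(g^{-1}x)$ is the usual check. I would fix these left/right conventions carefully first, following the formulas in \cref{def: lp induction}.

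The main obstacle is surjectivity. Let $f\in I^p(V)^{G_2}$. Then $f$ is a measurable function $G\to V$ that is $G_2$-invariant up to null sets and $\Gamma$-equivariant. The first step is to show that such an $f$ agrees almost everywhere with a continuous function. Since $f$ descends to $G_1=G/G_2$, the action of $G$ on $I^p(V)^{G_2}$ factors through $G_1$. The translation action of $G$ on $I^p(V)$ is strongly continuous, as is standard for translations on $L^p_{\loc}$. Convolving $f$ against an approximate identity $\phi_j\in C_c(G_1)$, viewed as an operator on $I^p(V)^{G_2}$, yields functions $\phi_j * f$ that are continuous in the variable of $G_1$ and converge to $f$ in $I^p(V)$.

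For a continuous equivariant $G_2$-invariant $F$, set $v=F(1)$. Equivariance gives $F(\gamma^{-1})=\gamma v$. Continuity of $F$ at $1$ along $\iota(\gamma_n)\to1$ gives $\gamma_n v\to v$, so $v\in V_\iota$ and $F=f_v$ by density of $\iota(\Gamma)$. Thus the image of $\kappa$ is dense in $I^p(V)^{G_2}$. It is also closed, since $\kappa$ is an isometry and $V_\iota$ is closed, as noted in \cref{def:continuous_vectors}. Hence $\kappa$ is surjective.

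One subtlety is that functions in $I^p$ are only almost-everywhere defined. To evaluate the smoothed function $\phi_j*f$ at $1$, I would first pick a representative that is genuinely $\Gamma$-equivariant and $G_2$-invariant everywhere, and then define the convolution pointwise as a Bochner integral over $G_1$. This argument follows \cite[Proposition~5.1]{baderboutonnethoudayerpeterson:charmenability} verbatim, with $L^2$-norms replaced by $L^p$-norms and Hilbertian tensor products avoided.
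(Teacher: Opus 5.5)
Your proposal is correct and follows the route the paper takes. The paper gives no independent argument; it cites \cite[Proposition~5.1]{baderboutonnethoudayerpeterson:charmenability} and notes that the Hilbert-space proof transfers verbatim to the $L^p$-induction of \cref{sec:shapiro}. Your write-up supplies the details of that transfer: $\kappa(v)$ is the continuous extension of $\gamma\mapsto\gamma^{-1}v$ along the dense subgroup $\iota(\Gamma)$, and surjectivity comes from the density of smoothed (hence continuous) $G_2$-invariant vectors together with the closedness of the isometric image.
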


\begin{theorem} \label{thm:invariant_vectors_implies_finite}
    Let $G$ be a connected semisimple Lie group and let $\Gamma<G$ be an irreducible lattice. Suppose $X$ is a non-empty set with a $\Gamma$-action. Let $1\leq p<\infty$ and consider the induced action of $\Gamma$ on $\ell^p(X)$. If there exists a non-compact factor $H<G$ that has invariant vectors in the induced representation $I^p\bigl(\ell^p(X)\bigr)$, then $\Gamma$ has a finite orbit.
\end{theorem}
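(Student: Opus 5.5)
We reduce to \cref{prop:continuous_vector_invariant} and then to an elementary argument on $\ell^p(X)$. Write $G=H\cdot G'$, where $G'$ is the product of the remaining factors, and set $G_2=H$, $G_1=G/H$, $\iota\colon G\to G_1$ the quotient map (modulo the finite center, which we may absorb). Since $\Gamma$ is irreducible and $H$ is non-compact, $\iota(\Gamma)$ is dense in $G_1$. By \cref{prop:continuous_vector_invariant}, the hypothesis $I^p(\ell^p(X))^H\neq 0$ translates into the existence of a non-zero $G_1$-continuous vector $v\in \ell^p(X)$ with respect to $\iota(\Gamma)<G_1$. So the task becomes: a non-zero $G_1$-continuous vector in $\ell^p(X)$ forces a finite $\Gamma$-orbit in $X$.

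Fix such a $v\neq 0$ and let $M=\max_x|v(x)|>0$, attained on a finite non-empty set $F=\{x : |v(x)|=M\}$; finiteness holds because $v\in\ell^p$. Pick $\varepsilon>0$ smaller than the gap between $M$ and the next largest value of $|v|$. Continuity of $v$ gives an identity neighborhood $U\subset G_1$ such that $\|\gamma v-v\|_p<\varepsilon$ whenever $\iota(\gamma)\in U$. For such $\gamma$ the sup-norm difference is also $<\varepsilon$, and $\gamma v$ is just $v$ permuted. Hence $\gamma$ must map $F$ onto $F$: the coordinates where $|\gamma v|=M$ must lie within $\varepsilon$ of $M$ for $v$, forcing them into $F$. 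Thus the subgroup $\Lambda=\iota^{-1}(\overline{\langle U\rangle})\cap\Gamma$, where $\langle U\rangle$ is the open subgroup generated by $U$, stabilizes $F$ setwise, since stabilizing $F$ is a group condition. Because $G$ is a connected Lie group, $G_1$ is connected, so $\langle U\rangle=G_1$ and $\Lambda=\Gamma$. Hence $\Gamma$ preserves the finite set $F$, and any point of $F$ has a finite $\Gamma$-orbit.

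The main obstacle is the bookkeeping in applying \cref{prop:continuous_vector_invariant}: it requires an exact sequence $1\to G_2\to G\to G_1\to 1$ with dense image of $\Gamma$ in $G_1$. This is where we need to handle the finite center and the possible compact factors. The plan is to pass to $G$ modulo its center and compact factors, which changes neither density nor the invariance hypothesis up to finite-index issues. Finite-index issues are harmless for finite orbits, because a finite orbit of a finite-index subgroup yields a finite orbit of $\Gamma$. One must also ensure that $\iota(\Gamma)$ is dense in $G/H$, which is the irreducibility assumption as defined in \cref{sec:semisimple_groups}. The remaining steps are elementary.
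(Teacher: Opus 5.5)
Your reduction is the same as the paper's. You apply \cref{prop:continuous_vector_invariant} to $1\to H\to G\to G/H\to 1$ to get a non-zero $G/H$-continuous vector $v\in\ell^p(X)$. Then you play the discreteness of the non-zero values of an $\ell^p$-function against the connectedness of $G/H$ and the density of $\iota(\Gamma)$.

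The endgames differ. The paper fixes $x_0$ with $v(x_0)\neq 0$ and notes that $\gamma\mapsto v(\gamma^{-1}x_0)$ extends continuously to $G/H$, with values in the countable set $\overline{v(X)}$. Connectedness makes this function constant, so $\Gamma x_0$ lies in the finite level set $\{v=v(x_0)\}$. Density and connectedness enter in one stroke, and no generation argument is needed. Your maximum-modulus set $F$ also works and gives an explicit finite invariant set. However, it requires passing from a neighbourhood of the identity to all of $\Gamma$, and that step is not justified as written.

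You show that $\Gamma_U=\{\gamma\in\Gamma:\iota(\gamma)\in U\}$ preserves $F$. Since the stabilizer $\Gamma_F$ is a subgroup, this gives $\langle\Gamma_U\rangle\subseteq\Gamma_F$. It does not give $\iota^{-1}(\langle U\rangle)\cap\Gamma\subseteq\Gamma_F$. Writing $\iota(\gamma)=u_1\cdots u_k$ with $u_i\in U$ tells you nothing about $\gamma$, because the $u_i$ need not lie in $\iota(\Gamma)$.

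Density has to be used again at this point. Consider a product lattice $\Gamma_1\times\Gamma_2<H\times H'$, with $\Gamma_1$ acting trivially and $\Gamma_2$ acting by translation on $X=\Gamma_2$. Every vector is then continuous, $\Gamma_U=\Gamma_1\times\{1\}$ for small $U$, and $\langle U\rangle=H'$. Yet there is no finite orbit.

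The repair is short:
\begin{itemize}
\item We have $\iota(\Gamma)\cap U=\iota(\Gamma_U)\subseteq\iota(\Gamma_F)$, and $\iota(\Gamma)\cap U$ is dense in $U$.
\item Hence the closed subgroup $\overline{\iota(\Gamma_F)}$ contains $U$, so it is open, so it equals $G/H$ by connectedness.
\item Given $\gamma\in\Gamma$, choose $\lambda\in\Gamma_F$ with $\iota(\lambda)\in\iota(\gamma)U$. Then $\gamma^{-1}\lambda\in\Gamma_U\subseteq\Gamma_F$, so $\gamma\in\Gamma_F$.
\end{itemize}

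Finally, the reduction modulo the center and compact factors in your last paragraph is unnecessary, and it would change both the lattice and the induced module. The proposition applies directly with $G_2=H$ and $G_1=G/H$. Density of $\iota(\Gamma)$ in $G/H$ is exactly the paper's definition of irreducibility.
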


\begin{proof}
Let $\iota\colon G\to G/H$.
By \cref{prop:continuous_vector_invariant}, the closed and $\Gamma$-invariant subspace $V\subset \ell^p(X)$, of $G/H$-continuous vectors, is non-zero. Let $f\in V\bs\{0\}\subset \ell^p(X)$ be a non-zero vector. From the $\ell^p$-condition we get that $0$ is the only possible accumulation points of $f(X)$ outside $f(X)$. 
Let $x_0\in X$ be such that $f(x_0)\ne 0$. 
The map 
\[ g\colon \Gamma\to \bbR,~~\gamma\to (\gamma f)(x_0)=f(\gamma^{-1}x_0)\]
extends to a continuous map $G/H\to \bbR$. Since $\Gamma$ is dense in $G/H$, this  extension takes values in the closure of $f(X)$, which is a countable set. Since $G/H$ is connected, the extension is constant. This implies that the $\Gamma$-orbit of~$x_0$ is finite by the $\ell^p$-condition. 
\end{proof}

We can now deduce \cref{thm:fixedsemisimple} from the introduction.

\begin{corollary} \label{cor:farb_for_semisimple}
    Let $G$ be a connected semisimple Lie group with finite center and no compact factors. Assume $G$ has property (T). Let $\Gamma<G$ be an irreducible lattice. Suppose $X$ is an $\mathbb{R}$-acyclic simplicial complex and $\Gamma$ acts on $X$ simplicially. 
    If $\dim X<\rank(G)$, then $\Gamma$ has a finite orbit.
\end{corollary}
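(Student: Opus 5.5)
We follow the pattern of \cref{thm:farb_simple}, splitting according to whether a non-compact factor has invariant vectors in the induced representation. Put $V=\bigoplus_{k=0}^{\dim X}\overline\Delta_k(X)$, an $L^1$-space with a linear isometric $\Gamma$-action; each summand $\overline\Delta_k(X)$ is $\Gamma$-isometric to $\ell^1(X_k)$, where $X_k$ is the set of $k$-simplices. The action of $\Gamma$ on $X_k$ is by signed permutations, so to apply \cref{thm:invariant_vectors_implies_finite} we pass to $\ell^1$ of the set of oriented $k$-simplices; this is harmless for producing finite orbits. Consider the induced representation $W=I^1_\Gamma(V)$, an $L^1$-space with a continuous isometric $G$-action by \cref{lem: Lp modules preserved under induction}. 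As in the proof of \cref{cor:vanishin_simple_connected}, we may first pass to a finite central quotient or cover, so that $G$ is the real points of a simply connected group; finiteness of the center makes this routine.

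\emph{Case 1: some non-compact almost simple factor $H<G$ has non-zero invariant vectors in $W$.} Since $W=\bigoplus_k I^1(\overline\Delta_k)$, the factor $H$ has invariants in some $I^1(\ell^1(\text{oriented }k\text{-simplices}))$. \cref{thm:invariant_vectors_implies_finite} then gives a finite $\Gamma$-orbit of $k$-simplices, and hence a finite orbit in $X$. Connectedness of $G/H$, which that theorem requires, holds because $G$ is connected.

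\emph{Case 2: no non-compact factor has invariant vectors in $W$.} Each factor has property (T), since $G$ does. For $L^p$-representations of (T) groups, the absence of invariant vectors implies the absence of almost invariant vectors (\cite[Theorem~A]{baderfurmangelandermonod:BFGM:property}, as used in \cref{cor:vanishin_simple_connected for semisimple}). \cref{thm:mainGsemisimpleLp} therefore gives $H^i(G,W)=0$ for $i<\rank(G)$. By \cref{cor:shapiro_lemma_for_lattices}, the Shapiro restriction $H^i(G,I^1(V))\to H^i(\Gamma,V)$ is surjective for $i<\rank(G)$. This corollary applies when $\rank(G)\ge2$; if $\rank(G)=1$, then $\dim X=0$ and we only need $H^0$, which is handled by Case 1 or directly. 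Hence $H^k(\Gamma,V)=0$, and so $H^k(\Gamma,\overline\Delta_k(X))=0$, for $0\le k\le\dim X<\rank(G)$. \cref{thm:finite_orbit_main} then yields a finite orbit.

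The main point to check is Case 2's use of \cref{cor:shapiro_lemma_for_lattices} for non-uniform $\Gamma$, which needs $\rank(G)\ge 2$ and irreducibility; both are available here. The other delicate point is matching the hypotheses of \cref{thm:invariant_vectors_implies_finite}, which is stated for $\ell^p(X)$ with $X$ a $\Gamma$-set, against $\overline\Delta_k$ with its orientation signs. The doubling trick via oriented simplices handles this, because $\overline\Delta_k(X)$ is a $\Gamma$-equivariant complemented summand, namely the odd part, of $\ell^1$ of the oriented simplices, and induction preserves this splitting.
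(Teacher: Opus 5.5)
Your proposal is correct and follows the paper's proof essentially step for step. It uses the same case split on whether some non-compact factor has invariant vectors in $I^1(V)$ for $V=\bigoplus_k\overline\Delta_k(X)$, with \cref{thm:invariant_vectors_implies_finite} in the first case, and in the second case the vanishing for $G$ combined with \cref{cor:shapiro_lemma_for_lattices} and \cref{thm:finite_orbit_main}. The paper quotes that vanishing from \cref{cor:vanishin_simple_connected for semisimple}; note that the passage to a simply connected cover should be done on the coefficient module $W$, as there, not on $\Gamma$, whose action on $X$ need not factor through $\Gamma/(\Gamma\cap Z)$. Your treatment of orientation signs (passing to $\ell^1$ of oriented simplices) and of the rank-one case fills in details the paper leaves implicit.
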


\begin{proof}
We deduce from \cref{cor:vanishin_simple_connected for semisimple} that for any $L^p$-space $V$ and a linear isometric action on $V$  without invariant for non-compact factors, we have $H^i(G,V)=0$ for $0\leq  i< \rank(G)$ and $H^{\rank(G)}(G,V)$ is Hausdorff.

    Consider the action of $\Gamma$ on the $L^1$-space $V=\bigoplus_{k=0}^{\dim X}\bar \Delta_k(X)$. 
    Consider the $G$-action on $I^1(V)$. If some non-compact factor of $G$ has invariant vectors, then by \cref{thm:invariant_vectors_implies_finite}, $\Gamma$ has a finite orbit in one of the $\bar\Delta_k(X)$, and so in $X$.

    Assume otherwise. As in \cref{cor:vanishin_simple_connected} since all factors have property (T), the action of $G$ on $I^1V$, satisfies that there are no almost invariant for non-compact factors, and so $H^k\bigl(G,I^1(V)\bigr)=0$ for every $0<k<\rank(G)$. By \cref{cor:shapiro_lemma_for_lattices}, $H^k(\Gamma,V)=0$ for every $0<k<\rank(G)$. In particular, $H^k\bigl(\Gamma,\bar{\Delta}_k(X)\bigr)=0$ for every $0<k<\rank(G)$. By \cref{thm:finite_orbit_main}, $\Gamma$ has a finite orbit in $X$.
\end{proof}

The following is a reproduction of \cref{thm:padicquestion}.

\begin{corollary} \label{thm:padicquestionagain}
    For a simple $p$-adic group $G$ of rank $r$ and a lattice $\Gamma<G$, $d=r$ is the minimal number for which $\Gamma$ has an action on a contractible $d$-dimensional simplicial complex where all orbits are infinite.
\end{corollary}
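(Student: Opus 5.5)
The claim has two halves. First, a lower bound: every action of $\Gamma$ on a contractible simplicial complex of dimension $d<r$ has a finite orbit. Second, an upper bound: there is a contractible $r$-dimensional complex on which $\Gamma$ acts with all orbits infinite.

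For the lower bound, let $X$ be contractible of dimension $<r$. Contractibility gives $\mathbb{R}$-acyclicity, so \cref{thm:farb_simple} applies directly and yields a finite orbit. Its hypotheses hold because $G$ is simple of characteristic $0$ and $\Gamma<G$ is a lattice; if $r\geq 2$ this is literally covered by that theorem. In rank one, $\dim X<1$ means $X$ is a contractible $0$-dimensional complex, hence a point, so there is a finite orbit trivially. Hence no contractible complex of dimension $<r$ carries an action with all orbits infinite.

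For the upper bound, take $X$ to be the Bruhat--Tits building $\mathcal{B}$ of $G$. It is a contractible (indeed CAT(0)) simplicial complex of dimension $r$, on which $G$ acts simplicially with compact open stabilizers.

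The key step is to show that every $\Gamma$-orbit of vertices of $\mathcal{B}$ is infinite. A vertex stabilizer $G_x$ is compact open, and $\Gamma\cap G_x$ is discrete and compact, hence finite. Since $G$ is non-compact, the lattice $\Gamma$ is infinite, so $\Gamma x\cong\Gamma/(\Gamma\cap G_x)$ is infinite. Every simplex orbit is infinite for the same reason. If one insists on orbits of points of the geometric realization, the same argument applies: each point has a compact stabilizer, namely the stabilizer of its carrier simplex, or a finite-index subgroup of it.

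One caveat is where $G$ should act. The building is usually built for the simply connected cover, and a $p$-adic simple $G$ may act by possibly type-permuting automorphisms. This does not matter, since only stabilizers being compact is needed, and that holds for the action of any form of $G$ on its Bruhat--Tits building.

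Combining the two halves gives minimality: $d=r$. I expect the only delicate point to be the bookkeeping in the lower bound, namely checking that \cref{thm:farb_simple} covers all lattices in $p$-adic simple groups, including the passage to the simply connected cover that is already handled inside its proof. The upper bound is standard Bruhat--Tits theory.
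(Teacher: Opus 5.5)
Your proof is correct and follows the paper's argument: the Bruhat--Tits building, on which $\Gamma$ acts with finite stabilizers and hence infinite orbits, gives the upper bound, and \cref{thm:farb_simple} (which rests on \cref{cor:main_simple_lattice}) gives the lower bound. Your separate treatment of rank one is a worthwhile addition, since \cref{cor:main_simple_lattice} assumes $\rank(G)\geq 2$ and the paper does not address that case explicitly.
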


\begin{proof}
    The Bruhat-Tits building of $G$ is of dimension $r$ and $G$ acts on it properly. Thus, so does $\Gamma$, hence all orbits of $\Gamma$ are infinite.  
    By \cref{cor:main_simple_lattice} and \cref{thm:farb_simple}, there are no such actions on $\mathbb{R}$-acyclic (and, in particular, contractible) simplicial complexes of dimension smaller than the rank.
\end{proof}

Finally, we provide the following two applications which do not use \cref{sec: vanishing} at all.




\begin{corollary}
Let $G$ be a discrete group of geometric dimension $n$ and let $X$ be a universal cover of a simplicial $K(G,1)$ of dimension $n$.
There exists $0\leq k\leq n$ such that $H^k(G,\overline\Delta_k)\neq 0$.

Furthermore, if $G$ is of type $F$ there exists $0\leq k\leq n$ such that $H^k\bigl(G,\ell^1(G)\bigr)\neq 0$.
\end{corollary}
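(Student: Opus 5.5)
The plan is to apply \cref{finite orbit} directly to the universal cover $X$, and then identify the coefficient modules $\overline\Delta_k(X)$ with $\ell^1$-sums of copies of $\ell^1(G)$ in the type $F$ case.

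\textbf{First statement.} Since $K(G,1)$ is aspherical, its universal cover $X$ is a contractible simplicial complex. In particular $X$ is $\mathbb{R}$-acyclic, and $\dim X=n$. The deck group $G$ is discrete, so it acts simplicially and continuously in the sense of \cref{subsec:6setup}, since every stabilizer is open. \cref{finite orbit} then yields some $0\le k\le n$ with $H^k(G,\overline\Delta_k(X))\neq 0$. I see no obstacle here beyond checking the hypotheses; the deck action is even free.

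\textbf{Second statement.} Assume now that $G$ is of type $F$. Then we may choose $K(G,1)$ to be a finite simplicial complex of dimension $n$. I expect this to be the main point needing care: type $F$ guarantees a finite $K(G,1)$, but I need one of dimension exactly $n$ (or at most the relevant dimension). I would first try the standard argument that for $n\ge3$ a finite complex of dimension $\operatorname{cd}=\operatorname{gd}$ exists, via Wall's results. Alternatively, I would simply allow $\dim X=N\ge n$ and accept the index range $0\le k\le N$. Since the statement says $0\le k\le n$, I would rely on Eilenberg--Ganea/Wall, and handle the low-dimensional cases $n\le 2$ separately: free groups, and two-dimensional cases where a finite presentation complex works.

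Because the action of $G$ on $X$ is free and simplicial with finitely many orbits of simplices, we can choose one oriented representative in each orbit. Freeness ensures no simplex is mapped to itself with reversed orientation. This gives a $G$-equivariant isometric isomorphism
\[
\overline\Delta_k(X)\cong \ell^1(G)^{m_k},
\]
where $m_k$ is the number of $k$-simplices of $K(G,1)$ and the right-hand side carries the finite $\ell^1$-direct sum norm. Continuous cohomology of the discrete group $G$ commutes with finite direct sums. Hence the nonvanishing of $H^k(G,\overline\Delta_k(X))$ forces $m_k\neq0$ and $H^k(G,\ell^1(G))\neq0$ for the same $k$, which is what we want.
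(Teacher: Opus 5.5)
Your first part is correct, and applying \cref{finite orbit} directly is slightly cleaner than the paper's route. The paper instead invokes \cref{thm:finite_orbit_main} together with the observation that the free action has only infinite orbits. Your second part follows the paper's idea: identify $\overline\Delta_k(X)\cong\ell^1(G)^{m_k}$ and use additivity of cohomology. You are right that this needs a \emph{finite} $K(G,1)$ of controlled dimension. The paper's own proof simply asserts that the action on $X$ has finitely many orbits, silently assuming that the given $n$-dimensional $K(G,1)$ is finite.

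The gap is in how you secure this finite model when $n=2$. Wall's theorem gives a finite $K(G,1)$ of dimension $\max(\cd G,3)$, which covers $n\ge 3$, and $n\le 1$ is harmless. But your claim that ``a finite presentation complex works'' in dimension $2$ is unsupported. Presentation complexes are in general not aspherical. Whether a group of type $F$ with $\operatorname{gd}G=2$ has a finite $2$-dimensional $K(G,1)$ is not known in general; it is an instance of Wall's D(2) problem. Your fallback of allowing $k\le N$ proves a weaker statement.

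The missing observation is that you never need to control the dimension of the finite model. Take any finite simplicial $K(G,1)$, of some dimension $N$, with universal cover $X'$. Your argument then gives some $0\le k\le N$ with $H^k\bigl(G,\ell^1(G)\bigr)\neq 0$. Since $G$ is discrete, this is ordinary group cohomology, which vanishes in degrees $k>\cd G$ for every coefficient module. Moreover $\cd G\le\operatorname{gd}G=n$, so $k\le n$. With this, Wall's theorem becomes unnecessary.
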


\begin{proof}
    Since $G$ acts simplicially on $X$, $X$ is $\mathbb{R}$-acyclic, and all orbits are infinite, the result follows from \cref{thm:finite_orbit_main}.
    
    Assume $G$ is of type $F$. Since the action of $G$ on $X$ is free, with finitely many orbits, $H^k(G,\overline{\Delta}_k)=\bigoplus_{j=1}^{m_k}H^k(G,\ell^1(G))$. Since for some $k$ the cohomology does not vanish, we get that  $H^k(G,\ell^1(G))\neq 0$ as well.
\end{proof}

The analogous statement for $\ell^2$-cohomology is called the \emph{zero-in-the-spectrum question} and it is open; see \cite{lott:zero} for a review on the subject.

\begin{theorem}
    If $\Gamma_1,\dots,\Gamma_n$ are discrete groups of type $FP_{n}(\mathbb{R})$ with property (T), then any simplicial action of $\Gamma=\Gamma_1\times \cdots \times \Gamma_n$ on a simplicial $\mathbb{R}$-acyclic complex $X$ with $\dim X\leq n$ satisfies that there exists $1\leq i\leq n$ such that $\Gamma_i$ has a finite orbit.
\end{theorem}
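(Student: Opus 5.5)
We argue by contradiction via \cref{thm:finite_orbit_main}. Suppose no $\Gamma_i$ has a finite orbit in $X$. Consider the $L^1$-space $V=\bigoplus_{k=0}^{\dim X}\overline\Delta_k(X)$ with its linear isometric $\Gamma$-action. The plan is to verify the hypotheses of \cref{cor:products_of_discrete} with $m=0$ and $p=1$. This would give $H^k(\Gamma,V)=0$ for $0\le k\le n-1$ and $H^n(\Gamma,V)$ Hausdorff. That alone falls one degree short when $\dim X=n$, so we will instead run the inductive step of \cref{cor:products_of_discrete} once more, using property (T) to obtain $H^1$-vanishing for the last factor.

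\emph{Step 1: no almost invariant vectors for each factor.} By \cref{non existence of invariants} applied to $\Gamma_i$ acting on $X$, the absence of finite $\Gamma_i$-orbits gives $\overline\Delta_k(X)^{\Gamma_i}=0$ for every $k$, hence $V^{\Gamma_i}=0$. Since $\Gamma_i$ has property (T), the remark in the Example after \cref{cor:products_of_discrete} (from \cite{baderfurmangelandermonod:BFGM:property}) shows that $\Gamma_i$ has no almost invariant vectors on $V$. Moreover, the same holds on every ultrapower by \cref{lem:no_almost_invariants_ultrapowers}.

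\emph{Step 2: one more degree.} Property (T) for $\Gamma_1$ yields $H^1(\Gamma_1,W)=0$ for every $L^1$-space $W$ with an isometric $\Gamma_1$-action, by \cref{thm:BGM}. Together with $H^0(\Gamma_1,W)=W^{\Gamma_1}=0$, this gives the hypothesis of \cref{cor:products_of_discrete} with $m=1$, which is exactly the case noted in the Example. Since the $\Gamma_i$ are of type $FP_n(\mathbb{R})$, i.e.\ $FP_{m+n-1}$ with $m=1$, \cref{cor:products_of_discrete} gives $H^k(\Gamma,V)=0$ for $0\le k\le n$. Cohomology commutes with finite direct sums, so $H^k(\Gamma,\overline\Delta_k(X))=0$ for $1\le k\le n$.

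\emph{Step 3: conclusion.} Since $\dim X\le n$ and $X$ is $\mathbb{R}$-acyclic, \cref{thm:finite_orbit_main} yields a finite $\Gamma$-orbit. A finite $\Gamma$-orbit is a union of finite $\Gamma_i$-orbits, which contradicts the assumption. Hence some $\Gamma_i$ has a finite orbit.

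The main point to check is the inductive step in \cref{cor:products_of_discrete}. That step uses \cref{product of two groups}, which requires only $V^{\Gamma_{j}}=0$ for the adjoined factor, and this is supplied by Step 1. It also needs the finiteness type $FP_{n}(\mathbb{R})$ of the partial products, which follows from the hypothesis on each $\Gamma_i$. We must also ensure that the "no almost invariant vectors for every factor" hypothesis is stable under restriction to $\Gamma_1$ alone, which is immediate.
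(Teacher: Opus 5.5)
Your proof is correct and is the paper's argument in contrapositive form. The paper splits into two cases on $V=\bigoplus_k\overline\Delta_k(X)$: if some $\Gamma_i$ has almost invariant vectors, property (T) gives invariant vectors and \cref{non existence of invariants} gives a finite $\Gamma_i$-orbit; otherwise \cref{cor:products_of_discrete} with $m=1$, $p=1$ gives $H^k(\Gamma,V)=0$ for $k\le n$, and \cref{thm:finite_orbit_main} finishes. Your explicit use of \cref{thm:BGM} to check the $m=1$ hypothesis is left implicit in the paper (in the Example after \cref{cor:products_of_discrete}), and your opening detour through $m=0$ is not needed.
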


\begin{proof}
    Consider the action of $\Gamma$ on the $\ell^1$-space $V=\bigoplus_{k=0}^{\dim X}\overline \Delta_k(X)$. 
    If for some $1\leq i\leq n$,  $\Gamma_i$ has almost invariant vectors in $V$, then by property (T) and \cite[Theorem~A]{baderfurmangelandermonod:BFGM:property}, $\Gamma_i$ has non-trivial invariant vectors in $V$. In particular, in  $\overline\Delta_k(X)$ for some $k$. By \cref{non existence of invariants}, $\Gamma_i$ has a finite orbit in $X$.

    Otherwise, by \cref{cor:products_of_discrete}, $H^k(\Gamma,V)=0$ for all $0\leq k\leq n$, and since cohomology commutes with direct sums, $H^k(\bigl(\Gamma,\overline{\Delta}_k(X)\bigr)=0$. By \cref{thm:finite_orbit_main}, $\Gamma$ has a finite orbit in $X$, and in particular, every $\Gamma_i$ does.
\end{proof}


\bibliographystyle{amsalpha-arxiv}
\bibliography{biblio}

\end{document}